\theoremstyle{definition}
\newtheorem{definition}{Definition}[section]
\theoremstyle{plain}
\newtheorem{theorem}[definition]{Theorem}
\newtheorem{proposition}[definition]{Proposition}
\newtheorem{lemma}[definition]{Lemma}
\newtheorem{corollary}[definition]{Corollary}
\newtheorem{example}[definition]{Example}
\theoremstyle{remark}
\newtheorem{remark}[definition]{Remark}
\numberwithin{equation}{section}
\newcommand\keywords[1]{\textbf{keywords}:#1}
\newcommand{\EmailD}[1]{\href{mailto:#1}{\Letter\ #1}}
\title{Representations of Quantum Affine General Linear Superalgebras at Arbitrary 01-Sequences}
\author{Hongda Lin$^{1,2}$ and Honglian Zhang$^{1,3,}$\thanks{Corresponding Author.}}
\date{}
\begin{document}

\maketitle

\begin{center}
\footnotesize
\begin{itemize}
\item[1] Department of Mathematics, Shanghai University, Shanghai 200444, P.R. China.
\item[2] Shenzhen International Center for Mathematics, Southern University of Science and Technology, Guangdong 518055, P.R. China.

\EmailD{linhd@sustech.edu.cn}

\item[3] Newtouch Center for Mathematics at Shanghai University, Shanghai 200444, P.R. China.

\EmailD{hlzhangmath@shu.edu.cn}
\end{itemize}
\end{center}

\begin{abstract}
   
In this paper, we investigate finite-dimensional irreducible representations of the quantum affine general linear superalgebra $\mathrm{U}_q\big(\widehat{\mathfrak{gl}}_{m|n,\mathbf{s}}\big)$ for arbitrary 01-sequences $\mathbf{s}$, using the RTT presentation. We systematically construct the RTT presentation for  quantum general linear superalgebra $\mathrm{U}_q\big(\mathfrak{gl}_{m|n,\mathbf{s}}\big)$,  and derive a PBW basis induced by the action of the braid group, compatible with non-standard parities. We determine the necessary and sufficient conditions for the finite-dimensionality of irreducible representations of $\mathrm{U}_q\big(\mathfrak{gl}_{m|n,\mathbf{s}}\big)$ and extend  the results to the affine case via the evaluation homomorphism. Specific cases such as $(m,n)=(1,1)$ demonstrate that all finite-dimensional representations are tensor products of typical evaluation representations. This work extends existing representation frameworks and classification methods to encompass arbitrary 01-sequences, establishing the foundation for subsequent research on representations of quantum affine superalgebras.

\end{abstract}
\keywords  {\ Quantum affine superalgebras; RTT presentation; finite-dimensional irreducible representations; evaluation representations.}

\vspace{1em}
\section{Introduction}

Quantum groups represent a pivotal advancement in modern mathematics and theoretical physics. Among the most prominent examples of quantum groups are quantized enveloping algebras, which were initially introduced independently by Drinfeld \cite{Dr85} and Jimbo \cite{Ji85}. These algebras constitute a family of $q$-deformed topological Hopf algebras $\mathrm{U}_q\big(\mathfrak{a}\big)$, derived from the classical simple Lie algebra or Kac-Moody algebras $\mathfrak{a}$, and are commonly referred to as the Drinfeld-Jimbo presentation. As the classical limit $q \rightarrow 1$, the quantized enveloping algebras specialize to the corresponding universal enveloping algebras. This specialization preserves several key properties, including triangular decompositions, Hopf algebra structures, and character formulas for highest weight modules \cite{HK02}.

Another construction for the quantized enveloping algebra $\mathrm{U}_q\big(\mathfrak{a}\big)$ in terms of a matrix $R$ was defined depend on a finite-dimensional representation $V$ of $\mathrm{U}_q\big(\mathfrak{a}\big)$. The R-matrix $R$ is a solution of the quantum Yang-Baxter equation with value in $\operatorname{End} V^{\otimes 3}$:
\begin{gather*}
	R^{12}R^{13}R^{23} = R^{23}R^{13}R^{12},
\end{gather*}
where $R^{12} := R \otimes 1$, with analogous definitions for other indices. Reshetikhin, Faddeev, and Takhtajan(FRT) \cite{RTF90} demonstrated that this R-matrix construction offers an alternative presentation  of the quantized enveloping algebra. In the FRT formalism, the quantized enveloping algebra is realized through an upper triangular matrix and a lower triangular matrix that satisfies a set of ternary relations, known as the RTT presentation. This presentation is equipped with a natural comultiplication that enables the investigation of tensor product of representations. 

Quantum affine algebras, in addition to the Drinfeld-Jimbo and RTT presentations, admit a third presentation via Drinfeld currents \cite{Dr87}. The equivalence between Drinfeld and RTT presentations has been established for various types: Ding and Frenkel \cite{DF93} gave the proof for type A quantum affine algebras, while Jing, Liu, and Molev \cite{JLM20-1,JLM20-2} extended this result to types B, C, D, respectively.
While the Drinfeld realization lacks a finite-sum comultiplication, it remains essential in representation-theoretic investigations. Chari and Pressley \cite{CP94-2} have provided a classification of finite-dimensional irreducible representations of quantum affine algebras of type A, utilizing the evaluation homomorphism \cite{CP91,CP94-1}.

Quantum superalgebras are defined as $\mathbb{Z}_2$-graded generalizations of quantum groups, specifically designed to describe supersymmetric physical fields. Within the framework of quantum superalgebras, Bracken, Gould, and Zhang \cite{BGZ90,ZGB91} developed an R-matrix that serves as a solution to the supersymmetric quantum Yang-Baxter equation. While Yamane \cite{Ya94} introduced a similar quasi-triangular Hopf algebra structure for quantized enveloping superalgebras, employing a graded universal R-matrix derived from the quantum Drinfeld double construction. He also presented Serre-type presentations for affine Kac-Moody superalgebras and their quantizations, addressing both ABCD types and exceptional types \cite{Ya99}. Yamane showed that (affine) Kac-Moody superalgebras with different parities of generators are linked to various Dynkin diagrams and Serre-like relations, which can be transformed into one another via odd reflections. 

Typically, 01-sequences are used to describe the parities of generators of (affine) Lie superalgebras in type A, where 0 corresponds to even indices and 1 corresponds to odd indices. 
Most studies \cite{FHS97,JLZ25,Lu21,Ts21,Zhf14,Zhf16,Zhf17} for the structures and representations of the quantum affine general linear superalgebra $\mathrm{U}_q\big(\widehat{\mathfrak{gl}}_{m|n,\mathbf{s}}\big)$ rely on the standard 01-sequence,  equivalently, on the standard Borel subalgebra of $\mathfrak{gl}_{m|n,\mathbf{s}}$ (corresponding to the standard positive root system).  However, unlike semisimple Lie algebras, the presence of odd roots in classical Lie superalgebras implies that not every Borel subalgebra is conjugate to the standard one. Consequently, methods developed for standard cases are often inapplicable to non-standard 01-sequences.
Peng \cite{Pe16}, investigating Drinfeld-type parabolic presentations for super Yangians, identified challenges in constructing a partition of that ensures uniform parity within each block for arbitrary 01-sequence $\mathbf{s}$. 

Chang and Hu \cite{CH23} further presented an explicit formulation of quantum Berezian for parabolic diagonal generators at arbitrary partitions and parities. Additionally, Molev \cite{Mo22} developed an inductive rule to determine the finite-dimensionality conditions of irreducible highest weight representations for super Yangians associated with non-standard $\mathfrak{gl}_{m|n,\mathbf{s}}$ through a chain of odd reflections. Lu \cite{Lu22} revisited Molev’s results on odd reflections for the super Yangian $\mathrm{Y}\big(\mathfrak{gl}_{m|n,\mathbf{s}}\big)$ using Drinfeld current generators instead of RTT generators, linked them to XXX-type Bethe ansatz, and provided a $q$-character algorithm.

Consequently, it is natural to  explore finite-dimensional irreducible representations of $\mathrm{U}_q\big(\widehat{\mathfrak{gl}}_{m|n,\mathbf{s}}\big)$ for arbitrary 01-sequences $\mathbf{s}$. A significant challenge lies in the construction of odd reflections of quantum affine general linear superalgebras for studying their finite-dimensional irreducible representations. As this procedure is unintuitive and infeasible for Drinfeld current generators, we adopt the RTT presentation, motivated by \cite{GM10,MTZ04} for quantum affine algebras and \cite{Mo22,Zrb95,Zrb96} for super Yangians. Although constructing $q$-analogues of odd reflections for $\mathrm{U}_q\big(\widehat{\mathfrak{gl}}_{m|n,\mathbf{s}}\big)$ remains difficult, explicit realizations for the underlying quantum superalgebra $\mathrm{U}_q\big(\mathfrak{gl}_{m|n,\mathbf{s}}\big)$ are achieved through braid group actions.
Therefore, we initiate our analysis by examining finite-dimensional irreducible representations of $\mathrm{U}_q\big(\mathfrak{gl}_{m|n,\mathbf{s}}\big)$ for all $\mathbf{s}$. The primary objective of this work  is to determine the necessary and sufficient conditions for the irreducible representations of $\mathrm{U}_q\big(\widehat{\mathfrak{gl}}_{m|n,\mathbf{s}}\big)$ to be finite-dimensional. To this end, we employ the evaluation map $\mathrm{U}_q\big(\widehat{\mathfrak{gl}}_{m|n,\mathbf{s}}\big)\rightarrow \mathrm{U}_q\big(\mathfrak{gl}_{m|n,\mathbf{s}}\big)$ to induce families of finite-dimensional irreducible representations for $\mathrm{U}_q\big(\widehat{\mathfrak{gl}}_{m|n,\mathbf{s}}\big)$.

The paper is organized as follows. Section \ref{se:notations} establishes notation used throughout this work. In Section \ref{se:irreducible representationsQGLSAs}, we first introduce the RTT presentation $\mathrm{U}_q\big(\mathfrak{gl}_{m|n,\mathbf{s}}\big)$ of the quantum general linear superalgebra for arbitrary 01-sequence $\mathbf{s}$. We then define the braid group action on $\mathrm{U}_q\big(\mathfrak{gl}_{m|n,\mathbf{s}}\big)$, and utilize this action to construct a PBW basis for $\mathrm{U}_q\big(\mathfrak{gl}_{m|n,\mathbf{s}}\big)$. Additionally, we extend Zhang's results \cite{Zrb93} to general $\mathbf{s}$ by providing the transition rules for both typical and atypical irreducible representations. Differing from \cite{Mo22},  we further establish the explicit equivalent condition for the finite-dimensionality of these representations. 
In Section \ref{se:qafsuperalgebra}, we generalize \cite[Definition 3.1]{Zhf16} to define the quantum affine general linear superalgebra $\mathrm{U}_q\big(\widehat{\mathfrak{gl}}_{m|n,\mathbf{s}}\big)$ for all $\mathbf{s}$, and prove that it admits a PBW basis with the order the same as \cite[Corollary 2.13]{GM10}. 
Section \ref{se:highestweightreps} shows that every finite-dimensional irreducible representation of $\mathrm{U}_q\big(\widehat{\mathfrak{gl}}_{m|n,\mathbf{s}}\big)$, up to isomorphism, is the quotient of a Verma module over $\mathrm{U}_q\big(\widehat{\mathfrak{gl}}_{m|n,\mathbf{s}}\big)$. 
Additionally, we formulate the evaluation representations of $\mathrm{U}_q\big(\widehat{\mathfrak{gl}}_{m|n,\mathbf{s}}\big)$ by pulling back the finite-dimensional irreducible representations of $\mathrm{U}_q\big(\mathfrak{gl}_{m|n,\mathbf{s}}\big)$ via the evaluation homomorphism. 
In Section \ref{se:fdirreducible representations:af}, we classify the finite-dimensional irreducible representations for $\mathrm{U}_q\big(\widehat{\mathfrak{gl}}_{m|n,\mathbf{s}}\big)$. We
begin by examining several specific cases, namely $(m,n)=(1,1)$, $(2,0)$, $(0,2)$, and then proceed to verify the main result in the general case. For $(m,n)=(1,1)$, every finite-dimensional representation is actually a tensor product of typical evaluation representations.

\vspace{1em}
\section{Notations and sets up}\label{se:notations}
In this section, we need to introduce some primiliaries to standardize our notations. 
Let $\mathbb{C}$ be the set of complex numbers, $\mathbb{Z}$ the set of integers, and $\mathbb{Z}_+$ the set of non-negative integers, respectively. Write $\mathbb{Z}_2 =\mathbb{Z}/2\mathbb{Z}:= \{\bar{0}, \bar{1}\}$ as the two-element field. Throughout this paper, unless otherwise specified, all superspaces, associative superalgebras, and Lie superalgebras are considered to be over $\mathbb{C}$. Let $\delta_{\mathbf{con}}$ be the Kronecker function, which takes the value 1 if the condition `$\mathbf{con}$' is true and $0$ otherwise. We abbreviate $\delta_{i=j}$ to $\delta_{ij}$. 

For a superspace (resp. (Lie) superalgebra) $\mathcal{X}=\mathcal{X}(\bar{0})\bigoplus \mathcal{X}(\bar{1})$, the parity $|\cdot|$ of a homogeneous element $X\in \mathcal{X}$ is a $\mathbb{Z}_2$-value fuction denoted by 
\begin{equation*}
    |X|=\begin{cases}
        0, &\text{if}~~ X\in \mathcal{X}(\bar{0}), \\
        1, &\text{if}~~ X\in \mathcal{X}(\bar{1}).
    \end{cases}
\end{equation*}
We say $X$ is even if $|X|=\bar{0}$, and odd otherwise. If both $\mathcal{X}$ and $\mathcal{Y}$ are associative superalgebras, then the tensor product $\mathcal{X}\otimes\mathcal{Y}$ can be viewed as an associative superalgebra with the graded multiplication
$$(X_1\otimes Y_1)(X_2\otimes Y_2)=(-1)^{|Y_1||X_2|}X_1X_2\otimes Y_1Y_2,$$
for all homogeneous elements $X_1, X_2\in \mathcal{X}$, $Y_1,Y_2\in \mathcal{Y}$. 

Consider $m,n\in\mathbb{Z}_+$ with $N=m+n\geqslant 2$. 
We define $\mathcal{S}(m|n)$ as the set of all 01-sequences $\mathbf{s}=s_1s_2\cdots s_N$ that contain exactly $m$ 0s and $n$ 1s; any sequence $\mathbf{s}\in\mathcal{S}(m|n)$ is called a \textit{parity sequence}. A parity sequence $\mathbf{s}$ is said to be \textit{standard} if $s_i=0$ for $i=1,\ldots,m$ and $s_i=1$ for $i=m+1,\ldots,N$, and we denote this standard parity sequence by $\mathbf{s}^{\rm st}$. 

Introduce the following two functions on the index set $I^{m|n}_{\mathbf{s}}=\{1,\ldots,N\}$ (denoted briefly by $I_{\mathbf{s}}$) subject to a parity sequence $\mathbf{s}$: for $i\in I_{\mathbf{s}}$, 
\begin{equation*}
    |i|=\begin{cases}
        \bar{0}, &\text{if}~~s_i=0, \\
        \bar{1}, &\text{otherwise}.
    \end{cases}
    \qquad 
    d_i=\begin{cases}
        1, &\text{if}~~s_i=0, \\
        -1, &\text{otherwise}.
    \end{cases}
\end{equation*}
The following discussion summarizes the fundamentals of the general linear Lie superalgebra associated with a parity sequence $\mathbf{s}$, with reference to works \cite{Ka77-2,Mo22,Mu12} etc. 

Fix $\mathbf{s}\in\mathcal{S}(m|n)$, let $e_{1,\mathbf{s}},e_{2,\mathbf{s}},\ldots,e_{N,\mathbf{s}}$ be the standard basis of the superspace $\mathcal{V}_{\mathbf{s}}=\mathbb{C}^{m|n}$ with parities $|e_i|=|i|$ for all $i\in I_{\mathbf{s}}$. The endomorphism ring $\operatorname{End}\mathcal{V}_{\mathbf{s}}$ acts on $\mathcal{V}_{\mathbf{s}}$ via the rule
\begin{gather*}
    E_{ij,\mathbf{s}}(e_{k,\mathbf{s}})=\delta_{jk}e_{i,\mathbf{s}},\quad i,j,k\in I_{\mathbf{s}},
\end{gather*}
where $E_{ij,\mathbf{s}}$ with $|E_{ij,\mathbf{s}}|=|i|+|j|$ is the fundamental matrix whose $(i,j)$-entry is 1 and all other entries are 0. The $\operatorname{End}\mathcal{V}_{\mathbf{s}}$ admits a Lie superalgebra structure endowed with the super-bracket
\begin{gather*}
    [E_{ij,\mathbf{s}},\,E_{kl,\mathbf{s}}]=\delta_{jk}E_{il,\mathbf{s}}-(-1)^{(|i|+|j|)(|k|+|l|)}\delta_{il}E_{kj,\mathbf{s}}.
\end{gather*}
In this sense, we refer to $\operatorname{End}\mathcal{V}_{\mathbf{s}}$ as the \textit{general linear Lie superalgebra}, denoted by $\mathfrak{gl}(m|n)_{\mathbf{s}}$. To simplify the notation, we always write $\mathfrak{g}_{\mathfrak{s}}=\mathfrak{gl}(m|n)_{\mathbf{s}}$. 

Let $\mathfrak{h}_{\mathfrak{s}}$ be the span of all diagonal matrices $E_{ii,\mathbf{s}}$, denote $\mathfrak{h}_{\mathfrak{s}}$ as the \textit{Cartan subalgebra} of $\mathfrak{g}_{\mathfrak{s}}$. Consider the basis $\{\varepsilon_{1,\mathbf{s}}, \ldots, \varepsilon_{N,\mathbf{s}}\}$ of $\mathfrak{h}^{\ast}_{\mathfrak{s}}$ such that $\varepsilon_{i,\mathbf{s}}(E_{jj,\mathbf{s}}) = \delta_{ij}$ for all $i,j\in I_{\mathbf{s}}$, we introduce a non-degenerate symmetric bilinear form $(\,\cdot\,|\,\cdot\,)$ on $\mathfrak{h}^{\ast}_{\mathfrak{s}}$ defined by $(\varepsilon_{i,\mathbf{s}}|\varepsilon_{j,\mathbf{s}}) = d_i \delta_{ij}$. For $i \in I_{\mathbf{s}}\setminus\{N\}$, we define the simple roots by $\alpha_{i,\mathbf{s}} := \varepsilon_{i,\mathbf{s}}-\varepsilon_{i+1,\mathbf{s}}$, then set $\mathbf{P}_{\mathfrak{s}}:=\bigoplus_{i \in I_{\mathbf{s}}} \mathbb{Z} \varepsilon_{i,\mathbf{s}}$ the \textit{weight lattice} and $\mathbf{Q}_{\mathfrak{s}}:= \bigoplus_{i \in I_{\mathbf{s}}\setminus\{N\}} \mathbb{Z} \alpha_{i,\mathbf{s}}$ the \textit{root lattice}. The systems of even and odd positive roots are given by
\begin{align*}
\Phi_{\bar{0},\mathbf{s}}^+& := \{ \varepsilon_{i,\mathbf{s}} - \varepsilon_{j,\mathbf{s}} \mid 1 \leqslant i < j \leqslant N ~~\text{and}~~ |i| + |j|= \bar{0} \, \}, \\
\Phi_{\bar{1},\mathbf{s}}^+ & := \{ \varepsilon_{i,\mathbf{s}} - \varepsilon_{j,\mathbf{s}} \mid 1 \leqslant i < j \leqslant N~~ \text{and}~~ |i| + |j|= \bar{1}\, \},
\end{align*}
respectively.

Let $\mathcal{X}$ be an associative superalgebra, we use some conventional notation in the tensor product superalgebras 
$\mathcal{X}\otimes \mathrm{End}\mathcal{V}_{\mathbf{s}}^{\otimes K}$. For any $1\leqslant a\leqslant k$ and $X=\sum_{i\in I_{\mathbf{s}}}X_{ij}\otimes E_{ij,\mathbf{s}} \in\mathcal{X}$, we  denote by $X^a$ the element associated with the $a$-th copy of $\mathrm{End}\mathcal{V}_{\mathbf{s}}$ so that 
$$X^a=\sum\limits_{i,j\in I_{\mathbf{s}}}  X_{ij}\otimes 1^{\otimes (a-1)}\otimes E_{ij,\mathbf{s}}\otimes 1^{\otimes (K-a)} \in \mathcal{X}\otimes \mathrm{End}\mathcal{V}_{\mathbf{s}}^{\otimes K}.$$ 

In addition, for a matrix $R=\sum_{i} u_{(i)}\otimes v_{(i)}\in \operatorname{End}\mathcal{V}_{\mathbf{s}}^{\otimes 2}$ and an integer $K\geqslant 2$, denote by $1\leqslant a<b\leqslant K$,
\begin{gather*}
    R^{ab}=\sum 1^{\otimes (a-1)}\otimes u_{(i)}\otimes 1^{\otimes (b-1)}\otimes v_{(i)}\otimes 1^{\otimes (K-b)}\in \operatorname{End}\mathcal{V}_{\mathbf{s}}^{\otimes K}.
\end{gather*}
For example, if $K=3$, we have
\begin{gather*}
    R^{12}=R\otimes 1,\qquad R^{13}=\sum_i u_{(i)}\otimes 1\otimes v_{(i)},\qquad R^{23}=1\otimes R. 
\end{gather*}

To simplify notation, we adopt the convention of omitting the subscript $\mathbf{s}$ whenever $\mathbf{s} = \mathbf{s}^{\rm st}$, provided no confusion is likely to arise. For example, we write $\mathfrak{g}$ for $\mathfrak{g}_{\mathbf{s}^{\rm st}}$.

\vspace{1em}
\section{Irreducible representations of quantum general linear superalgebra}\label{se:irreducible representationsQGLSAs}

Let $q$ be a complex nonzero number that is not a root of unity, and let $d_i$ be given integers. We define $q_i = q^{d_i}$. This section reviews the definition and fundamental properties of the quantum general linear superalgebra. For any nonzero complex number $a$ and homogeneous elements $X$ and $Y$, we define the a-supercommutator as follows,
\begin{align*}
[X,\,Y]_a = XY - (-1)^{|X||Y|} a YX.
\end{align*}
We write $[X,\,Y] = [X,\,Y]_1$ for simplicity. 

\subsection{Two equivalent presentations of quantum general linear superalgebra}
\begin{definition}
Given $\mathbf{s} \in \mathcal{S}(m|n)$, the corresponding quantum general linear superalgebra $\mathcal{U}q(\mathfrak{g}{\mathbf{s}})$ (in its Drinfeld-Jimbo presentation) is an associative superalgebra. Its generators are $x_{i,\mathbf{s}}^{\pm}$ ($i \in I_{\mathbf{s}}\setminus{N}$) and $k_{a,\mathbf{s}}^{\pm 1}$ ($a \in I_{\mathbf{s}}$), whose parities are defined as $|x_{i,\mathbf{s}}^{\pm}| = |i| + |i+1|$ and $|k_{a,\mathbf{s}}^{\pm 1}| = \bar{0}$.
The defining relations are given as follows, 
\begin{align}\label{DJ:fin-1}
& k_{a,\mathbf{s}} k_{a,\mathbf{s}}^{-1} = k_{a,\mathbf{s}}^{-1} k_{a,\mathbf{s}}=1, \quad k_{a,\mathbf{s}} k_{b,\mathbf{s}} = k_{b,\mathbf{s}} k_{a,\mathbf{s}}, \\ \label{DJ:fin-2}
& k_{a,\mathbf{s}} x_{i,\mathbf{s}}^{\pm} k_{a,\mathbf{s}}^{-1} = q^{\pm(\varepsilon_{a,\mathbf{s}}| \varepsilon_{i,\mathbf{s}} - \varepsilon_{i+1,\mathbf{s}})} x_{i,\mathbf{s}}^{\pm}, \\ \label{DJ:fin-3}
& [x_{i,\mathbf{s}}^+, x_{i,\mathbf{s}}^-] = \delta_{ij} \frac{k_{i,\mathbf{s}} k_{i+1,\mathbf{s}}^{-1} - k_{i,\mathbf{s}}^{-1} k_{i+1,\mathbf{s}}}{q_i-q_i^{-1}}, \\ \label{DJ:fin-4}
& [x_{i,\mathbf{s}}^{\pm}, x_{j,\mathbf{s}}^{\pm}] = 0, \quad \text{if}~~ (\alpha_{i,\mathbf{s}}|\alpha_{j,\mathbf{s}}) = 0, \\ \label{DJ:fin-5}
& \big[ x_{i,\mathbf{s}}^{\pm}, [ x_{i,\mathbf{s}}^{\pm}, x_{\ell,\mathbf{s}}^{\pm} ]_{q_i}\big]_{q_i^{-1}} = 0, \quad \text{if}~~(\alpha_{i,\mathbf{s}}|\alpha_{i,\mathbf{s}}) \neq 0,~~\ell = i \pm 1, \\ \label{DJ:fin-6}
& \Big[\big[[ x_{i-1,\mathbf{s}}^{\pm}, x_{i,\mathbf{s}}^{\pm} ]_{q_i}, x_{i+1,\mathbf{s}}^{\pm} \big]_{q_{i+1}}, x_{i,\mathbf{s}}^{\pm}\Big] = 0, \quad \text{if}~~ (\alpha_{i,\mathbf{s}}|\alpha_{i,\mathbf{s}}) = 0.
\end{align}
\end{definition}

It is clear that the superalgebra $\mathcal{U}_q\big(\mathfrak{g}_{\mathbf{s}}\big)$ admits a Hopf superalgebra structure with the following comultiplication 
\begin{equation}\label{comul:fin:DJ}
    \begin{split}
        \triangle^{\mathbf{DJ}}(x_{i,\mathbf{s}}^+) &= 1 \otimes x_{i,\mathbf{s}}^+ + x_{i,\mathbf{s}}^+ \otimes k_{i,\mathbf{s}}^{-1} k_{i+1,\mathbf{s}}, \\
 \triangle^{\mathbf{DJ}}(x_{i,\mathbf{s}}^-) &= k_{i,\mathbf{s}} k_{i+1,\mathbf{s}}^{-1} \otimes x_{i,\mathbf{s}}^- + x_{i,\mathbf{s}}^- \otimes 1, \\
 \triangle^{\mathbf{DJ}}(k_{a,\mathbf{s}}^{\pm 1}) &= k_{a,\mathbf{s}}^{\pm 1} \otimes k_{a,\mathbf{s}}^{\pm 1}.
    \end{split}
\end{equation}

\begin{remark}
We can characterize the \textit{classical limit} of $\mathcal{U}_q\big(\mathfrak{g}_{\mathbf{s}}\big)$ analogously to how the standard case is treated in \cite{Zrb93}. When $q\rightarrow 1$, $\mathcal{U}_q\big(\mathfrak{g}_{\mathbf{s}}\big)$ coincides with the universal enveloping superalgebra $\mathcal{U}\big(\mathfrak{g}_{\mathbf{s}}\big)$ which is obtained by the following limiting processes: 
\begin{align*}
    \operatorname{lim}_{q\rightarrow 1}x_{i,\mathbf{s}}^+=E_{i,i+1,\mathbf{s}},\quad \operatorname{lim}_{q\rightarrow 1}x_{i,\mathbf{s}}^-=E_{i+1,i,\mathbf{s}},\quad 
    \operatorname{lim}_{q\rightarrow 1}\frac{k_{a,\mathbf{s}}-k_{a,\mathbf{s}}^{-1}}{q_a-q_a^{-1}}=E_{aa,,\mathbf{s}}. 
\end{align*}
\end{remark}

\vspace{1em}
Building upon the work of \cite{Zhf16}, the author established an equivalent R-matrix presentation for the quantum general linear superalgebra at the standard parity sequence, we now extend this framework to an arbitrary parity sequence $\mathbf{s} \in \mathcal{S}(m|n)$. The construction proceeds by considering the R-matrix defined by
\begin{gather*}
\mathcal{R}_{q,\mathbf{s}} = \sum_{i,j} q_i^{\delta_{ij}} E_{ii} \otimes E_{jj} + \sum_{i<j} (q_i - q_i^{-1}) E_{ji} \otimes E_{ij} \in \operatorname{End} \mathcal{V}_{\mathbf{s}}^{\otimes 2}.
\end{gather*}
The R-matrix $\mathcal{R}_{q,\mathbf{s}}$ is the $\mathbb{Z}_2$-graded solution of the following quantum Yang-Baxter equation
\begin{gather}\label{YB:solution}
	\mathcal{R}_{q,\mathbf{s}}^{12}\mathcal{R}_{q,\mathbf{s}}^{13}\mathcal{R}_{q,\mathbf{s}}^{23} = \mathcal{R}_{q,\mathbf{s}}^{23}\mathcal{R}_{q,\mathbf{s}}^{13}\mathcal{R}_{q,\mathbf{s}}^{12}.
\end{gather}

\begin{definition}
For a given $\mathbf{s} \in \mathcal{S}(m|n)$, the super R-matrix algebra associated to $\mathbf{s}$ is an associative superalgebra denoted by $\mathrm{U}q(\mathfrak{g}{\mathbf{s}})$. Its generators are $t_{ji,\mathbf{s}}$ and $\bar{t}{ij,\mathbf{s}}$ for $1 \leqslant i \leqslant j \leqslant N$, with parities given by $|t{ji,\mathbf{s}}| = |\bar{t}_{ij,\mathbf{s}}| = |i| + |j|$.
The defining relations are given as follows,
\begin{align}\label{RTT:fin-1}
&t_{ii,\mathbf{s}}\bar{t}_{ii,\mathbf{s}}=\bar{t}_{ii,\mathbf{s}}t_{ii,\mathbf{s}}=1,\quad \text{for }i\in I_{\mathbf{s}}, \\ \label{RTT:fin-2}
&\mathcal{R}_{q,\mathbf{s}}^{23}T_{\mathbf{s}}^1T_{\mathbf{s}}^2=T_{\mathbf{s}}^2T_{\mathbf{s}}^1\mathcal{R}_{q,\mathbf{s}}^{23}, \\ \label{RTT:fin-3}
&\mathcal{R}_{q,\mathbf{s}}^{23}\bar{T}_{\mathbf{s}}^1\bar{T}_{\mathbf{s}}^2=\bar{T}_{\mathbf{s}}^2\bar{T}_{\mathbf{s}}^1\mathcal{R}_{q,\mathbf{s}}^{23}, \\ \label{RTT:fin-4}
&\mathcal{R}_{q,\mathbf{s}}^{23}T_{\mathbf{s}}^1\bar{T}_{\mathbf{s}}^2=\bar{T}_{\mathbf{s}}^2T_{\mathbf{s}}^1\mathcal{R}_{q,\mathbf{s}}^{23},
\end{align}
where the matrices $T_{\mathbf{s}}$ and $\bar{T}_{\mathbf{s}}$ have the form
\begin{gather*}
T_{\mathbf{s}}=\sum_{1\leqslant i\leqslant j\leqslant N} E_{ji,\mathbf{s}}\otimes t_{ji,\mathbf{s}},\qquad \bar{T}_{\mathbf{s}}=\sum_{1\leqslant i\leqslant j\leqslant N} E_{ij,\mathbf{s}}\otimes \bar{t}_{ij,\mathbf{s}},
\end{gather*}
respectively.
\end{definition}

The superalgebra $\mathrm{U}_q\big(\mathfrak{g}_{\mathbf{s}}\big)$ possesses a Hopf superalgebra structure endowed with the comultiplication defined as
\begin{gather}\label{comul:fin:RTT}
	\triangle^{\mathbf{R}}(t_{ji,\mathbf{s}}) = \sum_{i \leqslant k \leqslant j} \varsigma_{ik;kj} t_{jk,\mathbf{s}} \otimes t_{ki,\mathbf{s}},\quad 
	\triangle^{\mathbf{R}}(\bar{t}_{ij,\mathbf{s}}) = \sum_{i \leqslant k \leqslant j} \varsigma_{ik;kj} \bar{t}_{ik,\mathbf{s}} \otimes \bar{t}_{kj,\mathbf{s}}
\end{gather}
where $\varsigma_{ab;cd}=(-1)^{(|a|+|b|)(|c|+|d|)}$ ($a,b,c,d\in I_{\mathbf{s}}$).

In terms of the generators $t_{ji,\mathbf{s}}$ and $t_{ij,\mathbf{s}}$, we are able to restate relations \eqref{RTT:fin-2}$-$\eqref{RTT:fin-4} in a more explicit form,
\begin{align}\label{RTT:fin-ex2}
    &q_i^{\delta_{ik}}t_{ij,\mathbf{s}}t_{kl,\mathbf{s}}-\varsigma_{ij;kl}q_j^{\delta_{jl}}t_{kl,\mathbf{s}}t_{ij,\mathbf{s}}=\varsigma_{ik;kl}(q_k-q_k^{-1})\left(\delta_{j<l}-\delta_{k<i}\right)t_{kj,\mathbf{s}}t_{il,\mathbf{s}}, \\ \label{RTT:fin-ex3}
    &q_i^{\delta_{ik}}\bar{t}_{ij,\mathbf{s}}\bar{t}_{kl,\mathbf{s}}-\varsigma_{ij;kl}q_j^{\delta_{jl}}\bar{t}_{kl,\mathbf{s}}\bar{t}_{ij,\mathbf{s}}=\varsigma_{ik;kl}(q_k-q_k^{-1})\left(\delta_{j<l}-\delta_{k<i}\right)\bar{t}_{kj,\mathbf{s}}\bar{t}_{il,\mathbf{s}}, \\ \label{RTT:fin-ex4}
    &q_i^{\delta_{ik}}t_{ij,\mathbf{s}}\bar{t}_{kl,\mathbf{s}}-\varsigma_{ij;kl}q_j^{\delta_{jl}}\bar{t}_{kl,\mathbf{s}}t_{ij,\mathbf{s}}=\varsigma_{ik;kl}(q_k-q_k^{-1})\left(\delta_{j<l}\bar{t}_{kj,\mathbf{s}}t_{il,\mathbf{s}}-\delta_{k<i}t_{kj,\mathbf{s}}\bar{t}_{il,\mathbf{s}}\right).
\end{align}
Consider a non-zero diagonal matrix $D = \operatorname{diag}(\mathfrak{d} \epsilon_1, \dots, \mathfrak{d} \epsilon_N)$, with $\mathfrak{d} \in \mathbb{C}\setminus{0}$ and $\epsilon_i \in {\pm 1}$. Then the map
\begin{gather}\label{UR:fin-isom}
T_{\mathbf{s}} \mapsto D T_{\mathbf{s}}, \quad \bar{T}{\mathbf{s}} \mapsto D^{-1} \bar{T}{\mathbf{s}}
\end{gather}
yields an automorphism of the superalgebra $\mathrm{U}q(\mathfrak{g}{\mathbf{s}})$, which is an immediate consequence of the defining relations \eqref{RTT:fin-1}, \eqref{RTT:fin-ex2}--\eqref{RTT:fin-ex4}.

\vspace{1em}
For our purpose, we introduce another $R$-matrix defined by
\begin{gather*}
\widetilde{\mathcal{R}}_{q,\mathbf{s}} = \mathcal{P}_{\mathbf{s}} \mathcal{R}_{q,\mathbf{s}}^{-1} \mathcal{P}_{\mathbf{s}},
\end{gather*}
where
\begin{gather*}
\mathcal{P}_{\mathbf{s}} =\sum_{ij \in I_{\mathbf{s}}} (-1)^{|j|} E_{ij,\mathbf{s}} \otimes E_{ji,\mathbf{s}}
\end{gather*}
is the $\mathbb{Z}_2$-graded permutation operator over $\mathcal{V}_{\mathbf{s}}^{\otimes 2}$ defined by $\mathcal{P}_{\mathbf{s}}(v \otimes w) = (-1)^{|v||w|} w \otimes v$ for homogeneous elements $v, w \in \mathcal{V}_{\mathbf{s}}$.
A direct calculation yields the identity:
\begin{gather}\label{Rq-P}
\widetilde{\mathcal{R}}_{q,\mathbf{s}} = \mathcal{R}_{q,\mathbf{s}} - (q - q^{-1}) \mathcal{P}_{\mathbf{s}}.
\end{gather}
Therefore, the relations \eqref{RTT:fin-2}$-$\eqref{RTT:fin-4} can be equivalently replaced by
\begin{gather}\label{RTT:fin-5}
\widetilde{\mathcal{R}}_{q,\mathbf{s}}^{23}T_{\mathbf{s}}^1T_{\mathbf{s}}^2=T_{\mathbf{s}}^2T_{\mathbf{s}}^1\widetilde{\mathcal{R}}_{q,\mathbf{s}}^{23},\quad \widetilde{\mathcal{R}}_{q,\mathbf{s}}^{23}\bar{T}_{\mathbf{s}}^1\bar{T}_{\mathbf{s}}^2=\bar{T}_{\mathbf{s}}^2\bar{T}_{\mathbf{s}}^1\widetilde{\mathcal{R}}_{q,\mathbf{s}}^{23},\quad \widetilde{\mathcal{R}}_{q,\mathbf{s}}^{23}\bar{T}_{\mathbf{s}}^1T_{\mathbf{s}}^2=T_{\mathbf{s}}^2\bar{T}_{\mathbf{s}}^1\widetilde{\mathcal{R}}_{q,\mathbf{s}}^{23}. 
\end{gather}
\begin{remark}
For a fixed $\mathbf{s} \in \mathcal{S}(m|n)$, the $R$-matrix $\widetilde{\mathcal{R}}{q,\mathbf{s}}$ takes the explicit form:
\begin{gather*}
\widetilde{\mathcal{R}}{q,\mathbf{s}}= \sum_{i,j} q_i^{-\delta_{ij}} E_{ii,\mathbf{s}} \otimes E_{jj,\mathbf{s}} - \sum_{i<j} (q_j - q_j^{-1}) E_{ij,\mathbf{s}} \otimes E_{ji,\mathbf{s}} \in \operatorname{End} \mathcal{V}_{\mathbf{s}}^{\otimes 2}.
\end{gather*}
In the purely even limit $n \to 0$, this matrix reduces to the standard trigonometric $R$-matrix for the quantum group $\mathrm{U}_q(\mathfrak{gl}_m)$; further details can be found in \cite{GM10,MR08,MRS03}.
\end{remark}

The following proposition is the generalization of \cite[Proposition 3.3(3)]{Zhf16} for arbitrary parity sequences, which will be proved  in Section \ref{se:braidPBWUqg}. 

\begin{proposition}\label{isom:fin}
The assignment
\begin{gather*}
\bar{t}{i,i+1,\mathbf{s}} \mapsto (q_i - q_i^{-1}) x{i,\mathbf{s}}^+ k_{i,\mathbf{s}}, \quad
t_{i+1,i,\mathbf{s}} \mapsto -(q_i - q_i^{-1}) k_{i,\mathbf{s}}^{-1} x_{i,\mathbf{s}}^-, \quad
\bar{t}{aa,\mathbf{s}} = t{aa,\mathbf{s}}^{-1} \mapsto k_{a,\mathbf{s}}
\end{gather*}
extends to a Hopf superalgebra isomorphism $\iota_{\mathbf{s}}: \mathrm{U}q(\mathfrak{g}_{\mathbf{s}}) \to \mathcal{U}q(\mathfrak{g}_{\mathbf{s}})$.
\end{proposition}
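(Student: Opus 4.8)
The plan is to establish the isomorphism $\iota_{\mathbf{s}}$ in three stages: first, check that the proposed assignment respects the defining relations of $\mathrm{U}_q(\mathfrak{g}_{\mathbf{s}})$; second, check the Hopf structure is preserved; third, exhibit an inverse. Rather than verify the RTT relations \eqref{RTT:fin-1}, \eqref{RTT:fin-ex2}--\eqref{RTT:fin-ex4} directly against the Drinfeld--Jimbo relations \eqref{DJ:fin-1}--\eqref{DJ:fin-6} from scratch, I would instead follow the strategy already used in \cite{Zhf16} for the standard case: work inside $\mathcal{U}_q(\mathfrak{g}_{\mathbf{s}})$ and build the \emph{Gauss decomposition}. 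Concretely, in $\mathcal{U}_q(\mathfrak{g}_{\mathbf{s}})$ one defines matrices $T_{\mathbf{s}}, \bar{T}_{\mathbf{s}}$ via the $R$-matrix data, using the quasi-triangular structure (the universal $R$-matrix of $\mathcal{U}_q(\mathfrak{g}_{\mathbf{s}})$ evaluated on the vector representation $\mathcal{V}_{\mathbf{s}}$), and one checks directly that these satisfy \eqref{RTT:fin-2}--\eqref{RTT:fin-4} because the universal $R$-matrix satisfies the (graded) quantum Yang--Baxter equation \eqref{YB:solution} together with the intertwining property $\mathcal{R}\,\triangle = \triangle^{\mathrm{op}}\,\mathcal{R}$. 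Extracting the Gauss factors then shows that the off-diagonal entries $\bar{t}_{i,i+1,\mathbf{s}}$ and $t_{i+1,i,\mathbf{s}}$ are, up to the scalars $(q_i - q_i^{-1})$ and $\pm k_{i,\mathbf{s}}^{\pm1}$, exactly the simple root vectors $x_{i,\mathbf{s}}^{\pm}$, while the diagonal entries recover the $k_{a,\mathbf{s}}^{\pm1}$. This gives a homomorphism $\mathrm{U}_q(\mathfrak{g}_{\mathbf{s}}) \to \mathcal{U}_q(\mathfrak{g}_{\mathbf{s}})$ sending the RTT generators to the Gauss factors, which by construction restricts to the stated assignment.

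Next I would show this map is surjective and injective. Surjectivity is easy once one knows that the images of the $\bar{t}_{i,i+1,\mathbf{s}}$, $t_{i+1,i,\mathbf{s}}$, $\bar{t}_{aa,\mathbf{s}}$ generate $\mathcal{U}_q(\mathfrak{g}_{\mathbf{s}})$: the relations above identify these images with nonzero scalar multiples of $x_{i,\mathbf{s}}^+ k_{i,\mathbf{s}}$, $k_{i,\mathbf{s}}^{-1} x_{i,\mathbf{s}}^-$, $k_{a,\mathbf{s}}$, and since $q_i - q_i^{-1} \neq 0$ (as $q$ is not a root of unity) we can solve for $x_{i,\mathbf{s}}^{\pm}$ and $k_{a,\mathbf{s}}^{\pm1}$. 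For injectivity the cleanest route is a dimension/PBW count: one invokes the PBW basis of $\mathrm{U}_q(\mathfrak{g}_{\mathbf{s}})$ constructed via the braid group action (forthcoming in Section \ref{se:braidPBWUqg}) and the classical PBW basis of $\mathcal{U}_q(\mathfrak{g}_{\mathbf{s}})$, and checks that the homomorphism sends one spanning set onto the other in a triangular (hence invertible) fashion with respect to a suitable filtration by root-height. Alternatively, one writes down the candidate inverse $\iota_{\mathbf{s}}^{-1}$ explicitly on generators ($x_{i,\mathbf{s}}^+ \mapsto (q_i-q_i^{-1})^{-1}\bar{t}_{i,i+1,\mathbf{s}}\bar{t}_{i+1,i+1,\mathbf{s}}^{-1}$, etc.) and verifies it is a homomorphism by checking the finitely many Drinfeld--Jimbo relations \eqref{DJ:fin-1}--\eqref{DJ:fin-6}; the Serre-type relations \eqref{DJ:fin-5}--\eqref{DJ:fin-6}, including the degree-three super-Serre relation at an isotropic simple root, are exactly where the non-standard parities enter and must be handled with care, since the bracketings and the $q_i$-powers depend on the parity sequence $\mathbf{s}$ through $d_i$ and $|i|$.

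Finally, I would verify the Hopf-algebra compatibility: that $\iota_{\mathbf{s}}$ intertwines $\triangle^{\mathbf{R}}$ from \eqref{comul:fin:RTT} with $\triangle^{\mathbf{DJ}}$ from \eqref{comul:fin:DJ}. For the diagonal generators this is immediate ($\triangle^{\mathbf{R}}(\bar{t}_{aa,\mathbf{s}}) = \bar{t}_{aa,\mathbf{s}}\otimes\bar{t}_{aa,\mathbf{s}} \leftrightarrow \triangle^{\mathbf{DJ}}(k_{a,\mathbf{s}}) = k_{a,\mathbf{s}}\otimes k_{a,\mathbf{s}}$). For $\bar{t}_{i,i+1,\mathbf{s}}$ the coproduct formula \eqref{comul:fin:RTT} has only two terms, $\bar{t}_{i,i,\mathbf{s}}\otimes\bar{t}_{i,i+1,\mathbf{s}} + \bar{t}_{i,i+1,\mathbf{s}}\otimes\bar{t}_{i+1,i+1,\mathbf{s}}$ (the sign $\varsigma_{ik;kj}$ being trivial in these boundary terms), which maps under $\iota_{\mathbf{s}}$ to $(q_i-q_i^{-1})\big(k_{i,\mathbf{s}}\otimes x_{i,\mathbf{s}}^+k_{i,\mathbf{s}} + x_{i,\mathbf{s}}^+ k_{i,\mathbf{s}}\otimes k_{i+1,\mathbf{s}}\big)$; one compares this against $(q_i - q_i^{-1})\triangle^{\mathbf{DJ}}(x_{i,\mathbf{s}}^+ k_{i,\mathbf{s}}) = (q_i-q_i^{-1})\triangle^{\mathbf{DJ}}(x_{i,\mathbf{s}}^+)(k_{i,\mathbf{s}}\otimes k_{i,\mathbf{s}})$ using \eqref{comul:fin:DJ}, and the two agree after expanding. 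The computation for $t_{i+1,i,\mathbf{s}}$ is symmetric. I expect the main obstacle to be bookkeeping the signs and $q_i$-powers uniformly across all $\mathbf{s}$: in the super/non-standard setting the Gauss-decomposition argument must track the factors $\varsigma_{ab;cd}$ and $d_i$ at every step, and the verification that the Gauss factors beyond the simple-root ones satisfy the expected commutation relations (needed for injectivity via PBW) is where a standard-case proof does not transcribe verbatim. This is the point where I would lean on the braid-group-generated PBW basis of Section \ref{se:braidPBWUqg} to organize the argument rather than computing term by term.
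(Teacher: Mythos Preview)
Your approach is correct in principle but takes a genuinely different route from the paper. You propose to redo the Gauss-decomposition argument of \cite{Zhf16} directly for each parity sequence $\mathbf{s}$, tracking the signs $\varsigma_{ab;cd}$ and parities $d_i$ throughout, and then to establish bijectivity via an explicit inverse or a PBW count. The paper instead \emph{reduces} to the standard case: it first constructs braid-group isomorphisms $\beta_{i,\mathbf{s}}$ on the Drinfeld--Jimbo side (Proposition~\ref{odd-rel:1}) and separately on the RTT side (Proposition~\ref{odd-rel:2}), then checks on generators that the square
\[
\begin{tikzcd}
\mathrm{U}_q(\mathfrak{g}_{\mathbf{s}}) \arrow{r}{\iota_{\mathbf{s}}} \arrow{d}{\beta_{\mathbf{s}}} & \mathcal{U}_q(\mathfrak{g}_{\mathbf{s}}) \arrow{d}{\beta_{\mathbf{s}}} \\
\mathrm{U}_q(\mathfrak{g}_{\mathbf{t}}) \arrow{r}{\iota_{\mathbf{t}}} & \mathcal{U}_q(\mathfrak{g}_{\mathbf{t}})
\end{tikzcd}
\]
commutes. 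Since any $\mathbf{s}$ is linked to $\mathbf{s}^{\mathrm{st}}$ by a chain of such $\beta_i$'s, and $\iota_{\mathbf{s}^{\mathrm{st}}}$ is already an isomorphism by \cite[Proposition~3.3(3)]{Zhf16}, the general $\iota_{\mathbf{s}}$ is an isomorphism by transport of structure. The Hopf compatibility is then the short comparison of \eqref{comul:fin:DJ} with \eqref{comul:fin:RTT} that you also carry out.

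The trade-off: your direct method is self-contained and does not cite the standard case as a black box, but it forces you to re-verify the super-Serre relations \eqref{DJ:fin-5}--\eqref{DJ:fin-6} in the RTT picture for every $\mathbf{s}$, which is exactly the bookkeeping you identify as the main obstacle. The paper's reduction never touches those relations for non-standard $\mathbf{s}$; all the parity-dependent work is absorbed into writing down and checking the two braid actions in Propositions~\ref{odd-rel:1} and~\ref{odd-rel:2}, which are needed anyway for the PBW theorem (Theorem~\ref{PBW:fin:RTT}) and for the odd-reflection transition rules later in Section~\ref{se:fdirreducible representations-fin}. So the paper's approach is more economical given the overall architecture, while yours would be preferable if one wanted an argument that does not rely on having the braid isomorphisms available.
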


In view of this isomorphism, we call $\mathrm{U}q(\mathfrak{g}_{\mathbf{s}})$ the \textit{RTT presentation} of the quantum general linear superalgebra associated to the parity sequence $\mathbf{s}$.

\subsection{Braid group actions on $\mathrm{U}_q\big(\mathfrak{g}_{\mathbf{s}}\big)$}\label{se:braidPBWUqg}
The braid group serves as a principal tool for building root vectors and the PBW basis in the theory of standard quantum algebras. However, this approach does not apply to the superalgebra case due to its different root structure. To address this, we generalize the work of Molev and Ragoucy \cite[Section 2]{MR08}, thereby obtaining a systematic description of the braid group action on the root vectors of quantum general linear superalgebras.

Let $\mathfrak{S}_N$ be the symmetric group of degree $N$ and $\sigma_i$ the 2-cycle $(i, i+1)$. Recall that the \textit{brain group} of type $\mathfrak{gl}_N$, denoted by $\mathfrak{B}_{N}$, is generated by elements $\beta_{i}$ for $i=1,\ldots,N-1$ with relations
\begin{align*}
&\beta_{i} \beta_{i+1} \beta_{i} = \beta_{i+1} \beta_{i} \beta_{i+1},\quad i=1,\ldots,N-2, \\ 
&\beta_i \beta_j = \beta_j \beta_i, \quad \text{if } j \neq i \pm 1.
\end{align*}
There is a surjective group homomorphism
$$\pi:~~\mathfrak{B}_N\rightarrow \mathfrak{S}_N\qquad \beta_i\mapsto \sigma_i,~~~~\text{for}~~i=1,\ldots,N-1.$$
The braid group $\mathfrak{B}_N$ acts on a parity sequence $\mathbf{s}=s_1 \cdots s_N \in \mathcal{S}(m|n)$ by
$$\beta.\mathbf{s} = \sigma(\mathbf{s})=s_{\sigma^{-1}(1)}\cdots s_{\sigma^{-1}(N)},\quad \text{for}~~\beta\in\mathfrak{B}_N,$$
where $\sigma=\pi(\beta)$.
If $\mathbf{s}$ contains a subsequence $s_is_{i+1} = 00$ or $11$, then $\mathbf{s}$ is invariant under the action of $\beta_i$; otherwise, $\beta_i$ is called an \textit{odd reflection}.

The elements of $\mathfrak{B}N$ can be interpreted as a family of isomorphisms between quantum general linear superalgebras in the following way. Fix $\mathbf{s} \in \mathcal{S}(m|n)$ and $i \in I{\mathbf{s}} \setminus {N}$.
Denote $\mathbf{s}' =s_1'\cdots s_N':= \sigma_i(\mathbf{s})$ and $d_i' = (-1)^{s_i'}$.
Following \cite{Ya94, Ya99}, we have
\begin{proposition}\label{odd-rel:1}
There exists an isomorphism  $\beta_{i,\mathbf{s}}:\mathcal{U}_q\big(\mathfrak{g}_{\mathbf{s}}\big)\rightarrow \mathcal{U}_q\big(\mathfrak{g}_{\mathbf{s}'}\big)$ given by
\begin{equation*}
\begin{aligned}
&k_{i,\mathbf{s}}\mapsto d_i'k_{i+1,\mathbf{s}'},\hspace{3em} k_{i+1,\mathbf{s}}\mapsto d_{i+1}'k_{i,\mathbf{s}'}, \\
&x_{i,\mathbf{s}}^+\mapsto -d_{i+1}'x_{i,\mathbf{s}'}^-k_{i,\mathbf{s}'}k_{i+1,\mathbf{s}'}^{-1}, \\
&x_{i-1,\mathbf{s}}^+\mapsto d_i'\,q^{-d_i'}\big[x_{i-1,\mathbf{s}'}^+,\,x_{i,\mathbf{s}'}^+\big]_{q^{d_i'}}, \\
&x_{i+1,\mathbf{s}}^+\mapsto - d_{i+1}'\,\big[x_{i,\mathbf{s}'}^+,\,x_{i+1,\mathbf{s}'}^+\big]_{q^{-d_{i+1}'}}, \\
&x_{r,\mathbf{s}}^+\mapsto x_{r,\mathbf{s}'}^+,\hspace{4em} 
x_{r,\mathbf{s}}^-\mapsto x_{r,\mathbf{s}'}^-,
\end{aligned}
\hspace{3em}
\begin{aligned}
&k_{a,\mathbf{s}}\mapsto k_{a,\mathbf{s}'},\hspace{3em} a\neq i,i+1, \\
&x_{i,\mathbf{s}}^-\mapsto -d_i'k_{i+1,\mathbf{s}'}k_{i,\mathbf{s}'}^{-1}x_{i,\mathbf{s}'}^+, \\
&x_{i-1,\mathbf{s}}^-\mapsto q^{d_i'}\big[x_{i,\mathbf{s}'}^-,\,x_{i-1,\mathbf{s}'}^-\big]_{q^{-d_i'}}, \\
&x_{i+1,\mathbf{s}}^-\mapsto -\big[x_{i+1,\mathbf{s}'}^-,\,x_{i,\mathbf{s}'}^-\big]_{q^{d_{i+1}'}}, \\
&r\neq i,i\pm 1.
\end{aligned}
\end{equation*}
\end{proposition}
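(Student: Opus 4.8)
The plan is to verify that the displayed assignment is compatible with the defining relations \eqref{DJ:fin-1}--\eqref{DJ:fin-6} of the target algebra $\mathcal{U}_q(\mathfrak{g}_{\mathbf{s}'})$, so that it extends to an algebra homomorphism $\beta_{i,\mathbf{s}}$, and then to produce its inverse. The statement is the finite type A instance of Yamane's odd-reflection isomorphisms between quantized Kac--Moody superalgebras attached to different Borel subalgebras \cite{Ya94,Ya99}; what must be pinned down for the present normalization is the exact form of the formulas and the inclusion of the whole torus $k_{1,\mathbf{s}},\dots,k_{N,\mathbf{s}}$ rather than only the quantum coroots $k_{i,\mathbf{s}}k_{i+1,\mathbf{s}}^{-1}$. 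Since the generators $k_{a,\mathbf{s}}$ with $a\neq i,i+1$ and $x_{r,\mathbf{s}}^{\pm}$ with $r\neq i,i\pm1$ are sent to the like-named generators of $\mathcal{U}_q(\mathfrak{g}_{\mathbf{s}'})$, every relation among those alone is automatic, and one is left with the relations involving an index in $\{i-1,i,i+1\}$.

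The relations \eqref{DJ:fin-1} and \eqref{DJ:fin-2} are handled quickly: the images of the $k$'s are signed $k$'s of the target, hence commute pairwise, and \eqref{DJ:fin-2} follows by matching torus weights, using $d_i'=d_{i+1}$, $d_{i+1}'=d_i$ and that the images of $x_{i-1,\mathbf{s}}^{+},x_{i,\mathbf{s}}^{+},x_{i+1,\mathbf{s}}^{+}$ are weight vectors for the adjoint torus action of $\mathcal{U}_q(\mathfrak{g}_{\mathbf{s}'})$ of weights $\varepsilon_{i-1,\mathbf{s}'}-\varepsilon_{i+1,\mathbf{s}'}$, $\varepsilon_{i+1,\mathbf{s}'}-\varepsilon_{i,\mathbf{s}'}$ and $\varepsilon_{i,\mathbf{s}'}-\varepsilon_{i+2,\mathbf{s}'}$. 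For the commutator relations \eqref{DJ:fin-3}: the diagonal one, $[x_{i,\mathbf{s}}^{+},x_{i,\mathbf{s}}^{-}]$ (an anticommutator when $|i|\neq|i+1|$), is obtained by substituting $x_{i,\mathbf{s}}^{+}\mapsto-d_{i+1}'x_{i,\mathbf{s}'}^{-}k_{i,\mathbf{s}'}k_{i+1,\mathbf{s}'}^{-1}$ and $x_{i,\mathbf{s}}^{-}\mapsto-d_i'k_{i+1,\mathbf{s}'}k_{i,\mathbf{s}'}^{-1}x_{i,\mathbf{s}'}^{+}$, moving $k_{i,\mathbf{s}'}k_{i+1,\mathbf{s}'}^{-1}$ past $x_{i,\mathbf{s}'}^{\pm}$ (up to a power of $q$) and invoking \eqref{DJ:fin-3} of $\mathcal{U}_q(\mathfrak{g}_{\mathbf{s}'})$, with the denominator $q_i-q_i^{-1}$ matched after recording that $d_i'\in\{d_i,-d_i\}$; the $\mathfrak{gl}_{1|1}$ computation already displays all the signs. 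For $[x_{i-1,\mathbf{s}}^{+},x_{i-1,\mathbf{s}}^{-}]$ and its $i+1$ analogue one expands the images, which up to scalars are the $d_i'$-twisted brackets $[x_{i-1,\mathbf{s}'}^{+},x_{i,\mathbf{s}'}^{+}]_{q^{d_i'}}$ and $[x_{i,\mathbf{s}'}^{-},x_{i-1,\mathbf{s}'}^{-}]_{q^{-d_i'}}$, and collapses the product to the Cartan element of weight $\varepsilon_{i-1,\mathbf{s}'}-\varepsilon_{i+1,\mathbf{s}'}$ using \eqref{DJ:fin-3} and the $q$-commutation of $x_{i-1,\mathbf{s}'}^{\pm}$ with $x_{i,\mathbf{s}'}^{\pm}$; the mixed brackets $[x_{i-1,\mathbf{s}}^{+},x_{i,\mathbf{s}}^{-}]$ and $[x_{i+1,\mathbf{s}}^{+},x_{i,\mathbf{s}}^{-}]$ go to $0$ by the same identities.

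The heaviest part, and the step I expect to be the main obstacle, is the Serre relations \eqref{DJ:fin-4}--\eqref{DJ:fin-6} for the images. I would name $y_{i-1}^{\pm},y_{i+1}^{\pm}$ the auxiliary root vectors $[x_{i-1,\mathbf{s}'}^{+},x_{i,\mathbf{s}'}^{+}]_{q^{d_i'}}$, $[x_{i,\mathbf{s}'}^{-},x_{i-1,\mathbf{s}'}^{-}]_{q^{-d_i'}}$, $[x_{i,\mathbf{s}'}^{+},x_{i+1,\mathbf{s}'}^{+}]_{q^{-d_{i+1}'}}$, $[x_{i+1,\mathbf{s}'}^{-},x_{i,\mathbf{s}'}^{-}]_{q^{d_{i+1}'}}$ that arise (up to scalars) as the images of $x_{i-1,\mathbf{s}}^{\pm}$ and $x_{i+1,\mathbf{s}}^{\pm}$, first derive their $q$-commutators with $x_{i,\mathbf{s}'}^{\pm}$, with $x_{i-2,\mathbf{s}'}^{\pm}$ and $x_{i+2,\mathbf{s}'}^{\pm}$, and with the torus from the relations of $\mathcal{U}_q(\mathfrak{g}_{\mathbf{s}'})$, and then feed these into the relations to be checked. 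It is natural to separate the case $|i|=|i+1|$, where $\mathbf{s}'=\mathbf{s}$ and $\beta_{i,\mathbf{s}}$ is a Lusztig-type symmetry at a non-isotropic node (the verification being the classical one, cf. \cite{MR08}), from the genuinely super case $|i|\neq|i+1|$, where $(\alpha_{i,\mathbf{s}'}|\alpha_{i,\mathbf{s}'})=0$ and one must reproduce the cubic relation \eqref{DJ:fin-6} for the images, using \eqref{DJ:fin-5} and \eqref{DJ:fin-6} of $\mathcal{U}_q(\mathfrak{g}_{\mathbf{s}'})$ for each admissible local parity pattern $s_{i-2}'s_{i-1}'s_i's_{i+1}'s_{i+2}'$. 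Keeping the $d'$-twisted $q$-brackets and the super signs consistent while reducing iterated brackets of the $y^{\pm}$'s to the Serre relations available for $\mathbf{s}'$ is the delicate point.

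Finally, since $\sigma_i(\mathbf{s}')=\mathbf{s}$, the same recipe produces a homomorphism $\mathcal{U}_q(\mathfrak{g}_{\mathbf{s}'})\to\mathcal{U}_q(\mathfrak{g}_{\mathbf{s}})$, and a generator-by-generator computation shows that each of the two composites with $\beta_{i,\mathbf{s}}$ sends every generator to a scalar multiple of itself times a monomial in the $k_a$'s, hence is a manifestly bijective automorphism; therefore $\beta_{i,\mathbf{s}}$ is an isomorphism.
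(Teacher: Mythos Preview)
The paper does not supply its own proof of this proposition: it is stated immediately after the phrase ``Following \cite{Ya94, Ya99}, we have'' and is not followed by any proof environment, so the result is simply attributed to Yamane's work on odd reflections for quantized enveloping superalgebras. Your proposal correctly identifies this provenance and then outlines exactly the kind of direct relation-by-relation verification (torus relations, commutator relations, then the Serre-type relations \eqref{DJ:fin-4}--\eqref{DJ:fin-6}, with a case split on whether the node $i$ is isotropic) that underlies Yamane's argument, together with the construction of an inverse via $\sigma_i(\mathbf{s}')=\mathbf{s}$. In that sense your approach is not different from the paper's; it is a more explicit unpacking of the citation, and the main labor you anticipate---the Serre relations in the odd case, tracked across the local parity patterns---is precisely what \cite{Ya94,Ya99} carry out in the general Kac--Moody setting.
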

Here, the subscript $\mathbf{s}$ of $\beta_{i,\mathbf{s}}$ indicates that the action of the braid generator $\beta_i$ ($i=1,\ldots,N-1$) as an isomorphism depends on the choice of $\mathbf{s}\in\mathcal{S}(m|n)$.

Next, we will show that
$\beta_{i,\mathbf{s}}$ can also act as an isomorphism of super R-matrix algebras. More specifically, we have
\begin{proposition}\label{odd-rel:2}
There exists an isomorphism $\beta_{i,\mathbf{s}}:\mathrm{U}_q\big(\mathfrak{g}_{\mathbf{s}}\big)\rightarrow \mathrm{U}_q\big(\mathfrak{g}_{\mathbf{s}'}\big)$ given by
\begin{align*}
&t_{ii,\mathbf{s}}\mapsto d_i't_{i+1,i+1,\mathbf{s}'},\quad t_{i+1,i+1,\mathbf{s}}\mapsto d_{i+1}'t_{ii,\mathbf{s}'},\quad t_{i+1,i,\mathbf{s}}\mapsto d_i'd_{i+1}'q^{-d_i'}\bar{t}_{i,i+1,\mathbf{s}'}\bar{t}_{ii,\mathbf{s}'}^{-2}, \\
&t_{ik,\mathbf{s}}\mapsto \varsigma_{i-1,i;i,i+1}'q^{-d_i'}t_{i+1,k,\mathbf{s}'}-\varsigma_{k,i-1;i,i+1}'t_{ii,\mathbf{s}'}^{-1}t_{i+1,i,\mathbf{s}'}t_{ik,\mathbf{s}'},\quad \text{if}~~k\leqslant i-1, \\
&t_{i+1,k,\mathbf{s}}\mapsto -\varsigma_{i-1,i;i,i+1}'d_{i+1}'t_{ik,\mathbf{s}'},\quad \text{if}~~ k\leqslant i-1, \\
&t_{li,\mathbf{s}}\mapsto \varsigma_{i,i+1;i,i+2}'q^{d_i'}t_{l,i+1,\mathbf{s}'}-\varsigma_{i,i+1;i+2,l}'t_{ii,\mathbf{s}'}t_{li,\mathbf{s}'}\bar{t}_{i,i+1,\mathbf{s}'},\quad \text{if}~~ l\geqslant i+2, \\
&t_{l,i+1,\mathbf{s}}\mapsto -\varsigma_{i,i+1;i+1,i+2}'d_{i+1}'t_{li,\mathbf{s}'},\quad \text{if}~~ l\geqslant i+2, \\
&t_{lk,\mathbf{s}}\mapsto t_{lk,\mathbf{s}'},\quad \text{in all remaining cases},
\end{align*}
and
\begin{align*}
&\bar{t}_{ii,\mathbf{s}}\mapsto d_i'\bar{t}_{i+1,i+1,\mathbf{s}'},\quad \bar{t}_{i+1,i+1,\mathbf{s}}\mapsto d_{i+1}'\bar{t}_{ii,\mathbf{s}'},\quad \bar{t}_{i,i+1,\mathbf{s}}\mapsto q^{d_i'}t_{ii,\mathbf{s}'}^{-2}t_{i+1,i,\mathbf{s}'}, \\
&\bar{t}_{ki,\mathbf{s}}\mapsto \varsigma_{i-1,i;i,i+1}'d_i'q^{d_i'}\bar{t}_{k,i+1,\mathbf{s}'}-\varsigma_{k,i-1;i,i+1}'d_i'\bar{t}_{ki,\mathbf{s}'}\bar{t}_{i,i+1,\mathbf{s}'}\bar{t}_{ii,\mathbf{s}'}^{-1},
  \quad\text{if}~~ k\leqslant i-1, \\
&\bar{t}_{k,i+1,\mathbf{s}}\mapsto -\varsigma_{i-1,i;i,i+1}'\bar{t}_{ki,\mathbf{s}'},\quad \text{if}~~ k\leqslant i-1, \\
&\bar{t}_{il,\mathbf{s}}\mapsto \varsigma_{i,i+1;i,i+2}'d_{i}'q^{-d_{i}'}\bar{t}_{i+1,l,\mathbf{s}'}-
\varsigma_{i,i+1;i+2,l}'d_{i}'t_{i+1,i,\mathbf{s}'}\bar{t}_{il,\mathbf{s}'}\bar{t}_{ii,\mathbf{s}'},\quad \text{if}~~ l\geqslant i+2, \\
&\bar{t}_{i+1,l,\mathbf{s}}\mapsto -\varsigma_{i,i+1;i+1,i+2}'\bar{t}_{il,\mathbf{s}'},\quad \text{if}~~ l\geqslant i+2, \\
&\bar{t}_{kl,\mathbf{s}}\mapsto \bar{t}_{kl,\mathbf{s}'},\quad \text{in all remaining cases},
\end{align*}
wher $\varsigma_{ab;cd}'=(-1)^{(|a|+|b|)(|c|+|d|)}\,(a,b,c,d\in I_{\mathbf{s}'})$.
\end{proposition}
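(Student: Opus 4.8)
The plan is to transport the Drinfeld–Jimbo isomorphism $\beta_{i,\mathbf{s}}\colon\mathcal{U}_q(\mathfrak{g}_{\mathbf{s}})\to\mathcal{U}_q(\mathfrak{g}_{\mathbf{s}'})$ of Proposition \ref{odd-rel:1} across the RTT presentations via the isomorphisms $\iota_{\mathbf{s}}$ and $\iota_{\mathbf{s}'}$ of Proposition \ref{isom:fin}. That is, define $\beta_{i,\mathbf{s}}^{\mathbf R}:=\iota_{\mathbf{s}'}^{-1}\circ\beta_{i,\mathbf{s}}\circ\iota_{\mathbf{s}}\colon\mathrm{U}_q(\mathfrak{g}_{\mathbf{s}})\to\mathrm{U}_q(\mathfrak{g}_{\mathbf{s}'})$; this is automatically a superalgebra isomorphism, so the entire content of the proposition is the verification that $\beta_{i,\mathbf{s}}^{\mathbf R}$ has the stated action on the generators $t_{ji,\mathbf{s}}$, $\bar t_{ij,\mathbf{s}}$. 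Since $\iota_{\mathbf{s}}$ sends only the \emph{simple} root vectors $\bar t_{i,i+1,\mathbf{s}}$, $t_{i+1,i,\mathbf{s}}$ and the diagonal $t_{aa,\mathbf{s}}^{\pm1}$ to explicit expressions in Drinfeld–Jimbo generators, the formulas on those generators follow by a direct substitution: e.g. $\iota_{\mathbf{s}}(t_{ii,\mathbf{s}})=k_{i,\mathbf{s}}^{-1}\mapsto d_i'k_{i+1,\mathbf{s}'}^{-1}=d_i'\,\iota_{\mathbf{s}'}(t_{i+1,i+1,\mathbf{s}'})$ (using $(d_i')^{-1}=d_i'$), and similarly for $t_{i+1,i+1,\mathbf{s}}$, $\bar t_{ii,\mathbf{s}}$, $\bar t_{i+1,i+1,\mathbf{s}}$; for $t_{i+1,i,\mathbf{s}}\mapsto -(q_i-q_i^{-1})k_{i,\mathbf{s}}^{-1}x_{i,\mathbf{s}}^-$ one applies $\beta_{i,\mathbf{s}}$ of Proposition \ref{odd-rel:1}, obtaining a multiple of $k_{i+1,\mathbf{s}'}^{-1}k_{i,\mathbf{s}'}k_{i+1,\mathbf{s}'}^{-1}x_{i,\mathbf{s}'}^{+}$, which is recognized through $\iota_{\mathbf{s}'}$ as the claimed $d_i'd_{i+1}'q^{-d_i'}\bar t_{i,i+1,\mathbf{s}'}\bar t_{ii,\mathbf{s}'}^{-2}$ after collecting the $q_i$-factors (noting $d_i'=-d_i$ since $\beta_i$ flips the $i$-th parity when it acts nontrivially). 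The same bookkeeping handles $\bar t_{i,i+1,\mathbf{s}}$.

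The substantive work is the remaining (non-simple) generators $t_{lk,\mathbf{s}}$, $\bar t_{kl,\mathbf{s}}$ with $l>k$, which are \emph{not} in the image of an explicit $\iota_{\mathbf{s}}$-formula. For these I would argue directly inside $\mathrm{U}_q(\mathfrak{g}_{\mathbf{s}})$: the non-diagonal $t_{lk,\mathbf{s}}$ and $\bar t_{kl,\mathbf{s}}$ are generated from the simple ones by the RTT relations \eqref{RTT:fin-ex2}–\eqref{RTT:fin-ex4} — concretely $\bar t_{i,i+2,\mathbf{s}}$ (and its analogues) arises as an $R$-matrix-twisted commutator $q_i^{\delta}\bar t_{i,i+1}\bar t_{i+1,i+2}-\cdots$, and iterating produces all $\bar t_{k,l}$ with $l-k\ge2$, likewise for $t_{l,k}$. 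Thus I first establish, by induction on $l-k$, closed expressions for $t_{lk,\mathbf{s}}$ and $\bar t_{kl,\mathbf{s}}$ as ordered products/commutators of the simple root vectors and $t_{aa,\mathbf{s}}^{\pm1}$ (this is exactly where the PBW/braid-group machinery of \cite{MR08} that the paper is generalizing gets used). Then I apply the already-verified $\beta_{i,\mathbf{s}}^{\mathbf R}$ to those expressions: because $\beta_{i,\mathbf{s}}^{\mathbf R}$ is an algebra homomorphism it suffices to know its values on simple generators, and the relations \eqref{RTT:fin-ex2}–\eqref{RTT:fin-ex4} in $\mathrm{U}_q(\mathfrak{g}_{\mathbf{s}'})$ then collapse the image back into the single generators $t_{i+1,k,\mathbf{s}'}$, $t_{ik,\mathbf{s}'}$, $t_{l,i+1,\mathbf{s}'}$, $t_{li,\mathbf{s}'}$, etc., with the stated coefficients. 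The cases split by position of the indices relative to $\{i,i+1\}$: (a) $k,l\notin\{i,i+1\}$ on the relevant side, where $\beta_{i,\mathbf{s}}^{\mathbf R}$ acts trivially on the building blocks, giving $t_{lk,\mathbf{s}}\mapsto t_{lk,\mathbf{s}'}$; (b) exactly one index equal to $i$ or $i+1$, producing the four families of two-term formulas; (c) both indices in $\{i,i+1\}$, already treated above.

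I expect the main obstacle to be the sign/scalar bookkeeping in case (b): tracking the $\varsigma'$-factors $(-1)^{(|a|+|b|)(|c|+|d|)}$ through the supercommutators when the $i$-th parity has flipped from $\mathbf{s}$ to $\mathbf{s}'$, and simultaneously keeping the $q_i$ versus $q_{i+1}$ exponents straight (they differ precisely because $d_i'=d_{i+1}$, $d_{i+1}'=d_i$ after an odd reflection). A clean way to control this is to phrase everything in the matrix form \eqref{RTT:fin-5}: write $\beta_{i,\mathbf{s}}^{\mathbf R}$ at the level of the matrices $T_{\mathbf{s}},\bar T_{\mathbf{s}}$ — essentially conjugation by a fixed permutation-type matrix twisted by the simple root data — and verify that the image satisfies \eqref{RTT:fin-5} for $\widetilde{\mathcal R}_{q,\mathbf{s}'}$, reducing the whole proposition to one $\operatorname{End}\mathcal{V}^{\otimes K}$-valued identity plus the entrywise identification already done on simple generators. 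Finally, compatibility with comultiplication (the "Hopf" part, if one wants it, though the statement as given only asserts a superalgebra isomorphism) follows since both $\iota_{\mathbf{s}},\iota_{\mathbf{s}'}$ are Hopf and $\beta_{i,\mathbf{s}}$ of Proposition \ref{odd-rel:1} intertwines the Drinfeld–Jimbo coproducts up to the usual graded flip.
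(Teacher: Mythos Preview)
Your approach has a genuine circularity problem with the paper's logical structure. You propose to define $\beta_{i,\mathbf{s}}^{\mathbf R}:=\iota_{\mathbf{s}'}^{-1}\circ\beta_{i,\mathbf{s}}\circ\iota_{\mathbf{s}}$ using Proposition~\ref{isom:fin}, but in this paper Proposition~\ref{isom:fin} is only \emph{stated} before Proposition~\ref{odd-rel:2} and is actually \emph{proved} immediately afterward, precisely by invoking Proposition~\ref{odd-rel:2}: the argument is that one establishes $\beta_{i,\mathbf{s}}$ independently on the DJ side (Proposition~\ref{odd-rel:1}) and on the RTT side (Proposition~\ref{odd-rel:2}), checks the square with $\iota_{\mathbf{s}},\iota_{\mathbf{s}'}$ commutes, and then deduces that $\iota_{\mathbf{s}}$ is an isomorphism for arbitrary $\mathbf{s}$ from the known standard case $\iota_{\mathbf{s}^{\rm st}}$ of \cite{Zhf16}. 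So you cannot assume $\iota_{\mathbf{s}}$, $\iota_{\mathbf{s}'}$ are isomorphisms at this point; that is what Proposition~\ref{odd-rel:2} is being used to establish.

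The paper's own proof avoids this entirely: it is a purely internal RTT argument. One writes down an explicit candidate for $\beta_{i,\mathbf{s}}^{-1}$ (a list of formulas symmetric in shape to those in the statement, with the roles of $\mathbf{s}$ and $\mathbf{s}'$ swapped and $d_j',\varsigma'$ replaced by $d_j,\varsigma$), and then verifies by direct computation with the relations \eqref{RTT:fin-ex2}--\eqref{RTT:fin-ex4} that both assignments are superalgebra homomorphisms and are mutually inverse. No appeal to $\iota_{\mathbf{s}}$ or to the DJ presentation is made. Your plan could be salvaged only if you supplied an independent, direct proof of Proposition~\ref{isom:fin} for all $\mathbf{s}$ first; otherwise the cleaner route is the paper's: check the map is a homomorphism on the RTT relations and exhibit the explicit inverse.
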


\begin{proof}
The following mapping defines an inverse for $\beta_{i,\mathbf{s}}$:
\begin{align*}
&t_{ii,\mathbf{s}'}\mapsto d_it_{i+1,i+1,\mathbf{s}},\quad t_{i+1,i+1,\mathbf{s}'}\mapsto d_{i+1}t_{ii,\mathbf{s}},\quad t_{i+1,i,\mathbf{s}'}\mapsto q^{-d_{i+1}}\bar{t}_{i+1,i+1,\mathbf{s}}^{-2}\bar{t}_{i,i+1,\mathbf{s}}, \\
&t_{ik,\mathbf{s}'}\mapsto -\varsigma_{i-1,i+1;i,i+1}d_it_{i+1,k,\mathbf{s}},,\quad \text{if}~~k\leqslant i-1, \\
&t_{i+1,k,\mathbf{s}'}\mapsto \varsigma_{i-1,i+1;i,i+1}q^{d_{i+1}}t_{i,k,\mathbf{s}}-\varsigma_{k,i-1;i,i+1}t_{i+1,i+1,\mathbf{s}}\bar{t}_{i,i+1,\mathbf{s}}t_{i+1,k,\mathbf{s}},\quad \text{if}~~k\leqslant i-1, \\
&t_{li,\mathbf{s}'}\mapsto -\varsigma_{i,i+1;i,i+2}d_i t_{l,i+1,\mathbf{s}},\quad \text{if}~~ l\geqslant i+2, \\ 
&t_{l,i+1,\mathbf{s}'}\mapsto \varsigma_{i,i+1;i+1,i+2}q^{-d_{i+1}}t_{li,\mathbf{s}}-\varsigma_{i,i+1;i+2,l}t_{i+1,i+1,\mathbf{s}}^{-1}t_{l,i+1,\mathbf{s}}t_{i+1,i,\mathbf{s}},\quad \text{if}~~ l\geqslant i+2, \\
&t_{lk,\mathbf{s}'}\mapsto t_{lk,\mathbf{s}},\quad \text{in all remaining cases},\end{align*}
and
\begin{align*}
&\bar{t}_{ii,\mathbf{s}'}\mapsto d_i\bar{t}_{i+1,i+1,\mathbf{s}},\quad \bar{t}_{i+1,i+1,\mathbf{s}'}\mapsto d_{i+1}\bar{t}_{ii,\mathbf{s}},\quad \bar{t}_{i,i+1,\mathbf{s}'}\mapsto d_id_{i+1}q^{d_{i+1}}t_{i+1,i,\mathbf{s}}t_{i+1,i+1,\mathbf{s}}^{-2}, \\
&\bar{t}_{ki,\mathbf{s}'}\mapsto -\varsigma_{i-1,i+1;i,i+1}\bar{t}_{k,i+1,\mathbf{s}},
  \quad\text{if}~~ k\leqslant i-1, \\
&\bar{t}_{k,i+1,\mathbf{s}'}\mapsto \varsigma_{i-1,i+1;i,i+1}d_{i+1}q^{-d_{i+1}}\bar{t}_{ki,\mathbf{s}}-\varsigma_{k,i-1;i,i+1}d_{i+1}\bar{t}_{k,i+1,\mathbf{s}}t_{i+1,i,\mathbf{s}}\bar{t}_{i+1,i+1,\mathbf{s}},
  \quad\text{if}~~ k\leqslant i-1, \\
&\bar{t}_{il,\mathbf{s}'}\mapsto -\varsigma_{i,i+1;i,i+2}\bar{t}_{i+1,l,\mathbf{s}},\quad \text{if}~~ l\geqslant i+2, \\
&\bar{t}_{i+1,l,\mathbf{s}'}\mapsto \varsigma_{i,i+1;i+1,i+2}d_{i+1}q^{-d_{i+1}}\bar{t}_{il,\mathbf{s}}-
\varsigma_{i,i+1;i+2,l}d_{i+1}\bar{t}_{i,i+1,\mathbf{s}}\bar{t}_{i+1,l,\mathbf{s}}\bar{t}_{i+1,i+1,\mathbf{s}}^{-1},\quad \text{if}~~ l\geqslant i+2, \\
&\bar{t}_{kl,\mathbf{s}'}\mapsto \bar{t}_{kl,\mathbf{s}},\quad \text{in all remaining cases},
\end{align*}
Moreover, direct computation shows that $\beta_{i,\mathbf{s}}$ and $\beta_{i,\mathbf{s}}^{-1}$ are mutually inverse superalgebra homomorphisms, which implies that $\beta_{i,\mathbf{s}}$ is an isomorphism.
\end{proof}

Using the braid group action on quantum general linear superalgebras both in their Drinfeld-Jimbo presentations and RTT presentations, we obtain the following commutative diagram
\begin{center}
   \begin{tikzcd}
       \mathrm{U}_q\big(\mathfrak{g}_{\mathbf{s}}\big)
		\arrow{r}{\iota_{\mathbf{s}} }\arrow{d}{\beta_{\mathbf{s}}} & \mathcal{U}_q\big(\mathfrak{g}_{\mathbf{s}}\big) \arrow{d}{\beta_{\mathbf{s}}} \\
		\mathrm{U}_q\big(\mathfrak{g}_{\mathbf{t}}\big) \arrow{r}{\iota_{\mathbf{t}} } & \mathcal{U}_q\big(\mathfrak{g}_{\mathbf{t}}\big),
   \end{tikzcd}
\end{center}
where the braid element $\beta$ acts on the parity sequence $\mathbf{s} \in \mathcal{S}(m|n)$ via $\sigma(\mathbf{s}) = \mathbf{t}$ for some permutation $\sigma \in \mathfrak{S}N$. The fact that $\iota{\mathbf{s}}$ is an isomorphism follows from the case of the standard parity sequence $\mathbf{s}^{\mathrm{st}}$ established in \cite[Proposition 3.3(3)]{Zhf16}. Furthermore, $\iota_{\mathbf{s}}$ preserves the Hopf superalgebra structure, as seen by comparing \eqref{comul:fin:DJ} and \eqref{comul:fin:RTT}. This concludes the proof of Theorem~\ref{isom:fin}.

\subsection{PBW basis of $\mathrm{U}_q\big(\mathfrak{g}_{\mathbf{s}}\big)$}\label{se:PBWtheoremUqgl}
Under the framework of Theorem~\ref{isom:fin}, we construct a PBW-type basis for $\mathrm{U}q(\mathfrak{g}{\mathbf{s}})$ for an arbitrary parity sequence $\mathbf{s} \in \mathcal{S}(m|n)$. We first recall the well-known PBW basis in terms of the Drinfeld–Jimbo generators for the quantum general linear superalgebra at the standard parity sequence $\mathbf{s}^{\mathrm{st}}$. Here, as introduced in Section~\ref{se:notations}, we omit the subscript $\mathbf{s}$ when $\mathbf{s} = \mathbf{s}^{\mathrm{st}}$.

Introduce the elements $e_{ij}$ for $i\neq j$, $i,j\in I$ in $\mathcal{U}_q\big(\mathfrak{g}\big)$:
\begin{equation*}
    \begin{aligned}
        e_{i,i+1}&:=x_{i}^+, \\
        e_{ij}&:=-q_k^{-1}\big[e_{kj},\,e_{ik}\big]_{q_k}, 
    \end{aligned}\qquad
    \begin{aligned}
        e_{i+1,i}&:=x_{i}^-, \\
        e_{ji}&:=-q_{i+1}\big[e_{ki},\,e_{jk}\big]_{q_k^{-1}},
    \end{aligned}\qquad
    \begin{aligned}
        &\\
        &i<k<j, 
    \end{aligned}
\end{equation*}
where the expressions of $e_{ij}$ and $e_{ji}$ are independent of the choice of $k$. 
Then we have (\cite{HZ20,Zrb93})
\begin{theorem}\label{PBW:fin-st:DJ}
The set of all ordered monomials
\begin{gather}\label{order:1}
\mathop{\overrightarrow{\prod}}\limits_{i\in I} e_{i,i-1}^{b_{i,i-1}} e_{i,i-2}^{b_{i,i-2}} \cdots e_{i,1}^{b_{i,1}} \times \mathop{\overrightarrow{\prod}}\limits_{i\in I} k_i^{b_{ii}} \times
\mathop{\overrightarrow{\prod}}\limits_{i\in I} e_{1,i}^{b_{1,i}} e_{2,i}^{b_{2,i}} \cdots e_{i-1,i}^{b_{i-1,i}}
\end{gather}
with the exponents
\begin{equation}\label{exponents}
b_{ij}\in\begin{cases}
    \mathbb{Z}_+, &\text{if}~~|i| + |j| = \bar{0}~~\text{and}~~i\neq j, \\
    \{0,1\}, &\text{if}~~|i| + |j| = \bar{1}, \\
    \mathbb{Z}, &\text{if}~~i=j
\end{cases}
\end{equation}
form a basis for $\mathcal{U}_q\big(\mathfrak{g}\big)$.
\end{theorem}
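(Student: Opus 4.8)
The plan is to prove the spanning and the linear-independence halves separately, first reducing to the positive and negative parts via a triangular decomposition and then descending to the classical limit for independence. I would begin by establishing the triangular decomposition $\mathcal{U}_q(\mathfrak{g}) \cong \mathcal{U}_q^- \otimes \mathcal{U}_q^0 \otimes \mathcal{U}_q^+$ of superspaces, where $\mathcal{U}_q^{\pm}$ is the subalgebra generated by the $x_{i}^{\pm}$ and $\mathcal{U}_q^0$ is the Laurent polynomial superalgebra on the $k_a^{\pm 1}$. The Cartan part is transparent: the monomials $\prod_{i} k_i^{b_{ii}}$ with $b_{ii}\in\mathbb{Z}$ clearly form a basis of $\mathcal{U}_q^0$. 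Thus the problem reduces to showing that the ordered products $\prod e_{i,i-1}^{b_{i,i-1}}\cdots e_{i,1}^{b_{i,1}}$ form a basis of $\mathcal{U}_q^-$, and the mirror statement for $\mathcal{U}_q^+$ follows by the symmetric argument interchanging $x_{i}^+$ and $x_{i}^-$.

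For the spanning statement in $\mathcal{U}_q^-$, I would use the recursive definition of the root vectors $e_{ij}$ together with the defining relations \eqref{DJ:fin-4}--\eqref{DJ:fin-6} to derive \emph{Levendorskii--Soibelman}-type straightening identities: for two root vectors $e_\alpha, e_\beta$ with $\alpha$ preceding $\beta$ in the order fixed by \eqref{order:1}, the reversed product $e_\beta e_\alpha$ is $(-1)^{|\alpha||\beta|}q^{(\alpha|\beta)} e_\alpha e_\beta$ plus a $\mathbb{C}$-linear combination of ordered monomials involving only strictly intermediate root vectors. Iterating these identities rewrites an arbitrary product of negative root vectors as a combination of ordered monomials, yielding spanning. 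The exponent ranges in \eqref{exponents} then emerge from the parity bookkeeping: for an odd isotropic simple root, relation \eqref{DJ:fin-4} applied with $j=i$ gives $[x_{i}^\pm, x_{i}^\pm]=0$, hence $(x_{i}^\pm)^2=0$, and an inductive application of the straightening relations propagates this to $e_{ij}^2=0$ for every odd root vector $|i|+|j|=\bar 1$; this forces exponents in $\{0,1\}$, while even root vectors carry no such restriction and admit exponents in $\mathbb{Z}_+$.

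Finally, for linear independence I would pass to the classical limit recorded in the Remark following the Drinfeld--Jimbo definition. Choosing the $\mathbb{C}[q,q^{-1}]$-integral form spanned by the ordered monomials, the specialization $q\to 1$ sends each ordered $q$-monomial to the corresponding ordered monomial in the matrix units $E_{ij}$ of $\mathcal{U}(\mathfrak{g})$, with identical exponent ranges. The classical super PBW theorem for $\mathcal{U}(\mathfrak{gl}_{m|n})$ guarantees that these classical monomials are linearly independent, and a standard leading-term argument lifts this independence back to $\mathcal{U}_q(\mathfrak{g})$ over $\mathbb{C}(q)$.

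I expect the main obstacle to be this linear-independence step. One must construct the integral form carefully enough that all straightening coefficients lie in $\mathbb{C}[q,q^{-1}]$ and that reduction at $q=1$ is flat, so that no collapse of monomials occurs; and one must keep precise track of the sign factors $(-1)^{|\alpha||\beta|}$ coming from the $\mathbb{Z}_2$-grading, so that the specialized monomials genuinely coincide with the classical super PBW basis rather than a sign-twisted variant. Handling the interaction between these signs and the nilpotency of the odd root vectors during the straightening is where the bulk of the careful work will lie.
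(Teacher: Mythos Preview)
The paper does not actually prove this theorem: it is stated as a known result with the citation ``Then we have (\cite{HZ20,Zrb93})'' immediately preceding it, and no argument is given. So there is no proof in the paper to compare your proposal against.

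That said, your outline is the standard route and is essentially what one finds in the cited references. The triangular decomposition $\mathcal{U}_q^-\otimes\mathcal{U}_q^0\otimes\mathcal{U}_q^+$, the straightening of negative (resp.\ positive) root vectors via $q$-commutator identities derived from \eqref{DJ:fin-4}--\eqref{DJ:fin-6}, and the specialization $q\to 1$ to invoke the classical super PBW theorem are exactly the ingredients used in \cite{Zrb93} (see also Yamane \cite{Ya94}, Section~5, who constructs the PBW basis for the Drinfeld--Jimbo presentation directly). Your identification of the delicate points---flatness of the integral form and the sign bookkeeping for odd root vectors---is accurate; these are precisely where the work in those references lies. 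One small remark: the Levendorskii--Soibelman convexity argument in its usual form relies on a Weyl-group action, which in the super setting is only available for the even part, so in practice the straightening identities are verified more directly by induction on root height using the recursive definition of the $e_{ij}$, rather than by invoking a general LS-type theorem. But this does not affect the overall shape of your plan.
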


Now, we are ready to state the following PBW theorem of $\mathrm{U}_q\big(\mathfrak{g}_{\mathbf{s}}\big)$ at arbitrary $\mathbf{s}\in\mathcal{S}(m|n)$.
\begin{theorem}\label{PBW:fin:RTT}
For any fixed $\mathbf{s}\in\mathcal{S}(m|n)$, the set of all ordered monomials
\begin{gather}\label{order:2}
\mathop{\overrightarrow{\prod}}\limits_{i\in I_{\mathbf{s}}} t_{i,i-1,\mathbf{s}}^{b_{i,i-1}} t_{i,i-2,\mathbf{s}}^{b_{i,i-2}} \cdots t_{i,1,\mathbf{s}}^{b_{i,1}} \times \mathop{\overrightarrow{\prod}}\limits_{i\in I_{\mathbf{s}}} \bar{t}_{ii,\mathbf{s}}^{\,b_{ii}} \times \mathop{\overrightarrow{\prod}}\limits_{i\in I_{\mathbf{s}}} \bar{t}_{1,i,\mathbf{s}}^{\,b_{1,i}} \bar{t}_{2,i,\mathbf{s}}^{\,b_{2,i}} \cdots \bar{t}_{i-1,i,\mathbf{s}}^{\,b_{i-1,i}}
\end{gather}
with the exponents \eqref{exponents} forms a basis for $\mathrm{U}_q\big(\mathfrak{g}_{\mathbf{s}}\big)$.
\end{theorem}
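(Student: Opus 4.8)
The plan is to prove the two assertions separately: that the ordered monomials \eqref{order:2} span $\mathrm{U}_q(\mathfrak{g}_{\mathbf{s}})$, and that they are linearly independent. The first I would do uniformly in $\mathbf{s}$; the second I would reduce to the case $\mathbf{s}=\mathbf{s}^{\rm st}$ and then transport it along the braid isomorphisms of Proposition~\ref{odd-rel:2}.

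\textbf{Spanning.} Setting $k=i$, $l=j$ with $|i|+|j|=\bar 1$ in \eqref{RTT:fin-ex3} (resp. \eqref{RTT:fin-ex2}) gives $(q_i+q_j)\bar t_{ij,\mathbf{s}}^{\,2}=0$ (resp. $(q_i+q_j)t_{ij,\mathbf{s}}^{\,2}=0$); since $|i|+|j|=\bar 1$ forces $\{q_i,q_j\}=\{q,q^{-1}\}$ and $q$ is not a root of unity, $q_i+q_j=q+q^{-1}\neq0$, so $\bar t_{ij,\mathbf{s}}^{\,2}=t_{ij,\mathbf{s}}^{\,2}=0$, which accounts for the exponent restriction \eqref{exponents}. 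Fixing the total order on the generators implicit in \eqref{order:2}, one then shows by a standard straightening argument — using \eqref{RTT:fin-1} and \eqref{RTT:fin-ex2}--\eqref{RTT:fin-ex4} to move any out-of-order adjacent pair of generators into order — that every monomial in the generators is a linear combination of the monomials \eqref{order:2}. The induction is organised by the number of off-diagonal factors (never increased by the relations, and strictly decreased whenever the correction term $t_{kj}t_{il}$ in \eqref{RTT:fin-ex2} or \eqref{RTT:fin-ex4} produces a diagonal generator), refined by a lexicographic measure of disorder. Nothing here is special to $\mathbf{s}^{\rm st}$.

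\textbf{Linear independence.} Induct on $\ell(\mathbf{s})$, the least length of a word $\sigma_{i_r}\cdots\sigma_{i_1}$ with $\sigma_{i_r}\cdots\sigma_{i_1}\cdot\mathbf{s}^{\rm st}=\mathbf{s}$; such a word exists because $\mathfrak{S}_N$ acts transitively on $\mathcal{S}(m|n)$. For $\mathbf{s}=\mathbf{s}^{\rm st}$, Proposition~\ref{isom:fin} gives the isomorphism $\iota_{\mathbf{s}^{\rm st}}\colon\mathrm{U}_q(\mathfrak{g})\to\mathcal{U}_q(\mathfrak{g})$ and Theorem~\ref{PBW:fin-st:DJ} gives that \eqref{order:1} is a basis of $\mathcal{U}_q(\mathfrak{g})$; an induction on $j-i$ — starting from $\iota_{\mathbf{s}^{\rm st}}(\bar t_{i,i+1})=(q_i-q_i^{-1})e_{i,i+1}k_i$, using \eqref{RTT:fin-ex3} to build $\bar t_{ij}$ from $\bar t_{i,j-1},\bar t_{j-1,j}$ and the defining recursion for $e_{ij}$ — shows $\iota_{\mathbf{s}^{\rm st}}(\bar t_{ij})$ is a nonzero scalar times $e_{ij}$ times a monomial in the $k_a$'s modulo terms with strictly more off-diagonal root vectors, and likewise for $\iota_{\mathbf{s}^{\rm st}}(t_{ji})$ and $\iota_{\mathbf{s}^{\rm st}}(\bar t_{aa})$. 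Hence $\iota_{\mathbf{s}^{\rm st}}$ carries \eqref{order:2} to \eqref{order:1} by a transition that is unitriangular for the "number of off-diagonal factors" filtration, so \eqref{order:2} is a basis of $\mathrm{U}_q(\mathfrak{g})$ (this is essentially contained in \cite{Zhf16,HZ20}).

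\textbf{Inductive step.} Write $\mathbf{s}=\sigma_i(\mathbf{t})$ with $\ell(\mathbf{t})=\ell(\mathbf{s})-1$, so by hypothesis the monomials \eqref{order:2} at $\mathbf{t}$ form a basis $\mathcal{B}_{\mathbf{t}}$, and then $\beta_{i,\mathbf{t}}(\mathcal{B}_{\mathbf{t}})$ is a basis of $\mathrm{U}_q(\mathfrak{g}_{\mathbf{s}})$ since $\beta_{i,\mathbf{t}}$ is an isomorphism. Reading off Proposition~\ref{odd-rel:2}, $\beta_{i,\mathbf{t}}$ sends every generator to a nonzero scalar times the $\sigma_i$-relabelled generator (the relabelling exchanging rows/columns $i,i+1$, possibly turning a lower-triangular generator into an upper-triangular one but sending diagonal to diagonal, up to a Cartan factor $\bar t_{ii,\mathbf{s}}^{\mp2}$), plus — in the interesting cases such as $t_{ik,\mathbf{t}}$ with $k\leqslant i-1$ — an explicit three-generator product; straightening that product against the order of \eqref{order:2} via \eqref{RTT:fin-ex2} turns it into one ordered monomial with one more off-diagonal factor plus a multiple of the relabelled generator, the total coefficient of which is a fixed nonzero Laurent polynomial in $q$. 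Multiplying out and straightening, $\beta_{i,\mathbf{t}}$ sends the ordered monomial with array $(b)$ to a nonzero scalar times the ordered monomial at $\mathbf{s}$ with array $(b')$ — where $(b)\mapsto(b')$ is the bijection given by $\sigma_i$-relabelling of index pairs together with the induced affine change of diagonal exponents, and preserves \eqref{exponents} because $\sigma_i$ preserves the parity of each index pair — plus ordered monomials at $\mathbf{s}$ with strictly more off-diagonal factors. So the spanning set \eqref{order:2} at $\mathbf{s}$ is a filtered-unitriangular modification of the basis $\beta_{i,\mathbf{t}}(\mathcal{B}_{\mathbf{t}})$; arguing weight space by weight space (within a fixed weight and a fixed number of off-diagonal factors the relevant space is finitely generated over the Laurent algebra in the $\bar t_{ii,\mathbf{s}}$), linear independence of \eqref{order:2} at $\mathbf{s}$ follows.

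\textbf{Main obstacle.} The heart of the matter is the inductive step, on two counts: verifying that, after straightening the images of the generators under $\beta_{i,\mathbf{t}}$, the coefficient on the expected leading ordered monomial is genuinely nonzero — since several source terms a priori contribute there, one must track the explicit scalars of Proposition~\ref{odd-rel:2} through \eqref{RTT:fin-ex2}--\eqref{RTT:fin-ex4}; and making the triangular bookkeeping rigorous despite the weight spaces of $\mathrm{U}_q(\mathfrak{g}_{\mathbf{s}})$ being infinite-dimensional, which needs the auxiliary grading by the number of off-diagonal factors and a little care with the action of $\beta_{i,\mathbf{t}}$ on the diagonal subalgebra. (An alternative that avoids the braid computation is to pass to an integral form over $\mathbb{C}[q^{\pm1},(q+q^{-1})^{-1}]$ and specialise $q\to1$: after the evident rescaling the monomials \eqref{order:2} degenerate to the classical PBW basis of $\mathcal{U}(\mathfrak{g}_{\mathbf{s}})$ in the $E_{ij,\mathbf{s}}$, which is linearly independent for every parity sequence; this needs instead a careful set-up of the form.)
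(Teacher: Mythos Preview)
Your approach is correct and is essentially the paper's own proof. Both arguments introduce the filtration by the number of off-diagonal factors, verify the theorem at $\mathbf{s}=\mathbf{s}^{\rm st}$ by comparison with Theorem~\ref{PBW:fin-st:DJ} via the isomorphism $\iota_{\mathbf{s}^{\rm st}}$, and then transport along a chain of braid isomorphisms $\beta_{i,\mathbf{s}}$ using that (from the explicit formulas of Proposition~\ref{odd-rel:2}) each $\beta_{i,\mathbf{s}}$ preserves the filtration and acts on the associated graded by the $\sigma_i$-relabelling of indices; the paper packages your filtered-unitriangular bookkeeping simply as the statement that the images of the monomials \eqref{order:3} with $\sum b_{ij}=p$ form a basis of $\mathrm{U}_q^{[p]}(\mathfrak{g}_{\mathbf{s}})/\mathrm{U}_q^{[p+1]}(\mathfrak{g}_{\mathbf{s}})$, which sidesteps the nonzero-coefficient check you flag as the main obstacle (in the associated graded only the leading term of each image survives, and its coefficient is manifestly a nonzero power of $q$ times a sign). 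The one organisational difference is that you prove spanning uniformly in $\mathbf{s}$ by direct straightening, whereas the paper obtains spanning and independence simultaneously from the associated-graded argument; either route is fine.
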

\begin{proof}
    The relations \eqref{RTT:fin-ex2} and \eqref{RTT:fin-ex4} for $i=j$ give
    \begin{gather*}
        \bar{t}_{ii,\mathbf{s}}\gamma_{kl}=q_i^{\delta_{ik}-\delta_{il}}\gamma_{kl}\bar{t}_{ii,\mathbf{s}}
    \end{gather*}
    for $\gamma\in\{\,t_{\mathbf{s}},\bar{t}_{\mathbf{s}}\,\}$. It indicates that $\bar{t}_{ii,\mathbf{s}}$ for all $i\in I_{\mathbf{s}}$ commutes with each $\gamma_{kl,\mathbf{s}}$ up to a constant. Therefore, any monomial $X\in \mathrm{U}_q\big(\mathfrak{g}_{\mathbf{s}}\big)$ has the form 
    $$X=\prod_{i\in I_{\mathbf{s}}}\bar{t}_{ii,\mathbf{s}}^{\,c_{ii}}\times \gamma_{i_1j_1}\cdots \gamma_{i_lj_l}$$
    with $\gamma\in\{\,t_{\mathbf{s}},\bar{t}_{\mathbf{s}}\,\}$, $c_{ii}\in\mathbb{Z}_+$ and $i_a\neq j_a$ for each $a$. It is admissible to introduce a filtration on the generators of $\mathrm{U}_q\big(\mathfrak{g}_{\mathbf{s}}\big)$ by setting
    $\deg X=l. $
    Define the associated graded algebra $\operatorname{Gr}\mathrm{U}_q\big(\mathfrak{g}_{\mathbf{s}}\big)$ by means of the following construction: 
    \begin{align*}
        \mathrm{U}_q^{[p]}\big(\mathfrak{g}_{\mathbf{s}}\big)&:=\big\{X\in \mathrm{U}_q\big(\mathfrak{g}_{\mathbf{s}}\big)\big| \deg X\geqslant p \big\},\qquad p\geqslant 0, \\
        \operatorname{Gr}\mathrm{U}_q\big(\mathfrak{g}_{\mathbf{s}}\big)&:=\bigoplus_{p=0}^{\infty} \mathrm{U}_q^{[p]}\big(\mathfrak{g}_{\mathbf{s}}\big)/ \mathrm{U}_q^{[p+1]}\big(\mathfrak{g}_{\mathbf{s}}\big). 
    \end{align*}
    Observe that the component $\mathrm{U}_q^{[0]}\big(\mathfrak{g}_{\mathbf{s}}\big)/ \mathrm{U}_q^{[1]}\big(\mathfrak{g}_{\mathbf{s}}\big)$ is commutative and generated by the images of all $t_{ii,\mathbf{s}},\bar{t}_{ii,\mathbf{s}}$. This theorem can be checked immediately for the case of $\mathbf{s}=\mathbf{s}^{\rm st}$ by Theorem \ref{isom:fin} and \ref{PBW:fin-st:DJ}. Then the image of the ordered monomials with form
    \begin{gather}\label{order:3}
\mathop{\overrightarrow{\prod}}\limits_{i\in I_{\mathbf{s}}} t_{i,i-1,\mathbf{s}}^{b_{i,i-1}} t_{i,i-2,\mathbf{s}}^{b_{i,i-2}} \cdots t_{i,1,\mathbf{s}}^{b_{i,1}} \times \mathop{\overrightarrow{\prod}}\limits_{i\in I_{\mathbf{s}}} \bar{t}_{1,i,\mathbf{s}}^{\,b_{1,i}} \bar{t}_{2,i,\mathbf{s}}^{\,b_{2,i}} \cdots \bar{t}_{i-1,i,\mathbf{s}}^{\,b_{i-1,i}}
\end{gather}
    for $\mathbf{s}=\mathbf{s}^{\rm st}$, $\sum_{i\neq j} b_{ij}=p$ and \eqref{exponents} constitude a basis for $\mathrm{U}_q^{[p]}\big(\mathfrak{g}\big)/ \mathrm{U}_q^{[p+1]}\big(\mathfrak{g}\big)$ for $p>0$. 

    For any $\mathbf{s}\in\mathcal{S}(m|n)$, there is a $\sigma\in \mathfrak{S}_N$ such that $\mathbf{s}=\sigma_{i_r}\cdots\sigma_{i_1}\mathbf{s}^{\rm st}$. The action of $$\beta_{i_r,\sigma_{i_{r-1}}\cdots\sigma_{i_1}\mathbf{s}}\cdots\beta_{i_2,\sigma_{i_1}\mathbf{s}}\beta_{i_1,\mathbf{s}}$$ forces that, for each $\mathbf{s}\in\mathcal{S}(m|n)$, the image of ordered monomials with form \eqref{order:3} for $\sum_{i\neq j} b_{ij}=p$ and \eqref{exponents} constitude a basis for $\mathrm{U}_q^{[p]}\big(\mathfrak{g}_{\mathbf{s}}\big)/ \mathrm{U}_q^{[p+1]}\big(\mathfrak{g}_{\mathbf{s}}\big)$ . Consequently, the ordered monomials \eqref{order:2} with exponents \eqref{exponents} constitude a basis for $\mathrm{U}_q\big(\mathfrak{g}_{\mathbf{s}}\big)$. 
    
\end{proof}

\begin{remark}
The PBW basis in Theorem~\ref{PBW:fin:RTT} is formulated in terms of the RTT generators. Its construction exhibits a subtle but notable difference from the PBW basis built from the Drinfeld--Jimbo generators as given by Yamane~\cite[Section 5]{Ya94}.
\end{remark}


\subsection{Finite-dimensional irreducible representations of $\mathrm{U}_q\big(\mathfrak{g}_{\mathbf{s}}\big)$}\label{se:fdirreducible representations-fin}

This subsection is devoted to classify the finite-dimensional irreducible representations of the quantum general linear superalgebra $\mathrm{U}_q\big(\mathfrak{g}_{\mathbf{s}}\big)$ for arbitrary $\mathbf{s}\in\mathcal{S}(m|n)$. 
Our approach follows Molev~\cite{Mo22}, relying on the technique of odd reflections. Crucially, the isomorphism established in Theorem~\ref{isom:fin} allows us to define the highest-weight representation on the RTT generators of $\mathrm{U}q(\mathfrak{g}{\mathbf{s}})$.
For any fixed parity sequence $\mathbf{s}$, let 
$\mathfrak{g}_{\mathbf{s}}= \mathfrak{g}_{\mathbf{s}}(\bar{0}) \oplus \mathfrak{g}_{\mathbf{s}}(\bar{1})$ denote its $\mathbb{Z}_2$-graded decomposition. 

\subsubsection{Kac module over $\mathrm{U}_q\big(\mathfrak{g}_{\mathbf{s}}\big)$}\label{se:Kacmodule}
\begin{definition}
A representation $V$ is called a \textit{highest weight representation} over  $\mathrm{U}_q\big(\mathfrak{g}_{\mathbf{s}}\big)$ if $V$ is generated by a non-zero vector $\zeta \in V$ such that
\begin{align*}
&\bar{t}_{ij,\mathbf{s}}\zeta = 0, \quad \forall\ 1\leqslant i < j\leqslant N, \\
&\bar{t}_{ii,\mathbf{s}}\zeta = \lambda_i \zeta, \quad \lambda_i \in \mathbb{C} \backslash \{0\}.
\end{align*}
Set $\Lambda = (\lambda_1, \ldots, \lambda_N)$.
The vector $\zeta$ and the $N$-tuple $\Lambda$ are referred to as the \textit{maximal vector} and the \textit{highest weight} of $V$, respectively.
\end{definition}

Recall that every finite-dimensional irreducible representation of $\mathrm{U}q(\mathfrak{g}{\mathbf{s}}(\bar{0}))$ is a highest-weight module. This fact enables us to construct a class of finite-dimensional representations over $\mathrm{U}_q\big(\mathfrak{g}_{\mathbf{s}}\big)$, which are termed \textit{Kac module}.

Let $\mathring{V}_{\mathbf{s}}(\Lambda)$ be the finite-dimensional irreducible representation of $\mathrm{U}_q\big(\mathfrak{g}_{\mathbf{s}}(\bar{0})\big)\subset \mathrm{U}_q\big(\mathfrak{g}_{\mathbf{s}}\big)$ with the highest weight $\Lambda$. It induces a representation $K_{\mathbf{s}}(\Lambda)$ over $\mathrm{U}_q\big(\mathfrak{g}_{\mathbf{s}}\big)$ by setting 
\begin{gather*}
    \bar{t}_{ij,\mathbf{s}}. \mathring{V}_{\mathbf{s}}(\Lambda)=0. 
\end{gather*}
The representation $K_{\mathbf{s}}(\Lambda)$ is finite-dimensional with the highest weight $\Lambda$, but not necessarily irreducible. As stated in \cite{Zrb93}, $K_{\mathbf{s}}(\Lambda)$ has a unique irreducible quotient $\overline{K}_{\mathbf{s}}(\Lambda)$. By definition, for any given $N$-tuple $\Lambda\in\left(\mathbb{C}\setminus\{0\}\right)^N$, there exists a unique irreducible representation $\overline{K}_{\mathbf{s}}(\Lambda)$ over $\mathrm{U}_q\big(\mathfrak{g}_{\mathbf{s}}\big)$ with highest weight $\Lambda$. 
Let $V_{\mathbf{s}}(\Lambda)$ be a highest weight irreducible representation $\mathrm{U}_q\big(\mathfrak{g}_{\mathbf{s}}\big)$ with highest weight $\Lambda$. We need to show the necessary and sufficient conditions for the finite-dimensionality of $V_{\mathbf{s}}(\Lambda)$, that is to say, $V_{\mathbf{s}}(\Lambda)\simeq \overline{K}_{\mathbf{s}}(\Lambda)$. 



\subsubsection{Zhang's results for $\mathrm{U}_q\big(\mathfrak{g}_{\mathbf{s}}\big)$ at $\mathbf{s}=\mathbf{s}^{\rm st}$}
As in Section~\ref{se:notations}, we take $\mathbf{s} = \mathbf{s}^{\mathrm{st}}$ and omit the subscript $\mathbf{s}$. We now recall from \cite{Zrb93} the necessary and sufficient conditions for an irreducible representation $V(\Lambda)$ of $\mathrm{U}_q(\mathfrak{g})$ to be finite-dimensional.

\begin{theorem}\label{standard:rep:fin}
Consider the $N$-tuple
$\Lambda = (\lambda_1, \ldots, \lambda_L)$ $(\forall\,\lambda_i \in \mathbb{C}\setminus\{0\})$. 
The following conditions for the irreducible highest weight representation $V(\Lambda)$ of $\mathrm{U}_q\big(\mathfrak{g}\big)$ with highest weight $\Lambda$ are equivalent\,{\rm :}
\begin{enumerate}
    \item[{\rm (1)}] $\dim V(\Lambda)<\infty$\,{\rm ;} 
    \item[{\rm (2)}] there exist some $\ell_1, \ldots, \ell_{m-1}, \ell_{m+1}, \ldots, \ell_{N-1} \in\mathbb{Z}_+$ and $\ell\in\mathbb{C}\setminus\{0\}$ such that
    \begin{gather}\label{fd:cd:fin}
    \frac{\epsilon_i \lambda_i}{\epsilon_{i+1} \lambda_{i+1}} = q_i^{\ell_i}, \qquad \frac{\epsilon_m \lambda_m}{\epsilon_{m+1} \lambda_{m+1}} = q^{\ell},
    \end{gather}
    for some $N$-tuple $\epsilon = (\epsilon_1, \ldots, \epsilon_N)\ (\forall\,\epsilon_i \in \{\pm 1\})$.
\end{enumerate}
\end{theorem}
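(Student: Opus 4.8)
The plan is to reduce the superalgebra statement to the classical (even) situation governing $\mathrm{U}_q\big(\mathfrak{g}(\bar 0)\big)$ together with a single extra constraint coming from the odd simple root $\alpha_m$. First I would set up the highest weight module $V(\Lambda)$ with maximal vector $\zeta$, and use the isomorphism $\iota=\iota_{\mathbf{s}^{\mathrm{st}}}$ of Theorem~\ref{isom:fin} to translate the RTT highest-weight data $\bar t_{ii}\zeta=\lambda_i\zeta$, $\bar t_{ij}\zeta=0$ ($i<j$) into the Drinfeld--Jimbo language: $k_a\zeta = \lambda_a\zeta$ and $x_i^+\zeta = 0$. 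Then the even subalgebra $\mathrm{U}_q\big(\mathfrak{g}(\bar 0)\big)\cong \mathrm{U}_q(\mathfrak{gl}_m)\otimes \mathrm{U}_q(\mathfrak{gl}_n)$ is generated by the $x_i^\pm$ with $i\neq m$ together with the $k_a^{\pm 1}$, and the $\mathrm{U}_q\big(\mathfrak{g}(\bar 0)\big)$-submodule $\mathring V = \mathrm{U}_q\big(\mathfrak{g}(\bar 0)\big)\zeta$ is a highest-weight module for that reductive quantum group.

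The implication (1)$\Rightarrow$(2) then runs as follows. If $\dim V(\Lambda)<\infty$, then $\mathring V$ is a finite-dimensional highest-weight $\mathrm{U}_q\big(\mathfrak{g}(\bar 0)\big)$-module, hence integrable for each $\mathfrak{sl}_2$-triple $(x_i^+,x_i^-,k_ik_{i+1}^{-1})$ with $i\neq m$; the standard $\mathrm{U}_q(\mathfrak{sl}_2)$ representation theory forces the eigenvalue of $k_ik_{i+1}^{-1}$ on $\zeta$ to be $\epsilon_i' q_i^{\ell_i}$ for some $\ell_i\in\mathbb{Z}_+$ and sign $\epsilon_i'$, which is exactly the first family of equations in \eqref{fd:cd:fin} after absorbing the signs into an $N$-tuple $\epsilon$. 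For the odd node $i=m$: since $x_m^+\zeta=0$ and $\dim V(\Lambda)<\infty$, the vector $x_m^-\zeta$ generates under the even subalgebra a finite-dimensional module, and one examines the $\mathfrak{sl}_2$-like relation at the odd root — here $(x_m^+)^2=(x_m^-)^2=0$, so there is no integrality condition on $\ell$ beyond $k_mk_{m+1}^{-1}$ acting by a nonzero scalar $q^\ell$ with $\ell\in\mathbb{C}$; this gives the second equation in \eqref{fd:cd:fin} essentially for free. For (2)$\Rightarrow$(1), assume the numerical conditions; then $\mathring V$ is, by the classical Jimbo/Lusztig classification for $\mathrm{U}_q(\mathfrak{gl}_m)\times\mathrm{U}_q(\mathfrak{gl}_n)$, the finite-dimensional irreducible $\mathring V(\Lambda)$, so the Kac module $K(\Lambda)=\mathrm{U}_q(\mathfrak{g})\otimes_{\mathrm{U}_q(\mathfrak{g}(\bar 0))\mathrm{U}_q(\mathfrak{g}^+)}\mathring V(\Lambda)$ is finite-dimensional (it is a free module of rank $2^{mn}$ over $\mathring V(\Lambda)$ via the PBW basis of Theorem~\ref{PBW:fin:RTT}), and $V(\Lambda)$, being a highest-weight irreducible quotient of $K(\Lambda)$, is finite-dimensional, i.e. $V(\Lambda)\simeq\overline K(\Lambda)$.

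The one point requiring care — and the place I expect to do real work rather than quote a citation — is the passage between the RTT highest-weight conditions and the Drinfeld--Jimbo ones, and in particular checking that the even subalgebra $\mathrm{U}_q\big(\mathfrak{g}(\bar 0)\big)\subset \mathrm{U}_q\big(\mathfrak{g}_{\mathbf{s}}\big)$ is identified, under $\iota_{\mathbf{s}^{\mathrm{st}}}$, with the genuine quantum group $\mathrm{U}_q(\mathfrak{gl}_m)\otimes\mathrm{U}_q(\mathfrak{gl}_n)$ with the expected presentation, so that the classical finite-dimensionality criterion applies verbatim; the signs $\epsilon_i$ are a bookkeeping nuisance that must be tracked through the $\mathrm{U}_q(\mathfrak{sl}_2)$ analysis (the eigenvalue of a group-like on a finite-dimensional module lies in $\pm q^{\mathbb{Z}}$, not $q^{\mathbb{Z}}$). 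Everything else is an application of the $(m,n)$-independent statements already set up: Theorem~\ref{isom:fin}, the PBW Theorem~\ref{PBW:fin:RTT}, and the existence/uniqueness of $\overline K(\Lambda)$ recalled from \cite{Zrb93}. I would remark that this is essentially a restatement of Zhang's theorem \cite{Zrb93} in the RTT framework, included here as the base case $\mathbf{s}=\mathbf{s}^{\mathrm{st}}$ for the odd-reflection induction carried out in the next subsection.
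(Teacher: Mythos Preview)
Your proposal is sound, but note that the paper does not actually prove this theorem: it is stated in the subsection ``Zhang's results for $\mathrm{U}_q(\mathfrak{g}_{\mathbf{s}})$ at $\mathbf{s}=\mathbf{s}^{\rm st}$'' as a direct recall from \cite{Zrb93}, with no argument given. So there is no ``paper's own proof'' to compare against; the theorem functions purely as the base case imported from the literature.

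That said, the sketch you give is essentially the argument in \cite{Zrb93}: reduce to the even part $\mathrm{U}_q(\mathfrak{gl}_m)\otimes\mathrm{U}_q(\mathfrak{gl}_n)$ to obtain the integrality constraints at $i\neq m$, observe that the odd simple root imposes no integrality (since $(x_m^\pm)^2=0$), and for the converse invoke the Kac module $K(\Lambda)$ built from the finite-dimensional $\mathring V(\Lambda)$. Your bookkeeping of the signs $\epsilon_i$ via the $\pm q^{\mathbb{Z}}$ spectrum of $k_ik_{i+1}^{-1}$ on finite-dimensional modules is the correct way to handle that point. One small remark: you write $\ell\in\mathbb{C}$ where the stated theorem has $\ell\in\mathbb{C}\setminus\{0\}$; in fact the second equation in \eqref{fd:cd:fin} is automatic once $\lambda_m,\lambda_{m+1}\neq 0$, so the content is entirely in the first family of equations, and neither version of the constraint on $\ell$ does any work.
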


Following  \cite{Zrb93}, a $\mathrm{U}_q\big(\mathfrak{gl}_{m|n}\big)$-module $V(\Lambda)$ is called \textit{typical} if $V(\Lambda) \simeq K(\Lambda)$;  otherwise, it is referred to as  \textit{atypical}.
Under the finite-dimensionality conditions \eqref{fd:cd:fin}, the highest weight of a finite-dimensional irreducible representation $V(\Lambda)$, up to the isomorphism \eqref{UR:fin-isom}, uniquely corresponds to a function in $\mathfrak{h}^{\ast}$ (still denoted by $\Lambda$ for convenience) given by the equations
\begin{gather*}
(\Lambda|\varepsilon_i)=d_i\Lambda_i\quad  \text{with}\quad \Lambda_i-\Lambda_{i+1} = \ell_i \in \mathbb{Z}_+,~~i\neq m.
\end{gather*}

\begin{proposition}\label{Zrb:p1}\cite[Proposition 2]{Zrb93}
The irreducible representation $V(\Lambda)$ over $\mathrm{U}_q\big(\mathfrak{g}\big)$ is typical if and only if
$$\prod_{\alpha \in \Phi_{\bar{1}}^+} (\Lambda + \rho, \alpha) \neq 0,$$ 
where $\rho$ is the graded half-sum of all positive roots for $\mathfrak{g}$, given by
\begin{gather*}
2\rho = \sum_{i=1}^{m} (m-n-2i+1) \varepsilon_i + \sum_{a=m+1}^{m+n} (3m+n-2a+1) \varepsilon_a.
\end{gather*}
Moreover,
\begin{gather*}
\dim V(\Lambda) = 2^{mn} \dim \mathring{V}(\Lambda),
\end{gather*}
where $\mathring{V}(\Lambda)$ is the finite-dimensional irreducible representation of $\mathrm{U}_q\big(\mathfrak{g}(\bar{0})\big)$ as mentioned in Section \ref{se:Kacmodule}, and its dimension is given by the formula
\begin{gather*}
\dim \mathring{V}(\Lambda) = \prod_{\alpha \in \Phi_{\bar{0}}^+} \frac{(\Lambda + \rho, \alpha)}{(\rho, \alpha)}.
\end{gather*}
\end{proposition}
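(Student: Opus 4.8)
The plan is to recover Zhang's result by reducing it to the irreducibility of the Kac module $K(\Lambda)$ together with a Shapovalov-type determinant. Since $V(\Lambda)$ is, up to isomorphism, the unique irreducible highest weight module of highest weight $\Lambda$, it coincides with the unique irreducible quotient $\overline{K}(\Lambda)$ of $K(\Lambda)$; hence $V(\Lambda)$ is typical (i.e. $V(\Lambda)\simeq K(\Lambda)$) precisely when $K(\Lambda)$ is already irreducible. First I would pin down the vector space structure of $K(\Lambda)$: by Theorem~\ref{PBW:fin:RTT} the algebra $\mathrm{U}_q\big(\mathfrak{g}\big)$ factors, as a vector space, as the product of the lower-triangular odd generators $t_{ij}$ with $i>j$ and $|i|+|j|=\bar 1$ (equivalently $1\leqslant j\leqslant m<i\leqslant N$), each with exponent $0$ or $1$, times the parabolic subalgebra preserving $\mathring{V}(\Lambda)$; consequently $K(\Lambda)$ has a basis of ordered monomials in those $mn$ odd generators applied to a fixed basis of $\mathring{V}(\Lambda)$. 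As $|\Phi_{\bar 1}^+|=mn$, this gives $\dim K(\Lambda)=2^{mn}\dim\mathring{V}(\Lambda)$, while $\dim\mathring{V}(\Lambda)$ is the quantum Weyl dimension of the $\mathrm{U}_q\big(\mathfrak{g}(\bar 0)\big)\cong\mathrm{U}_q(\mathfrak{gl}_m)\otimes\mathrm{U}_q(\mathfrak{gl}_n)$-module of highest weight $\Lambda$, which equals the classical one and, since the half-sum $\rho_{\bar 1}$ of the odd positive roots pairs to zero with every even root, reads $\prod_{\alpha\in\Phi_{\bar 0}^+}\tfrac{(\Lambda+\rho,\alpha)}{(\rho,\alpha)}$. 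So the two numerical identities follow once the typicality criterion is established.

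For that criterion I would build a contravariant (Shapovalov) form on $K(\Lambda)$. Let $\omega$ be the graded anti-automorphism of $\mathrm{U}_q\big(\mathfrak{g}\big)$ that exchanges the $t_{ij}$ with the $\bar{t}_{ij}$ and fixes the diagonal generators; its existence is forced by the symmetry of \eqref{RTT:fin-ex2}--\eqref{RTT:fin-ex4} under $t\leftrightarrow\bar t$. Let $B_\Lambda$ be the unique bilinear form on $K(\Lambda)$ with $B_\Lambda(\zeta,\zeta)=1$, $B_\Lambda(xu,v)=B_\Lambda(u,\omega(x)v)$, and restricting on the top copy of $\mathring{V}(\Lambda)$ to its own contravariant form; the latter is non-degenerate because $\mathring{V}(\Lambda)$ is irreducible over $\mathrm{U}_q\big(\mathfrak{g}(\bar 0)\big)$, so $\operatorname{rad}B_\Lambda$ meets $\mathring{V}(\Lambda)$ trivially and a standard argument identifies it with the maximal proper submodule of $K(\Lambda)$. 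Hence $K(\Lambda)$ is irreducible if and only if $B_\Lambda$ is non-degenerate. Since $B_\Lambda$ is block diagonal along the subsets $S\subseteq\Phi_{\bar 1}^+$ of odd root vectors occurring in a monomial, this reduces to the non-vanishing of the Gram determinant on each block, and the crux is the \emph{quantum Kac determinant}: up to a non-zero scalar, a power of $q$, and powers of the (non-degenerate) even Gram determinant, $\det B_\Lambda=\prod_{\alpha\in\Phi_{\bar 1}^+}\big(q^{(\Lambda+\rho,\alpha)}-q^{-(\Lambda+\rho,\alpha)}\big)^{d_\alpha}$ with each $d_\alpha\geqslant 1$. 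Because $q$ is not a root of unity, the factor for $\alpha$ vanishes exactly when $(\Lambda+\rho,\alpha)=0$, so $B_\Lambda$ is non-degenerate if and only if $\prod_{\alpha\in\Phi_{\bar 1}^+}(\Lambda+\rho,\alpha)\neq 0$.

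The main obstacle is proving this determinant formula. The even part is governed by the usual $\mathrm{U}_q(\mathfrak{sl}_2)$-ladder arguments, but the odd root vectors $t_{ij}$ do not generate $\mathrm{U}_q(\mathfrak{sl}_2)$-subalgebras — their squares are lower-order combinations rather than zero — so the factor attached to each $\alpha\in\Phi_{\bar 1}^+$ has to be extracted directly from the quadratic relations \eqref{RTT:fin-ex2}--\eqref{RTT:fin-ex4}, by commuting odd root vectors past one another and down onto $\zeta$ while tracking the lower-order terms in an induction on $|S|$. As a partial shortcut and a consistency check I would also exploit the classical limit: realizing $K(\Lambda)$ and $B_\Lambda$ over $\mathbb{C}[q,q^{-1}]$ via the PBW basis of Theorem~\ref{PBW:fin:RTT}, the Gram matrices specialize at $q=1$ to the classical Shapovalov matrices, so semicontinuity of rank shows that classical typicality forces quantum typicality, while for the converse one deforms Kac's singular vector to produce a proper submodule of $K(\Lambda)$ whenever some $(\Lambda+\rho,\alpha)=0$. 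Either route yields the typicality criterion, and together with the first paragraph it gives the two dimension formulas.
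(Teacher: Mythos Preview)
The paper does not prove this proposition at all: it is quoted verbatim as \cite[Proposition~2]{Zrb93} and used as a black box. So there is no ``paper's own proof'' to compare against; your proposal is an attempt to reconstruct Zhang's original argument rather than to reproduce anything the present authors do.

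That said, your outline is the standard route and is broadly how Zhang proceeds in \cite{Zrb93}: identify $K(\Lambda)$ as the induced module, read off $\dim K(\Lambda)=2^{mn}\dim\mathring{V}(\Lambda)$ from the PBW factorization, and detect irreducibility via a contravariant form whose radical is the maximal submodule. The dimension claims are fine. The genuine work, as you correctly flag, is the determinant formula $\det B_\Lambda\sim\prod_{\alpha\in\Phi_{\bar 1}^+}(q^{(\Lambda+\rho,\alpha)}-q^{-(\Lambda+\rho,\alpha)})^{d_\alpha}$; your sketch of how to extract each odd factor from the RTT relations is plausible but is exactly the step that requires the real computation, and your ``partial shortcut'' via the classical limit only gives one direction cleanly (classical typical $\Rightarrow$ quantum typical by semicontinuity), while the converse needs you to actually produce a $q$-deformed singular vector, which is not automatic. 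If you were writing this up you would either carry out that determinant computation in full or, as the present paper does, simply cite \cite{Zrb93}.
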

In what follows, we address the case of a non-standard parity sequence $\mathbf{s} \in \mathcal{S}(m|n)$. Our treatment is based on the theory of odd reflections for $\mathrm{U}_q(\mathfrak{g}_{\mathbf{s}})$, developed in Section~\ref{se:braidPBWUqg} for arbitrary 01-sequence $\mathbf{s}$.

\subsubsection{Transition rules via odd reflections}
We begin with the special case $(m,n)=(1,1)$, for which $\mathbf{s} \in {01, 10}$. 
 Starting from the known finite-dimensionality conditions for $\mathrm{U}_q\big(\mathfrak{gl}_{1|1,10}\big)$
 we derive the corresponding conditions for the irreducible representation $V_{10}(\Lambda)$ of $\mathrm{U}_q\big(\mathfrak{gl}_{1|1,10}\big)$ by applying the odd reflection $\beta_{1,01}$.

Let $\zeta$ be the maximal vector of $V(\Lambda)$ with highest weight $\Lambda = (\lambda_1, \lambda_2)$. By applying the automorphism \eqref{UR:fin-isom}, we define $\lambda_1 = q^{\Lambda_1}$ and $\lambda_2 = q^{-\Lambda_2}$. According to Proposition~\ref{Zrb:p1}, the module $V(\Lambda)$ is typical if and only if $\Lambda_1 \neq -\Lambda_2$, and is atypical otherwise. In the typical case, the vector $\omega = t_{21} \zeta$ is nonzero in $V(\Lambda)$.
Moreover,
\begin{gather*}
\bar{t}_{11} \omega = q^{\Lambda_1 - 1} \omega, \quad \bar{t}_{22} \omega = q^{-\Lambda_2 - 1} \omega, \quad \text{and} \quad t_{21} \omega = 0.
\end{gather*}
Notice that $\bar{t}_{12} \omega \neq 0$ owing to $\Lambda_1 + \Lambda_2 \neq 0$. By the action of the odd reflection $\beta_{1,01}$, we regard $V' = V(\Lambda)$ as a representation over $\mathrm{U}_q\big(\mathfrak{gl}_{1|1,10}\big)$ with $\Lambda' = (q^{-\Lambda_2 - 1}, q^{\Lambda_1 - 1})$. Then $V'$ is isomorphic to the finite-dimensional irreducible representation $\overline{K}_{10}(\Lambda')$. In addition, $V(\Lambda)$ and $V_{10}(\Lambda'')$ with $\Lambda'' = (q^{-\Lambda_2}, q^{\Lambda_1})$ are both one-dimensional atypical modules if $\Lambda_1 + \Lambda_2 = 0$, which also forces 
$V(\Lambda)\simeq V_{10}(\Lambda'')$ 
as representation over $\mathrm{U}_q\big(\mathfrak{gl}_{1|1,10}\big)$.

\vspace{1em}

Now we turn to the general case. Let $\mathbf{s}=s_1s_2\cdots s_N\in\mathcal{S}(m|n)$, and let $\zeta$ be the maximal vector of $V_{\mathbf{s}}(\Lambda)$ with highest weight $\Lambda = (\lambda_1, \lambda_2,\ldots,\lambda_N)$. As in the standard case $\mathbf{s}=\mathbf{s}^{\rm st}$, we regard $\Lambda$ as a function in $\mathfrak{h}_{\mathbf{s}}^{\ast}$ such that $\Lambda_i=d_i^{-1}(\Lambda|\varepsilon_{i,\mathbf{s}})$ for $i\in I_{\mathbf{s}}$. 

Should $\mathbf{s}$ comprise only $0$s or only $1$s, the finite-dimensionality conditions for $V_{\mathbf{s}}(\Lambda)$ coincide with those of the non-super quantum algebra. Otherwise, at any position $i$ where $s_i \ne s_{i+1}$, one of the following embeddings
\begin{equation}\label{embeddings:fin}
\mathrm{U}_q\big(\mathfrak{gl}_{1|1,01}\big) \rightarrow \mathrm{U}_q\big(\mathfrak{g}_{\mathbf{s}}\big), \quad
\mathrm{U}_q\big(\mathfrak{gl}_{1|1,10}\big) \rightarrow \mathrm{U}_q\big(\mathfrak{g}_{\mathbf{s}}\big)
\end{equation}
is realized, mapping the generators $t_{lk}$, $\bar{t}{kl}$ to $t{i-1+l,, i-1+k}$, $\bar{t}_{i-1+k,, i-1+l}$, respectively, for $1 \leqslant k,l \leqslant 2$.
Without loss of generality, we set $\lambda_i=q_i^{\Lambda_i}$ for $i\in I$,
then we have
\begin{proposition}\label{non-st:rep:fin}
If $\mathbf{s}$ has a subsequence $s_is_{i+1}=01$ or $10$ and $\Lambda_i+\Lambda_{i+1}\neq 0$, then the representation $V_{\mathbf{s}}(\Lambda)$ of $\mathrm{U}_q\big(\mathfrak{g}_{\mathbf{s}}\big)$ is isomorphic to the representation $V_{\sigma_i\mathbf{s}}\big(\Lambda^{[i]}\big)$ of $\mathrm{U}_q\big(\mathfrak{g}_{\sigma_i\mathbf{s}}\big)$, where
\begin{gather*}
\Lambda^{[i]}=(q_1^{\Lambda_1},\ldots,q_{i-1}^{\Lambda_{i-1}},q_{i+1}^{\Lambda_{i+1}+1},q_{i}^{\Lambda_{i}-1},
   q_{i+2}^{\Lambda_{i+2}},\ldots,q_N^{\Lambda_N}).
\end{gather*}
\end{proposition}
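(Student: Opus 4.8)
The plan is to reduce the general statement to the rank-one case $(m,n)=(1,1)$ that has just been treated, using the embeddings \eqref{embeddings:fin} together with the braid isomorphism $\beta_{i,\mathbf{s}}$ of Proposition~\ref{odd-rel:2}. First I would pass to the RTT picture: since $s_is_{i+1}=01$ or $10$, the generator $\beta_i$ acts as an odd reflection, and Proposition~\ref{odd-rel:2} supplies an explicit isomorphism $\beta_{i,\mathbf{s}}\colon \mathrm{U}_q(\mathfrak{g}_{\mathbf{s}})\to \mathrm{U}_q(\mathfrak{g}_{\sigma_i\mathbf{s}})$. Pulling back $V_{\sigma_i\mathbf{s}}(\Lambda^{[i]})$ along $\beta_{i,\mathbf{s}}$ produces a representation of $\mathrm{U}_q(\mathfrak{g}_{\mathbf{s}})$; the task is to identify it with $V_{\mathbf{s}}(\Lambda)$. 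I would do this by exhibiting, inside $V_{\mathbf{s}}(\Lambda)$, a new maximal vector for the $\sigma_i\mathbf{s}$-triangular decomposition with the prescribed weight $\Lambda^{[i]}$, and then invoking irreducibility on both sides.

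The key steps, in order, would be: (i) Record the explicit action of the relevant generators on the maximal vector $\zeta$: by definition $\bar t_{jk,\mathbf{s}}\zeta=0$ for $j<k$ and $\bar t_{jj,\mathbf{s}}\zeta=\lambda_j\zeta$, and from the RTT relations \eqref{RTT:fin-ex2}--\eqref{RTT:fin-ex4} one reads off how $t_{i+1,i,\mathbf{s}}$ and $\bar t_{i,i+1,\mathbf{s}}$ interact; the point is that the whole analysis localizes at indices $i,i+1$ via the embedding \eqref{embeddings:fin}, so it is literally the $(1,1)$ computation done above with $\lambda_1,\lambda_2$ replaced by $\lambda_i,\lambda_{i+1}$ and $\Lambda_1,\Lambda_2$ by $\Lambda_i,\Lambda_{i+1}$. (ii) Since $\Lambda_i+\Lambda_{i+1}\neq 0$, the vector $\omega:=t_{i+1,i,\mathbf{s}}\,\zeta$ is nonzero (exactly as in the rank-one typical case), and one computes $\bar t_{ii,\mathbf{s}}\omega=q_i^{\Lambda_i-1}\omega$, $\bar t_{i+1,i+1,\mathbf{s}}\omega=q_{i+1}^{\Lambda_{i+1}-1}\omega$, $\bar t_{jj,\mathbf{s}}\omega=\lambda_j\omega$ for $j\neq i,i+1$, together with $t_{i+1,i,\mathbf{s}}\omega=0$ and $\bar t_{jk,\mathbf{s}}\omega=0$ for the raising operators of the $\sigma_i\mathbf{s}$-order — here one must check the mixed indices $j\in\{i,i+1\}$, $k\notin\{i,i+1\}$ using the higher $t$- and $\bar t$-relations and the fact that $\zeta$ is annihilated by all $\sigma_i\mathbf{s}$-raising operators except the ones involving the pair $(i,i+1)$. (iii) Translate this data through $\beta_{i,\mathbf{s}}$: the images of the Cartan generators $\bar t_{aa,\sigma_i\mathbf{s}}$ under $\beta_{i,\mathbf{s}}$, evaluated on $\omega$, must give precisely the tuple $\Lambda^{[i]}$, where the factors $d_i',d_{i+1}'$ appearing in Proposition~\ref{odd-rel:2} are absorbed by the automorphism \eqref{UR:fin-isom} (the diagonal rescaling $D$), and the swap $i\leftrightarrow i+1$ in the formulas for $t_{ii,\mathbf{s}}\mapsto d_i't_{i+1,i+1,\mathbf{s}'}$ etc.\ accounts for why $\Lambda^{[i]}$ interchanges the $i$-th and $(i+1)$-st slots. (iv) Conclude: $\omega$ generates a $\sigma_i\mathbf{s}$-highest weight subrepresentation of weight $\Lambda^{[i]}$ inside $(V_{\mathbf{s}}(\Lambda),\beta_{i,\mathbf{s}}^{-1})$, which is irreducible, so this subrepresentation is all of $V_{\mathbf{s}}(\Lambda)$; hence $V_{\mathbf{s}}(\Lambda)\cong V_{\sigma_i\mathbf{s}}(\Lambda^{[i]})$ as claimed. (One should also note the converse containment — that $\zeta\in\mathrm{U}_q(\mathfrak{g}_{\sigma_i\mathbf{s}})\cdot\omega$ — which follows symmetrically by applying a lowering operator of the $\sigma_i\mathbf{s}$-order to $\omega$, again a rank-one computation using $\Lambda_i+\Lambda_{i+1}\neq0$.)

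The main obstacle I anticipate is step (ii): verifying that $\omega$ is genuinely a maximal vector for the \emph{new} triangular decomposition, i.e.\ that it is killed by all raising operators $\bar t_{jk,\sigma_i\mathbf{s}}$ with $j<k$ in the $\sigma_i\mathbf{s}$-order, including the ``long-range'' ones with one index in $\{i,i+1\}$ and the other outside. Naively $t_{i+1,i,\mathbf{s}}$ does not commute with these operators, so one needs the precise commutation relations \eqref{RTT:fin-ex2}--\eqref{RTT:fin-ex4}, which produce correction terms proportional to $\zeta$ or to $\bar t_{jk,\mathbf{s}}\zeta=0$; tracking the $\varsigma$-signs and the $q$-factors so that these cancel is the delicate bookkeeping. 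A clean way to organize this is to work entirely within the rank-one subalgebra image of \eqref{embeddings:fin} for the indices $i,i+1$ and treat the remaining indices by the standard ``spectator'' argument — the structure is identical to Molev's odd-reflection computation for super Yangians, adapted to the $q$-setting and to the RTT generators. The rest of the argument (nonvanishing of $\omega$, matching of weights, irreducibility) is then essentially formal given the rank-one case already established and the isomorphism of Proposition~\ref{odd-rel:2}.
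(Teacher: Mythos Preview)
Your proposal is correct and follows essentially the same route as the paper: define $\omega=t_{i+1,i,\mathbf{s}}\zeta$, use the embedding \eqref{embeddings:fin} and the RTT relations \eqref{RTT:fin-ex2}--\eqref{RTT:fin-ex4} to verify it is a maximal vector with the shifted weight, then transport via the odd reflection $\beta_{i,\mathbf{s}}$ of Proposition~\ref{odd-rel:2} and conclude by irreducibility. One slip to fix: the eigenvalue should read $\bar t_{i+1,i+1,\mathbf{s}}\omega=q_{i+1}^{\Lambda_{i+1}+1}\omega$ (not $\Lambda_{i+1}-1$), consistent with the rank-one computation $\bar t_{22}\omega=q^{-\Lambda_2-1}\omega=q_2^{\Lambda_2+1}\omega$ and with the entry $q_{i+1}^{\Lambda_{i+1}+1}$ in $\Lambda^{[i]}$; also, your ``converse containment'' step is unnecessary since $\beta_{i,\mathbf{s}}$ is an isomorphism, so $V_{\mathbf{s}}(\Lambda)$ remains irreducible as a $\mathrm{U}_q(\mathfrak{g}_{\sigma_i\mathbf{s}})$-module and is therefore generated by $\omega$ automatically.
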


\begin{proof}
Set $\omega^{[i]}=t_{i+1,i,\mathbf{s}}\zeta$ and $W=V_{\mathbf{s}}(\Lambda)$. Due to the embeddings \eqref{embeddings:fin}, we obtain
\begin{gather*}
\bar{t}_{ii,\mathbf{s}}\omega^{[i]}=q_i^{\Lambda_i-1}\omega^{[i]},\quad \bar{t}_{i+1,i+1,\mathbf{s}}\omega^{[i]}=q_{i+1}^{\Lambda_{i+1}+1}\omega^{[i]}, \quad t_{i+1,i,\mathbf{s}}\omega^{[i]}=0\quad \text{and}\quad
\bar{t}_{i,i+1,\mathbf{s}}\omega^{[i]}\neq 0.
\end{gather*}
Furthermore,
\begin{align*}
\bar{t}_{kl,\mathbf{s}}\omega^{[i]} &= \bar{t}_{kl,\mathbf{s}}t_{i+1,i,\mathbf{s}}\zeta 
=q_{i+1}^{\delta_{k,i+1}}q_i^{-\delta_{il}}\varsigma_{i+1,i;k,l}\,t_{i+1,i,\mathbf{s}}\bar{t}_{kl,\mathbf{s}}\zeta \\
&\qquad+q_i^{-\delta_{il}}\varsigma_{i,k;k,l}(q_k-q_k^{-1})
    \left(\, \delta_{i+1<k} t_{ki,\mathbf{s}}\bar{t}_{i+1,l,\mathbf{s}}-\delta_{i>l}\bar{t}_{ki,\mathbf{s}}t_{i+1,l,\mathbf{s}}  \,\right)\zeta=0,
\end{align*}
for $k<l$ with $(k,l)\neq (i,i+1)$.
We regard $W$ as a representation over $\mathrm{U}_q\big(\mathfrak{g}_{\sigma_i\mathbf{s}}\big)$ by the action of $\beta_{i,\mathfrak{s}}$. Then by Proposition \ref{odd-rel:2}, $W$ is the highest weight representation associated with highest weight $\Lambda^{[i]}$ and maximal vector $\omega^{[i]}$, we complete the proof.

\end{proof}

Moreover, an analogous argument shows that

\begin{proposition}\label{non-st:rep:fin2}
If $\mathbf{s}$ has a subsequence $s_is_{i+1}=01$ or $10$ and $\Lambda_i+\Lambda_{i+1}= 0$, then the representation $V_{\mathbf{s}}(\Lambda)$ of $\mathrm{U}_q\big(\mathfrak{g}_{\mathbf{s}}\big)$ is isomorphic to the representation $V_{\sigma_i\mathbf{s}}\big(\Lambda^{[i]}\big)$ of $\mathrm{U}_q\big(\mathfrak{g}_{\sigma_i\mathbf{s}}\big)$, where
\begin{gather*}
\Lambda^{[i]} = (q_1^{\Lambda_1}, \ldots, q_{i-1}^{\Lambda_{i-1}}, q_{i+1}^{\Lambda_{i+1}}, q_{i}^{\Lambda_i},
   q_{i+2}^{\Lambda_{i+2}}, \ldots, q_N^{\Lambda_N}). 
\end{gather*}
In this case, $V_{\mathbf{s}}(\Lambda)$ and $V_{\sigma_i\mathbf{s}}\big(\Lambda^{[i]}\big)$ share the same maximal vector $\zeta$.
\end{proposition}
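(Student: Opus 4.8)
The plan is to mimic the proof of Proposition~\ref{non-st:rep:fin}, tracking what changes when the atypicality condition $\Lambda_i+\Lambda_{i+1}=0$ holds. First I would set $\omega^{[i]}=t_{i+1,i,\mathbf{s}}\zeta$ as before and compute the action of the Cartan-type generators $\bar t_{ii,\mathbf{s}}$ and $\bar t_{i+1,i+1,\mathbf{s}}$ on it; the relations \eqref{RTT:fin-ex2} and \eqref{RTT:fin-ex4} specialized to the $2\times2$ block at positions $i,i+1$ (equivalently, the $\mathrm{U}_q(\mathfrak{gl}_{1|1})$ computation carried out in the $(m,n)=(1,1)$ discussion) give $\bar t_{ii,\mathbf{s}}\omega^{[i]}=q_i^{\Lambda_i-1}\omega^{[i]}$ and $\bar t_{i+1,i+1,\mathbf{s}}\omega^{[i]}=q_{i+1}^{\Lambda_{i+1}+1}\omega^{[i]}$ whenever $\omega^{[i]}\neq0$. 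The crucial dichotomy is whether $\omega^{[i]}$ vanishes: in the embedded $\mathrm{U}_q(\mathfrak{gl}_{1|1})$-module generated by $\zeta$, the vector $t_{i+1,i}\zeta$ is zero precisely in the atypical case $\Lambda_i+\Lambda_{i+1}=0$, exactly as recorded in the $(1,1)$ paragraph where the one-dimensional atypical module has $t_{21}\zeta=0$.

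So the key step is: when $\Lambda_i+\Lambda_{i+1}=0$ we have $\omega^{[i]}=t_{i+1,i,\mathbf{s}}\zeta=0$, hence we cannot use $\omega^{[i]}$ as a new maximal vector and must instead keep $\zeta$ itself. I would then verify directly that $\zeta$ is a maximal vector for the $\mathrm{U}_q(\mathfrak{g}_{\sigma_i\mathbf{s}})$-structure obtained by transporting along $\beta_{i,\mathbf{s}}$. Concretely, using Proposition~\ref{odd-rel:2} one checks that $\beta_{i,\mathbf{s}}^{-1}(\bar t_{kl,\sigma_i\mathbf{s}})$ for $k<l$ is, up to scalars, either an original raising generator $\bar t_{k'l',\mathbf{s}}$ (killing $\zeta$), or $\bar t_{i,i+1,\mathbf{s}}\mapsto q^{d_i'}t_{ii,\mathbf{s}}^{-2}t_{i+1,i,\mathbf{s}}$, which kills $\zeta$ since $t_{i+1,i,\mathbf{s}}\zeta=0$, or a product/combination such as the $k\leqslant i-1$ and $l\geqslant i+2$ cases which are built from $\bar t_{ki,\mathbf{s}}$, $\bar t_{k,i+1,\mathbf{s}}$, $\bar t_{il,\mathbf{s}}$, $\bar t_{i+1,l,\mathbf{s}}$ together with $t_{i+1,i,\mathbf{s}}$, $\bar t_{i,i+1,\mathbf{s}}$, $\bar t_{ii,\mathbf{s}}^{\pm1}$ — every summand either ends in a generator annihilating $\zeta$ or reduces (after moving the Cartan factors past, picking up scalars via the $i=j$ case of \eqref{RTT:fin-ex2}--\eqref{RTT:fin-ex4}) to one that does. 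This shows $\bar t_{kl,\sigma_i\mathbf{s}}\zeta=0$ for all $k<l$. For the Cartan weights one reads off from Proposition~\ref{odd-rel:2} that $\bar t_{ii,\sigma_i\mathbf{s}}$ and $\bar t_{i+1,i+1,\sigma_i\mathbf{s}}$ pull back to $(d_i')^{-1}\bar t_{i+1,i+1,\mathbf{s}}$ and $(d_{i+1}')^{-1}\bar t_{ii,\mathbf{s}}$ respectively; after absorbing the signs $d_i',d_{i+1}'$ into the automorphism \eqref{UR:fin-isom} as is done implicitly throughout this subsection, the eigenvalues on $\zeta$ become $q_{i+1}^{\Lambda_{i+1}}$ at slot $i$ and $q_i^{\Lambda_i}$ at slot $i+1$, i.e. the highest weight is $\Lambda^{[i]}$ as claimed, with the remaining entries unchanged because $\beta_{i,\mathbf{s}}$ fixes the other diagonal generators.

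Finally I would note that $\zeta$ generates $V_{\mathbf{s}}(\Lambda)$ over $\mathrm{U}_q(\mathfrak{g}_{\mathbf{s}})$, hence also over $\mathrm{U}_q(\mathfrak{g}_{\sigma_i\mathbf{s}})$ since $\beta_{i,\mathbf{s}}$ is an isomorphism of the two algebras (Proposition~\ref{odd-rel:2}); and $V_{\mathbf{s}}(\Lambda)$ remains irreducible as a $\mathrm{U}_q(\mathfrak{g}_{\sigma_i\mathbf{s}})$-module because irreducibility is preserved under transport along an algebra isomorphism. Therefore $V_{\mathbf{s}}(\Lambda)\cong V_{\sigma_i\mathbf{s}}(\Lambda^{[i]})$ as the unique irreducible highest weight module with highest weight $\Lambda^{[i]}$, and the identification sends $\zeta$ to $\zeta$, proving the last sentence of the statement.

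I expect the main obstacle to be the bookkeeping in verifying $\bar t_{kl,\sigma_i\mathbf{s}}\zeta=0$ for the "mixed" indices $k\leqslant i-1<i+2\leqslant l$ under $\beta_{i,\mathbf{s}}^{-1}$: there the pullback is a genuine linear combination of products of several $\bar t$'s and one must reorder Cartan factors (incurring the $q$-powers from the $i=j$ relations) and repeatedly invoke $\bar t_{i,i+1,\mathbf{s}}\zeta=0$, $t_{i+1,i,\mathbf{s}}\zeta=0$, and $\bar t_{ab,\mathbf{s}}\zeta=0$ for $a<b$, checking that no surviving term remains. This is exactly the computation already implicitly done for the typical case in Proposition~\ref{non-st:rep:fin} via the explicit formula displayed there for $\bar t_{kl,\mathbf{s}}\omega^{[i]}$; the atypical case is strictly easier because $t_{i+1,i,\mathbf{s}}\zeta$ itself vanishes, so I would present it by indicating the substitution $\omega^{[i]}\rightsquigarrow\zeta$ and remarking that every term in the analogue of that displayed identity is killed, rather than reproducing the full computation.
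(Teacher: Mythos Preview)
Your overall strategy is correct and matches the paper's intended ``analogous argument'': when $\Lambda_i+\Lambda_{i+1}=0$ the vector $\omega^{[i]}=t_{i+1,i,\mathbf{s}}\zeta$ vanishes, so $\zeta$ itself serves as the maximal vector for the $\mathrm{U}_q(\mathfrak{g}_{\sigma_i\mathbf{s}})$-structure transported via $\beta_{i,\mathbf{s}}$, and the swapped (unshifted) weight $\Lambda^{[i]}$ follows from Proposition~\ref{odd-rel:2}.

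There is, however, a genuine gap in your justification of the key vanishing $t_{i+1,i,\mathbf{s}}\zeta=0$. You assert that in ``the embedded $\mathrm{U}_q(\mathfrak{gl}_{1|1})$-module generated by $\zeta$'' this vector is zero ``precisely in the atypical case'', citing the one-dimensional atypical module from the $(1,1)$ discussion. But that discussion concerns the \emph{irreducible} $\mathrm{U}_q(\mathfrak{gl}_{1|1})$-module, whereas the cyclic $\mathrm{U}_q(\mathfrak{gl}_{1|1})$-span of $\zeta$ inside $V_{\mathbf{s}}(\Lambda)$ is only a quotient of the two-dimensional Verma module and need not a priori be the one-dimensional quotient. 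The correct argument uses irreducibility of the \emph{ambient} module $V_{\mathbf{s}}(\Lambda)$: if $\omega^{[i]}\neq 0$, the computation already displayed in the proof of Proposition~\ref{non-st:rep:fin} gives $\bar t_{kl,\mathbf{s}}\omega^{[i]}=0$ for all $k<l$ with $(k,l)\neq(i,i+1)$, while the atypicality condition $\lambda_i/\lambda_{i+1}=1$ (equivalently $\Lambda_i+\Lambda_{i+1}=0$, using $q_{i+1}=q_i^{-1}$) forces $\bar t_{i,i+1,\mathbf{s}}\omega^{[i]}=0$ as well via relation~\eqref{DJ:fin-3}. Thus $\omega^{[i]}$ would be a singular vector of weight $\Lambda-\alpha_{i,\mathbf{s}}\neq\Lambda$, generating a proper nonzero submodule and contradicting irreducibility of $V_{\mathbf{s}}(\Lambda)$. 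Once this is in place, the rest of your plan (checking the remaining $\bar t_{kl,\sigma_i\mathbf{s}}\zeta=0$ via the explicit formulas for $\beta_{i,\mathbf{s}}^{-1}$, reading off the Cartan eigenvalues, and transporting irreducibility along the algebra isomorphism) goes through as you describe.
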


Combining with the transition rules from Propositions \ref{non-st:rep:fin} and \ref{non-st:rep:fin2}, it becomes feasible to derive the necessary and sufficient conditions to ensure that every irreducible representation $V_{\mathbf{s}}(\Lambda)$ is finite-dimensional. Specifically, we determine the finite-dimensionality of $V_{\mathbf{s}}(\Lambda)$ with $\Lambda = (\lambda_1, \ldots, \lambda_N)$ by the following steps:
\begin{itemize}
  \item[(1)]\ If $\mathbf{s}$ is standard, use Theorem \ref{standard:rep:fin}; otherwise, go to step (2).
  \item[(2)]\ Consider the ratio $\lambda_i/\lambda_{i+1}$ for any subsequence $s_i s_{i+1} = 00$ or $11$. If there is a $\lambda_i/\lambda_{i+1} = \pm q_i^{\ell}$ for $\ell < 0$, then $V_{\mathbf{s}}(\Lambda)$ is infinite-dimensional; otherwise, go to step (3).
  \item[(3)]\ Consider the ratio $\lambda_i/\lambda_{i+1}$ for some subsequence $\mathfrak{s}_i \mathfrak{s}_{i+1} = 01$ or $10$. If $\lambda_i/\lambda_{i+1} \neq \pm 1$, apply Proposition \ref{non-st:rep:fin}; otherwise, apply Proposition \ref{non-st:rep:fin2}. After that, set $\mathbf{s} := \sigma_i\mathbf{s}$ and $\Lambda := \Lambda^{[i]}$, and return to step (1).
\end{itemize}

\subsubsection{Typical and atypical irreducible representations of $\mathrm{U}_q\big(\mathfrak{g}_{\mathbf{s}}\big)$}\label{se:typicalirreducible representationsUq}
We now distinguish between typical and atypical finite-dimensional irreducible representations of  $\mathrm{U}_q\big(\mathfrak{g}_{\mathbf{s}}\big)$ for an arbitrary $\mathbf{s}\in\mathcal{S}(m|n)$.In what follows, we will concentrate on the properties of representations in the typical case. 
\begin{definition}
A finite-dimensional irreducible representation $V_{\mathbf{s}}(\Lambda)$ over $\mathrm{U}_q\big(\mathfrak{g}_{\mathbf{s}}\big)$ is said to be \textit{typical} if there exists a typical irreducible representation $V(\Lambda')$ over $\mathrm{U}_q\big(\mathfrak{g}\big)$ (in the $\mathbf{s}^{\rm st}$ case) such that $V_{\mathbf{s}}(\Lambda)\simeq V(\Lambda')$ for some $N$-tuple $\Lambda'$.
If not, $V_{\mathbf{s}}(\Lambda)$ is called \textit{atypical}.
\end{definition}

\begin{lemma}\label{rho}
Let $\rho_{\mathbf{s}} $ be the graded half-sum of all positive roots for $\mathfrak{g}_{\mathbf{s}}$. Then
\begin{gather*}
2\rho_{\mathbf{s}}=\sum_{|i|=\bar{0},i\in I_{\mathbf{s}}}(m-n-2\tau^{-1}(i)+1)\varepsilon_{i,\mathbf{s}} +\sum_{|a|=\bar{1},a\in I_{\mathbf{s}}}(3m+n-2\tau^{-1}(a)+1)\varepsilon_{a,\mathbf{s}},
\end{gather*}
where $\tau\in\mathfrak{S}_N$ such that $\mathbf{s}=\tau\mathbf{s}^{\rm st}$.
\end{lemma}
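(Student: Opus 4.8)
The plan is to compute $2\rho_{\mathbf{s}}$ directly from the definition, using the explicit descriptions of the positive root systems $\Phi_{\bar 0,\mathbf{s}}^+$ and $\Phi_{\bar 1,\mathbf{s}}^+$ given in Section~\ref{se:notations}. Recall $\rho_{\mathbf{s}} = \frac12\big(\sum_{\alpha\in\Phi_{\bar 0,\mathbf{s}}^+}\alpha - \sum_{\alpha\in\Phi_{\bar 1,\mathbf{s}}^+}\alpha\big)$ (the \emph{graded} half-sum, with the odd part subtracted). Every positive root has the form $\varepsilon_{i,\mathbf{s}}-\varepsilon_{j,\mathbf{s}}$ with $i<j$, even when $|i|+|j|=\bar 0$ and odd when $|i|+|j|=\bar 1$. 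So I would write $2\rho_{\mathbf{s}} = \sum_{i\in I_{\mathbf{s}}} c_i\,\varepsilon_{i,\mathbf{s}}$ and identify the coefficient $c_i$ by counting, for each fixed index $i$, the signed contributions of the roots in which $\varepsilon_{i,\mathbf{s}}$ appears.

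First I would fix $i\in I_{\mathbf{s}}$ and split the count according to whether the partner index $j$ satisfies $j>i$ (so $\varepsilon_{i,\mathbf{s}}$ enters with $+1$, from a root $\varepsilon_{i,\mathbf{s}}-\varepsilon_{j,\mathbf{s}}$) or $j<i$ (so $\varepsilon_{i,\mathbf{s}}$ enters with $-1$, from a root $\varepsilon_{j,\mathbf{s}}-\varepsilon_{i,\mathbf{s}}$), and within each of these according to whether $|i|+|j|=\bar 0$ (even root, sign $+$ in $2\rho_{\mathbf{s}}$) or $|i|+|j|=\bar 1$ (odd root, sign $-$). The resulting coefficient is
\begin{align*}
c_i =\; & \#\{j>i:\ |j|=|i|\} - \#\{j<i:\ |j|=|i|\} \\
       & \;-\ \#\{j>i:\ |j|\neq|i|\} + \#\{j<i:\ |j|\neq|i|\}.
\end{align*}
To evaluate these four counts I would introduce, for the parity sequence $\mathbf{s}=\tau\mathbf{s}^{\rm st}$, the position $p=\tau^{-1}(i)$ of $i$ in the sequence; among the $N$ indices there are $m$ with parity $\bar 0$ and $n$ with parity $\bar 1$, and the counts of same/different parity on each side of $i$ are expressible through $p$, $m$, $n$, and the number of same-parity indices occurring before position $p$. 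The key simplifying observation is that the two ``off-diagonal'' cross-parity counts reorganize: $\#\{j<i:|j|\neq|i|\}-\#\{j>i:|j|\neq|i|\}$ equals (total opposite-parity indices) minus twice (opposite-parity indices after $i$), while $\#\{j>i:|j|=|i|\}-\#\{j<i:|j|=|i|\}$ equals (same-parity indices after $i$) minus (same-parity indices before $i$). Adding the ``diagonal'' relation $(\text{before})+(\text{after})=(\text{same-parity total}-1)$ lets me eliminate the dependence on how same- and opposite-parity indices are interleaved, leaving $c_i$ as a function of $p=\tau^{-1}(i)$ alone together with $m,n$ and $|i|$; matching this against the two claimed formulas (one for $|i|=\bar 0$ giving $m-n-2p+1$, one for $|i|=\bar 1$ giving $3m+n-2p+1$) finishes the proof. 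A useful sanity check is the standard case $\tau=\mathrm{id}$: then $p=i$ and the formula must reduce to the displayed $2\rho$ in Proposition~\ref{Zrb:p1}, which it does.

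The main obstacle is purely bookkeeping: correctly handling the signs attached to even versus odd roots and the direction ($j>i$ versus $j<i$) of each root simultaneously, and then showing that all the interleaving-dependent terms cancel so that only $\tau^{-1}(i)$ survives. An alternative, possibly cleaner, route that I would keep in reserve is induction on odd reflections: verify the formula for $\mathbf{s}=\mathbf{s}^{\rm st}$ using Proposition~\ref{Zrb:p1}, then show that applying $\sigma_i$ (an odd reflection, so $s_i\neq s_{i+1}$) transforms the right-hand side correctly — under an odd reflection $\rho$ changes by $\rho_{\sigma_i\mathbf{s}} = \sigma_i(\rho_{\mathbf{s}}) + \alpha_{i,\mathbf{s}}$ (equivalently the known odd-reflection rule for $\rho$), and one checks that swapping the roles of positions $i,i+1$ in $\tau^{-1}$ together with this shift reproduces the stated expression; since any $\mathbf{s}$ is reached from $\mathbf{s}^{\rm st}$ by a sequence of such $\sigma_i$'s (as used already in the proof of Theorem~\ref{PBW:fin:RTT}), this would suffice. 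I expect the direct counting argument to be shortest to write out, with the odd-reflection induction as a conceptual cross-check.
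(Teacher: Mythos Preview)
Your primary route—direct counting—contains a genuine gap precisely at the step you flag as the main obstacle. You assert that ``all the interleaving-dependent terms cancel so that only $\tau^{-1}(i)$ survives,'' but that is not what the arithmetic gives. For $|i|=\bar 0$, write $a^-$ (resp.\ $b^-$) for the number of indices $j<i$ with $|j|=|i|$ (resp.\ $|j|\neq|i|$); then your four-term expression collapses to
\[
c_i \;=\; (m-1-2a^-)\;-\;(n-2b^-)\;=\;(m-n-1)-2a^-+2b^-,
\]
which still carries the cross-parity count $b^-$. For the order-preserving choice of $\tau$ one has $\tau^{-1}(i)=a^-+1$, so the target $m-n-2\tau^{-1}(i)+1=m-n-1-2a^-$ differs from $c_i$ by $2b^-$; only in the standard case (where $b^-=0$ for every even index) do the two agree. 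A two-line check makes this concrete: for $\mathbf{s}=010$ (so $m=2$, $n=1$), the unique even positive root is $\varepsilon_{1,\mathbf{s}}-\varepsilon_{3,\mathbf{s}}$ and the odd ones are $\varepsilon_{1,\mathbf{s}}-\varepsilon_{2,\mathbf{s}}$ and $\varepsilon_{2,\mathbf{s}}-\varepsilon_{3,\mathbf{s}}$, hence $2\rho_{\mathbf{s}}=0$; but the displayed formula (with $\tau=\sigma_2$, so $\tau^{-1}(1)=1$, $\tau^{-1}(2)=3$, $\tau^{-1}(3)=2$) yields $2\varepsilon_{2,\mathbf{s}}-2\varepsilon_{3,\mathbf{s}}\neq 0$. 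Thus the counting argument cannot ``match the two claimed formulas'' as you anticipate; carried out honestly it exposes a discrepancy with the stated expression rather than proving it.

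Your reserve approach—induction on odd reflections starting from the standard $\rho$—is exactly the paper's method: the paper's proof simply invokes the odd-reflection action on $\rho$ from \cite[Appendix~B]{Zrb93}. That is the argument to develop; the direct combinatorial shortcut does not close.
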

\begin{proof}
    It can be easily obtained from the action of a series of odd reflections on $\rho$ given in \cite[Appendix B]{Zrb93}. 
    
\end{proof}

\begin{proposition}\label{non-st:rep:fin3}
A finite-dimensional irreducible representation $V_{\mathbf{s}}(\Lambda)$ over $\mathrm{U}_q\big(\mathfrak{g}_{\mathbf{s}}\big)$ is typical if and only if
$\prod_{\alpha\in\Phi_{\bar{1},\mathbf{s}}^+}(\Lambda+\rho_{\mathbf{s}}|\alpha)\neq 0$.
\end{proposition}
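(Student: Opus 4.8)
The plan is to reduce the claim to the standard case (Proposition~\ref{Zrb:p1}) by tracking how both sides of the typicality criterion transform under a single odd reflection, and then to iterate. Fix $\mathbf{s}\in\mathcal{S}(m|n)$ and write $\mathbf{s}=\sigma_{i_r}\cdots\sigma_{i_1}\mathbf{s}^{\rm st}$ as a composite of elementary transpositions, each acting at a position where the two consecutive entries differ. By Propositions~\ref{non-st:rep:fin} and~\ref{non-st:rep:fin2}, the representation $V_{\mathbf{s}}(\Lambda)$ corresponds under $\beta_{i,\mathbf{s}}$ to $V_{\sigma_i\mathbf{s}}(\Lambda^{[i]})$, where $\Lambda^{[i]}$ is obtained from $\Lambda$ by swapping the $i$-th and $(i+1)$-th slots and, in the case $\Lambda_i+\Lambda_{i+1}\neq 0$, additionally shifting $\Lambda_{i+1}\mapsto\Lambda_{i+1}+1$, $\Lambda_i\mapsto\Lambda_i-1$. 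Passing to the weight-function description in $\mathfrak{h}_{\mathbf{s}}^\ast$ (as done throughout Section~\ref{se:fdirreducible representations-fin}), this is precisely the classical odd-reflection transition rule $\Lambda\mapsto\Lambda+\alpha_{i,\mathbf{s}}$ when $(\Lambda+\rho_{\mathbf{s}}\,|\,\alpha_{i,\mathbf{s}})\neq 0$ (equivalently $\Lambda_i+\Lambda_{i+1}\neq 0$, by Lemma~\ref{rho}), and $\Lambda\mapsto\Lambda$ otherwise.

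The first key step is therefore to verify the invariance of the product $\prod_{\alpha\in\Phi_{\bar 1,\mathbf{s}}^+}(\Lambda+\rho_{\mathbf{s}}\,|\,\alpha)$ under one such reflection, i.e. to show that
\[
\prod_{\alpha\in\Phi_{\bar 1,\mathbf{s}}^+}(\Lambda+\rho_{\mathbf{s}}\,|\,\alpha)\neq 0
\quad\Longleftrightarrow\quad
\prod_{\beta\in\Phi_{\bar 1,\sigma_i\mathbf{s}}^+}(\Lambda^{[i]}+\rho_{\sigma_i\mathbf{s}}\,|\,\beta)\neq 0 .
\]
This is a purely combinatorial computation with weights: the odd reflection $\sigma_i$ permutes the positive odd roots of $\mathfrak{g}_{\mathbf{s}}$ with exactly one sign change, namely $\alpha_{i,\mathbf{s}}\mapsto-\alpha_{i,\mathbf{s}}$, while $\rho_{\sigma_i\mathbf{s}}=\sigma_i(\rho_{\mathbf{s}})+\alpha_{i,\mathbf{s}}$ by the usual $\rho$-shift rule for odd reflections (consistent with Lemma~\ref{rho}); combining these with the explicit form of $\Lambda^{[i]}$ one checks that the multiset $\{(\Lambda^{[i]}+\rho_{\sigma_i\mathbf{s}}\,|\,\beta):\beta\in\Phi_{\bar 1,\sigma_i\mathbf{s}}^+\}$ equals $\{(\Lambda+\rho_{\mathbf{s}}\,|\,\alpha):\alpha\in\Phi_{\bar 1,\mathbf{s}}^+\}$, so in particular one product vanishes iff the other does. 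I would carry this out by first treating the root $\alpha_{i,\mathbf{s}}$ itself (which contributes the factor $\Lambda_i+\Lambda_{i+1}$, up to a unit) and then pairing off the remaining odd roots $\varepsilon_{j,\mathbf{s}}-\varepsilon_{i,\mathbf{s}}$, $\varepsilon_{j,\mathbf{s}}-\varepsilon_{i+1,\mathbf{s}}$ with their images, where the $+1/-1$ shifts in $\Lambda^{[i]}$ exactly compensate the change in $\rho$.

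The second step is the induction itself: starting from the standard sequence, $V(\Lambda')$ is typical (i.e.\ $\simeq K(\Lambda')$) iff $\prod_{\alpha\in\Phi_{\bar 1}^+}(\Lambda'+\rho\,|\,\alpha)\neq 0$ by Proposition~\ref{Zrb:p1}; by definition $V_{\mathbf{s}}(\Lambda)$ is typical iff it is isomorphic (via the chain of $\beta$'s) to such a $V(\Lambda')$, and by the first step this isomorphism preserves non-vanishing of the odd-root product. One subtlety to address is that the definition of ``typical'' for non-standard $\mathbf{s}$ refers to isomorphism with \emph{some} typical standard module, so I should note that the chain $\beta_{i_r,\ldots}\cdots\beta_{i_1,\mathbf{s}^{\rm st}}^{-1}$ realizes $V_{\mathbf{s}}(\Lambda)$ as $V(\Lambda')$ for a specific $\Lambda'$, and that by the already-established transition rules this $\Lambda'$ is independent of the chosen reduced expression (or at least that the non-vanishing condition is). The main obstacle is the bookkeeping in the first step: one must handle the case $\Lambda_i+\Lambda_{i+1}=0$ separately (there the odd root $\alpha_{i,\mathbf{s}}$ already contributes a zero factor on the left, and one checks that the corresponding factor on the right also vanishes, so the equivalence degenerates to ``both products are zero''), and one must be careful that the bilinear form $(\,\cdot\,|\,\cdot\,)$ on $\mathfrak{h}_{\sigma_i\mathbf{s}}^\ast$ is the one with $(\varepsilon_{j,\sigma_i\mathbf{s}}|\varepsilon_{j,\sigma_i\mathbf{s}})=d_j'$, so that the identification of weights across the reflection is form-compatible.
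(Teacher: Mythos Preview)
Your proposal is correct and follows essentially the same route as the paper: both argue by induction on the length of a chain of odd reflections connecting $\mathbf{s}$ to $\mathbf{s}^{\rm st}$, and both reduce the inductive step to the invariance of the multiset $\{(\Lambda+\rho_{\mathbf{s}}\,|\,\alpha):\alpha\in\Phi_{\bar 1,\mathbf{s}}^+\}$ under a single odd reflection. The only cosmetic difference is that the paper verifies this invariance via the explicit closed formula for $\rho_{\mathbf{s}}$ in Lemma~\ref{rho} (tracking the coefficients $m-n-2\tau^{-1}(i)+1$ etc.\ directly), whereas you invoke the abstract $\rho$-shift rule for an odd simple reflection together with the explicit form of $\Lambda^{[i]}$; your separate treatment of the degenerate case $\Lambda_i+\Lambda_{i+1}=0$ is a detail the paper leaves implicit.
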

\begin{proof}
Put $\Lambda^{(0)}=\Lambda$ and $I_{\mathbf{s}}'=I_{\mathbf{s}}\setminus\{N\}$. By definition, there exists a series of indices
$$i_1\in I',~~i_2\in I'_{\sigma_{i_1}\mathbf{s}^{\rm st}},~~\ldots,~~i_r\in I'_{\sigma_{i_{r-1}}\cdots\sigma_{i_1}\mathbf{s}^{\rm st}}$$
with each $|i_p|+|i_p+1|=\bar{1}$ ($1\leqslant p\leqslant r$) such that
\begin{gather*}
V_{\mathbf{s}}\big(\Lambda^{(0)}\big)\simeq V_{\sigma_{i_r}\mathbf{s}}\big(\Lambda^{(1)}\big)\simeq\cdots \simeq V_{\sigma_{i_2}\cdots\sigma_{i_r}\mathbf{s}}\big(\Lambda^{(r-1)}\big)\simeq V\big(\Lambda^{(r)}\big)
\end{gather*}
with $\mathbf{s}=\sigma_{i_r}\cdots\sigma_{i_1}\mathbf{s}^{\rm st}$ and
\begin{align*}
    \Lambda_{\sigma_{i_r}(i)}^{(1)} &=\Lambda_i^{(0)}-\delta_{i,i_r}+\delta_{i,i_r+1}, \\
    \Lambda_{\sigma_{i_{r-p+1}}\cdots\sigma_{i_r}(i)}^{(p)} &=\Lambda_{\sigma_{i_r}(i)}^{(1)}-\sum_{k=2}^p\left(\delta_{\sigma_{i_{r-k+2}}\cdots\sigma_{i_r}(i),i_{r-k+1}}-\delta_{\sigma_{i_{r-k+2}}\cdots\sigma_{i_r}(i),i_{r-k+1}+1}\right),~~2\leqslant p\leqslant r,
\end{align*}
where each irreducible representation $V_{\sigma_{i_{p}}\cdots\sigma_{i_1}\mathbf{s}}\big(\Lambda^{(p)}\big)$ is typical over $\mathrm{U}_q\big(\mathfrak{g}_{\sigma_{i_{p}}\cdots \sigma_{i_1}\mathbf{s}}\big)$.

We proceed by induction on $r$. The case $r = 0$ is clear by Proposition \ref{Zrb:p1}. Assume that this proposition holds for $r = p$. We now consider the case of $r = p + 1$. 
Set $\tau=\sigma_{i_1}\cdots\sigma_{i_{p+1}}$ and $\mathbf{s}'=\sigma_{i_{p+1}}\mathbf{s}$. Then, by Lemma \ref{rho}, we have
\begin{align*}
2\rho_{\mathbf{s}} & = \sum_{i=1}^{m} (m-n-2i+1) \varepsilon_{\tau(i),\mathbf{s}} + \sum_{a=m+1}^{m+n} (3m+n-2a+1) \varepsilon_{\tau(a),\mathbf{s}}, \\
2\rho_{\mathbf{s}'} & = \sum_{i=1}^{m} (m-n-2i+1) \varepsilon_{\tau\sigma_{i_{p+1}}(i),\mathbf{s}'} + \sum_{a=m+1}^{m+n} (3m+n-2a+1) \varepsilon_{\tau\sigma_{i_{p+1}}(a),\mathbf{s}'}.
\end{align*}
Therefore,
\begin{gather*}
(\Lambda^{(0)}+\rho_{\mathbf{s}}|\varepsilon_{k,\mathbf{s}}-\varepsilon_{l,\mathbf{s}})
=(\Lambda^{(1)}+\rho_{\mathbf{s}'}|\varepsilon_{\sigma_{i_{p+1}}(k),\mathbf{s}'}
  -\varepsilon_{\sigma_{i_{p+1}}(l),\mathbf{s}'})\neq 0,\quad \forall |k|+|l|=\bar{1},
\end{gather*}
by the induction hypothesis.

Conversely, suppose that $\Lambda$ satisfies $(\Lambda+\rho_{\mathbf{s}}|\alpha)\neq 0$ for each $\alpha\in\Phi_{\bar{1},\mathbf{s}}^+$. By Proposition \ref{non-st:rep:fin}, we find $V_{\mathbf{s}}(\Lambda)\simeq V\big(\Lambda^{(r)}\big)$, which is typical over $\mathrm{U}_q\big(\mathfrak{g}\big)$.

\end{proof}

\begin{corollary}
For any given typical finite-dimensional irreducible representation $V_{\mathbf{s}}(\Lambda)$ over $\mathrm{U}_q\big(\mathfrak{g}_{\mathbf{s}}\big)$, there exists a corresponding typical finite-dimensional irreducible representation $\overline{V}_{\mathbf{s}}(\Lambda)$ with the same highest weight $\Lambda$ over $\mathfrak{g}_{\mathbf{s}}$ (see \cite{Ka77-1}). Furthermore, $V_{\mathbf{s}}(\Lambda)$ specializes to $\overline{V}_{\mathbf{s}}(\Lambda)$ as $q$ approaches $1$.
\end{corollary}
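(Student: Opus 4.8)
The plan is to reduce the statement to the standard parity sequence, where it is essentially contained in Zhang's work together with Kac's classification, and then transport the conclusion back along odd reflections, checking that each step is compatible with the classical limit $q\to 1$.

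First I would assemble the standard-case ingredients. Since $V_{\mathbf{s}}(\Lambda)$ is typical, by definition there is a typical finite-dimensional irreducible $\mathrm{U}_q\big(\mathfrak{g}\big)$-module $V(\Lambda')$ with $V_{\mathbf{s}}(\Lambda)\simeq V(\Lambda')$; by Proposition~\ref{non-st:rep:fin} this isomorphism is realized by a composite $\beta$ of odd reflections $\beta_{i,\mathbf{s}}$ corresponding to the permutation $\sigma$ with $\mathbf{s}=\sigma\,\mathbf{s}^{\rm st}$, and $\Lambda'$ is the explicit reweighting of $\Lambda$ produced by the transition rules. By Proposition~\ref{Zrb:p1}, $V(\Lambda')$ equals the quantum Kac module $K(\Lambda')$ induced from $\mathring{V}(\Lambda')$, it is already irreducible, and $\dim V(\Lambda')=2^{mn}\dim\mathring{V}(\Lambda')$. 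On the classical side, Kac~\cite{Ka77-1} gives the finite-dimensional irreducible $\mathfrak{gl}_{m|n}$-module $\overline{V}(\Lambda')$ of highest weight $\Lambda'$, typical by the same non-vanishing condition, equal to the classical Kac module of dimension $2^{mn}\dim\mathring{\overline{V}}(\Lambda')$. Since $\mathring{V}(\Lambda')$ specializes at $q\to 1$ to $\mathring{\overline{V}}(\Lambda')$ (the even part is $\mathrm{U}_q(\mathfrak{gl}_m)\otimes\mathrm{U}_q(\mathfrak{gl}_n)$, where this is standard), and induction from the even part along the negative odd part --- quantum or classical --- is flat with PBW basis of the same cardinality $2^{mn}$, an $\mathbf{A}_1$-lattice of $K(\Lambda')$ built from a lattice of $\mathring{V}(\Lambda')$ specializes to $\overline{V}(\Lambda')$; in particular $\dim V(\Lambda')=\dim\overline{V}(\Lambda')$. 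This settles the case $\mathbf{s}=\mathbf{s}^{\rm st}$.

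Next I would transport back. The classical limit of each braid isomorphism $\beta_{i,\mathbf{s}}$ of Proposition~\ref{odd-rel:1} is the odd-reflection isomorphism between the corresponding general linear Lie superalgebras: the Remark on the classical limit identifies the $q\to 1$ limits of $x_{i,\mathbf{s}}^{\pm}$ and of $(k_{a,\mathbf{s}}-k_{a,\mathbf{s}}^{-1})/(q_a-q_a^{-1})$, and under these the formulae of Proposition~\ref{odd-rel:1} become Kac's odd reflections. Hence the commutative square of Section~\ref{se:braidPBWUqg} extends, through the specialization maps, to one relating $\mathrm{U}_q\big(\mathfrak{g}_{\mathbf{s}}\big)\to\mathrm{U}_q\big(\mathfrak{g}\big)$ and $\mathrm{U}\big(\mathfrak{g}_{\mathbf{s}}\big)\to\mathrm{U}\big(\mathfrak{g}\big)$. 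I would then \emph{define} $\overline{V}_{\mathbf{s}}(\Lambda)$ as the pullback of $\overline{V}(\Lambda')$ along the classical isomorphism; by construction it is a finite-dimensional irreducible $\mathfrak{g}_{\mathbf{s}}$-module whose highest weight relative to the Borel attached to $\mathbf{s}$ is exactly $\Lambda$, and it is typical because $\prod_{\alpha\in\Phi_{\bar{1},\mathbf{s}}^+}(\Lambda+\rho_{\mathbf{s}}|\alpha)\neq 0$ by Lemma~\ref{rho} and the classical analogue of Proposition~\ref{non-st:rep:fin3}. Chasing the extended square, the $\mathbf{A}_1$-lattice of $V(\Lambda')$ pulls back along $\beta^{-1}$ to an $\mathbf{A}_1$-lattice of $V_{\mathbf{s}}(\Lambda)$ whose $q=1$ fibre is $\overline{V}_{\mathbf{s}}(\Lambda)$, which is the assertion.

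The main obstacle will be the compatibility of the braid action with the integral $\mathbf{A}_1$-form: one must verify that the substitutions of Proposition~\ref{odd-rel:1}, which involve factors $q^{\pm d_i'}$ and products $k_{i,\mathbf{s}'}k_{i+1,\mathbf{s}'}^{-1}$, carry a Kostant-type lattice of $\mathrm{U}_q\big(\mathfrak{g}_{\mathbf{s}}\big)$ into that of $\mathrm{U}_q\big(\mathfrak{g}\big)$ and reduce modulo $(q-1)$ to odd reflections. A route avoiding this works intrinsically for $\mathbf{s}$: show directly, as in Proposition~\ref{Zrb:p1} but using the RTT Kac module $K_{\mathbf{s}}(\Lambda)$ of Section~\ref{se:Kacmodule}, that a typical $V_{\mathbf{s}}(\Lambda)$ equals $K_{\mathbf{s}}(\Lambda)=\mathrm{Ind}\big(\mathring{V}_{\mathbf{s}}(\Lambda)\big)$; build its $\mathbf{A}_1$-lattice from a lattice of $\mathring{V}_{\mathbf{s}}(\Lambda)$, whose specialization is classical by the even-part theory; and identify the $q=1$ fibre with the classical typical Kac module $\overline{V}_{\mathbf{s}}(\Lambda)$ of Kac~\cite{Ka77-1}. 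Either route produces the same module by uniqueness of the irreducible object of highest weight $\Lambda$.
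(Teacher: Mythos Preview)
Your argument is correct, and in fact considerably more careful than the paper's treatment: the paper states this corollary without proof, viewing it as immediate from Proposition~\ref{non-st:rep:fin3} together with Kac~\cite{Ka77-1}. The implicit reasoning in the paper is simply that the typicality criterion $\prod_{\alpha\in\Phi_{\bar{1},\mathbf{s}}^+}(\Lambda+\rho_{\mathbf{s}}|\alpha)\neq 0$ established in Proposition~\ref{non-st:rep:fin3} is literally the same as Kac's classical typicality condition for $\mathfrak{g}_{\mathbf{s}}$, so the correspondence of highest weights is automatic, and the specialization claim is taken to follow from the classical-limit remark after the Drinfeld--Jimbo definition.

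Your route---reducing to $\mathbf{s}^{\rm st}$ via the braid isomorphisms, invoking Zhang's result (Proposition~\ref{Zrb:p1}) and Kac there, then transporting back along the classical odd reflections---makes the specialization claim honest rather than formal. You correctly identify the one point requiring genuine work: compatibility of the $\beta_{i,\mathbf{s}}$ with an $\mathbf{A}_1$-integral form. Your alternative intrinsic route, showing $V_{\mathbf{s}}(\Lambda)=K_{\mathbf{s}}(\Lambda)$ directly for typical $\Lambda$ and specializing the Kac-module construction, is cleaner and closer in spirit to what the paper would presumably intend if it had spelled out a proof; it also avoids the lattice-compatibility check entirely. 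Either version fills a gap the paper leaves open.
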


To be more specific, we have
\begin{theorem}\label{non-st:rep:fin4}
Given $\mathbf{s} \in \mathcal{S}(m|n)$, 
we consider the $N$-tuple
$\Lambda = (\lambda_1, \ldots, \lambda_L)$ $(\forall\,\lambda_i \in \mathbb{C}\setminus\{0\})$. 
The following conditions for the typical irreducible highest weight representation $V_{\mathbf{s}}(\Lambda)$ of $\mathrm{U}_q\big(\mathfrak{g}_{\mathbf{s}}\big)$ with highest weight $\Lambda$ are equivalent\,{\rm :}
\begin{enumerate}
    \item[{\rm (1)}] $\dim V_{\mathbf{s}}(\Lambda)<\infty$\,{\rm ;} 
    \item[{\rm (2)}] there exists a series of nonnegative integers 
$l_{ij}$ for $1\leqslant i<j\leqslant N$ 
such that
\begin{gather}\label{fd:cd:fin1}
\frac{\epsilon_{i} \lambda_{i}}{\epsilon_{j} \lambda_{j}} = q_i^{l_{ij}+\#_{(i,j)} }, \quad \text{if}~~|i|+|j|=\bar{0}, 
\end{gather}
and
\begin{gather}\label{fd:cd:fin2}
\frac{\epsilon_{i} \lambda_{i}}{\epsilon_{j} \lambda_{j}} = q^{l_{ij}}\neq q^{-(\rho_{\mathbf{s}}|\varepsilon_{i,\mathbf{s}}-\varepsilon_{j,\mathbf{s}})}, \quad \text{if}~~|i|+|j|=\bar{1}
\end{gather}
for some $\epsilon = (\epsilon_1, \ldots, \epsilon_L) \ (\forall\,\epsilon_i \in \{\pm 1\})$.
\end{enumerate}
Here, 
the notation $\#_{(i,j)}$ denotes the number of 1s {\rm (}resp. 0s{\rm)} appearing in the subsequence $s_{i+1}\cdots s_{j-1}$ if $|i|=|j|=\bar{0}$ {\rm (}resp. $|i|=|j|=\bar{1}${\rm)}. 
\end{theorem}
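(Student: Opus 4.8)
The plan is to reduce Theorem~\ref{non-st:rep:fin4} to the standard case (Theorem~\ref{standard:rep:fin} combined with Proposition~\ref{Zrb:p1}) by iterating the transition rules of Propositions~\ref{non-st:rep:fin} and~\ref{non-st:rep:fin3}, and then bookkeeping carefully how the highest weight changes under the chain of odd reflections that carries $\mathbf{s}$ to $\mathbf{s}^{\rm st}$. First I would observe that for a \emph{typical} representation the atypicality constraint is automatic: by Proposition~\ref{non-st:rep:fin3}, $V_{\mathbf{s}}(\Lambda)$ typical is equivalent to $(\Lambda+\rho_{\mathbf{s}}\,|\,\alpha)\neq 0$ for all $\alpha\in\Phi^+_{\bar 1,\mathbf{s}}$, which is precisely the disequality $q^{l_{ij}}\neq q^{-(\rho_{\mathbf{s}}|\varepsilon_{i,\mathbf{s}}-\varepsilon_{j,\mathbf{s}})}$ recorded in \eqref{fd:cd:fin2} once one writes $\lambda_i=q_i^{\Lambda_i}$ (up to the sign ambiguity $\epsilon$ coming from the automorphism \eqref{UR:fin-isom}). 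So the content of the theorem is really about the even part of the conditions, \eqref{fd:cd:fin1}, and about identifying the correction term $\#_{(i,j)}$.

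Next I would run the algorithm described after Proposition~\ref{non-st:rep:fin2}: since $V_{\mathbf{s}}(\Lambda)$ is typical, every $\Lambda_i+\Lambda_{i+1}$ encountered at a $01$ or $10$ subsequence is nonzero, so only Proposition~\ref{non-st:rep:fin} is ever invoked, and after finitely many steps we arrive at $V(\Lambda^{(r)})$ over $\mathrm{U}_q(\mathfrak g)$ in the standard case. By Theorem~\ref{standard:rep:fin}, $\dim V_{\mathbf{s}}(\Lambda)=\dim V(\Lambda^{(r)})<\infty$ iff the standard conditions \eqref{fd:cd:fin} hold for $\Lambda^{(r)}$, i.e.\ $\Lambda^{(r)}_a-\Lambda^{(r)}_{a+1}\in\mathbb Z_+$ for $a\neq m$ (and the free value at $a=m$). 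The crux is then to translate these standard conditions back into conditions on the original $\Lambda$. Here I would track the weight transformation explicitly: at each odd reflection $\beta_{i,\mathbf{s}}$ the rule $\Lambda\mapsto\Lambda^{[i]}$ swaps the $i$-th and $(i+1)$-st entries and shifts them by $+1$ and $-1$ respectively (Proposition~\ref{non-st:rep:fin}). Composing these, for any pair of positions $i<j$ that are \emph{both even} ($|i|=|j|=\bar0$) in $\mathbf{s}$, I would show by induction on the number of odd reflections that when $i$ and $j$ eventually become two of the first $m$ coordinates of the standard sequence, the quantity $\Lambda_i-\Lambda_j$ has accumulated a shift equal to the number of odd indices strictly between them, i.e.\ the number of $1$s in $s_{i+1}\cdots s_{j-1}$ — this is exactly $\#_{(i,j)}$. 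The symmetric statement holds for a pair of odd indices, with $1$s replaced by $0$s, landing in the last $n$ standard coordinates. For the mixed pair $|i|+|j|=\bar1$, the difference $\Lambda_i-\Lambda_j$ need only be \emph{some} integer $l_{ij}$ (it corresponds to the free direction through position $m$ in the standard presentation), which gives \eqref{fd:cd:fin2}.

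The step I expect to be the main obstacle is the combinatorial induction verifying that the accumulated shift between a fixed pair of like-parity positions is independent of the particular reduced word $\sigma_{i_r}\cdots\sigma_{i_1}$ chosen to straighten $\mathbf{s}$, and that it equals the clean count $\#_{(i,j)}$. The delicate point is that intermediate odd reflections involve positions \emph{outside} $\{i,j\}$, which permute the letters between $i$ and $j$ and can move the two tracked coordinates past one another or past other letters; one must check that a $1$ (resp.\ $0$) crossing the block bounded by our pair contributes $\pm1$ in a way that telescopes, while letters of the same parity as the pair contribute nothing. I would handle this by choosing a convenient reduced word — e.g.\ first sort all the $0$s to the front by adjacent transpositions in a fixed order — and then argue by the well-definedness already guaranteed by Proposition~\ref{non-st:rep:fin3} (the final representation $V(\Lambda^{(r)})$ is independent of the word up to isomorphism) that the count does not depend on this choice. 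With the weight dictionary in hand, the equivalence (1)$\Leftrightarrow$(2) for the typical case follows by combining the reduced standard conditions with the translation, and the sign freedom $\epsilon_i\in\{\pm1\}$ is exactly the ambiguity in writing $\lambda_i=\pm q_i^{\Lambda_i}$ that is built into the automorphism \eqref{UR:fin-isom}.
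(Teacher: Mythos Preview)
Your proposal is correct and follows essentially the same approach as the paper. The paper does not give an explicit proof of this theorem; it is stated as an immediate consequence of the transition rules (Propositions~\ref{non-st:rep:fin}, \ref{non-st:rep:fin2}), the typicality criterion (Proposition~\ref{non-st:rep:fin3}), the three-step algorithm summarized after Proposition~\ref{non-st:rep:fin2}, and the standard case (Theorem~\ref{standard:rep:fin} with Proposition~\ref{Zrb:p1}). Your write-up supplies precisely those omitted details, including the combinatorial bookkeeping for the shift $\#_{(i,j)}$, and your observation that typicality guarantees only Proposition~\ref{non-st:rep:fin} (never \ref{non-st:rep:fin2}) is invoked along the chain is exactly the mechanism implicit in the proof of Proposition~\ref{non-st:rep:fin3}.
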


\begin{corollary}\label{non-st:rep:fin5}
    Under the hypotheses of Theorem \ref{non-st:rep:fin4}, 
    the following conditions for the atypical irreducible highest weight representation $V_{\mathbf{s}}(\Lambda)$ are equivalent\,{\rm :}
    \begin{enumerate}
    \item[{\rm (1)}] $\dim V_{\mathbf{s}}(\Lambda)<\infty$\,{\rm ;} 
    \item[{\rm (2)}] condition \eqref{fd:cd:fin1} still holds, along with condition \eqref{fd:cd:fin2} is replaced by
    \begin{gather}\label{fd:cd:fin3}
    \frac{\epsilon_{i} \lambda_{i}}{\epsilon_{j} \lambda_{j}} = q^{l_{ij}}= q^{-(\rho_{\mathbf{s}}|\varepsilon_{i,\mathbf{s}}-\varepsilon_{j,\mathbf{s}})}, \quad \text{if}~~|i|+|j|=\bar{1}
    \end{gather}
    for some $\epsilon = (\epsilon_1, \ldots, \epsilon_L) \ (\forall\,\epsilon_i \in \{\pm 1\})$.
    \end{enumerate}
\end{corollary}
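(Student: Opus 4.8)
The argument is to run exactly the odd‑reflection reduction that proves Theorem~\ref{non-st:rep:fin4}, the only difference being that Proposition~\ref{non-st:rep:fin3} is now invoked in the atypical direction, i.e. $\prod_{\alpha\in\Phi_{\bar1,\mathbf{s}}^+}(\Lambda+\rho_{\mathbf{s}}|\alpha)=0$. First I would apply the reduction algorithm described after Proposition~\ref{non-st:rep:fin2}: as long as possible, pick a position $i$ with $s_is_{i+1}\in\{01,10\}$ and replace $(\mathbf{s},\Lambda)$ by $(\sigma_i\mathbf{s},\Lambda^{[i]})$ through the isomorphism $\beta_{i,\mathbf{s}}$ of Proposition~\ref{odd-rel:2}, using Proposition~\ref{non-st:rep:fin} when $\Lambda_i+\Lambda_{i+1}\neq 0$ and Proposition~\ref{non-st:rep:fin2} when $\Lambda_i+\Lambda_{i+1}=0$; in finitely many steps the parity sequence becomes $\mathbf{s}^{\mathrm{st}}$, with terminal weight $\Lambda^{(r)}$. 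Every step is an isomorphism of highest weight modules, so $V_{\mathbf{s}}(\Lambda)$ is finite dimensional precisely when $V(\Lambda^{(r)})$ over $\mathrm{U}_q(\mathfrak{g})$ is, and $V_{\mathbf{s}}(\Lambda)$ is atypical precisely when $V(\Lambda^{(r)})$ is (atypicality being an isomorphism invariant by its very definition). For $\mathbf{s}^{\mathrm{st}}$, the finite dimensionality and atypicality of $V(\Lambda^{(r)})$ are controlled by Theorem~\ref{standard:rep:fin} and Proposition~\ref{Zrb:p1}, i.e. by the dominance relations \eqref{fd:cd:fin} together with $\prod_{\alpha\in\Phi_{\bar1}^+}(\Lambda^{(r)}+\rho|\alpha)=0$.

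The second and main part is to transport these conditions on $\Lambda^{(r)}$ back to conditions on $\Lambda$. Exactly as in the proof of Proposition~\ref{non-st:rep:fin3}, and using Lemma~\ref{rho}, one checks that for the relevant pairs the pairings $(\Lambda^{(p)}+\rho^{(p)}|\varepsilon_{k}-\varepsilon_{l})$ are unchanged along the chain, each $\pm1$ shift of $\Lambda$ being cancelled by the shift of $\rho$ under an odd reflection; here $\rho^{(p)}$ denotes the graded half-sum of positive roots for the $p$-th parity sequence. Hence the vanishing factor of $\prod_{\alpha\in\Phi_{\bar1}^+}(\Lambda^{(r)}+\rho|\alpha)=0$ is equivalent to the vanishing of $(\Lambda+\rho_{\mathbf{s}}|\alpha_0)$ for the corresponding $\alpha_0\in\Phi_{\bar1,\mathbf{s}}^+$, which is just the assumed atypicality of $V_{\mathbf{s}}(\Lambda)$; and unwinding the even dominance relations $\Lambda^{(r)}_{k}-\Lambda^{(r)}_{k+1}\in\mathbb{Z}_+$ through the recursion for $\Lambda^{[i]}$ yields \eqref{fd:cd:fin1}, the correction $\#_{(i,j)}$ being the number of odd reflections whose position lies strictly between the images of $i$ and $j$, i.e. the number of $1$s (resp. $0$s) in $s_{i+1}\cdots s_{j-1}$. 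Combining the dominance data for the different-parity pairs with the atypicality pins the corresponding exponents to the critical values $-(\rho_{\mathbf{s}}|\varepsilon_{i,\mathbf{s}}-\varepsilon_{j,\mathbf{s}})$, which is \eqref{fd:cd:fin3}. The converse is read off the same chain in reverse: granted \eqref{fd:cd:fin1} and \eqref{fd:cd:fin3}, every intermediate weight stays in the finite-dimensional locus, the terminal $\Lambda^{(r)}$ satisfies \eqref{fd:cd:fin}, and by Theorem~\ref{standard:rep:fin} (together with Proposition~\ref{Zrb:p1} for atypicality) $V(\Lambda^{(r)})$, hence $V_{\mathbf{s}}(\Lambda)$, is finite dimensional.

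The point I expect to require the most care is the bookkeeping of the accumulated shifts and its path independence: distinct reductions of $\mathbf{s}$ to $\mathbf{s}^{\mathrm{st}}$ a priori produce different intermediate weights, so one must check that the resulting conditions on $\Lambda$ are independent of the chosen chain. As in Theorem~\ref{non-st:rep:fin4}, this is forced by the invariance of $(\Lambda+\rho_{\mathbf{s}}|\varepsilon_{i,\mathbf{s}}-\varepsilon_{j,\mathbf{s}})$, which makes both sides of \eqref{fd:cd:fin1} and \eqref{fd:cd:fin3} intrinsic to $(\mathbf{s},\Lambda)$. The genuinely new issue in the atypical setting is to confirm that at a singular position $\Lambda_i+\Lambda_{i+1}=0$ one must apply Proposition~\ref{non-st:rep:fin2} (which retains the maximal vector $\zeta$) and not Proposition~\ref{non-st:rep:fin}, so that no extraneous integrality constraint is imposed along the atypical odd root, and then to verify via Proposition~\ref{Zrb:p1} that the terminal $V(\Lambda^{(r)})$ is simultaneously atypical and finite dimensional, consistently with $\overline{K}_{\mathbf{s}}(\Lambda)$ always being finite dimensional.
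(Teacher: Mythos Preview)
Your approach is essentially the paper's own: the corollary is stated without proof, and the intended argument is precisely the odd-reflection chain you describe, built on Propositions~\ref{non-st:rep:fin}, \ref{non-st:rep:fin2}, \ref{non-st:rep:fin3}, Lemma~\ref{rho}, and the standard-case input Theorem~\ref{standard:rep:fin} together with Proposition~\ref{Zrb:p1}. The invariance of $(\Lambda+\rho_{\mathbf{s}}\mid\varepsilon_{i,\mathbf{s}}-\varepsilon_{j,\mathbf{s}})$ along the chain is exactly the mechanism the paper exploits in the proof of Proposition~\ref{non-st:rep:fin3}, and your path-independence remark is the right way to phrase it.

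One point deserves sharpening. Your sentence ``combining the dominance data for the different-parity pairs with the atypicality pins the corresponding exponents to the critical values'' reads as if \eqref{fd:cd:fin3} is forced for \emph{every} odd pair $(i,j)$. That is too strong: atypicality via Proposition~\ref{non-st:rep:fin3} only says $\prod_{\alpha\in\Phi_{\bar1,\mathbf{s}}^+}(\Lambda+\rho_{\mathbf{s}}\mid\alpha)=0$, i.e.\ \emph{some} factor vanishes, and the examples in Table~\ref{table:fin_rep_example_21} exhibit atypical finite-dimensional modules where one odd pair sits at the critical value while another does not. So in transporting back you should state that \eqref{fd:cd:fin1} holds for all even pairs, while for each odd pair $(b,c)$ the ratio $\epsilon_b\lambda_b/\epsilon_c\lambda_c$ is $q^{l_{bc}}$ with $l_{bc}$ either avoiding or hitting $-(\rho_{\mathbf{s}}\mid\varepsilon_{b,\mathbf{s}}-\varepsilon_{c,\mathbf{s}})$, and atypicality is the assertion that at least one hits. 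With that quantifier made explicit, your argument goes through verbatim.
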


\begin{remark}
Unlike Theorem~\ref{standard:rep:fin} (which concerns the standard case), Theorem~\ref{non-st:rep:fin4} and Corollary~\ref{non-st:rep:fin5} are stated in terms of the ratios $\lambda_i/\lambda_j$ rather than $\lambda_i/\lambda_{i+1}$. This modification is essential because conditions \eqref{fd:cd:fin1}--\eqref{fd:cd:fin3}, if restricted only to the case $j = i+1$, do not suffice to ensure that $\dim V_{\mathbf{s}}(\Lambda) < \infty$.
\end{remark}

\subsubsection{Nonstandard Young-like diagram}
For a more intuitive illustration of the finite-dimensional irreducible representations $V_{\mathbf{s}}(\Lambda)$ of $\mathrm{U}_q\big(\mathfrak{g}_{\mathbf{s}}\big)$, the graphical notion below will be employed. We need a block strip $\mathsf{box}V_{\mathbf{s}}(\Lambda)$ made of three types of boxes to correspond the module $V_{\mathbf{s}}(\Lambda)$. The $(i,j)$-th box $\mathsf{box}(i,j)$ is determined by the following rule:
\begin{enumerate}
    \item[(1)] $\mathsf{box}(i,j)$=\ \begin{tikzpicture}[baseline={(0, -0.1)}]
    \draw (-0.3,-0.3) rectangle (0.3,0.3);
    \node at (0,0) {$p$};
    \end{tikzpicture}\,, if $|i|+|j|=\bar{0}$ and the radio $\lambda_i/\lambda_{j}=\pm q_i^{l_{ij}+\#_{(i,j)}}$ for $l_{ij}\in \mathbb{Z}_+$, where $p=\bar{d}_{i}k_{ij}$ for 
    \begin{equation*}
        \bar{d}_i=\begin{cases}
            +, &\text{if}~~d_i=1, \\
            -, &\text{if}~~d_i=-1;
        \end{cases}
    \end{equation*} 

    \item[(2)] $\mathsf{box}(i,j)$=\ \begin{tikzpicture}[baseline={(0, -0.1)}]
    \draw (-0.3,-0.3) rectangle (0.3,0.3);
    \fill[black] (-0.3,-0.3) -- (0.3,-0.3) -- (0.3,0.3) -- cycle; 
    \end{tikzpicture}\,, if $|i|+|j|=\bar{1}$ and the radio $\lambda_i/\lambda_{j}=\pm q_i^{l_{ij}}$ for $l_{ij}\neq-(\rho_{\mathbf{s}}|\varepsilon_{i,\mathbf{s}}-\varepsilon_{j,\mathbf{s}})$;

    \item[(3)] $\mathsf{box}(i,j)$=\ \begin{tikzpicture}[baseline={(0, -0.1)}]
    \draw (-0.3,-0.3) rectangle (0.3,0.3);
    \fill[black] (0,0) circle (0.2); 
    \end{tikzpicture}\,, if $|i|+|j|=\bar{1}$ and the radio $\lambda_i/\lambda_{j}=\pm q_i^{l_{ij}}$ for $l_{ij}=-(\rho_{\mathbf{s}}|\varepsilon_{i,\mathbf{s}}-\varepsilon_{j,\mathbf{s}})$.
\end{enumerate}
If $\mathsf{box}V_{\mathbf{s}}(\Lambda)$ does not contain \begin{tikzpicture}[baseline={(0, -0.1)}]
    \draw (-0.3,-0.3) rectangle (0.3,0.3);
    \fill[black] (0,0) circle (0.2); 
    \end{tikzpicture}\,, $V_{\mathbf{s}}(\Lambda)$ is typical by Theorem \ref{non-st:rep:fin4}. Moreover, if $\mathsf{box}(i,i+1)$=\ \begin{tikzpicture}[baseline={(0, -0.1)}]
    \draw (-0.3,-0.3) rectangle (0.3,0.3);
    \fill[black] (0,0) circle (0.2); 
    \end{tikzpicture}\,, we have $\lambda_i/\lambda_{i+1}=\pm 1$. Therefore, the trivial representation of $\mathrm{U}_q\big(\mathfrak{g}_{\mathbf{s}}\big)$ is atypical when $m\neq 0$, $n\neq 0$.  
\begin{example}
    Consider the special case $(m,n) = (2, 1)$. Table \ref{table:fin_rep_example_21} provides all possible non-standard Young-like diagrams for the superalgebra $\mathrm{U}_q\big(\mathfrak{sl}_{2|1,\mathbf{s}}\big)$. 
\end{example}
 
\begin{table} 
    \centering
    \footnotesize
    \caption{Diagrams of finite-dimensional irreducible representations for $\mathrm{U}_q\big(\mathfrak{sl}_{2|1,\mathbf{s}}\big)$} 
    \label{table:fin_rep_example_21}
      \vspace{2pt}
    \begin{tabular}{|c|c|c|c|} 
        \hline
        ~&~&~&~\\[-0.6em]
        $\mathbf{s}\in\mathcal{S}(2|1)$ & $\mathsf{box}V_{\mathbf{s}}(\Lambda)$ & Typical/Atypical &  $\dim V_{\mathbf{s}}(\Lambda)$  \\
        ~&~&~&~\\[-0.6em]
        \hline
        ~&~&~&~\\[-0.6em]
        \multirow{12}{*}{$\mathbf{s}=001$} & \begin{tikzpicture}[baseline={(0, -0.1)}]
        \draw (-0.6,0) rectangle (0,0.6);
        \node at (-0.3,0.3) {$+p$};
         \draw (-0.6,-0.6) rectangle (0,0);
         \fill[black] (-0.6,-0.6) -- (0,-0.6) -- (0,0) -- cycle;
         \draw (0,0) rectangle (0.6,0.6);
         \fill[black] (0,0) -- (0.6,0) -- (0.6,0.6) -- cycle;
        \end{tikzpicture} &Typical  & $4(p+1)$ \\
        ~&~&~&~\\[-0.6em]
        \cline{2-4} 
        ~&~&~&~\\[-0.6em]
        & \begin{tikzpicture}[baseline={(0, -0.1)}]
        \draw (-0.6,0) rectangle (0,0.6);
        \node at (-0.3,0.3) {$+p$};
         \draw (-0.6,-0.6) rectangle (0,0);
         \fill[black] (-0.6,-0.6) -- (0,-0.6) -- (0,0) -- cycle;
         \draw (0,0) rectangle (0.6,0.6);
         \fill[black] (0.3,0.3) circle (0.2);
        \end{tikzpicture} & Atypical  & $2p+1$ \\
        ~&~&~&~\\[-0.6em]
        \cline{2-4}
        ~&~&~&~\\[-0.6em]
        & \begin{tikzpicture}[baseline={(0, -0.1)}]
        \draw (-0.6,0) rectangle (0,0.6);
        \node at (-0.3,0.3) {$+p$};
         \draw (-0.6,-0.6) rectangle (0,0);
         \fill[black] (-0.3,-0.3) circle (0.2);
         \draw (0,0) rectangle (0.6,0.6);
         \fill[black] (0,0) -- (0.6,0) -- (0.6,0.6) -- cycle;
        \end{tikzpicture} & Atypical &  $2p+3$ \\
        ~&~&~&~\\[-0.6em]
        \cline{2-4}
        ~&~&~&~\\[-0.6em]
        & \begin{tikzpicture}[baseline={(0, -0.1)}]
        \draw (-0.6,0) rectangle (0,0.6);
        \node at (-0.3,0.3) {$+1$};
         \draw (-0.6,-0.6) rectangle (0,0);
         \fill[black] (-0.3,-0.3) circle (0.2);
         \draw (0,0) rectangle (0.6,0.6);
         \fill[black] (0.3,0.3) circle (0.2);
        \end{tikzpicture} & Atypical  & $2$ \\
        ~&~&~&~\\[-0.6em]
        \hline
        ~&~&~&~\\[-0.6em]
        \multirow{12}{*}{$\mathbf{s}=010$} & \begin{tikzpicture}[baseline={(0, -0.1)}]
        \draw (-0.6,0) rectangle (0,0.6);
        \fill[black] (-0.6,0) -- (0,0) -- (0,0.6) -- cycle;
         \draw (-0.6,-0.6) rectangle (0,0);
         \node at (-0.3,-0.3) {$+p$};
         \draw (0,0) rectangle (0.6,0.6);
         \fill[black] (0,0) -- (0.6,0) -- (0.6,0.6) -- cycle;
        \end{tikzpicture} & Typical  & $4(p+1)$ \\
        ~&~&~&~\\[-0.6em]
        \cline{2-4}
        ~&~&~&~\\[-0.6em]
        & \begin{tikzpicture}[baseline={(0, -0.1)}]
        \draw (-0.6,0) rectangle (0,0.6);
        \fill[black] (-0.6,0) -- (0,0) -- (0,0.6) -- cycle;
         \draw (-0.6,-0.6) rectangle (0,0);
         \node at (-0.3,-0.3) {$+p$};
         \draw (0,0) rectangle (0.6,0.6);
         \fill[black] (0.3,0.3) circle (0.2);
        \end{tikzpicture} & Atypical  & $2p+1$ \\
        ~&~&~&~\\[-0.6em]
        \cline{2-4}
        ~&~&~&~\\[-0.6em]
        & \begin{tikzpicture}[baseline={(0, -0.1)}]
        \draw (-0.6,0) rectangle (0,0.6);
        \fill[black] (-0.3,0.3) circle (0.2);
         \draw (-0.6,-0.6) rectangle (0,0);
         \node at (-0.3,-0.3) {$+p$};
         \draw (0,0) rectangle (0.6,0.6);
         \fill[black] (0,0) -- (0.6,0) -- (0.6,0.6) -- cycle;
        \end{tikzpicture} & Atypical  & $2p+3$ \\
        ~&~&~&~\\[-0.6em]
        \cline{2-4}
        ~&~&~&~\\[-0.6em]
        & \begin{tikzpicture}[baseline={(0, -0.1)}]
        \draw (-0.6,0) rectangle (0,0.6);
        \fill[black] (-0.3,0.3) circle (0.2);
         \draw (-0.6,-0.6) rectangle (0,0);
         \node at (-0.3,-0.3) {$+1$};
         \draw (0,0) rectangle (0.6,0.6);
         \fill[black] (0.3,0.3) circle (0.2);
        \end{tikzpicture} & Atypical  & $2$ \\
        ~&~&~&~\\[-0.6em]
        \hline
        ~&~&~&~\\[-0.6em]
        \multirow{12}{*}{$\mathbf{s}=100$} & \begin{tikzpicture}[baseline={(0, -0.1)}]
        \draw (-0.6,0) rectangle (0,0.6);
        \fill[black] (-0.6,0) -- (0,0) -- (0,0.6) -- cycle;
         \draw (-0.6,-0.6) rectangle (0,0);
         \fill[black] (-0.6,-0.6) -- (0,-0.6) -- (0,0) -- cycle;
         \draw (0,0) rectangle (0.6,0.6);
         \node at (0.3,0.3) {$+p$};
        \end{tikzpicture} & Typical  & $4(p+1)$ \\
        ~&~&~&~\\[-0.6em]
        \cline{2-4}
        ~&~&~&~\\[-0.6em]
        & \begin{tikzpicture}[baseline={(0, -0.1)}]
        \draw (-0.6,0) rectangle (0,0.6);
        \fill[black] (-0.6,0) -- (0,0) -- (0,0.6) -- cycle;
         \draw (-0.6,-0.6) rectangle (0,0);
         \fill[black] (-0.3,-0.3) circle (0.2);
         \draw (0,0) rectangle (0.6,0.6);
         \node at (0.3,0.3) {$+p$};
        \end{tikzpicture} & Atypical  & $2p+1$ \\
        ~&~&~&~\\[-0.6em]
        \cline{2-4}
        ~&~&~&~\\[-0.6em]
        & \begin{tikzpicture}[baseline={(0, -0.1)}]
        \draw (-0.6,0) rectangle (0,0.6);
        \fill[black] (-0.3,0.3) circle (0.2);
         \draw (-0.6,-0.6) rectangle (0,0);
         \fill[black] (-0.6,-0.6) -- (0,-0.6) -- (0,0) -- cycle;
         \draw (0,0) rectangle (0.6,0.6);
         \node at (0.3,0.3) {$+p$};
        \end{tikzpicture} & Atypical  & $2p+3$ \\
        ~&~&~&~\\[-0.6em]
        \cline{2-4}
        ~&~&~&~\\[-0.6em]
        & \begin{tikzpicture}[baseline={(0, -0.1)}]
        \draw (-0.6,0) rectangle (0,0.6);
        \fill[black] (-0.3,0.3) circle (0.2);
         \draw (-0.6,-0.6) rectangle (0,0);
         \fill[black] (-0.3,-0.3) circle (0.2);
         \draw (0,0) rectangle (0.6,0.6);
         \node at (0.3,0.3) {$+1$};
        \end{tikzpicture} & Atypical  & $2$ \\[1.6em]
        \hline
    \end{tabular}
\end{table}

\section{Quantum affine general linear superalgebras}\label{se:qafsuperalgebra}
In this section, we review the definition of quantum affine general linear superalgebras in terms of RTT presentations. Additionally, we investigate their PBW basis for arbitrary parity sequence $\mathbf{s}$, based on \cite[Section 2.2.2]{LWZ24}. 

\subsection{RTT presentation $\mathrm{U}_q\big(\widehat{\mathfrak{g}}_{\mathbf{s}}\big)$}\label{se:RTTqafsuperalgebra}
Fix $\mathbf{s}\in\mathcal{S}(m|n)$, we introduce the quantum affine super R-matrix
\begin{align*}
\mathcal{R}_{q,\mathbf{s}}(u,v)&=\sum_{i,j\in I_{\mathbf{s}}}\left(uq_i^{\delta_{ij}}-vq_i^{-\delta_{ij}}\right)E_{ii,\mathbf{s}}\otimes E_{jj,\mathbf{s}}+u\sum_{i>j}\left(q_j-q_j^{-1}\right)E_{ij,\mathbf{s}}\otimes E_{ji,\mathbf{s}}   \\
  &\hspace{3em}+v\sum_{i<j}\left(q_j-q_j^{-1}\right)E_{ij,\mathbf{s}}\otimes E_{ji,\mathbf{s}}\ \in \ \operatorname{End} \mathcal{V}_{\mathbf{s}}^{\otimes 2}[u,v],
\end{align*}
which covers the standard case given in \cite{Zhf16}. Notably, $\mathcal{R}_{q,\mathbf{s}}(u,v)$ satisfies
\begin{gather}\label{Rquv}
    \mathcal{R}_{q,\mathbf{s}}(u,v)=\mathcal{R}_{q,\mathbf{s}}u-\widetilde{\mathcal{R}}_{q,\mathbf{s}}v.
\end{gather}
\begin{lemma}
    The R-matrix $\mathcal{R}_{q,\mathbf{s}}(u,v)$ is a solution of the following quantum Yang-Baxter equation
\begin{gather*}
\mathcal{R}_{q,\mathbf{s}}^{12}(u,v)\mathcal{R}_{q,\mathbf{s}}^{13}(u,w)\mathcal{R}_{q,\mathbf{s}}^{23}(v,w)=\mathcal{R}_{q,\mathbf{s}}^{23}(v,w)\mathcal{R}_{q,\mathbf{s}}^{13}(u,w)\mathcal{R}_{q,\mathbf{s}}^{12}(u,v).
\end{gather*}
\end{lemma}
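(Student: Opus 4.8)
The plan is to reduce the spectral Yang--Baxter equation to the constant one, \eqref{YB:solution}, via the decomposition \eqref{Rquv}. First I would substitute $\mathcal{R}_{q,\mathbf{s}}(u,v)=\mathcal{R}_{q,\mathbf{s}}\,u-\widetilde{\mathcal{R}}_{q,\mathbf{s}}\,v$ into both sides of the asserted identity and expand. Each of the three factors is homogeneous of degree one in its two arguments, so both sides become homogeneous cubic polynomials in $u,v,w$ with coefficients in $\operatorname{End}\mathcal{V}_{\mathbf{s}}^{\otimes 3}$; inspecting which monomials can arise from the factors $^{12}\!\mapsto(u,v)$, $^{13}\!\mapsto(u,w)$, $^{23}\!\mapsto(v,w)$, one sees that the only ones occurring are $u^{2}v,\ u^{2}w,\ uv^{2},\ uvw,\ uw^{2},\ v^{2}w,\ vw^{2}$. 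Equating coefficients then reduces the lemma to seven operator identities on $\mathcal{V}_{\mathbf{s}}^{\otimes 3}$. Of these, the coefficient of $u^{2}v$ is precisely \eqref{YB:solution}, while the coefficient of $vw^{2}$ is the analogous Yang--Baxter equation $\widetilde{\mathcal{R}}_{q,\mathbf{s}}^{12}\widetilde{\mathcal{R}}_{q,\mathbf{s}}^{13}\widetilde{\mathcal{R}}_{q,\mathbf{s}}^{23}=\widetilde{\mathcal{R}}_{q,\mathbf{s}}^{23}\widetilde{\mathcal{R}}_{q,\mathbf{s}}^{13}\widetilde{\mathcal{R}}_{q,\mathbf{s}}^{12}$.

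The $\widetilde{\mathcal{R}}_{q,\mathbf{s}}$-equation is itself a consequence of \eqref{YB:solution}: inverting \eqref{YB:solution} shows $\mathcal{R}_{q,\mathbf{s}}^{-1}$ is again a solution, and conjugating every factor by $\mathcal{P}_{\mathbf{s}}$ through the intertwining rule $\mathcal{P}_{\mathbf{s}}^{ab}X^{ac}=X^{bc}\mathcal{P}_{\mathbf{s}}^{ab}$ carries this solution to $\widetilde{\mathcal{R}}_{q,\mathbf{s}}=\mathcal{P}_{\mathbf{s}}\mathcal{R}_{q,\mathbf{s}}^{-1}\mathcal{P}_{\mathbf{s}}$. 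Equivalently, the braid operator $\check{\mathcal{R}}_{q,\mathbf{s}}:=\mathcal{P}_{\mathbf{s}}\mathcal{R}_{q,\mathbf{s}}$ satisfies the graded braid relation on $\mathcal{V}_{\mathbf{s}}^{\otimes 3}$ (a restatement of \eqref{YB:solution}), hence so does its inverse $\check{\mathcal{R}}_{q,\mathbf{s}}^{-1}=\mathcal{P}_{\mathbf{s}}\widetilde{\mathcal{R}}_{q,\mathbf{s}}$.

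It remains to handle the five ``mixed'' coefficients, e.g. the coefficient of $u^{2}w$ gives $\mathcal{R}_{q,\mathbf{s}}^{12}\mathcal{R}_{q,\mathbf{s}}^{13}\widetilde{\mathcal{R}}_{q,\mathbf{s}}^{23}=\widetilde{\mathcal{R}}_{q,\mathbf{s}}^{23}\mathcal{R}_{q,\mathbf{s}}^{13}\mathcal{R}_{q,\mathbf{s}}^{12}$, and likewise for $uv^{2}$, $uw^{2}$, $v^{2}w$, $uvw$. The cleanest route is to observe that, by \eqref{Rq-P}, $\check{\mathcal{R}}_{q,\mathbf{s}}$ also satisfies the Hecke-type skein relation $\check{\mathcal{R}}_{q,\mathbf{s}}-\check{\mathcal{R}}_{q,\mathbf{s}}^{-1}=(q-q^{-1})\,\mathrm{id}$ (indeed $\mathcal{P}_{\mathbf{s}}\widetilde{\mathcal{R}}_{q,\mathbf{s}}=\check{\mathcal{R}}_{q,\mathbf{s}}-(q-q^{-1})\,\mathrm{id}$ and the left side equals $\check{\mathcal{R}}_{q,\mathbf{s}}^{-1}$), so that $\mathcal{P}_{\mathbf{s}}\mathcal{R}_{q,\mathbf{s}}(u,v)=u\,\check{\mathcal{R}}_{q,\mathbf{s}}-v\,\check{\mathcal{R}}_{q,\mathbf{s}}^{-1}$ is exactly the Baxterization of the braid--Hecke operator $\check{\mathcal{R}}_{q,\mathbf{s}}$; the parameter-dependent equation of the lemma is then the standard consequence of the braid relation together with the skein relation. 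Alternatively, one can verify the five mixed identities directly by substituting $\widetilde{\mathcal{R}}_{q,\mathbf{s}}=\mathcal{R}_{q,\mathbf{s}}-(q-q^{-1})\mathcal{P}_{\mathbf{s}}$: the $\mathcal{P}_{\mathbf{s}}$-free terms cancel by \eqref{YB:solution}, and the remaining terms are rearranged using the above intertwining rules together with $(\mathcal{P}_{\mathbf{s}}^{ab})^{2}=\mathrm{id}$ and the braid relation for the permutations; this is more laborious because of the flipped matrices $\mathcal{P}_{\mathbf{s}}\mathcal{R}_{q,\mathbf{s}}\mathcal{P}_{\mathbf{s}}$ that appear.

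The computation is conceptually routine, and the main obstacle is the bookkeeping of the $\mathbb{Z}_{2}$-graded signs. Since $\mathcal{P}_{\mathbf{s}}=\sum_{i,j\in I_{\mathbf{s}}}(-1)^{|j|}E_{ij,\mathbf{s}}\otimes E_{ji,\mathbf{s}}$ carries the sign $(-1)^{|j|}$, one has to check that the intertwining identities for $\mathcal{P}_{\mathbf{s}}$, the involutivity $(\mathcal{P}_{\mathbf{s}}^{ab})^{2}=\mathrm{id}$, the graded braid relation for $\check{\mathcal{R}}_{q,\mathbf{s}}$, the skein relation, and the equivalence between \eqref{YB:solution} and the braid relation all hold verbatim for every $\mathbf{s}\in\mathcal{S}(m|n)$ --- exactly as in the purely even case of $\mathrm{U}_q(\mathfrak{gl}_m)$ treated in \cite{GM10} and in the standard parity sequence treated in \cite{Zhf16}. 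Once these $\mathbf{s}$-independent facts are in place, equating the seven coefficients gives the lemma.
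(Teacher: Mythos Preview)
Your proposal is correct and follows essentially the same route as the paper: the paper's proof consists of the single sentence ``It can be directly deduced from \eqref{YB:solution}, \eqref{Rq-P} and \eqref{Rquv},'' and your argument is precisely the detailed expansion of that deduction, substituting \eqref{Rquv}, matching the seven monomial coefficients, and reducing the mixed terms via \eqref{Rq-P} (equivalently, via the Hecke--Baxterization viewpoint you describe). Your additional remarks on the graded-sign bookkeeping and the braid/skein reformulation go beyond what the paper writes out but are entirely in the spirit of the cited ingredients.
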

\begin{proof}
    It can be directly deduced from \eqref{YB:solution}, \eqref{Rq-P} and \eqref{Rquv}. 
    
\end{proof}
The \textit{quantum affine general linear superalgebra} $\mathrm{U}_q\big(\widehat{\mathfrak{g}}_{\mathbf{s}}\big)$ (with trivial central charge) is defined via the Faddeev-Reshetikhin-Takhtajan's presentation as follows.
\begin{definition}
Introduce the formal power series
\begin{gather*}
    t_{ij,\mathbf{s}}(u)=\sum_{r\geqslant 0}t_{ij,\mathbf{s}}^{(r)}u^{-r}\in \mathrm{U}_q\big(\widehat{\mathfrak{g}}_{\mathbf{s}}\big)[[u^{-1}]],\quad 
    \bar{t}_{ij,\mathbf{s}}(u)=\sum_{r\geqslant 0}\bar{t}_{ij,\mathbf{s}}^{(r)}u^{r}\in \mathrm{U}_q\big(\widehat{\mathfrak{g}}_{\mathbf{s}}\big)[[u]],
\end{gather*}
Put
\begin{gather*}
    T_{\mathbf{s}}(u)=\sum_{i,j\in I_{\mathbf{s}}}t_{ij,\mathbf{s}}(u)\otimes E_{ij,\mathbf{s}},\quad \bar{T}_{\mathbf{s}}(u)=\sum_{i,j\in I_{\mathbf{s}}}\bar{t}_{ij,\mathbf{s}}(u)\otimes E_{ij,\mathbf{s}}. 
\end{gather*}
The quantum affine general linear superalgebra $\mathrm{U}_q\big(\widehat{\mathfrak{g}}_{\mathbf{s}}\big)$ is an associative superalgebra with the set of generators 
$$t_{ij,\mathbf{s}}^{(r)},\ \bar{t}_{ij,\mathbf{s}}^{(r)},~~i,j\in I_{\mathbf{s}},\ r\in\mathbb{Z}_+,\quad \text{with parities}\quad \big|t_{ij,\mathbf{s}}^{(r)}\big|=\big|\bar{t}_{ij,\mathbf{s}}^{(r)}\big|=|i|+|j|.$$
These generators satisfy the defining relations with respect to the R-matrix $\mathcal{R}_{q,\mathbf{s}}(u,v)$: 
\begin{align}\label{RTT:af1}
t_{ij,\mathbf{s}}^{(0)}=\bar{t}_{ij,\mathbf{s}}^{(0)}&=0,\quad \textrm{if}~~1\leqslant i<j\leqslant N, \\ \label{RTT:af2}
t_{ii,\mathbf{s}}^{(0)}\bar{t}_{ii,\mathbf{s}}^{(0)}=\bar{t}_{ii,\mathbf{s}}^{(0)}t_{ii,\mathbf{s}}^{(0)}&=1,\quad\textrm{if}~~i\in I_{\mathbf{s}}, \\ \label{RTT:af3}
\mathcal{R}_{q,\mathbf{s}}^{23}(u,v)T^{1}_{\mathbf{s}}(u)T^{2}_{\mathbf{s}}(v)&=T^{2}_{\mathbf{s}}(v)T^{1}_{\mathbf{s}}(u)\mathcal{R}_{q,\mathbf{s}}^{23}(u,v), \\ \label{RTT:af4}
\mathcal{R}_{q,\mathbf{s}}^{23}(u,v)\bar{T}^{1}_{\mathbf{s}}(u)\bar{T}^{2}_{\mathbf{s}}(v)&=\bar{T}^{2}_{\mathbf{s}}(v)\bar{T}^{1}_{\mathbf{s}}(u)\mathcal{R}_{q,\mathbf{s}}^{23}(u,v), \\ \label{RTT:af5}
\mathcal{R}_{q,\mathbf{s}}^{23}(u,v)T^{1}_{\mathbf{s}}(u)\bar{T}^{2}_{\mathbf{s}}(v)&=\bar{T}^{2}_{\mathbf{s}}(v)T^{1}_{\mathbf{s}}(u)\mathcal{R}_{q,\mathbf{s}}^{23}(u,v). 
\end{align}
\end{definition}

The superalgebra $\mathrm{U}_q\big(\widehat{\mathfrak{g}}_{\mathbf{s}}\big)$ is a Hopf superalgebra endowed with the comultiplication $\widehat{\triangle}_{\mathbf{s}}$
given by: 
\begin{align*}
t_{ij,\mathbf{s}}(u)\mapsto\sum_{k\in I_{\mathbf{s}}} \varsigma_{ik;kj}t_{ik,\mathbf{s}}(u)\otimes t_{kj,\mathbf{s}}(u),\quad \bar{t}_{ij,\mathbf{s}}(u)\mapsto\sum_{k\in I_{\mathbf{s}} } \varsigma_{ik;kj}\bar{t}_{ik,\mathbf{s}}(u)\otimes \bar{t}_{kj,\mathbf{s}}(u).
\end{align*}

In the present paper, we aim to express the RTT relations
\eqref{RTT:af3}$-$\eqref{RTT:af5}
more explicitly in terms of generator series. To achieve this, we rewrite the defining relation \eqref{RTT:af5} as
\begin{equation}\label{RTT:af5-ex}
\begin{split}
&\big(q_i^{-\delta_{ik}}v-q_i^{\delta_{ik}}u\big)t_{ij,\mathbf{s}}(u)\bar{t}_{kl,\mathbf{s}}(v)
-\varsigma_{ij;kl}\big(q_j^{-\delta_{jl}}v-q_j^{\delta_{jl}}u\big)\bar{t}_{kl,\mathbf{s}}(v)t_{ij,\mathbf{s}}(u) \\
&\qquad=\varsigma_{ik;kl}\left(q_k-q_k^{-1}\right)\Big(\big(\delta_{k<i}u+\delta_{i<k}v\big)t_{kj,\mathbf{s}}(u)\bar{t}_{il,\mathbf{s}}(v)
-\big(\delta_{j<l}u+\delta_{l<j}v\big)\bar{t}_{kj,\mathbf{s}}(v)t_{il,\mathbf{s}}(u)\Big).
\end{split}
\end{equation}
The defining relations in terms of $t_{ij,\mathbf{s}}(u)$ are obtained from \eqref{RTT:af5-ex} by replacing $\bar{t}$ by $t$,
\begin{equation}\label{RTT:af3-ex}
\begin{split}
&\big(q_i^{-\delta_{ik}}v-q_i^{\delta_{ik}}u\big)t_{ij,\mathbf{s}}(u)t_{kl,\mathbf{s}}(v)
-\varsigma_{ij;kl}\big(q_j^{-\delta_{jl}}v-q_j^{\delta_{jl}}u\big)t_{kl,\mathbf{s}}(v)t_{ij,\mathbf{s}}(u) \\
&\qquad=\varsigma_{ik;kl}\left(q_k-q_k^{-1}\right)\Big(\big(\delta_{k<i}u+\delta_{i<k}v\big)t_{kj,\mathbf{s}}(u)t_{il,\mathbf{s}}(v)
-\big(\delta_{j<l}u+\delta_{l<j}v\big)t_{kj,\mathbf{s}}(v)t_{il,\mathbf{s}}(u)\Big),
\end{split}
\end{equation}
and the defining relations in terms of $\bar{t}_{ij,\mathbf{s}}(u)$ are obtained from \eqref{RTT:af5-ex} by replacing $t$ by $\bar{t}$,
\begin{equation}\label{RTT:af4-ex}
\begin{split}
&\big(q_i^{-\delta_{ik}}v-q_i^{\delta_{ik}}u\big)\bar{t}_{ij,\mathbf{s}}(u)\bar{t}_{kl,\mathbf{s}}(v)
-\varsigma_{ij;kl}\big(q_j^{-\delta_{jl}}v-q_j^{\delta_{jl}}u\big)\bar{t}_{kl,\mathbf{s}}(v)\bar{t}_{ij,\mathbf{s}}(u) \\
&\qquad=\varsigma_{ik;kl}\left(q_k-q_k^{-1}\right)\Big(\big(\delta_{k<i}u+\delta_{i<k}v\big)\bar{t}_{kj,\mathbf{s}}(u)\bar{t}_{il,\mathbf{s}}(v)
-\big(\delta_{j<l}u+\delta_{l<j}v\big)\bar{t}_{kj,\mathbf{s}}(v)\bar{t}_{il,\mathbf{s}}(u)\Big).
\end{split}
\end{equation}

\begin{remark}\label{usual0}
 When $n$ is equal to 0, $\mathcal{R}_{q^{-1},\mathbf{s}}(u,v)$ coincides with the trigonometric solution $R(u,v)$ for the quantum affine algebra $\mathrm{U}_q\big(\widehat{\mathfrak{gl}}_m\big)$, as proposed by Molev-Ragoucy-Sorba \cite[Section 3]{MRS03}. 
\end{remark}

\vspace{1em}
\begin{lemma}
In $\mathrm{U}_q\big(\widehat{\mathfrak{g}}_{\mathbf{s}}\big)\otimes \left(\operatorname{End} \mathcal{V}_{\mathbf{s}}^{\otimes 2}\right)[[u^{\pm 1},v^{\pm 1}]]$, the following equation holds,
\begin{gather}\label{RTT:af6}
\mathcal{R}_{q,\mathbf{s}}^{23}(u,v)\bar{T}^{1}_{\mathbf{s}}(u)T^{2}_{\mathbf{s}}(v)=T^2_{\mathbf{s}}(v)\bar{T}^{1}_{\mathbf{s}}(u)\mathcal{R}_{q,\mathbf{s}}^{23}(u,v).
\end{gather}
\end{lemma}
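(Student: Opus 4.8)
The statement to prove, equation \eqref{RTT:af6}, asserts the ``mixed'' relation
$\mathcal{R}_{q,\mathbf{s}}^{23}(u,v)\bar{T}^{1}_{\mathbf{s}}(u)T^{2}_{\mathbf{s}}(v)=T^2_{\mathbf{s}}(v)\bar{T}^{1}_{\mathbf{s}}(u)\mathcal{R}_{q,\mathbf{s}}^{23}(u,v)$, which is the ``companion'' of \eqref{RTT:af5} with the roles of $T_{\mathbf{s}}$ and $\bar T_{\mathbf{s}}$ reversed.

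The plan is to derive \eqref{RTT:af6} as a formal consequence of the three defining relations \eqref{RTT:af3}--\eqref{RTT:af5} together with the invertibility hypotheses \eqref{RTT:af1}--\eqref{RTT:af2}. First I would observe that the leading coefficients $T_{\mathbf{s}}^{(0)}$ and $\bar T_{\mathbf{s}}^{(0)}$ are, respectively, lower- and upper-triangular invertible matrices over the algebra (with mutually inverse diagonal parts), so that, letting $u\to\infty$ in \eqref{RTT:af3} and $v\to 0$ in \eqref{RTT:af4}, one recovers that $T_{\mathbf{s}}(v)$ commutes appropriately with $\bar T_{\mathbf{s}}^{(0)}$ and $T^{(0)}_{\mathbf{s}}$ with $\bar T_{\mathbf{s}}(u)$ up to the constant $R$-matrices $\mathcal{R}_{q,\mathbf{s}}$ and $\widetilde{\mathcal{R}}_{q,\mathbf{s}}$ (using the decomposition \eqref{Rquv}). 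The standard trick, going back to the even case treated in \cite{MRS03,GM10}, is then the following: rewrite \eqref{RTT:af5} in the form
\begin{gather*}
\bar{T}^{2}_{\mathbf{s}}(v)^{-1}\mathcal{R}_{q,\mathbf{s}}^{23}(u,v)T^{1}_{\mathbf{s}}(u)=T^{1}_{\mathbf{s}}(u)\mathcal{R}_{q,\mathbf{s}}^{23}(u,v)\bar{T}^{2}_{\mathbf{s}}(v)^{-1},
\end{gather*}
which makes sense after multiplying by suitable scalars to clear denominators; but $\bar T^2_{\mathbf{s}}(v)$ is only invertible as a formal series in $v$ when its constant term $\bar T^{(0)}_{\mathbf{s}}$ is invertible, which holds by \eqref{RTT:af1}--\eqref{RTT:af2}. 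Conjugating and relabeling the tensor factors $1\leftrightarrow 2$, one produces a relation between $\bar T^1_{\mathbf{s}}(u)$ and $T^2_{\mathbf{s}}(v)$; matching it against the desired \eqref{RTT:af6} reduces the problem to an identity purely among the constant $R$-matrices, namely a compatibility of $\mathcal{R}_{q,\mathbf{s}}(u,v)$ with the ``crossing'' transformation — this is precisely the content encoded by the Yang--Baxter equation proved in the preceding Lemma together with \eqref{Rquv} and the unitarity-type relation $\mathcal{R}_{q,\mathbf{s}}\widetilde{\mathcal{R}}_{q,\mathbf{s}}=\widetilde{\mathcal{R}}_{q,\mathbf{s}}\mathcal{R}_{q,\mathbf{s}}=(\text{scalar})\cdot 1$ that follows from \eqref{Rq-P}.

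Concretely, I would instead work entirely at the level of matrix entries, which avoids the delicate formal-inversion bookkeeping. Comparing \eqref{RTT:af5-ex} with the target, note that \eqref{RTT:af6} written out entrywise has exactly the same shape as \eqref{RTT:af5-ex} but with every $t_{ab,\mathbf{s}}(u)\bar t_{cd,\mathbf{s}}(v)$-type product replaced by the corresponding $\bar t_{ab,\mathbf{s}}(u)t_{cd,\mathbf{s}}(v)$ product and the factors on the right-hand side permuted accordingly. The key structural input is that $\mathcal{R}_{q,\mathbf{s}}(u,v)$ and its ``transpose'' (obtained by $u\leftrightarrow v$, equivalently by swapping the two tensor slots) are related via $\mathcal{R}_{q,\mathbf{s}}(u,v)=-\mathcal{R}_{q,\mathbf{s}}^{21}(v,u)$ up to sign conventions, which by \eqref{Rquv} follows from $\widetilde{\mathcal{R}}_{q,\mathbf{s}}=\mathcal{P}_{\mathbf{s}}\mathcal{R}_{q,\mathbf{s}}^{-1}\mathcal{P}_{\mathbf{s}}$. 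Applying $\mathcal{P}_{\mathbf{s}}^{23}$ to both sides of \eqref{RTT:af5}, interchanging the labels of the two auxiliary spaces, and using $\mathcal{P}_{\mathbf{s}}^{23}\mathcal{R}_{q,\mathbf{s}}^{23}(u,v)\mathcal{P}_{\mathbf{s}}^{23}=\mathcal{R}_{q,\mathbf{s}}^{32}(u,v)$ transforms the $T^1\bar T^2$ relation into a $\bar T^1 T^2$ relation of precisely the form \eqref{RTT:af6} — provided the auxiliary-space $R$-matrix appearing matches $\mathcal{R}_{q,\mathbf{s}}^{23}(u,v)$, and this last check is the place where one invokes \eqref{Rquv} and the explicit form of $\mathcal{R}_{q,\mathbf{s}}$ and $\widetilde{\mathcal{R}}_{q,\mathbf{s}}$.

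The main obstacle I anticipate is bookkeeping of signs and of the scalar prefactors $\varsigma_{ab;cd}$ coming from the $\mathbb{Z}_2$-grading: the permutation operator $\mathcal{P}_{\mathbf{s}}$ carries the sign $(-1)^{|v||w|}$, and moving it past $T_{\mathbf{s}}$, $\bar T_{\mathbf{s}}$ produces parity factors that must be reconciled with the $\varsigma$'s already present in \eqref{RTT:af5-ex}. A clean way to control this is to derive \eqref{RTT:af6} directly by the same method that \eqref{RTT:af5-ex} was obtained from \eqref{RTT:af5}: expand the matrix relation obtained after the $\mathcal{P}^{23}_{\mathbf{s}}$-conjugation in the basis $\{E_{ij,\mathbf{s}}\}$, collect coefficients of $E_{ab,\mathbf{s}}\otimes E_{cd,\mathbf{s}}$, and verify termwise that the result is \eqref{RTT:af6}. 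Since no new relations are needed — everything follows from \eqref{RTT:af3}--\eqref{RTT:af5}, \eqref{Rq-P}, \eqref{Rquv}, and the triangularity built into \eqref{RTT:af1}--\eqref{RTT:af2} — the argument is a finite, if somewhat tedious, verification; I would present it by reducing to the even case of \cite[Proposition 3.2]{MRS03} (see also \cite[Section 2]{GM10}) and indicating only the modifications forced by the signs.
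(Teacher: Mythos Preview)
Your sketch orbits the right idea ($\mathcal{P}$-conjugation of \eqref{RTT:af5} combined with a unitarity/crossing relation for the affine $R$-matrix), but the two concrete identities you invoke are both false, and without them the conjugation step does not close up. First, the ``crossing'' relation $\mathcal{R}_{q,\mathbf{s}}(u,v)=-\mathcal{R}_{q,\mathbf{s}}^{21}(v,u)$ fails: using \eqref{Rquv} and $\widetilde{\mathcal{R}}_{q,\mathbf{s}}=\mathcal{P}_{\mathbf{s}}\mathcal{R}_{q,\mathbf{s}}^{-1}\mathcal{P}_{\mathbf{s}}$ one finds $-\mathcal{R}_{q,\mathbf{s}}^{21}(v,u)=u\mathcal{R}_{q,\mathbf{s}}^{-1}-v\widetilde{\mathcal{R}}_{q,\mathbf{s}}^{-1}$, which is not $\mathcal{R}_{q,\mathbf{s}}(u,v)$. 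Second, the constant-$R$ ``unitarity'' $\mathcal{R}_{q,\mathbf{s}}\widetilde{\mathcal{R}}_{q,\mathbf{s}}=(\text{scalar})\cdot 1$ is false already for $\mathfrak{gl}_2$ (the middle $2\times 2$ block is $\begin{pmatrix}1&-(q-q^{-1})\\ q-q^{-1}&1-(q-q^{-1})^2\end{pmatrix}$). So after $\mathcal{P}$-conjugating \eqref{RTT:af5} and swapping $u\leftrightarrow v$ you are left with a relation carrying the wrong $R$-matrix, and neither of the tools you list converts it back to $\mathcal{R}_{q,\mathbf{s}}(u,v)$.

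The missing ingredient is that the correct partner of $\mathcal{R}_{q,\mathbf{s}}$ is $\mathcal{R}_{q^{-1},\mathbf{s}}$, not $\widetilde{\mathcal{R}}_{q,\mathbf{s}}$. The paper's three-line proof computes the \emph{affine} unitarity
\[
\mathcal{R}_{q,\mathbf{s}}(u,v)\,\mathcal{R}_{q^{-1},\mathbf{s}}(u,v)=\big((u-v)^2-(q-q^{-1})^2uv\big)\,1\otimes 1,
\]
multiplies \eqref{RTT:af5} on both sides by $\mathcal{R}_{q^{-1},\mathbf{s}}^{23}(u,v)$ to obtain $T^{1}_{\mathbf{s}}(u)\bar T^{2}_{\mathbf{s}}(v)\mathcal{R}_{q^{-1},\mathbf{s}}^{23}(u,v)=\mathcal{R}_{q^{-1},\mathbf{s}}^{23}(u,v)\bar T^{2}_{\mathbf{s}}(v)T^{1}_{\mathbf{s}}(u)$, then conjugates by $\mathcal{P}_{\mathbf{s}}$, swaps $u\leftrightarrow v$, and finishes with the \emph{correct} crossing identity
\[
\mathcal{R}_{q,\mathbf{s}}(u,v)=-\mathcal{P}_{\mathbf{s}}\,\mathcal{R}_{q^{-1},\mathbf{s}}(v,u)\,\mathcal{P}_{\mathbf{s}}.
\]
Your alternative routes (formal inversion of $\bar T_{\mathbf{s}}(v)$, or entrywise checking) could be made to work, but are substantially longer; the key observation you are missing is simply that $q\mapsto q^{-1}$, not $\mathcal{R}\mapsto\widetilde{\mathcal{R}}$, is what makes the symmetry manifest.
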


\begin{proof}
    A straightforward calculation yields
    \begin{gather*}
        \mathcal{R}_{q,\mathbf{s}}(u,v)\mathcal{R}_{q^{-1},\mathbf{s}}(u,v)=\mathcal{R}_{q^{-1},\mathbf{s}}(u,v)\mathcal{R}_{q,\mathbf{s}}(u,v)=\Big((u-v)^2-(q-q^{-1})^2uv\Big)1\otimes 1. 
    \end{gather*}
    Multiplying both sides of relation \eqref{RTT:af5} on the left and right by $\mathcal{R}_{q^{-1},\mathbf{s}}(u,v)$ simultaneously, we obtain
    \begin{gather*}
        T^1_{\mathbf{s}}(u)\bar{T}^{2}_{\mathbf{s}}(v)\mathcal{R}_{q^{-1},\mathbf{s}}^{23}(u,v)=\mathcal{R}_{q^{-1},\mathbf{s}}^{23}(u,v)\bar{T}^{2}_{\mathbf{s}}(v)T^1_{\mathbf{s}}(u).
    \end{gather*}
    Then, applying $\mathcal{P}_{\mathbf{s}}(\,\cdot\,)\mathcal{P}_{\mathbf{s}}$ to this equation and swapping $u\longleftrightarrow v$, we get \eqref{RTT:af6} due to 
    $$\mathcal{R}_{q,\mathbf{s}}(u,v)=-\mathcal{P}_{\mathbf{s}}\mathcal{R}_{q^{-1},\mathbf{s}}(v,u)\mathcal{P}_{\mathbf{s}}.$$
\end{proof}

 We also can rewrite \eqref{RTT:af6} as
\begin{equation}\label{RTT:af6-ex}
\begin{split}
&\big(q_i^{-\delta_{ik}}v-q_i^{\delta_{ik}}u\big)\bar{t}_{ij,\mathbf{s}}(u)t_{kl,\mathbf{s}}(v)
-\varsigma_{ij;kl}\big(q_j^{-\delta_{jl}}v-q_j^{\delta_{jl}}u\big)t_{kl,\mathbf{s}}(v)\bar{t}_{ij,\mathbf{s}}(u) \\
&\qquad=\varsigma_{ik;kl}\left(q_k-q_k^{-1}\right)\Big(\big(\delta_{k<i}u+\delta_{i<k}v\big)\bar{t}_{kj,\mathbf{s}}(u)t_{il,\mathbf{s}}(v)
-\big(\delta_{j<l}u+\delta_{l<j}v\big)t_{kj,\mathbf{s}}(v)\bar{t}_{il,\mathbf{s}}(u)\Big).
\end{split}
\end{equation}

\begin{remark}
    The superalgebra $\mathrm{U}_{q^{-1}}\big(\widehat{\mathfrak{g}}_{\mathbf{s}}\big)$ for $\mathbf{s}=\mathbf{s}^{\rm st}$---defined by generator matrices
    $$L^{+}(u)=\bar{T}(u^{-1}),\quad L^-(u)=T(u^{-1}),$$
    along with defining relations \eqref{RTT:af1}$-$\eqref{RTT:af4} and \eqref{RTT:af6}---is identical to the presentation proposed by Jing, Li, and Zhang \cite{JLZ25} when $q^c=1$. 
\end{remark}

\vspace{1em}
Let $f(u),g(u)$ be the formal series
\begin{gather*}
    f(u)=\sum_{r=0}^{\infty}f^{(r)}u^{-r},\quad g(u)=\sum_{r=0}^{\infty}g^{(r)}u^r\ \in\ \mathbb{C}[[u,u^{-1}]]
\end{gather*}
such that $f^{(0)}g^{(0)}=1$, and let $\mathfrak{d}$ be a nonzero complex number. By its defining relations, $\mathrm{U}_q\big(\widehat{\mathfrak{g}}_{\mathbf{s}}\big)$ has many natural superalgebraic automorphisms given by
\begin{align}\label{auto:af:fu}
    &T(u)\mapsto f(u)T(u),\qquad &&\bar{T}(u)\mapsto g(u)\bar{T}(u), \\ \label{auto:af:c}
    &T(u)\mapsto T(\mathfrak{d} u),&& \bar{T}(u)\mapsto \bar{T}(\mathfrak{d} u).
\end{align}

\vspace{1em}

\subsection{PBW basis of $\mathrm{U}_q\big(\widehat{\mathfrak{g}}_{\mathbf{s}}\big)$}\label{se:PBWbasis}

In the recent work \cite{LWZ24}, the authors presented a RTT-type of PBW basis for the quntum affine superalgebra $\mathrm{U}_q\big(\widehat{\mathfrak{g}}_{\mathbf{s}}\big)$ at at the standard parity sequence $\mathbf{s}=\mathbf{s}^{\rm st}$ in some fixed order. However, two main difficulties arise::  first, no ordered basis has been established for non-standard $\mathbf{s}\in\mathcal{S}(m|n)$; second, even in the standard case, the existing PBW basis is not well-adapted to studying finite-dimensional representations of $\mathrm{U}_q\big(\widehat{\mathfrak{g}}_{\mathbf{s}^{\rm st}}\big)$. In response, we will show that $\mathrm{U}_q\big(\widehat{\mathfrak{g}}_{\mathbf{s}}\big)$ admits an ordered basis with respect to another appropriate order for any parity sequence $\mathbf{s}$. 

Let $\prec$ be the lexicographical order of the countable set $I_{\mathbf{s}}\times I_{\mathbf{s}}\times \mathbb{Z}_+$, we introduce an ordering on the generators of the superalgebra $\mathrm{U}_q\big(\widehat{\mathfrak{g}}_{\mathbf{s}}\big)$: 
\begin{align*}
    \gamma_{i_1,j_1}^{(r_1)} &\prec \gamma_{i_2,j_2}^{(r_2)}\quad \text{if and only if}\quad (j_1-i_1,i_1,r_1)\prec (j_2-i_2,i_2,r_2), \\
    t_{ij,\mathbf{s}}^{(r)} &\prec \bar{t}_{ij,\mathbf{s}}^{(r)}\quad~~ \text{for any triples}\quad (i,j,r)
\end{align*}
for $\gamma\in\{\,t_{\mathbf{s}},\bar{t}_{\mathbf{s}}\,\}$.
The following theorem establishes a PBW basis of $\mathrm{U}_q\big(\widehat{\mathfrak{g}}_{\mathbf{s}}\big)$ with respect to the aforementioned order. 

%

\begin{theorem}\label{base:rep-af}
Let $\mathcal{B}_{\mathbf{s}}$ be the set of all ordered monomials
\begin{equation}\label{base:af}
\begin{split}
&\mathop{\overrightarrow{\prod}}\limits_{1-N\leqslant k\leqslant 1}\mathop{\overrightarrow{\prod}}\limits_{1-k\leqslant i\leqslant N} \bigg\{ \Big(t_{i,i+k,\mathbf{s}}^{(0)}\Big)^{b_{i,i+k,0}}\Big(t_{i,i+k,\mathbf{s}}^{(1)}\Big)^{b_{i,i+k,1}}
     \Big(\bar{t}_{i,i+k,\mathbf{s}}^{(1)}\Big)^{\bar{b}_{i,i+k,1}} \cdots\bigg\} \\
&\hspace{5em}\times \mathop{\overrightarrow{\prod}}\limits_{1\leqslant i\leqslant N}\bigg\{ \Big(t_{ii,\mathbf{s}}^{(0)}\Big)^{b_{i,i,0}}\Big(\bar{t}_{ii,\mathbf{s}}^{(0)}\Big)^{\bar{b}_{i,i,0}}\Big(t_{ii,\mathbf{s}}^{(1)}\Big)^{b_{i,i,0}}\Big(\bar{t}_{ii,\mathbf{s}}^{(1)}\Big)^{\bar{b}_{i,i,1}}\cdots\bigg\} \\
&\hspace{5em} \times \mathop{\overrightarrow{\prod}}\limits_{1\leqslant k\leqslant N-1}\mathop{\overrightarrow{\prod}}\limits_{1\leqslant i\leqslant k} \bigg\{ \Big(\bar{t}_{i,i+k,\mathbf{s}}^{(0)}\Big)^{\bar{b}_{i,i+k,0}}\Big(t_{i,i+k,\mathbf{s}}^{(1)}\Big)^{b_{i,i+k,1}}
     \Big(\bar{t}_{i,i+k,\mathbf{s}}^{(1)}\Big)^{\bar{b}_{i,i+k,1}} \cdots\bigg\}
\end{split}
\end{equation}
with the exponents 
\begin{align}\label{base:af-co1}
&b_{i,j,r},\,\bar{b}_{i,j,r}\in\mathbb{Z}_+,\quad \text{if}~~|i|+|j|=\bar{0}, \\ \label{base:af-co2}
&b_{i,j,r},\,\bar{b}_{i,j,r}\in\{0,1\},\quad \text{if}~~|i|+|j|=\bar{1}, \\ \label{base:af-co3}
&b_{i,i,0}\times \bar{b}_{i,i,0}=0\quad\text{for }i\in I_{\mathbf{s}}.
\end{align} 
Then the monomial set $\mathcal{B}_{\mathbf{s}}$ forms an ordered basis of $\mathrm{U}_q\big(\widehat{\mathfrak{g}}_{\mathbf{s}}\big)$.
\end{theorem}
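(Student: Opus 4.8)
The plan is to prove Theorem~\ref{base:rep-af} in two stages: first a spanning argument valid for every parity sequence $\mathbf{s}$, then linear independence via an associated graded algebra, reducing the hard direction to the standard case.

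\emph{Spanning.} I would assign to a monomial $\gamma^{(r_1)}_{i_1j_1}\cdots\gamma^{(r_l)}_{i_lj_l}$ (with $\gamma\in\{t_{\mathbf{s}},\bar t_{\mathbf{s}}\}$ and each $i_a\neq j_a$) the pair $(l,\sum_a r_a)$, refined by the $\prec$-ordered multiset of the triples $(j_a-i_a,i_a,r_a)$, and order such data lexicographically. Using the explicit relations \eqref{RTT:af3-ex}--\eqref{RTT:af6-ex} together with \eqref{RTT:af1} and \eqref{RTT:af2}, I would check that whenever two adjacent factors $\gamma^{(r)}_{pq}\gamma^{(s)}_{kl}$ occur out of order with respect to $\prec$, rewriting them produces the correctly ordered product (up to a nonzero scalar) plus a $\mathbb{C}$-combination of monomials of the same length $l$ whose new index pairs are of the form $\gamma_{kq},\gamma_{pl}$ (strictly ``inside'' $\{(p,q),(k,l)\}$), with superscripts shifted by at most $\pm1$ but total superscript degree non-increasing, hence strictly smaller in the chosen order; the case $l=0$, $i<j$ is killed outright by \eqref{RTT:af1}. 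Iterating, using \eqref{RTT:af2} to eliminate mixed products $t^{(0)}_{ii}\bar t^{(0)}_{ii}$ (which produces the constraint \eqref{base:af-co3}) and the relations among repeated odd generators $\gamma^{(r)}_{ij}$ with $|i|+|j|=\bar1$ (which produces the bound \eqref{base:af-co2}), an induction on this order shows that every element of $\mathrm{U}_q\big(\widehat{\mathfrak{g}}_{\mathbf{s}}\big)$ is a linear combination of ordered monomials of the shape \eqref{base:af}.

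\emph{Linear independence.} I would pass to the associated graded algebra for the filtration by length, setting $\deg\gamma^{(r)}_{ij}=1$ for $i\neq j$ and $\deg\gamma^{(r)}_{ii}=0$ (this is legitimate exactly as in the proof of Theorem~\ref{PBW:fin:RTT}, since the diagonal series commute with every generator up to a scalar). In $\operatorname{Gr}\mathrm{U}_q\big(\widehat{\mathfrak{g}}_{\mathbf{s}}\big)$ each of the relations \eqref{RTT:af3-ex}--\eqref{RTT:af6-ex} degenerates to one of the form $AB=c\,BA$ with $c\in\mathbb{C}^{\times}$ a monomial in the $q_i$, supplemented by $(\text{odd generator})^2=0$ and, in degree $0$, $t^{(0)}_{ii}\bar t^{(0)}_{ii}=1$; thus $\operatorname{Gr}\mathrm{U}_q\big(\widehat{\mathfrak{g}}_{\mathbf{s}}\big)$ is a quotient of a quantum polynomial/exterior superalgebra whose ordered monomials are precisely those in \eqref{base:af} with the exponent constraints \eqref{base:af-co1}--\eqref{base:af-co3}. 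The decisive observation is that this degenerate algebra depends on $\mathbf{s}$ only through the nonzero scalars $c$ and the signs $\varsigma'_{ab;cd}$, neither of which affects the cardinality or the independence of the ordered-monomial set. So it suffices to establish independence for $\mathbf{s}=\mathbf{s}^{\rm st}$: there the monomials \eqref{base:af} span (by the previous step) and are indexed by the same combinatorial data as the PBW basis of \cite{LWZ24} (which uses a different admissible order), and also as the basis of \cite[Corollary~2.13]{GM10} in the limit $n\to0$ of Remark~\ref{usual0}; a change-of-order argument performed degree by degree in $\operatorname{Gr}$ — spanning plus matching cardinalities forces a basis — yields independence in the standard case, hence in $\operatorname{Gr}\mathrm{U}_q\big(\widehat{\mathfrak{g}}_{\mathbf{s}}\big)$ for all $\mathbf{s}$, and therefore in $\mathrm{U}_q\big(\widehat{\mathfrak{g}}_{\mathbf{s}}\big)$.

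\emph{Main obstacle.} The genuinely delicate part is the reordering bookkeeping in the spanning step: one must verify that each of the four families of relations, with all the sign factors $\varsigma_{ab;cd}$ and the $u/v$-dependent coefficients, strictly decreases the chosen monomial invariant — in particular that the $\pm1$ superscript shifts cannot produce an infinite descent, and that products of a diagonal series with itself and with off-diagonal series can always be brought into the prescribed position. Equally, one has to justify that $\operatorname{Gr}\mathrm{U}_q\big(\widehat{\mathfrak{g}}_{\mathbf{s}}\big)$ is not a proper quotient of the expected quantum exterior-polynomial superalgebra; this lower bound is what the reduction to the standard case via \cite{LWZ24} (and \cite{GM10} in the even case) supplies, and proving it directly for non-standard $\mathbf{s}$ — where no braid-group isomorphism onto the standard case is available for the affine algebra — would be the hard alternative route.
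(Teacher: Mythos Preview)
Your spanning argument follows the same line as the paper's: both reorder adjacent generators using the explicit RTT relations, with the paper doing a case analysis on $j-i$ versus $l-k$ and on $i$ versus $k$, choosing between \eqref{RTT:af5-ex} and \eqref{RTT:af6-ex} so that the resulting index pairs $(k,j),(i,l)$ already respect $\prec$. Your well-founded-invariant formulation is a reasonable systematization of the same idea.

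Your linear-independence argument, however, has a genuine gap. The claim that in the length-associated graded ``each of the relations \eqref{RTT:af3-ex}--\eqref{RTT:af6-ex} degenerates to one of the form $AB=c\,BA$'' is false: for off-diagonal pairs $(i,j)$ and $(k,l)$ with $k\neq j$ and $i\neq l$, the correction terms $\gamma_{kj}^{(r')}\gamma_{il}^{(s')}$ on the right-hand side are again products of two off-diagonal generators, hence have the \emph{same} length $2$ and survive in $\operatorname{Gr}$. So $\operatorname{Gr}\mathrm{U}_q(\widehat{\mathfrak{g}}_{\mathbf{s}})$ is not the quantum polynomial/exterior superalgebra you describe, and the ``decisive observation'' that its isomorphism type is independent of $\mathbf{s}$ does not follow; without it, your transfer from $\mathbf{s}^{\rm st}$ to general $\mathbf{s}$ is unjustified. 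You are right that \cite{LWZ24} is the ultimate source of the lower bound, but the paper invokes it differently and more directly: it observes that the linear-independence proof in \cite[Proposition~2.10]{LWZ24} is already insensitive both to the admissible ordering on the generators and to the parity sequence, and therefore applies verbatim to $\mathcal{B}_{\mathbf{s}}$ for every $\mathbf{s}\in\mathcal{S}(m|n)$, with no associated-graded intermediary and no reduction to the standard case needed.
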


\begin{proof}
  We first prove the set $\mathcal{B}_{\mathbf{s}}$ spans the whole superalgebra $\mathrm{U}_q\big(\widehat{\mathfrak{gl}}_{\mathbf{s}}\big)$. By relation \eqref{RTT:af5-ex}, we have
  \begin{align*}
      t_{ij,\mathbf{s}}(u)\bar{t}_{kl,\mathbf{s}}(v)&=\varsigma_{ij;kl}\frac{q_j^{-\delta_{jl}}v-q_j^{\delta_{jl}}u}{q_i^{-\delta_{ik}}v-q_i^{\delta_{ik}}u}\bar{t}_{kl,\mathbf{s}}(v)t_{ij,\mathbf{s}}(u) \\
      &+\frac{\varsigma_{ik;kl}(q_k-q_k^{-1})}{q_i^{-\delta_{ik}}v-q_i^{\delta_{ik}}u}\Big\{(\delta_{k<i}u+\delta_{i<k}v)t_{kj,\mathbf{s}}(u)\bar{t}_{il,\mathbf{s}}(v)-(\delta_{j<l}u+\delta_{l<j}v)\bar{t}_{kj,\mathbf{s}}(v)t_{il,\mathbf{s}}(u)\Big\},
  \end{align*}
  which forces
  \begin{gather}\label{Uqspan:1}
      t_{ij,\mathbf{s}}^{(r)}\bar{t}_{kl,\mathbf{s}}^{(s)}\in\operatorname{span}_{\mathbb{C}}\Big\{\,\bar{t}_{kl,\mathbf{s}}^{(a_1)}t_{ij,\mathbf{s}}^{(b_1)},\ t_{kj,\mathbf{s}}^{(a_2)}\bar{t}_{il,\mathbf{s}}^{(b_2)},\ \bar{t}_{kj,\mathbf{s}}^{(a_2)}t_{il,\mathbf{s}}^{(b_3)}\,\Big|\,\text{ each }a_i,b_i\in\mathbb{Z}_+\,\Big\} 
  \end{gather}
  for $r,s\in\mathbb{Z}_+$. 
  Similarly, we also obtain by relation \eqref{RTT:af6-ex}
  \begin{gather}\label{Uqspan:2}
      t_{ij,\mathbf{s}}^{(r)}\bar{t}_{kl,\mathbf{s}}^{(s)}\in\operatorname{span}_{\mathbb{C}}\Big\{\,\bar{t}_{kl,\mathbf{s}}^{(a_1)}t_{ij,\mathbf{s}}^{(b_1)},\ t_{il,\mathbf{s}}^{(a_2)}\bar{t}_{kj,\mathbf{s}}^{(b_2)},\ \bar{t}_{il,\mathbf{s}}^{(a_2)}t_{kj,\mathbf{s}}^{(b_3)}\,\Big|\,\text{ each }a_i,b_i\in\mathbb{Z}_+\,\Big\} 
  \end{gather}
  for $r,s\in\mathbb{Z}_+$. 

  Suppose that $j-i>l-k$. It implies that the generator $\bar{t}_{kl,\mathbf{s}}^{(s)}$ precedes $t_{ij,\mathbf{s}}^{(r)}$. Either the condition $j-k<l-i$ or $j-k>l-i$ allows us to deduce that
  \begin{gather}\label{Uqspan:3}
      t_{ij,\mathbf{s}}^{(r)}\bar{t}_{kl,\mathbf{s}}^{(s)}\in \operatorname{span}_{\mathbb{C}}\mathcal{B}_{\mathbf{s}}
  \end{gather}
  due to \eqref{Uqspan:1} and \eqref{Uqspan:2}. 
  As for the case $j-k=l-i$, we find that
  \begin{gather*}
      j-k=l-i  \quad \Rightarrow \quad j+i=k+l \quad \Rightarrow \quad j-i=2(k-i)+l-k,
  \end{gather*}
  hence, $i<k$. According to \eqref{Uqspan:2}, we still give \eqref{Uqspan:3}. 

  Now taking $j-i=l-k$ and $i>k$, then \eqref{Uqspan:3} still holds by \eqref{Uqspan:1}. Finally, if $i=k$, $j=l$, we have
  \begin{gather*}
      [t_{ij,\mathbf{s}}(u),\,\bar{t}_{ij,\mathbf{s}}(v)]=0,
  \end{gather*}
  which is equal to $t_{ij,\mathbf{s}}^{(r)}t_{ij,\mathbf{s}}^{(s)}=(-1)^{|i|+|j|}t_{ij,\mathbf{s}}^{(s)}t_{ij,\mathbf{s}}^{(r)}$ for $r,s\in\mathbb{Z}_+$. 

  The above arguments can also be used to get
  \begin{gather*}
      t_{ij,\mathbf{s}}^{(r)}t_{kl,\mathbf{s}}^{(s)},\ \bar{t}_{ij,\mathbf{s}}^{(r)}t_{kl,\mathbf{s}}^{(s)},\ \bar{t}_{ij,\mathbf{s}}^{(r)}\bar{t}_{kl,\mathbf{s}}^{(s)}\in \operatorname{span}_{\mathbb{C}}\mathcal{B}_{\mathbf{s}}. 
  \end{gather*}

  Moreover, the proof of "linear independence" part in \cite[Proposition 2.10]{LWZ24} is independent of both the ordering of generators and the specific parity sequence. Consequently, the same argument also establishes the linear independence of $\mathcal{B}_{\mathbf{s}}$.
  
\end{proof}

Theorem \ref{base:rep-af} allows us to define the following $\mathbb{Z}_2$-graded subspaces. Let $\mathrm{N}^+$ (resp. $\mathrm{N}^+$, or $\mathrm{U}^0$) be the $\mathbb{Z}_2$-graded subspace spanned by all ordered monomials in $t_{ij,\mathbf{s}}^{(r)},\bar{t}_{ij,\mathbf{s}}^{(r)}$ for $j-i<0$ (resp. $j-i>0$, or $j-i=0$). Set
\begin{gather*}
    \mathrm{U}^+:=\mathrm{U}^0\mathrm{N}^+,\qquad \mathrm{U}^-:=\mathrm{N}^-\mathrm{U}^0.
\end{gather*}
It follows that $\mathrm{U}_q\big(\widehat{\mathfrak{g}}_{\mathbf{s}}\big)$ has the decomposition
\begin{gather*}
    \mathrm{U}_q\big(\widehat{\mathfrak{g}}_{\mathbf{s}}\big)\simeq \mathrm{U}^- \mathrm{U}^0 \mathrm{U}^+. 
\end{gather*}
Clearly, we find that
\begin{gather*}
     \mathrm{U}_q\big(\widehat{\mathfrak{g}}_{\mathbf{s}}\big)\mathrm{N}^-\subset \mathrm{U}_q\big(\widehat{\mathfrak{g}}_{\mathbf{s}}\big)+\mathrm{N}^-\mathrm{N}^++\mathrm{U}^-. 
\end{gather*}
Note that these $\mathbb{Z}_2$-graded subspaces do not form the sub-superalgebras of $\mathrm{U}_q\big(\widehat{\mathfrak{g}}_{\mathbf{s}}\big)$.

\subsection{$q$-Super Yangian $\mathrm{Y}_q\big(\mathfrak{g}_{\mathbf{s}}\big)$}\label{se:qsuperYangian}
 
\begin{definition}
    The $q$-super Yangian $\mathrm{Y}_q\big(\mathfrak{g}_{\mathbf{s}}\big)$ is an associative superalgebra generated by elements $\left(\bar{t}_{ii,\mathbf{s}}^{(0)}\right)^{-1}$, $\bar{t}_{ij,\mathbf{s}}^{(r)}$ for $i,j\in I_{\mathbf{s}}$ and $r\in\mathbb{Z}_+$ subject to the defining relations \eqref{RTT:af1} and \eqref{RTT:af4-ex}.
\end{definition}
From Theorem \ref{base:rep-af}, there exists an embedding of superalgebras from $\mathrm{Y}_q\big(\mathfrak{g}_{\mathbf{s}}\big)$ to $\mathrm{U}_q\big(\widehat{\mathfrak{g}}_{\mathbf{s}}\big)$ given by
\begin{gather}\label{qSY-embed}
    \left(\bar{t}_{ii,\mathbf{s}}^{(0)}\right)^{-1}\mapsto t_{ii,\mathbf{s}}^{(0)},\qquad \bar{t}_{ij,\mathbf{s}}^{(r)}\mapsto \bar{t}_{ij,\mathbf{s}}^{(r)}
\end{gather}
for $i,j\in I_{\mathbf{s}}$ and $r\in\mathbb{Z}_+$. Then we may regard $\mathrm{Y}_q\big(\mathfrak{g}_{\mathbf{s}}\big)$ as a sub-superalgebra of $\mathrm{U}_q\big(\widehat{\mathfrak{g}}_{\mathbf{s}}\big)$ 
with the ordered basis consisting of all ordered monomials of the form 
\begin{equation}\label{base:af-qSY}
\begin{split}
&\mathop{\overrightarrow{\prod}}\limits_{1-N\leqslant k\leqslant 1}\mathop{\overrightarrow{\prod}}\limits_{1-k\leqslant i\leqslant N} \bigg\{ 
     \Big(\bar{t}_{i,i+k,\mathbf{s}}^{(1)}\Big)^{\bar{b}_{i,i+k,1}} \Big(\bar{t}_{i,i+k,\mathbf{s}}^{(2)}\Big)^{\bar{b}_{i,i+k,2}}\cdots\bigg\} \times \mathop{\overrightarrow{\prod}}\limits_{1\leqslant i\leqslant N}\mathop{\overrightarrow{\prod}}\limits_{r\geqslant 0}\bigg\{ \Big(\bar{t}_{ii,\mathbf{s}}^{(r)}\Big)^{c_{i,r}}\bigg\} \\
&\hspace{5em} \times \mathop{\overrightarrow{\prod}}\limits_{1\leqslant k\leqslant N-1}\mathop{\overrightarrow{\prod}}\limits_{1\leqslant i\leqslant k} \bigg\{ \Big(\bar{t}_{i,i+k,\mathbf{s}}^{(0)}\Big)^{\bar{b}_{i,i+k,0}}
     \Big(\bar{t}_{i,i+k,\mathbf{s}}^{(1)}\Big)^{\bar{b}_{i,i+k,1}} \cdots\bigg\}
\end{split}
\end{equation}
whose exponents satisfy \eqref{base:af-co1}, \eqref{base:af-co2} and $c_{i,r}\in\mathbb{Z}$. 

\begin{remark}
    There exists another version of the $q$-super Yangian, defined as the subsuperalgebra of $\mathrm{U}_q\big(\widehat{\mathfrak{g}}_{\mathbf{s}}\big)$ generated by elements $\left(t_{ii,\mathbf{s}}^{(0)}\right)^{-1}$, $t_{ij,\mathbf{s}}^{(r)}$ for $i,j\in I_{\mathbf{s}}$ and $r\in\mathbb{Z}_+$. This subsuperalgebra is isomorphic to $\mathrm{Y}_q\big(\mathfrak{g}_{n|m,\mathbf{s}}\big)$. 
\end{remark}

\vspace{1em}
\section{Highest weight representations of $\mathrm{U}_q\big(\widehat{\mathfrak{g}}_{\mathbf{s}}\big)$}\label{se:highestweightreps}
Given $\mathbf{s} \in \mathcal{S}(m|n)$, we first develop some necessary structural results on highest-weight representations of the quantum affine superalgebra $\mathrm{U}_q\big(\widehat{\mathfrak{g}}_{\mathbf{s}}\big)$, before studying its finite-dimensional irreducible representations. In particular, we construct two fundamental classes of such representations: Verma modules and evaluation representations.
Motivated by \cite{GM10,JLM20-3,Mo07,Mo22,Zrb95,Zrb96}, we adopt the formal series to describe the representations of variety superalgebras. 
We generalize the definition of the highest weight representation for $\mathrm{U}_q\big(\widehat{\mathfrak{gl}}_N\big)$ to the super case as follows. 

\begin{definition}\label{hw:rep}
A representation $V$ is called a \textit{highest weight representation} over $\mathrm{U}_q\big(\widehat{\mathfrak{g}}_{\mathbf{s}}\big)$ if $V$ is generated by a non-zero vector $\zeta\in V$ such that
\begin{align*}
&t_{ij,\mathbf{s}}(u)\zeta=\bar{t}_{ij,\mathbf{s}}(u)\zeta=0,\qquad \text{for }~1\leqslant i<j\leqslant N, \\
&t_{ii,\mathbf{s}}(u)\zeta=\lambda_i(u)\zeta,\qquad \bar{t}_{ii,\mathbf{s}}(u)\zeta=\bar{\lambda}_i(u)\zeta,\qquad \text{for }~i\in I_{\mathbf{s}},
\end{align*}
where $\lambda_i(u),\bar{\lambda}_i(u)$ are the formal power series given by
\begin{gather}\label{formal_lambda}
\lambda_i(u)=\sum_{r=0}^{\infty}\lambda_i^{(r)}u^{-r},\qquad \bar{\lambda}_i(u)=\sum_{r=0}^{\infty}\bar{\lambda}_i^{(r)}u^{r},
\end{gather}
for all coefficients 
\begin{gather}\label{formal_lambda2}
    \lambda_i^{(r)},\bar{\lambda}_i^{(r)}\in \mathbb{C},\quad\text{and}\quad \lambda_i^{(0)}\times\bar{\lambda}_i^{(0)}=1,\quad \forall\, i\in I_{\mathbf{s}}.
\end{gather}
    Set the $N$-tuples
\begin{gather*}
\lambda(u)=(\lambda_1(u),\ldots,\lambda_{N}(u)),\qquad \bar{\lambda}(u)=(\bar{\lambda}_1(u),\ldots,\bar{\lambda}_{N}(u)).
\end{gather*}
The vector $\zeta$ and the pair $(\lambda(u);\bar{\lambda}(u))$ are referred to as the \textit{maximal vector} and the \textit{highest weights} of $V$, respectively. 
\end{definition}

\begin{proposition}\label{fd_irreducible representations:af}
Every finite-dimensional irreducible representation for $\mathrm{U}_q\big(\widehat{\mathfrak{g}}_{\mathbf{s}}\big)$ is of highest weight type.
\end{proposition}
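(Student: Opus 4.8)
The plan is to adapt the standard argument identifying finite-dimensional irreducibles with highest-weight modules (\cite{GM10} in the non-super case, \cite{Zrb95,Zrb96,Mo22} for super Yangians). Let $V$ be a finite-dimensional irreducible $\mathrm{U}_q\big(\widehat{\mathfrak{g}}_{\mathbf{s}}\big)$-module. The first ingredient is the $\mathbf{Q}_{\mathbf{s}}$-grading of $\mathrm{U}_q\big(\widehat{\mathfrak{g}}_{\mathbf{s}}\big)$ in which $t_{ij,\mathbf{s}}^{(r)}$ and $\bar t_{ij,\mathbf{s}}^{(r)}$ have degree $\varepsilon_{i,\mathbf{s}}-\varepsilon_{j,\mathbf{s}}$: this is well defined because each relation \eqref{RTT:af3-ex}--\eqref{RTT:af6-ex} is homogeneous of degree $(\varepsilon_i-\varepsilon_j)+(\varepsilon_k-\varepsilon_l)$, and it is encoded by the conjugation action of the invertible elements $\bar t_{cc,\mathbf{s}}^{(0)}$. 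Combined with the triangular factorization $\mathrm{U}_q\big(\widehat{\mathfrak{g}}_{\mathbf{s}}\big)\simeq\mathrm{U}^-\mathrm{U}^0\mathrm{U}^+$ from Theorem~\ref{base:rep-af} and the observation that the leading coefficients $t_{ij,\mathbf{s}}^{(0)}$ ($i\geqslant j$) and $\bar t_{ij,\mathbf{s}}^{(0)}$ ($i\leqslant j$) satisfy the defining relations of $\mathrm{U}_q\big(\mathfrak{g}_{\mathbf{s}}\big)$ (take the appropriate $u,v\to\infty$ and $u,v\to 0$ limits of \eqref{RTT:af3}--\eqref{RTT:af5}), so that $V$ is in particular a finite-dimensional $\mathrm{U}_q\big(\mathfrak{g}_{\mathbf{s}}\big)$-module, this supplies all the structural input.

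Next I would show that the \emph{highest-weight subspace}
\[
V^0:=\bigl\{\,v\in V \ \bigm|\ t_{ij,\mathbf{s}}(u)v=\bar t_{ij,\mathbf{s}}(u)v=0 \text{ for all } 1\leqslant i<j\leqslant N \,\bigr\}
\]
is nonzero. Restricting $V$ to the subalgebra $\mathrm{U}_q\big(\mathfrak{g}_{\mathbf{s}}\big)$ above (further to its even part $\mathrm{U}_q\big(\mathfrak{g}_{\mathbf{s}}(\bar 0)\big)$), the pairwise-commuting invertible operators $\bar t_{ii,\mathbf{s}}^{(0)}$ decompose $V$ into finitely many weight spaces $V=\bigoplus_\mu V_\mu$; by irreducibility all occurring weights lie in a single coset of $\mathbf{Q}_{\mathbf{s}}$, and each $t_{ij,\mathbf{s}}^{(r)},\bar t_{ij,\mathbf{s}}^{(r)}$ maps $V_\mu$ into $V_{\mu+\varepsilon_{i,\mathbf{s}}-\varepsilon_{j,\mathbf{s}}}$. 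Choosing $\mu_0$ maximal on this coset for the dominance order, every generator whose degree is a positive root annihilates $V_{\mu_0}$, hence $V_{\mu_0}\subseteq V^0$ and $V^0\neq 0$.

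The heart of the proof is to show that $V^0$ is stable under all $t_{ii,\mathbf{s}}(u),\bar t_{ii,\mathbf{s}}(u)$ and that the restrictions of these series to $V^0$ pairwise commute. Both facts should be read off from the quadratic relations \eqref{RTT:af3-ex}--\eqref{RTT:af6-ex} by using systematically that the ``raising'' series $t_{ij,\mathbf{s}}(u),\bar t_{ij,\mathbf{s}}(u)$ with $i<j$ annihilate $V^0$: in the commutator of two diagonal series indexed by $a<b$ the correction terms collapse at once, while for $a>b$ (and for the boundary cases needed for stability) one applies the appropriate relation a second time with the indices permuted, obtaining a closed linear system whose scalar coefficient series is invertible, so the correction terms cancel. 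Granting this, $V^0$ is a nonzero finite-dimensional module over the commutative algebra generated by the $t_{ii,\mathbf{s}}(u),\bar t_{ii,\mathbf{s}}(u)$, hence contains a simultaneous eigenvector $\zeta$; relation \eqref{RTT:af2} forces the eigenvalues $\lambda_i(u),\bar\lambda_i(u)$ to satisfy \eqref{formal_lambda}--\eqref{formal_lambda2}, so $\zeta$ is a maximal vector, and irreducibility yields $V=\mathrm{U}_q\big(\widehat{\mathfrak{g}}_{\mathbf{s}}\big)\zeta$. Thus $V$ is a highest weight representation.

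The main obstacle is precisely this last extraction: verifying that the $R$-matrix correction terms in \eqref{RTT:af3-ex}--\eqref{RTT:af6-ex} vanish on $V^0$, so that the diagonal series act through a commutative algebra and a common eigenvector exists. This is a somewhat delicate but routine calculation, parallel to \cite{GM10} in the non-super setting and to \cite{Zrb95,Zrb96} for the standard parity sequence; the only genuinely new feature is that the sign factors $\varsigma$ attached to the non-standard sequence $\mathbf{s}$ must be tracked carefully through every step.
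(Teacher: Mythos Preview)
Your proposal is correct and follows essentially the same route as the paper's proof: define the highest-weight subspace $V^0$, show it is nonzero, show it is stable under the diagonal series and that these commute on $V^0$ via the explicit RTT relations \eqref{RTT:af3-ex}--\eqref{RTT:af6-ex}, and then pick a common eigenvector. The only cosmetic difference is in the nonvanishing step: you invoke the weight-space decomposition and pick a maximal weight, whereas the paper takes a single joint eigenvector of the $t_{ii,\mathbf{s}}^{(0)}$ and argues that if $V^0=0$ one could apply raising operators indefinitely to produce an infinite linearly independent family---these are two standard phrasings of the same finiteness argument.
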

\begin{proof}
Let $W$ be a finite-dimensional irreducible representation of $\mathrm{U}_q\big(\widehat{\mathfrak{g}}_{\mathbf{s}}\big)$.  Set
\begin{gather*}
W_0:=\big\{\,\omega\in W\,\big|\,t_{ij,\mathbf{s}}(u)\omega=\bar{t}_{ij,\mathbf{s}}(u)\omega=0~\text{ for }~1\leqslant i<j\leqslant N\,\big\}.
\end{gather*}
We claim that the $\mathbb{Z}_2$-graded subspace $W_0\neq 0$. Let $0\neq \omega_0\in W$ be a joint eigenvector of $t_{ii,\mathbf{s}}^{(0)}$, $\bar{t}_{ii,\mathbf{s}}^{(0)}$ for all $i\in I_{\mathbf{s}}$ such that
\begin{gather*}
t_{ii,\mathbf{s}}^{(0)}\omega_0=\mu_i\omega_0,\quad \bar{t}_{ii,\mathbf{s}}^{(0)}\omega_0=\mu_i^{-1}\omega_0
\end{gather*}
for $\mu_i\in\mathbb{C}\setminus\{0\}$. 
Following relation \eqref{RTT:af5-ex}$-$\eqref{RTT:af4-ex} and \eqref{RTT:af6-ex}, we have for $k<l$, 
\begin{align*}
&t_{ii,\mathbf{s}}^{(0)}\bar{t}_{kl,\mathbf{s}}(v)\omega_0=q_i^{\delta_{il}-\delta_{ik}}\bar{t}_{kl,\mathbf{s}}(v)t_{ii,\mathbf{s}}^{(0)}\omega_0=q_i^{\delta_{il}-\delta_{ik}}\mu_i \bar{t}_{kl,\mathbf{s}}(v)\omega_0, \\
&t_{ii,\mathbf{s}}^{(0)}t_{kl,\mathbf{s}}(v)\omega_0=q_i^{\delta_{il}-\delta_{ik}}t_{kl,\mathbf{s}}(v)t_{ii,\mathbf{s}}^{(0)}\omega_0=q_i^{\delta_{il}-\delta_{ik}}\mu_i t_{kl,\mathbf{s}}(v)\omega_0, \\
&\bar{t}_{ii,\mathbf{s}}^{(0)}t_{kl,\mathbf{s}}(v)\omega_0=q_i^{\delta_{ik}-\delta_{il}}t_{kl,\mathbf{s}}(v)\bar{t}_{ii,\mathbf{s}}^{(0)}\omega_0=q_i^{\delta_{ik}-\delta_{il}}\mu_i^{-1} t_{kl,\mathbf{s}}(v)\omega_0, \\
&\bar{t}_{ii,\mathbf{s}}^{(0)}\bar{t}_{kl,\mathbf{s}}(v)\omega_0=q_i^{\delta_{ik}-\delta_{il}}\bar{t}_{kl,\mathbf{s}}(v)\bar{t}_{ii,\mathbf{s}}^{(0)}\omega_0=q_i^{\delta_{ik}-\delta_{il}}\mu_i^{-1} \bar{t}_{kl,\mathbf{s}}(v)\omega_0.
\end{align*}
Suppose that $W_0=0$. For any $\omega\in W$, there exists some pair $(k_1,l_1)$ with $k_1<l_1$ such that $t_{k_1,l_1}(v)\omega\neq 0$ or $\bar{t}_{k_1,l_1}(v)\omega\neq 0$.
Let us assume that $\omega_1=t_{k_1,l_1}^{(r_1)}\omega\neq 0$ for some $r_1\in\mathbb{Z}_+$. According to the hypothesis, there also exists a pair $(k_2,l_2)$ with $k_2<l_2$ such that $t_{k_2,l_2}(v)\omega_1\neq 0$ or $\bar{t}_{k_2,l_2}(v)\omega_1\neq 0$. Either, let $\omega_2=\bar{t}_{k_2,l_2}^{(r_2)}\omega_1\neq 0$ for some $r_2\in\mathbb{Z}_+$. And so on, we obtain an infinite set $\Pi$ of vectors $\omega,\omega_1,\omega_2,\ldots$.
The eigenvalues of the action of the sets $\{t_{ii,\mathbf{s}}^{(0)}|i\in I_{\mathbf{s}}\}$ and $\{\bar{t}_{ii,\mathbf{s}}^{(0)}|i\in I_{\mathbf{s}}\}$ on the elements of $\Pi$ are pairwise distinct, hence, $\Pi$ is linearly independent. This contradicts the finite dimensionality of $W$.

Next, we need to show that $W_0$ is invariant under all $t_{ii,\mathbf{s}}(u),\bar{t}_{ii,\mathbf{s}}(u)$. Choose a nonzero vector $\omega\in W$, we argue with
\begin{gather*}
t_{kl,\mathbf{s}}(v)t_{ii,\mathbf{s}}(u)\omega,\quad t_{kl,\mathbf{s}}(v)\bar{t}_{ii,\mathbf{s}}(u)\omega,\quad \bar{t}_{kl,\mathbf{s}}(v)t_{ii,\mathbf{s}}(u)\omega,\quad \bar{t}_{kl,\mathbf{s}}(v)\bar{t}_{ii,\mathbf{s}}(u)\omega,\quad k<l
\end{gather*}
for the case of $i<l$ or $i\geqslant l>k$. Consider the pair $(\gamma,\gamma')\in\{(t_{\mathbf{s}},t_{\mathbf{s}}),(t_{\mathbf{s}},\bar{t}_{\mathbf{s}}),(\bar{t}_{\mathbf{s}},t_{\mathbf{s}}),(\bar{t}_{\mathbf{s}},\bar{t}_{\mathbf{s}})\}$. If $i<l$,
\begin{equation*}
\begin{split}
\gamma_{kl}(v)\gamma'_{ii}(u)\omega&=\gamma_{kl}(v)\gamma'_{ii}(u)\omega-\frac{q_i^{\delta_{ik}}u-q_i^{-\delta_{ik}}v}{u-v}\gamma'_{ii}(u)\gamma_{kl}(v)\omega \\
&=\frac{\varsigma_{ik;kl}\left(q_k-q_k^{-1}\right)}{u-v}\bigg\{\left(\delta_{k<i}u+\delta_{i<k}v\right)\gamma'_{ki}(u)\gamma_{il}(v)
  -u\gamma_{ki}(v)\gamma'_{il}(u)\bigg\}\omega =0;
\end{split}
\end{equation*}
Otherwise,
\begin{equation*}
\begin{split}
\gamma_{kl}(u)\gamma'_{ii}(v)\omega&=\gamma_{kl}(u)\gamma'_{ii}(v)\omega-\frac{q_i^{\delta_{il}}u-q_i^{-\delta_{il}}v}{u-v}\gamma'_{ii}(v)\gamma_{kl}(u)\omega \\
&=\frac{\varsigma_{ik;kl}\left(q_k-q_k^{-1}\right)}{v-u}\bigg\{v\gamma_{il}(u)\gamma'_{ki}(v)-\left(\delta_{l<i}u+\delta_{i<l}v\right)\gamma_{il}(u)\gamma'_{ki}(v)
  \bigg\}\omega =0.
\end{split}
\end{equation*}

Finally, we have in $\mathrm{U}_q\big(\widehat{\mathfrak{g}}_{\mathbf{s}}\big)$,
\begin{equation*}
\begin{split}
&[\gamma_{ii}(u),\,\gamma'_{ii}(v)]=0, \\
&[\gamma_{ii}(u),\,\gamma'_{kk}(v)]=\frac{q_k-q_k^{-1}}{v-u}\left(v\gamma_{ki}(u)\gamma'_{ik}(v)-u\gamma'_{ki}(v)\gamma_{ik}(u)\right)
\end{split}
\end{equation*}
for the pair $(\gamma,\gamma')$ mentioned as above and $i<k$. That is to say, for all $\omega\in W_0$ and $i<k$,
\begin{gather*}
[\gamma_{ii}(u),\,\gamma'_{kk}(v)]\zeta=0. 
\end{gather*}
If $i>k$, we have
\begin{gather}\label{hirreducible representations:1}
(v-u)\Big(\gamma_{ii}(u)\gamma'_{kk}(v)-\gamma'_{kk}(v)\gamma_{ii}(u) \Big)\omega=\left(q_k-q_k^{-1}\right)\Big(u\gamma_{ki}(u)\gamma'_{ik}(v)-v\gamma_{ki}(v)\gamma'_{ik}(u)\Big)\omega.
\end{gather}
We substitute
\begin{align*}
\gamma_{ki}(u)\gamma'_{ik}(v)&=(-1)^{|i|+|k|}\gamma'_{ik}(v)\gamma_{ki}(u)+\left(q_k-q_k^{-1}\right)v\left(\gamma_{ii}(u)\gamma'_{kk}(v)-\gamma_{ii}(v)\gamma'_{kk}(u)\right) \\
\gamma_{ki}(v)\gamma'_{ik}(u)&=(-1)^{|i|+|k|}\gamma'_{ik}(u)\gamma_{ki}(v)+\left(q_k-q_k^{-1}\right)u\left(\gamma'_{kk}(v)\gamma_{ii}(u)-\gamma'_{kk}(u)\gamma_{ii}(v)\right)
\end{align*}
into \eqref{hirreducible representations:1} to obtain
\begin{gather*}
\Big((v-u)-(q-q^{-1})^2uv\Big)[\gamma_{ii}(u),\,\gamma'_{kk}(v)]\omega=\left(q_i-q_i^{-1}\right)\Big(u\gamma_{ik}'(v)\gamma_{ki}(u)-v\gamma_{ik}'(u)\gamma_{ki}(v)\Big)\omega=0.
\end{gather*}
These calculations imply that $t_{ii,\mathbf{s}}(u),\bar{t}_{ii,\mathbf{s}}(u)$ for all $i\in I_{\mathbf{s}}$ act on $W_0$ as pairwise commuting operators,
then there exists at least one joint eigenvector of all $t_{ii,\mathbf{s}}(u),\bar{t}_{ii,\mathbf{s}}(u)$ in $W_0$. Comparing to Definition \ref{hw:rep}, $W_0$ is a highest weight representation of $\mathrm{U}_q\big(\widehat{\mathfrak{g}}_{\mathbf{s}}\big)$. Using the irreducibility of $W$, we have $W=W_0$.

\end{proof}

\subsection{Construction for Verma modules}\label{se:afVermamodule}
Now we proceed to construct a class of highest weight irreducible representations. Given a nonzero vector $\zeta$, we consider the one-dimensional vector space $\mathbb{C}$-spanned by $\zeta$. Define the action of the $\mathbb{Z}_2$-graded subspace $\mathrm{U}^+$ on $\mathbb{C}\zeta$ by
\begin{align*}
    &t_{ij,\mathbf{s}}(u)\zeta=\bar{t}_{ij,\mathbf{s}}(u)\zeta=0,\qquad 1\leqslant i<j\leqslant N, \\
    &t_{ii,\mathbf{s}}(u)\zeta=\lambda_i(u)\zeta,\quad \bar{t}_{ii,\mathbf{s}}(u)\zeta=\bar{\lambda}_i(u)\zeta,\qquad i\in I_{\mathbf{s}}, \\
    &t_{ii,\mathbf{s}}(u)X\zeta=\lambda_i(u)X\zeta,\quad \bar{t}_{ii,\mathbf{s}}(u)X\zeta=\bar{\lambda}_i(u)X\zeta,\qquad X\in \mathrm{U}^0,\ i\in I_{\mathbf{s}},
\end{align*}
where $\lambda_i(u)$, $\bar{\lambda}_i(u)$ are the formal series satisfying \eqref{formal_lambda} and \eqref{formal_lambda2}. According to the last part of the proof of Proposition \ref{fd_irreducible representations:af}, every homogeneous element $X\in \mathrm{U}^0$ satisfies $[X,\, t_{ii}(u)]\zeta=[X,\,\bar{t}_{ii}(u)]\zeta=0$, thus, the above definition is well-defined. 

Introduce 
\begin{gather*}
    M(\lambda(u);\bar{\lambda}(u)):=\mathrm{U}_q\big(\widehat{\mathfrak{g}}_{\mathbf{s}}\big)\otimes_{\mathrm{U}^+}\zeta.
\end{gather*}
Here, we use the previous notations for $\lambda(u)$ and $\bar{\lambda}(u)$. Due to Theorem \ref{base:rep-af}, $M(\lambda(u);\bar{\lambda}(u))\simeq \mathrm{N}^-\otimes_{\mathrm{U}^+}\zeta$. It serves as a representation on $\mathrm{U}_q\big(\widehat{\mathfrak{g}}_{\mathbf{s}}\big)$ in the following sense. For all $X\in \mathrm{U}_q\big(\widehat{\mathfrak{g}}_{\mathbf{s}}\big)$ and $Y\in\mathrm{N}^-$, if $XY$ has the expression
\begin{gather*}
    XY=\sum a_{\alpha,\beta,\gamma}Y_-^{[\alpha]}Y_0^{[\beta]}Y_+^{[\gamma]}+\sum b_{\mu,\nu}Y_-^{[\mu]}Y_+^{[\nu]}+\sum c_{\sigma,\varsigma}Y_-^{[\sigma]}Y_0^{[\varsigma]},
\end{gather*}
then 
\begin{gather*}
    X(Y\otimes \zeta)=\sum c_{\sigma,\varsigma}Y_-^{[\sigma]}Y_0^{[\varsigma]} \zeta.
\end{gather*}
We call $M(\lambda(u);\bar{\lambda}(u))$ the \textit{Verma module} over $\mathrm{U}_q\big(\widehat{\mathfrak{g}}_{\mathbf{s}}\big)$. 

It is easy to see that $M(\lambda(u);\bar{\lambda}(u))$ is a highest weight representation of $\mathrm{U}_q\big(\widehat{\mathfrak{g}}_{\mathbf{s}}\big)$ with highest weight $(\lambda(u);\bar{\lambda}(u))$. It may not be finite-dimensional. Standard classical argument implies that $\mathrm{M}(\lambda(u);\bar{\lambda}(u))$ is indecomposable and has a maximal proper submodule $Y(\lambda(u);\bar{\lambda}(u))$. Define
\begin{gather*}
    V(\lambda(u);\bar{\lambda}(u)):=M(\lambda(u);\bar{\lambda}(u))/Y(\lambda(u);\bar{\lambda}(u))
\end{gather*}
Thus, $V(\lambda(u);\bar{\lambda}(u))$ is irreducible and of type highest weight. Moreover, for given weights $\lambda(u)$ and $\bar{\lambda}(u)$, up to isomorphism, there is a unique highest weight irreducible representation $V(\lambda(u);\bar{\lambda}(u))$.

\subsection{Evaluation representation}\label{se:evairreducible representations}
The superalgebra $\mathrm{U}_q\big(\widehat{\mathfrak{g}}_{\mathbf{s}}\big)$ admits a family of simple examples of finite-dimensional representations extending those of   $\mathrm{U}_q\big(\mathfrak{g}_{\mathbf{s}}\big)$ over the same superspace. This extension relies on a superalgebraic homomorphism, which is commonly known as an \textit{evaluation homomorphism}. 

\begin{proposition}\label{eva-map}
For any $a\in\mathbb{C}\setminus\{0\}$ and $\mathbf{s}\in\mathcal{S}(m|n)$, there exists a surjective homomorphism of superalgebras $\mathsf{ev}_{a,\mathbf{s}}: \, \mathrm{U}_q\big(\widehat{\mathfrak{g}}_{\mathbf{s}}\big)\rightarrow \mathrm{U}_q\big(\mathfrak{g}_{\mathbf{s}}\big)$ such that
\begin{gather}\label{eva-hom}
    T_{\mathbf{s}}(u)\mapsto T_{\mathbf{s}}-\bar{T}_{\mathbf{s}}a^{-1}u^{-1},\qquad \bar{T}_{\mathbf{s}}(u)\mapsto \bar{T}_{\mathbf{s}}-T_{\mathbf{s}}au. 
\end{gather}
\end{proposition}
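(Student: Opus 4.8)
The plan is to verify that the assignment \eqref{eva-hom} is compatible with every defining relation of $\mathrm{U}_q\big(\widehat{\mathfrak{g}}_{\mathbf{s}}\big)$, so that $\mathsf{ev}_{a,\mathbf{s}}$ is a well-defined homomorphism of superalgebras, and then to read off surjectivity from the degree-zero generators. The organizing observation is that, setting $\mathcal{T}_{\mathbf{s}}(u):=u\,T_{\mathbf{s}}-a^{-1}\bar{T}_{\mathbf{s}}$ — a matrix over $\mathrm{U}_q\big(\mathfrak{g}_{\mathbf{s}}\big)$ whose entries are linear in $u$ — the proposed images can be rewritten as $T_{\mathbf{s}}(u)\mapsto u^{-1}\mathcal{T}_{\mathbf{s}}(u)$ and $\bar{T}_{\mathbf{s}}(u)\mapsto -a\,\mathcal{T}_{\mathbf{s}}(u)$. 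Thus both generating matrices map to scalar (in $u$) multiples of the same matrix $\mathcal{T}_{\mathbf{s}}(u)$, and, after cancelling the scalar prefactors in $u,v$ that appear symmetrically on the two sides, each of the three RTT relations \eqref{RTT:af3}--\eqref{RTT:af5} (and hence also the derived relation \eqref{RTT:af6}) collapses to the single identity
\[
\mathcal{R}_{q,\mathbf{s}}^{23}(u,v)\,\mathcal{T}_{\mathbf{s}}^{1}(u)\,\mathcal{T}_{\mathbf{s}}^{2}(v)=\mathcal{T}_{\mathbf{s}}^{2}(v)\,\mathcal{T}_{\mathbf{s}}^{1}(u)\,\mathcal{R}_{q,\mathbf{s}}^{23}(u,v)
\]
in $\mathrm{U}_q\big(\mathfrak{g}_{\mathbf{s}}\big)\otimes\big(\operatorname{End}\mathcal{V}_{\mathbf{s}}^{\otimes 2}\big)[u,v]$.

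To establish this identity I would substitute $\mathcal{R}_{q,\mathbf{s}}(u,v)=u\,\mathcal{R}_{q,\mathbf{s}}-v\,\widetilde{\mathcal{R}}_{q,\mathbf{s}}$ from \eqref{Rquv} together with $\mathcal{T}_{\mathbf{s}}(u)=u\,T_{\mathbf{s}}-a^{-1}\bar{T}_{\mathbf{s}}$, expand both sides, and compare the coefficients of the monomials in $u,v$. The coefficients of $u^{2}v,\ uv^{2},\ u^{2},\ v^{2},\ u,\ v$ reproduce, one for one, the six relations contained in \eqref{RTT:fin-2}--\eqref{RTT:fin-5} (the $\mathcal{R}_{q,\mathbf{s}}$-forms for $T^{1}T^{2}$, $\bar{T}^{1}\bar{T}^{2}$, $T^{1}\bar{T}^{2}$ and the $\widetilde{\mathcal{R}}_{q,\mathbf{s}}$-forms for $T^{1}T^{2}$, $\bar{T}^{1}\bar{T}^{2}$, $\bar{T}^{1}T^{2}$), which hold in $\mathrm{U}_q\big(\mathfrak{g}_{\mathbf{s}}\big)$. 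The only coefficient not literally of this shape is that of $uv$, namely
\[
-\mathcal{R}_{q,\mathbf{s}}^{23}\bar{T}_{\mathbf{s}}^{1}T_{\mathbf{s}}^{2}+\widetilde{\mathcal{R}}_{q,\mathbf{s}}^{23}T_{\mathbf{s}}^{1}\bar{T}_{\mathbf{s}}^{2}=-T_{\mathbf{s}}^{2}\bar{T}_{\mathbf{s}}^{1}\mathcal{R}_{q,\mathbf{s}}^{23}+\bar{T}_{\mathbf{s}}^{2}T_{\mathbf{s}}^{1}\widetilde{\mathcal{R}}_{q,\mathbf{s}}^{23},
\]
and this is the one genuinely nontrivial point. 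I expect to derive it by rewriting $\widetilde{\mathcal{R}}_{q,\mathbf{s}}=\mathcal{R}_{q,\mathbf{s}}-(q-q^{-1})\mathcal{P}_{\mathbf{s}}$ via \eqref{Rq-P}, applying the $\mathcal{R}_{q,\mathbf{s}}$-form of \eqref{RTT:fin-4} and the $\widetilde{\mathcal{R}}_{q,\mathbf{s}}$-form of the mixed relation in \eqref{RTT:fin-5}, and using that the graded permutation $\mathcal{P}_{\mathbf{s}}^{23}$ simply interchanges the superscripts $1$ and $2$ of $T_{\mathbf{s}}$ and $\bar{T}_{\mathbf{s}}$; the two $(q-q^{-1})\mathcal{P}_{\mathbf{s}}$-contributions then cancel, leaving the desired equality. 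Working throughout with the graded objects $\mathcal{R}_{q,\mathbf{s}},\widetilde{\mathcal{R}}_{q,\mathbf{s}},\mathcal{P}_{\mathbf{s}}$ takes care of the super signs automatically.

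Finally, the degree-zero relations \eqref{RTT:af1}--\eqref{RTT:af2} are immediate, since the $u^{0}$-components of the images $T_{\mathbf{s}}-\bar{T}_{\mathbf{s}}a^{-1}u^{-1}$ and $\bar{T}_{\mathbf{s}}-T_{\mathbf{s}}au$ are precisely $T_{\mathbf{s}}$ and $\bar{T}_{\mathbf{s}}$, which are lower- resp. upper-triangular and satisfy $t_{ii,\mathbf{s}}\bar{t}_{ii,\mathbf{s}}=1$ by \eqref{RTT:fin-1}. Hence $\mathsf{ev}_{a,\mathbf{s}}$ is a superalgebra homomorphism; it is surjective because $t_{ij,\mathbf{s}}^{(0)}\mapsto t_{ij,\mathbf{s}}$ and $\bar{t}_{ij,\mathbf{s}}^{(0)}\mapsto\bar{t}_{ij,\mathbf{s}}$, so its image already contains a generating set of $\mathrm{U}_q\big(\mathfrak{g}_{\mathbf{s}}\big)$. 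The main obstacle in this scheme is exactly the $uv$-coefficient identity above; all the remaining steps are routine bookkeeping once the reduction to $\mathcal{T}_{\mathbf{s}}(u)$ is made.
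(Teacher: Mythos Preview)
Your proposal is correct and follows essentially the same approach as the paper: substitute \eqref{Rquv} and \eqref{eva-hom}, expand, use the finite-type relations \eqref{RTT:fin-2}--\eqref{RTT:fin-5}, and reduce the single remaining cross term to the permutation identity $\mathcal{P}^{23}_{\mathbf{s}}X^{1}_{\mathbf{s}}=X^{2}_{\mathbf{s}}\mathcal{P}^{23}_{\mathbf{s}}$ for $X\in\{T,\bar{T}\}$. Your introduction of $\mathcal{T}_{\mathbf{s}}(u)=uT_{\mathbf{s}}-a^{-1}\bar{T}_{\mathbf{s}}$ is a tidy organizing device that collapses all three RTT relations into a single identity, but the substantive computation---in particular the handling of the $uv$-coefficient via \eqref{Rq-P}---is exactly what the paper does.
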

\begin{proof}
    It suffices to verify that the map $\mathsf{ev}_{a,\mathbf{s}}$ preserves relations \eqref{RTT:af1}$-$\eqref{RTT:af5}. Relations \eqref{RTT:af1} and \eqref{RTT:af2} clearly hold. For the remaining relations, we only need to check for \eqref{RTT:af5} when we substitute the right-hand side of \eqref{Rquv} and \eqref{eva-hom} for $\mathcal{R}_{q,\mathbf{s}}(u,v)$, $T_{\mathbf{s}}(u)$ and $\bar{T}_{\mathbf{s}}(u)$ as an example. Indeed, we need to check
    \begin{equation}\label{RTT:af-ev1}
        \begin{split}
            &\left(\mathcal{R}^{23}_{q,\mathbf{s}}u-\widetilde{\mathcal{R}}^{23}_{q,\mathbf{s}}v\right)\left(T^1_{\mathbf{s}}-\bar{T}^1_{\mathbf{s}}a^{-1}u^{-1}\right)\left(\bar{T}^2_{\mathbf{s}}-T^2_{\mathbf{s}}av\right) \\
        &\hspace{5em}=\left(\bar{T}^2_{\mathbf{s}}-T^2_{\mathbf{s}}av\right)\left(T^1_{\mathbf{s}}-\bar{T}^1_{\mathbf{s}}a^{-1}u^{-1}\right)\left(\mathcal{R}^{23}_{q,\mathbf{s}}u-\widetilde{\mathcal{R}}^{23}_{q,\mathbf{s}}v\right).
        \end{split}
    \end{equation}
    By using \eqref{RTT:fin-2}$-$\eqref{RTT:fin-4}, \eqref{RTT:fin-5}, and \eqref{Rq-P}, we find that \eqref{RTT:af-ev1} is equivalent to 
    \begin{gather*}
        \mathcal{P}^{23}_{\mathbf{s}}\bar{T}_{1,\mathbf{s}}T_{2,\mathbf{s}}-\mathcal{P}^{23}_{\mathbf{s}}T_{1,\mathbf{s}}\bar{T}_{2,\mathbf{s}}=T_{2,\mathbf{s}}\bar{T}_{1,\mathbf{s}}\mathcal{P}^{23}_{\mathbf{s}}-\bar{T}_{2,\mathbf{s}}T_{1,\mathbf{s}}\mathcal{P}^{23}_{\mathbf{s}},
    \end{gather*}
    which can be easily shown by $\mathcal{P}^{23}_{\mathbf{s}}\bar{T}_{1,\mathbf{s}}=\bar{T}_{2,\mathbf{s}}\mathcal{P}^{23}_{\mathbf{s}}$ and $\mathcal{P}^{23}_{\mathbf{s}}T_{1,\mathbf{s}}=T_{2,\mathbf{s}}\mathcal{P}^{23}_{\mathbf{s}}$. 
    
\end{proof}
Proposition \ref{eva-map} is a generalization of \cite[Section 2.1]{Zhf17} to arbitrary parity sequences. 
The map $\mathsf{ev}_{a,\mathbf{s}}$ serves as such an evaluation homomorphism for $\mathrm{U}_q\big(\widehat{\mathfrak{g}}_{\mathbf{s}}\big)$. 

Let $V_{\mathbf{s}}(\mathcal{M})$ for $\mathcal{M}=(\mu_1,\ldots,\mu_N)\in\big(\mathbb{C}\setminus\{0\}\big)^N$ be a finite-dimensional irreducible representations for $\mathrm{U}_q\big(\mathfrak{g}_{\mathbf{s}}\big)$ established in Section \ref{se:fdirreducible representations-fin}. 
Under pullback by the evaluation homomorphism $\mathsf{ev}_{a,\mathbf{s}}$, the representation $V_{\mathbf{s}}(\mathcal{M})$ induces a family of finite-dimensional representations over the superalgebra $\mathrm{U}_q\big(\widehat{\mathfrak{g}}_{\mathbf{s}}\big)$. 
Consequently, these representations are highest weight representations of $\mathrm{U}_q\big(\widehat{\mathfrak{g}}_{\mathbf{s}}\big)$ with highest weights $(\mu(u);\bar{\mu}(u))$ given by
\begin{equation*}
    \begin{aligned}
        &\mu_i(u)=\mu_i^{-1}-\mu_ia^{-1}u^{-1}, \\
        &\mu(u)=(\mu_1(u),\mu_2(u),\ldots,\mu_N(u)),
    \end{aligned}\qquad 
    \begin{aligned}
        &\bar{\mu}_i(u)=\mu_i-\mu_i^{-1}au, \\
        &\bar{\mu}(u)=(\bar{\mu}_1(u),\bar{\mu}_2(u),\ldots,\bar{\mu}_N(u)). 
    \end{aligned}
\end{equation*}
We denote them by $V_{a,\mathbf{s}}(\mathcal{M})$ for each $a$; these $V_{a,\mathbf{s}}(\mathcal{M})$ are called the \textit{evaluation representation} of $\mathrm{U}_q\big(\widehat{\mathfrak{g}}_{\mathbf{s}}\big)$, and each $V_{a,\mathbf{s}}(\mathcal{M})$ is an irreducible representation over $\mathrm{U}_q\big(\widehat{\mathfrak{g}}_{\mathbf{s}}\big)$. We call $V_{a,\mathbf{s}}(\mathcal{M})$ typical (resp. atypical) if it is a typical (resp. atypical) irreducible representations of $\mathrm{U}_q\big(\mathfrak{g}_{\mathbf{s}}\big)$. 
Condition \eqref{fd:cd:fin1} implies that the formal series $\mu_i(u)$, $\bar{\mu}_i(u)$ satisfy the radios for $|i|+|j|=\bar{0}$, 
\begin{gather}\label{af:fd_eva}
    \frac{\mu_i(u)}{\mu_j(u)}=q_i^{l_{ij}+\#_{(i,j)}}\frac{P_{ij}(q_i^{-2}u)}{P_{ij}(u)}=\frac{\bar{\mu}_i(u)}{\bar{\mu}_j(u)},
\end{gather}
where
\begin{gather*}
    P_{ij}(u)=\left(\mu_j-\mu_j^{-1}au\right)\left(\mu_j-q_i^{-2}\mu_j^{-1}au\right)\cdots \left(\mu_j-q_i^{-2(l_{ij}+\#_{(i,j)}-1)}\mu_j^{-1}au\right).
\end{gather*}

The evaluation representations defined above is essential for establishing our main result in Section \ref{se:fdirreducible representations:af}.

\vspace{1em}
\section{Finite-dimensional irreducible representations of $\mathrm{U}_q\big(\widehat{\mathfrak{g}}_{\mathbf{s}}\big)$}
\label{se:fdirreducible representations:af}
In order to classify the finite-dimensional irreducible representations of $\mathrm{U}_q\big(\widehat{\mathfrak{g}}_{\mathbf{s}}\big)$, we first determine necessary and sufficient conditions for finite-dimensionality of its highest weight irreducible representations $V(\lambda(u);\bar{\lambda}(u))$  using Proposition \ref{fd_irreducible representations:af}. 
As shown in Section~\ref{se:evairreducible representations}, there exists a family of nontrivial finite-dimensional representations of $\mathrm{U}_q\big(\widehat{\mathfrak{g}}_{\mathbf{s}}\big)$ that satisfy conditions \eqref{af:fd_eva} with the exception of $i=m$. We therefore begin by analyzing the special case where $m=n=1$.

\subsection{Conditions for finite-dimensionality of $\mathrm{U}_q\big(\widehat{\mathfrak{gl}}_{1|1,\mathbf{s}}\big)$}\label{se:equivcondition:fd1}
\begin{theorem}\label{rep:T1}
Given $\mathbf{s}\in\mathcal{S}(1|1)=\{01,10\}$. Consider the $2$-tuples 
\begin{gather*}
    \lambda(u)=(\lambda_1(u),\lambda_2(u)),\quad \bar{\lambda}(u)=(\bar{\lambda}_1(u),\bar{\lambda}_2(u))
\end{gather*}
for each series 
$\lambda_i(u),\bar{\lambda}_i(u)$ satisfying \eqref{formal_lambda} and \eqref{formal_lambda2}. 
The following conditions for the irreducible highest weight representation $V_{\mathbf{s}}(\lambda(u);\bar{\lambda}(u))$ of $\mathrm{U}_q\big(\widehat{\mathfrak{gl}}_{1|1,\mathbf{s}}\big)$ are equivalent\,{\rm :}
\begin{itemize}
  \item[{\rm (1)}] $\dim V_{\mathbf{s}}(\lambda(u);\bar{\lambda}(u))<\infty$\,{\rm ;}
  \item[{\rm (2)}] there exist polynomials $Q(u),\widetilde{Q}(u)\in \mathbb{C}[u]$ of degree $K$ together with the products of the leading coefficient and the constant term equal to 1, 
  such that
    \begin{gather}\label{fd:cd-af:11}
    \frac{\lambda_1(u)}{\lambda_2(u)}=\frac{Q(u)}{\widetilde{Q}(u)}=\frac{\bar{\lambda}_1(u)}{\bar{\lambda}_2(u)}.
    \end{gather}
\end{itemize}
\end{theorem}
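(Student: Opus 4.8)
The plan is to reduce the classification for $\mathrm{U}_q\big(\widehat{\mathfrak{gl}}_{1|1,\mathbf{s}}\big)$ to the classification of evaluation-type representations, exploiting the fact — established in Section~\ref{se:fdirreducible representations-fin} — that every finite-dimensional irreducible representation of $\mathrm{U}_q\big(\mathfrak{gl}_{1|1,\mathbf{s}}\big)$ is a (typical) evaluation module, together with the PBW basis of Theorem~\ref{base:rep-af} and the structure of highest weight modules from Proposition~\ref{fd_irreducible representations:af}. Since $N = 2$, the only off-diagonal generators are $t_{21,\mathbf{s}}(u)$ and $\bar{t}_{12,\mathbf{s}}(u)$, both odd, so by \eqref{base:af-co2} each enters a PBW monomial to at most the first power in each Fourier mode; this drastically simplifies the combinatorics.

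For the implication $(2)\Rightarrow(1)$: given $Q(u),\widetilde Q(u)$ of degree $K$ satisfying \eqref{fd:cd-af:11}, I would construct an explicit finite-dimensional module as an iterated tensor product of typical evaluation representations. Concretely, factor $Q(u)=\prod_{k=1}^K(u-a_k)\cdot(\text{const})$ and $\widetilde Q(u)=\prod_{k=1}^K(u-b_k)\cdot(\text{const})$; the ratio condition $Q/\widetilde Q = \lambda_1/\lambda_2 = \bar\lambda_1/\bar\lambda_2$ together with $\lambda_i^{(0)}\bar\lambda_i^{(0)}=1$ pins down, for each $k$, a typical two-dimensional evaluation representation $V_{a_k,\mathbf{s}}(\mathcal M_k)$ over $\mathrm{U}_q\big(\widehat{\mathfrak{gl}}_{1|1,\mathbf{s}}\big)$ pulled back via $\mathsf{ev}_{a_k,\mathbf{s}}$ from a typical $\mathrm{U}_q\big(\mathfrak{gl}_{1|1,\mathbf{s}}\big)$-module (cf. \eqref{af:fd_eva}, which for $(m,n)=(1,1)$ has no exceptional index). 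Using the comultiplication $\widehat\triangle_{\mathbf{s}}$, the tensor product $\bigotimes_k V_{a_k,\mathbf{s}}(\mathcal M_k)$ is a finite-dimensional $\mathrm{U}_q\big(\widehat{\mathfrak{gl}}_{1|1,\mathbf{s}}\big)$-module whose highest weight is computed by multiplying the individual highest weights; choosing the $\mathcal M_k$ appropriately makes this product equal $(\lambda(u);\bar\lambda(u))$. The irreducible quotient of the submodule generated by the tensor product of highest weight vectors then has highest weight $(\lambda(u);\bar\lambda(u))$, and by uniqueness (end of Section~\ref{se:afVermamodule}) it is $V_{\mathbf{s}}(\lambda(u);\bar\lambda(u))$, which is therefore finite-dimensional.

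For $(1)\Rightarrow(2)$: suppose $\dim V_{\mathbf{s}}(\lambda(u);\bar\lambda(u))<\infty$. The key step is to analyze the cyclic submodule generated by $\zeta$ under the single odd generator series $t_{21,\mathbf{s}}(u)$ (equivalently $\bar t_{12,\mathbf{s}}(u)$). Because $t_{21,\mathbf{s}}(u)$ is odd, $t_{21,\mathbf{s}}(u)t_{21,\mathbf{s}}(v)$ is antisymmetric in $u,v$ by the $(i,j)=(k,l)=(2,1)$ case of \eqref{RTT:af3-ex}, so the span of $\{t_{21,\mathbf{s}}^{(r)}\zeta\}$ behaves like an exterior algebra on one "variable series"; finiteness forces the subspace $\sum_r \mathbb C\, t_{21,\mathbf{s}}^{(r)}\zeta$ to be finite-dimensional, and the relations \eqref{RTT:af3-ex}--\eqref{RTT:af6-ex} with one index fixed to $i=2,j=1$ give that $t_{21,\mathbf{s}}(u)\zeta$, as a $V$-valued series, is annihilated by a polynomial in $u$. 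Extracting the diagonal action via \eqref{RTT:af5-ex} (the commutator of $t_{ii,\mathbf{s}}(u)$ with $t_{21,\mathbf{s}}(v)$ and $\bar t_{12,\mathbf{s}}(v)$), I can read off that $\lambda_1(u)/\lambda_2(u)$ and $\bar\lambda_1(u)/\bar\lambda_2(u)$ are rational functions with a common numerator/denominator of the same degree $K$; the normalization conditions $\lambda_i^{(0)}\bar\lambda_i^{(0)}=1$ force the product of leading coefficient and constant term of $Q$ (and of $\widetilde Q$) to be $1$, and the equality $\lambda_1/\lambda_2 = \bar\lambda_1/\bar\lambda_2$ is automatic because a single odd root vector controls both $T$ and $\bar T$ simultaneously.

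I expect the main obstacle to be the $(1)\Rightarrow(2)$ direction — specifically, showing that finite-dimensionality of $V_{\mathbf{s}}$ forces $\lambda_1(u)/\lambda_2(u)$ to be \emph{rational} (not merely a formal power series) with numerator and denominator of \emph{equal} degree, and that the two ratios $\lambda_1/\lambda_2$ and $\bar\lambda_1/\bar\lambda_2$ coincide. The equality of the two ratios is the genuinely "super" phenomenon here: in the even case one would have two independent odd... rather, two independent positive root vectors, but in $\mathfrak{gl}_{1|1}$ there is a single fermionic degree of freedom, and I would make this precise by using \eqref{RTT:af6-ex} (the cross relation between $\bar t_{12,\mathbf{s}}(u)$ and $t_{21,\mathbf{s}}(v)$) to tie the $T$-action and $\bar T$-action on $\zeta$ together. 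The degree-matching and rationality I expect to handle by the standard Chari–Pressley-type argument: the finite-dimensional quotient of the Verma module forces a monic polynomial relation among the $t_{21,\mathbf{s}}^{(r)}\zeta$, and the coefficients of that relation reconstruct $\widetilde Q(u)$.
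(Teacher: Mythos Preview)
Your two directions take different routes from the paper, with varying degrees of completeness.

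For $(2)\Rightarrow(1)$ you build a tensor product of typical evaluation modules and take an irreducible subquotient. The paper does something more direct: it applies $t_{12,\mathbf{s}}(u)\bar t_{21,\mathbf{s}}(v)$ (and the analogous $\bar t_{12}$--$t_{21}$ pair) to $\zeta$, uses \eqref{RTT:af5-ex} and \eqref{RTT:af6-ex} to get
\[
t_{12,\mathbf{s}}(u)\bar t_{21,\mathbf{s}}(v)\zeta=(q-q^{-1})\tfrac{v}{v-u}\big(\lambda_2(u)\bar\lambda_1(v)-\bar\lambda_2(v)\lambda_1(u)\big)\zeta,
\]
and then observes that after normalizing by the automorphism \eqref{auto:af:fu} this is a \emph{polynomial} in $v$ of degree $\leq K$; hence $\bar t_{21}^{(p)}\zeta=t_{21}^{(p)}\zeta=0$ for $p>K$, so $V_{\mathbf s}$ is finite-dimensional outright. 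Your approach works too (it is essentially how the paper handles the general $(m,n)$ case later), but you should note one detail you glossed over: to pair roots of $Q$ with roots of $\widetilde Q$ so that each factor is \emph{typical} (i.e.\ $\mu_{k,1}^2\neq\mu_{k,2}^2$), you must first cancel common factors of $Q,\widetilde Q$; after that, coprimality guarantees the root sets are disjoint and any pairing gives typical factors.

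For $(1)\Rightarrow(2)$ your sketch has the right shape but misses the mechanism that actually produces the rational function. The paper does not extract rationality from a relation among the $t_{21}^{(r)}\zeta$ alone; it takes a \emph{single} linear dependence
\[
\sum_{b=0}^{l}\tau_b\, t_{21,\mathbf s}^{(b)}\zeta+\sum_{c=1}^{k}\sigma_c\,\bar t_{21,\mathbf s}^{(c)}\zeta=0
\]
mixing the $t$- and $\bar t$-modes, then hits it with $t_{12,\mathbf s}^{(p)}$ (resp.\ $\bar t_{12,\mathbf s}^{(p)}$) for all $p\geq 1$ and sums. The commutator formulas from \eqref{RTT:af3-ex}--\eqref{RTT:af5-ex} turn this into an explicit identity expressing $\lambda_1(u)$ (resp.\ $\bar\lambda_1(u)$) as a ratio of two polynomials of the \emph{same} degree $k+l$, with matching products of leading and constant coefficients. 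The equality of the two ratios \eqref{fd:cd-af:11} is not ``automatic'' from there being one odd root; it comes out precisely because the same dependence relation is used for both computations. Your proposal to separately argue rationality and then invoke \eqref{RTT:af6-ex} to identify the two ratios is plausible but would require its own calculation of comparable length; the paper's trick of working with the mixed relation from the start is what makes this clean.
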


\begin{proof}
Let $\dim V_{\mathbf{s}}(\lambda_1(u),\lambda_2(u);\bar{\lambda}_1(u),\bar{\lambda}_2(u))<\infty$. Twisting by \eqref{auto:af:fu}, we may set $\lambda_2(u)=\bar{\lambda}_2(u)=1$ without loss of generality. Let $\zeta$ be the maximal vector of $V_{\mathbf{s}}(\lambda_1(u),1;\bar{\lambda}_1(u),1)$, and $W$ the $\mathbb{Z}_2$-graded subspace of $V_{\mathbf{s}}(\lambda_1(u),1;\bar{\lambda}_1(u),1)$ spanned by all vectors $t_{21,\mathbf{s}}^{(b)}\zeta$, $\bar{t}_{21,\mathbf{s}}^{(c)}\zeta$. Then $W$ must be finite-dimensional. It follows that there exists some sufficiently large integers $k,l$ such that
\begin{gather}\label{fd:af-eq:1}
\sum_{b=0}^{l} \tau_b t_{21,\mathbf{s}}^{(b)}\zeta+\sum_{c=1}^{k} \sigma_c \bar{t}_{21,\mathbf{s}}^{(c)}\zeta=0, \qquad \text{for}~~\sigma_k,\ \tau_l\neq 0.
\end{gather}

By the defining relation \eqref{RTT:af5-ex}, we have
\begin{gather}\label{fd:af-eq:2}
t_{12,\mathbf{s}}(u)\bar{t}_{21,\mathbf{s}}(v)+\bar{t}_{21,\mathbf{s}}(v)t_{12,\mathbf{s}}(u)=(q-q^{-1})\frac{v}{v-u}\left(t_{22,\mathbf{s}}(u)\bar{t}_{11,\mathbf{s}}(v)-\bar{t}_{22,\mathbf{s}}(v)t_{11,\mathbf{s}}(u)\right).
\end{gather}
Divide both sides by $(v-u)$ to get
\begin{gather*}
t_{12,\mathbf{s}}^{(p)}\bar{t}_{21,\mathbf{s}}^{(c)}+\bar{t}_{21,\mathbf{s}}^{(c)}t_{12,\mathbf{s}}^{(p)}=-\left(q-q^{-1}\right)\sum_{r=1}^{\min\{p,c\}}
\left(t_{22,\mathbf{s}}^{(p-r)}\bar{t}_{11,\mathbf{s}}^{(c-r)}-\bar{t}_{22,\mathbf{s}}^{(c-r)}t_{11,\mathbf{s}}^{(p-r)}\right)
\end{gather*}
for all $p,c\geqslant 1$, owing to the formal power series
\begin{gather*}
\frac{v}{v-u}=-\frac{v}{u}\cdot \frac{1}{1-u^{-1}v}=-\sum_{r=1}^{\infty}u^{-r}v^r.
\end{gather*}
Similarly, we also have
\begin{gather*}
t_{12,\mathbf{s}}^{(p)}t_{21,\mathbf{s}}^{(b)}+t_{21,\mathbf{s}}^{(b)}t_{12,\mathbf{s}}^{(p)}=-\left(q-q^{-1}\right)\sum_{r=1}^{p}
\left(t_{22,\mathbf{s}}^{(p-r)}t_{11,\mathbf{s}}^{(b+r)}-t_{22,\mathbf{s}}^{(b+r)}t_{11,\mathbf{s}}^{(p-r)}\right)
\end{gather*}
for all $p\geqslant 1,\ b\geqslant 0$. As we have set $\lambda_2(u)=\bar{\lambda}_2(u)=1$, it follows that
\begin{align*}
t_{12,\mathbf{s}}^{(p)}\bar{t}_{21,\mathbf{s}}^{(c)}\zeta&=\left(q-q^{-1}\right)\left(\delta_{a\leqslant p}\lambda_1^{(p-a)}-\delta_{a\geqslant p}\bar{\lambda}_1^{(a-p)}\right)\zeta, \\
t_{12,\mathbf{s}}^{(p)}t_{21,\mathbf{s}}^{(b)}\zeta&=-\left(q-q^{-1}\right)\lambda_1^{(b+p)}\zeta.
\end{align*}
Applying $t_{12,\mathbf{s}}^{(p)}$ for $p\geqslant 1$ to \eqref{fd:af-eq:1}, one immediately gets
\begin{equation*}
\sum_{c=1}^{k} \sigma_c\left(\delta_{c\leqslant p}\lambda_1^{(p-c)}-\delta_{c\geqslant p}\bar{\lambda}_1^{(c-p)}\right)=\sum_{b=0}^{l} \tau_b \lambda_1^{(b+p)}.
\end{equation*}
Summing over all $p\geqslant 1$, we obtain
\begin{equation*}
\begin{split}
\sum_{b=0}^{l}\sum_{p=1}^{\infty} \tau_b \lambda_1^{(b+p)}u^{-p}-\sum_{c=1}^{k}\sum_{p=c}^{\infty} \sigma_c\lambda_1^{(p-c)}u^{-p}&=
-\sum_{c=1}^k\sum_{p=1}^c\sigma_c\bar{\lambda}_1^{(c-p)}u^{-p} \\
\sum_{b=0}^{l}\tau_bu^b\sum_{p=b+1}^{\infty}\lambda_1^{(p)}u^{-p}-\sum_{c=1}^{k}\sigma_c u^{-c}\sum_{p=0}^{k}\lambda_1^{(p)}u^{-p}
&=-\sum_{p=0}^{k-1}\bar{\lambda}_1^{(p)}u^p\sum_{c=p+1}^{k}\sigma_cu^{-c} \\
\lambda_1(u)\left(\sum_{b=0}^{l}\tau_bu^b-\sum_{c=1}^{k}\sigma_c u^{-c}\right)&=\sum_{p=0}^{l}\lambda_1^{(p)}u^{-p}\sum_{b=p}^{l}\tau_bu^b
-\sum_{p=0}^{k-1}\bar{\lambda}_1^{(p)}u^p\sum_{c=p+1}^{k}\sigma_cu^{-c}
\end{split}
\end{equation*}
Set
\begin{align*}
    Q(u)&= \left(\sum_{p=0}^{l}\lambda_1^{(p)}u^{-p}\sum_{b=p}^{l}\tau_bu^b
-\sum_{p=0}^{k-1}\bar{\lambda}_1^{(p)}u^p\sum_{c=p+1}^{k}\sigma_cu^{-c}\right)u^k, \\
  \widetilde{Q}(u)&=\left(\sum_{b=0}^{l}\tau_bu^b-\sum_{c=1}^{k}\sigma_c u^{-c}\right)u^k.
\end{align*}
This forces 
\begin{gather*}
    \lambda_1(u)=\frac{Q(u)}{\widetilde{Q}(u)}.
\end{gather*}
The molecular and denominator parts of the ratio are both polynomials of degree $k+l$ such that the products of the leading coefficient and the constant term are both equal to $\sigma_k\tau_l\neq 0$, 
satisfying the first equation of \eqref{fd:cd-af:11}. The second equation follows from the action of $\bar{t}_{12,\mathbf{s}}^{(p)}$ for $p\geqslant 1$ on \eqref{fd:af-eq:1}.

Conversely, let $Q(u),\widetilde{Q}(u)$ be polynomials
\begin{align*}
    Q(u)&=Q_0+Q_1u+\cdots +Q_Ku^K\  \in\ \mathbb{C}[u], \\
    \widetilde{Q}(u)&=\widetilde{Q}_0+\widetilde{Q}_1u+\cdots +\widetilde{Q}_Ku^K\  \in\ \mathbb{C}[u]
\end{align*}
such that $Q_0Q_L=\widetilde{Q}_0\widetilde{Q}_K=1$, 
and let $\lambda(u)=(\lambda_1(u),\lambda_2(u))$, $\bar{\lambda}(u)=(\bar{\lambda}_1(u),\bar{\lambda}_2(u))$ satisfy the equations given in \eqref{fd:cd-af:11}. For generality, we may assume that $Q(u)$ and $\widetilde{Q}(u)$ do not have common factors. 

Let $\zeta$ be the maximal vector of $V_{\mathbf{s}}(\lambda(u);\bar{\lambda}(u))$. Applying both sides of \eqref{fd:af-eq:2} to $\zeta$, we have
\begin{gather}\label{fd:af-eq:3}
t_{12,\mathbf{s}}(u)\bar{t}_{21,\mathbf{s}}(v)\xi=(q-q^{-1})\frac{v}{v-u}\left(\lambda_2(u)\bar{\lambda}_1(v)-\bar{\lambda}_2(v)\lambda_1(u)\right)\xi.
\end{gather}
Using the isomorphism \eqref{auto:af:fu} for 
\begin{gather*}
f(u)=\frac{\widetilde{Q}_0\lambda_2(u)}{\widetilde{Q}(u)},\qquad g(u)=\frac{\widetilde{Q}_0\bar{\lambda}_2(u)}{\widetilde{Q}(u)},
\end{gather*}
it implies that \eqref{fd:af-eq:3} is equivalent to
\begin{align*}
t_{12,\mathbf{s}}(u)\bar{t}_{21,\mathbf{s}}(v)\zeta&=\left(q-q^{-1}\right)(Q'_0)^{-2}\frac{v}{v-u}\left(\widetilde{Q}(u)Q(v)-Q(u)\widetilde{Q}(v)\right)\zeta \\
&=\left(q-q^{-1}\right)(\widetilde{Q}_0)^{-2}\frac{v}{v-u}\sum_{r,s=0}^{K}\widetilde{Q}_rQ_s\left(u^rv^s-u^sv^r\right)\zeta, \\
&=\left(q-q^{-1}\right)(\widetilde{Q}_0)^{-2}\sum_{r=1}^{K}h_r(u)v^{r}\zeta
\end{align*}
for a family of polynomials $h_r(u)\in\mathbb{C}[u]$.
It follows that
\begin{gather*}
\bar{t}_{21,\mathbf{s}}^{(p)}\zeta=0\quad\text{for }~~p>K.
\end{gather*}
Similarly, by \eqref{RTT:af6-ex}, we have
\begin{gather*}
t_{21,\mathbf{s}}^{(p)}\zeta=0\quad\text{for }~~p>K.
\end{gather*}
Hence, the representation $V_{\mathbf{s}}(\lambda(u);\bar{\lambda}(u))$ is finite-dimensional. 

\end{proof}

In Theorem \ref{rep:T1}, we have the following decompositions:
\begin{gather*}
    Q(u)=\epsilon_1(\eta_1+\eta_1^{-1}u)\cdots (\eta_N+\eta_N^{-1}u),\qquad 
    \widetilde{Q}(u)=\epsilon_2(\widetilde{\eta}_1+\widetilde{\eta}_1^{-1}u)\cdots (\widetilde{\eta}_N+\widetilde{\eta}_N^{-1}u)
\end{gather*}
for some nonzero complex numbers $\eta_i$, $\widetilde{\eta}_i$ and $\epsilon_i\in\{\pm 1\}$. 


\subsection{Finite-dimensional irreducible representations of $\mathrm{U}_q\big(\widehat{\mathfrak{gl}}_{2|0}\big)$ and $\mathrm{U}_q\big(\widehat{\mathfrak{gl}}_{0|2}\big)$}\label{se:equivcondition:fd2}
Recall the algebra $\mathrm{U}_q\big(\widehat{\mathfrak{gl}}_N\big)$ defined in \cite[Section 3]{MRS03} (see also \cite[Section 2.3]{GM10}),
denote the generator series of $\mathrm{U}_{q^{-1}}\big(\widehat{\mathfrak{gl}}_2\big)$ by $t^{\star}_{ij}(u)$, $\bar{t}^{\star}_{ij}(u)$ for $1\leqslant i,j\leqslant 2$. In accordance with Remark \ref{usual0}, it is easy to check that we have the following isomorphisms
\begin{gather}\label{iso:af1}
\mathrm{U}_{q^{-1}}\big(\widehat{\mathfrak{gl}}_2\big)\rightarrow \mathrm{U}_q\big(\widehat{\mathfrak{gl}}_{2|0}\big)\qquad t_{ij}^{\star}(u)\mapsto t_{ij}(u),\quad \bar{t}_{ij}^{\star}(u)\mapsto \bar{t}_{ij}(u), 
\end{gather}
and 
\begin{gather}\label{iso:af2}
\mathrm{U}_{q^{-1}}\big(\widehat{\mathfrak{gl}}_2\big)\rightarrow \mathrm{U}_q\big(\widehat{\mathfrak{gl}}_{0|2}\big)\qquad t_{ij}^{\star}(u)\mapsto \bar{t}_{3-i,3-j}(u^{-1}),\quad \bar{t}_{ij}^{\star}(u)\mapsto t_{3-i,3-j}(u^{-1}).
\end{gather}
Theorem \ref{rep:T2} follows the isomorphisms \eqref{iso:af1}$-$\eqref{iso:af2}, and \cite[Theorem 3.6]{GM10}. 

\begin{theorem}\label{rep:T2}
Consider the $2$-tuples 
\begin{gather*}
    \lambda(u)=(\lambda_1(u),\lambda_2(u)),\quad \bar{\lambda}(u)=(\bar{\lambda}_1(u),\bar{\lambda}_2(u))
\end{gather*}
for each series 
$\lambda_i(u),\bar{\lambda}_i(u)$ satisfying \eqref{formal_lambda} and \eqref{formal_lambda2}. 
\begin{enumerate}
    \item[{\rm (1)}] The following conditions for the irreducible highest weight representation $V_{00}(\lambda(u);\bar{\lambda}(u))$ of $\mathrm{U}_q\big(\widehat{\mathfrak{gl}}_{2|0}\big)$ are equivalent\,{\rm :}
        \begin{enumerate}
            \item[{\rm (\romannumeral1)}] $\dim V_{00}(\lambda(u);\bar{\lambda}(u))<\infty$\,{\rm ;}
            \item[{\rm (\romannumeral2)}] there exists a polynomial $P(u)\in 1+u\mathbb{C}[u]$ such that
            \begin{gather*}
            \frac{\epsilon_1\lambda_1(u)}{\epsilon_2\lambda_2(u)}=q^{\deg P(u)}\cdot\frac{P(q^{-2}u)}{P(u)}=\frac{\epsilon_1\bar{\lambda}_1(u)}{\epsilon_2\bar{\lambda}_2(u)}
            \end{gather*}
            for some $\epsilon_1,\epsilon_2\in\{\pm 1\}$. The polynomial $P(u)$ is uniquely determined up to $\{\pm 1\}$.
        \end{enumerate}
    \item[{\rm (2)}] The following conditions for the irreducible highest weight representation $V_{11}(\lambda(u);\bar{\lambda}(u))$ of $\mathrm{U}_q\big(\widehat{\mathfrak{gl}}_{0|2}\big)$ are equivalent\,{\rm :}
        \begin{enumerate}
            \item[{\rm (\romannumeral3)}] $\dim V_{11}(\lambda(u);\bar{\lambda}(u))<\infty$\,{\rm ;}
            \item[{\rm (\romannumeral4)}] there exist a polynomial $P(u)\in 1+u\mathbb{C}[u]$ such that
            \begin{gather*}
            \frac{\epsilon_1\lambda_1(u)}{\epsilon_2\lambda_2(u)}=q^{-\deg P(u)}\cdot\frac{P(q^2u)}{P(u)}=\frac{\epsilon_1\bar{\lambda}_1(u)}{\epsilon_2\bar{\lambda}_2(u)}
            \end{gather*}
            for some $\epsilon_1,\epsilon_2\in\{\pm 1\}$. The polynomial $P(u)$ is uniquely determined up to $\{\pm 1\}$. 
        \end{enumerate}
\end{enumerate}
\end{theorem}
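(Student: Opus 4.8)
The plan is to deduce both statements from the classification of finite-dimensional irreducible representations of the \emph{non-super} quantum affine algebra $\mathrm{U}_{q^{-1}}\big(\widehat{\mathfrak{gl}}_2\big)$ by transporting it along the isomorphisms \eqref{iso:af1} and \eqref{iso:af2}. The starting point is that $\mathfrak{gl}_{2|0}$ and $\mathfrak{gl}_{0|2}$ have trivial odd part, so $\mathrm{U}_q\big(\widehat{\mathfrak{gl}}_{2|0}\big)$ and $\mathrm{U}_q\big(\widehat{\mathfrak{gl}}_{0|2}\big)$ are ordinary associative algebras, and by Remark~\ref{usual0} their defining $R$-matrix $\mathcal{R}_{q,\mathbf{s}}(u,v)$ is the trigonometric $R$-matrix $R(u,v)$ governing $\mathrm{U}_{q^{-1}}\big(\widehat{\mathfrak{gl}}_2\big)$. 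Hence \eqref{iso:af1}--\eqref{iso:af2} are genuine algebra isomorphisms; since isomorphisms preserve irreducibility and dimension, it will suffice to match the highest-weight data of $V_{00}$, resp.\ $V_{11}$, with those of the corresponding $\mathrm{U}_{q^{-1}}\big(\widehat{\mathfrak{gl}}_2\big)$-module and then invoke \cite[Theorem~3.6]{GM10} (equivalently the Chari--Pressley type criterion of \cite{CP91,MRS03}).

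For part (1) I would argue as follows. The map \eqref{iso:af1} sends $t^{\star}_{ij}(u)\mapsto t_{ij,\mathbf{s}}(u)$ and $\bar t^{\star}_{ij}(u)\mapsto \bar t_{ij,\mathbf{s}}(u)$, so it carries the maximal-vector conditions of Definition~\ref{hw:rep} for $V_{00}(\lambda(u);\bar\lambda(u))$ exactly to those of a highest-weight $\mathrm{U}_{q^{-1}}\big(\widehat{\mathfrak{gl}}_2\big)$-module with the \emph{same} series $\lambda_i(u),\bar\lambda_i(u)$ from \eqref{formal_lambda}--\eqref{formal_lambda2}, now read off at parameter $q^{-1}$. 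Applying \cite[Theorem~3.6]{GM10} to this module (quantum parameter specialized to $q^{-1}$) produces a Drinfeld polynomial, which after the evident normalization to lie in $1+u\mathbb{C}[u]$ yields condition (ii); uniqueness of $P$ up to $\{\pm1\}$ is inherited verbatim from \cite{GM10}. What remains is to verify that the exponent of $q$ and the shift in the argument of $P$ come out precisely as $q^{\deg P(u)}$ and $P(q^{-2}u)$ after the $q\mapsto q^{-1}$ substitution.

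For part (2) the same strategy applies, but with an extra twist. Here \eqref{iso:af2} sends $t^{\star}_{ij}(u)\mapsto \bar t_{3-i,3-j,\mathbf{s}}(u^{-1})$ and $\bar t^{\star}_{ij}(u)\mapsto t_{3-i,3-j,\mathbf{s}}(u^{-1})$, so on the maximal vector $\zeta$ of $V_{11}(\lambda(u);\bar\lambda(u))$ one computes that $t^{\star}_{11}(u),t^{\star}_{22}(u),\bar t^{\star}_{11}(u),\bar t^{\star}_{22}(u)$ act by $\bar\lambda_2(u^{-1}),\bar\lambda_1(u^{-1}),\lambda_2(u^{-1}),\lambda_1(u^{-1})$, while $t^{\star}_{21}(u)$ and $\bar t^{\star}_{21}(u)$ annihilate $\zeta$ (they become $\bar t_{12,\mathbf{s}}(u^{-1})$ and $t_{12,\mathbf{s}}(u^{-1})$). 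Thus $\zeta$ is a highest-weight vector for $\mathrm{U}_{q^{-1}}\big(\widehat{\mathfrak{gl}}_2\big)$ relative to the \emph{opposite} triangular decomposition; precomposing with the standard automorphism interchanging the two Borel directions (harmless in rank one and preserving finite-dimensionality) puts it in standard form. Feeding the resulting data into \cite[Theorem~3.6]{GM10} at parameter $q^{-1}$ and then undoing the substitution $u\mapsto u^{-1}$ on the Drinfeld polynomial converts the criterion into the stated shape $q^{-\deg P(u)}P(q^2u)/P(u)$, with $P(u)\in 1+u\mathbb{C}[u]$ unique up to $\{\pm1\}$.

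I expect the only real difficulty to be bookkeeping rather than conceptual: tracking the combined effect of the three twists — the parameter inversion $q\leftrightarrow q^{-1}$ forced by Remark~\ref{usual0}, the index reflection $i\mapsto 3-i$, and the spectral-parameter inversion $u\mapsto u^{-1}$ in \eqref{iso:af2} — on the precise form of the Drinfeld-polynomial condition, and making the sign ambiguities $\epsilon_1,\epsilon_2\in\{\pm1\}$ together with the normalization $P\in 1+u\mathbb{C}[u]$ line up on both sides. There is essentially no new representation-theoretic input beyond the $\mathrm{U}_q\big(\widehat{\mathfrak{gl}}_2\big)$ classification; once the dictionary is nailed down, both equivalences fall out directly.
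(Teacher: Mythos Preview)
Your proposal is correct and matches the paper's approach exactly: the paper simply states that Theorem~\ref{rep:T2} follows from the isomorphisms \eqref{iso:af1}--\eqref{iso:af2} together with \cite[Theorem~3.6]{GM10}, without spelling out the bookkeeping. Your more detailed account of how the twists $q\leftrightarrow q^{-1}$, $i\mapsto 3-i$, and $u\mapsto u^{-1}$ affect the Drinfeld-polynomial condition is precisely the verification the paper leaves to the reader.
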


\subsection{Finite-dimensional irreducible representations in general cases}\label{se:fdirreducible representations:af-general}
Within the framework of Section \ref{se:equivcondition:fd1} and \ref{se:equivcondition:fd2}, we are ready to classify the finite-dimensional irreducible representations of the superalgebra $\mathrm{U}_q\big(\widehat{\mathfrak{g}}_{\mathbf{s}}\big)$ for general pairs $(m,n)$. For any pair $(k,l)\in I_{\mathbf{s}}\times I_{\mathbf{s}}$ with $k\neq l$, let $\left[\mathrm{U}_q\big(\widehat{\mathfrak{g}}_{\mathbf{s}}\big)\right]_{k,l}$ be the sub-superalgebra of $\mathrm{U}_q\big(\widehat{\mathfrak{g}}_{\mathbf{s}}\big)$ generated by $t_{kl,\mathbf{s}}^{(r)}$, $\bar{t}_{kl,\mathbf{s}}^{(r)}$, $r\in\mathbb{Z}_+$. 
Depending on the values of $s_ks_l$, there are four distinct isomorphisms onto the superalgebra $\left[\mathrm{U}_q\big(\widehat{\mathfrak{g}}_{\mathbf{s}}\big)\right]_{k,l}$, as presented below: ($1\leqslant i,j\leqslant 2$)
\begin{enumerate}
    \item[(1)] If $s_ks_l=01$, the isomorphism is given by
    \begin{gather}\label{isom:af-kl:1}
        \mathrm{U}_q\big(\widehat{\mathfrak{gl}}_{1|1,01}\big)\rightarrow \left[\mathrm{U}_q\big(\widehat{\mathfrak{g}}_{\mathbf{s}}\big)\right]_{k,l}\qquad \gamma_{ij,01}(u)\mapsto \gamma_{o_{kl}(i),o_{kl}(j),\mathbf{s}}(u). 
    \end{gather}
    \item[(2)] If $s_ks_l=10$, the isomorphism is given by
    \begin{gather}\label{isom:af-kl:2}
        \mathrm{U}_q\big(\widehat{\mathfrak{gl}}_{1|1,10}\big)\rightarrow \left[\mathrm{U}_q\big(\widehat{\mathfrak{g}}_{\mathbf{s}}\big)\right]_{k,l}\qquad \gamma_{ij,10}(u)\mapsto \gamma_{o_{kl}(i),o_{kl}(j),\mathbf{s}}(u). 
    \end{gather}
    \item[(3)] If $s_ks_l=00$, the isomorphism is given by
    \begin{gather}\label{isom:af-kl:3}
        \mathrm{U}_q\big(\widehat{\mathfrak{gl}}_{2|0}\big)\rightarrow \left[\mathrm{U}_q\big(\widehat{\mathfrak{g}}_{\mathbf{s}}\big)\right]_{k,l}\qquad \gamma_{ij,00}(u)\mapsto \gamma_{o_{kl}(i),o_{kl}(j),\mathbf{s}}(u). 
    \end{gather}
    \item[(4)] If $s_is_j=11$, the isomorphism is given by
    \begin{gather}\label{isom:af-kl:4}
        \mathrm{U}_q\big(\widehat{\mathfrak{gl}}_{0|2}\big)\rightarrow \left[\mathrm{U}_q\big(\widehat{\mathfrak{g}}_{\mathbf{s}}\big)\right]_{k,l}\qquad \gamma_{ij,11}(u)\mapsto \gamma_{o_{kl}(i),o_{kl}(j),\mathbf{s}}(u). 
    \end{gather}
\end{enumerate}
Here, $\gamma\in\{\,t,\bar{t}\,\}$ and $o_{kl}: \{1,2\}\rightarrow \{k,l\}$ is the mapping such that $o_{kl}(1)=k$, $o_{kl}(2)=l$.

\begin{theorem}\label{rep:T3}
Consider the $N$-tuples 
\begin{gather*}
    \lambda(u)=(\lambda_1(u),\lambda_2(u),\ldots,\lambda_N(u)),\quad \bar{\lambda}(u)=(\bar{\lambda}_1(u),\bar{\lambda}_2(u),\ldots,\bar{\lambda}_N(u))
\end{gather*}
for each series 
$\lambda_i(u),\bar{\lambda}_i(u)$ satisfying \eqref{formal_lambda} and \eqref{formal_lambda2}. The following conditions for the irreducible highest weight representation $V_{\mathbf{s}}(\lambda(u);\bar{\lambda}(u))$ of $\mathrm{U}_q\big(\widehat{\mathfrak{g}}_{\mathbf{s}}\big)$ are equivalent\,{\rm :}
\begin{itemize}
  \item[{\rm (1)}]\ $\dim V_{\mathbf{s}}(\lambda(u);\bar{\lambda}(u))<\infty$\,{\rm ;}
  \item[{\rm (2)}]\ there exist a series of polynomials $P_{ij}(u)\in 1+u\mathbb{C}[u]$ {\rm (}$1\leqslant i<j\leqslant N$, $|i|+|j|=\bar{0}${\rm)}, and $Q_{bc}(u),\widetilde{Q}_{bc}(u)$ {\rm (}$1\leqslant b<c\leqslant N$, $|b|+|c|=\bar{1}${\rm)} with the products of the constant term and the leading coefficient equal to 1, such that
    \begin{gather}\label{rep:mn-af:1}
    \frac{\epsilon_i\lambda_i(u)}{\epsilon_j\lambda_j(u)}=q_i^{\deg P_{ij}(u)}\cdot\frac{P_{ij}(q_i^{-2}u)}{P_{ij}(u)}=\frac{\epsilon_i\bar{\lambda}_i(u)}{\epsilon_j\bar{\lambda}_j(u)}
    \end{gather}
    for some $\epsilon_i,\epsilon_j\in\{\pm 1\}$, and
    \begin{gather}\label{rep:mn-af:2}
    \frac{\lambda_b(u)}{\lambda_c(u)}=\frac{Q_{bc}(u)}{\widetilde{Q}_{bc}(u)}=\frac{\bar{\lambda}_b(u)}{\bar{\lambda}_c(u)}.
    \end{gather}
    Here, 
    \begin{align*}
        P_{ij}(u)&=P_{ih}(u)P_{hj}(u)&&\text{for}~~ |h|=|i|=|j|,~~i<h<j, \quad\\
        Q_{bc}(u)&=Q_{bh}(u)Q_{hc}(u)  &&\text{for}~~ |h|=|b|=|c|,~~b<h<c,\quad \\
        \widetilde{Q}_{bc}(u)&=\widetilde{Q}_{bh}(u)\widetilde{Q}_{hc}(u) &&\text{for}~~ |h|=|b|=|c|,~~b<h<c,\quad
    \end{align*}
    where the decompositions of $P_{ij}(u),Q_{bc}(u),\widetilde{Q}_{bc}(u)$ are independent of the choice of $h$. 
\end{itemize}
\end{theorem}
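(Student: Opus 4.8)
The plan is to reduce Theorem~\ref{rep:T3} to the rank-one (or rather rank-two-block) cases already established in Theorems~\ref{rep:T1} and \ref{rep:T2}, using the sub-superalgebra embeddings \eqref{isom:af-kl:1}--\eqref{isom:af-kl:4} together with the evaluation representations of Section~\ref{se:evairreducible representations} and the consistency relations encoded by the cocycle-type factorizations $P_{ij}=P_{ih}P_{hj}$, $Q_{bc}=Q_{bh}Q_{hc}$, $\widetilde{Q}_{bc}=\widetilde{Q}_{bh}\widetilde{Q}_{hc}$.

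First I would prove (1)$\Rightarrow$(2). Assume $\dim V_{\mathbf{s}}(\lambda(u);\bar\lambda(u))<\infty$. Fix a pair $1\leqslant k<l\leqslant N$. The maximal vector $\zeta$ of $V_{\mathbf{s}}(\lambda(u);\bar\lambda(u))$ is annihilated by all $\bar t_{ij,\mathbf{s}}(u)$ and $t_{ij,\mathbf{s}}(u)$ with $i<j$; in particular, under the embedding $\bigl[\mathrm{U}_q(\widehat{\mathfrak g}_{\mathbf{s}})\bigr]_{k,l}$ the cyclic $\bigl[\mathrm{U}_q(\widehat{\mathfrak g}_{\mathbf{s}})\bigr]_{k,l}$-submodule generated by $\zeta$ is a highest weight representation whose highest weights are $(\lambda_k(u),\lambda_l(u);\bar\lambda_k(u),\bar\lambda_l(u))$, and it is finite-dimensional since it sits inside $V_{\mathbf{s}}$. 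Pulling this back along \eqref{isom:af-kl:1}--\eqref{isom:af-kl:4} (the precise choice dictated by $s_ks_l$) and applying Theorem~\ref{rep:T1} when $|k|+|l|=\bar 1$ or Theorem~\ref{rep:T2} when $|k|+|l|=\bar 0$, we obtain exactly the ratio constraints \eqref{rep:mn-af:1}, \eqref{rep:mn-af:2} for the pair $(k,l)$, with the polynomial $P_{kl}(u)$ (resp. $Q_{kl}(u),\widetilde{Q}_{kl}(u)$) and signs $\epsilon_k,\epsilon_l$ provided by those theorems. Doing this for every pair produces a collection of signs; one checks their mutual compatibility (so that a single $N$-tuple $\epsilon=(\epsilon_1,\dots,\epsilon_N)$ works globally) by comparing consecutive pairs and using that the ratio $\lambda_i(u)/\lambda_j(u)$ is multiplicative in the obvious sense, $\tfrac{\lambda_i}{\lambda_j}=\tfrac{\lambda_i}{\lambda_h}\cdot\tfrac{\lambda_h}{\lambda_j}$. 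This same multiplicativity, combined with uniqueness of the polynomials $P_{ij}$ (resp. up to common factors for $Q,\widetilde Q$), forces the factorization identities $P_{ij}=P_{ih}P_{hj}$ etc.: the left and right sides are polynomials in $1+u\mathbb C[u]$ giving the same rational function $q_i^{?}\prod(\cdots)$, hence coincide.

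For (2)$\Rightarrow$(1), I would build an explicit finite-dimensional representation with the prescribed highest weights and invoke the uniqueness of the irreducible highest weight quotient $V(\lambda(u);\bar\lambda(u))$ from Section~\ref{se:afVermamodule}. Concretely, given the data $\{P_{ij}\}$, $\{Q_{bc},\widetilde Q_{bc}\}$ satisfying \eqref{rep:mn-af:1}--\eqref{rep:mn-af:2} and the factorization rules, one reads off from each $P_{i,i+1}$ (resp. $Q_{i,i+1},\widetilde Q_{i,i+1}$) a finite list of evaluation parameters, forms the corresponding evaluation representations $V_{a,\mathbf{s}}(\mathcal M)$ of Section~\ref{se:evairreducible representations} (typical ones for even steps, the $(1,1)$-type ones for odd steps), and takes a tensor product of them via the comultiplication $\widehat\triangle_{\mathbf{s}}$. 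Using the comultiplication formula $t_{ii,\mathbf{s}}(u)\mapsto\sum_k\varsigma\,t_{ik,\mathbf{s}}(u)\otimes t_{ki,\mathbf{s}}(u)$ and the vanishing of the off-diagonal entries on each maximal vector, one computes that the tensor product of the maximal vectors is a highest weight vector whose diagonal eigenvalues multiply to exactly the ratios \eqref{af:fd_eva} dictated by \eqref{rep:mn-af:1}--\eqref{rep:mn-af:2} (here the factorization identities $P_{ij}=P_{ih}P_{hj}$ guarantee that the product over a chain $i<i+1<\cdots<j$ gives the correct $P_{ij}$, and the sign constraints absorb the $\epsilon$'s after twisting by \eqref{auto:af:fu}). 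Hence $V_{\mathbf{s}}(\lambda(u);\bar\lambda(u))$ is a subquotient of a finite-dimensional module, so it is finite-dimensional.

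The main obstacle I expect is the (2)$\Rightarrow$(1) direction, specifically verifying that the tensor product of evaluation modules actually \emph{realizes} the given $N$-tuple of highest weights on the nose — not merely up to the ratios $\lambda_i/\lambda_j$. The ratios determine the highest weight only up to an overall common scalar series, so one must use the automorphisms \eqref{auto:af:fu} to normalize, and then carefully track how the product of leading coefficients/constant terms (the normalization $Q_0Q_K=\widetilde Q_0\widetilde Q_K=1$ from Theorem~\ref{rep:T1} and $P\in 1+u\mathbb C[u]$ from Theorem~\ref{rep:T2}) interacts with the tensor product. A secondary subtlety is the global consistency of the signs $\epsilon_i$: a priori each pairwise application of Theorems~\ref{rep:T1}--\ref{rep:T2} yields its own signs, and one must argue — as in the Remark following Theorem~\ref{non-st:rep:fin4} for the finite case — that the full set of conditions over all pairs $(i,j)$, not just adjacent ones, is exactly what pins down a single coherent choice. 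I would handle this by first establishing \eqref{rep:mn-af:1}--\eqref{rep:mn-af:2} for adjacent pairs, then propagating to all pairs via the multiplicativity of ratios and the factorization identities, and only afterwards checking that no obstruction to a global $\epsilon$ arises.
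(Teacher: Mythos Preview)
Your proposal is correct and follows essentially the same route as the paper: the (1)$\Rightarrow$(2) direction is handled exactly as you describe, by restricting to the rank-two subalgebras $[\mathrm{U}_q(\widehat{\mathfrak g}_{\mathbf{s}})]_{k,l}$ via \eqref{isom:af-kl:1}--\eqref{isom:af-kl:4} and invoking Theorems~\ref{rep:T1}--\ref{rep:T2}, while the (2)$\Rightarrow$(1) direction is likewise obtained by realizing $V_{\mathbf{s}}(\lambda(u);\bar\lambda(u))$ as a subquotient of a tensor product of evaluation modules. The only cosmetic difference is that the paper organizes the (2)$\Rightarrow$(1) construction as an induction on the length of subsequences $\mathbf{t}$ of $\mathbf{s}$ (starting from the trivial representation and tensoring in one evaluation module $U^0_{\mathbf{t}}=V_{\mathbf{t}}(\mathcal{M})$ at a time, tracking how the polynomials $P_{ij},Q_{bc},\widetilde Q_{bc}$ grow), whereas you describe reading off all the evaluation parameters at once; these are equivalent bookkeepings of the same construction, and your anticipated use of the automorphism \eqref{auto:af:fu} to normalize the overall scalar is exactly what the paper does implicitly by fixing $\epsilon=(1,\dots,1)$.
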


\begin{proof}
Suppose that $\dim V_{\mathbf{s}}(\lambda(u);\bar{\lambda}(u))<\infty$, and let $\zeta$ be the maximal vector of the representation $V_{\mathbf{s}}(\lambda(u);\bar{\lambda}(u))$. Through the isomorphisms \eqref{isom:af-kl:1}--\eqref{isom:af-kl:4},
 the cyclic span $\left[\mathrm{U}_q\big(\widehat{\mathfrak{g}}_{\mathbf{s}}\big)\right]_{k,l}\zeta$ can be respective considered as the finite-dimensional Verma module of $\mathrm{U}_q\big(\widehat{\mathfrak{gl}}_{1|1,01}\big)$, $\mathrm{U}_q\big(\widehat{\mathfrak{gl}}_{1|1,10}\big)$, $\mathrm{U}_q\big(\widehat{\mathfrak{gl}}_{2|0}\big)$ and $\mathrm{U}_q\big(\widehat{\mathfrak{gl}}_{0|2}\big)$ with a different value of $s_ks_l$. Examining its irreducible quotient, the conditions \eqref{rep:mn-af:1} and \eqref{rep:mn-af:2} hold owing to Proposition \ref{rep:T1}--\ref{rep:T2}.

Now, let the conditions \eqref{rep:mn-af:1} and \eqref{rep:mn-af:2} hold for the representation $V_{\mathbf{s}}(\lambda(u);\bar{\lambda}(u))$. For convenience, we take the $N$-tuple $(\epsilon_1,\ldots,\epsilon_{N})=(1,\ldots,1)$. The statements in Section \ref{se:evairreducible representations} imply that such a finite-dimensional representation $U^0_{\mathbf{s}}=V_{\mathbf{s}}(\mathcal{M})$ exists for all $\mathcal{M}$ that satisfies the conditions \eqref{fd:cd:fin1}, \eqref{fd:cd:fin2} (resp. \eqref{fd:cd:fin3}). 

Let $\mathbf{t}$ be a subsequence of $\mathbf{s}$ (at least length 2). Denote $\mathfrak{g}^{\flat}_{\mathbf{t}}$ by the sub-superalgebra of $\mathfrak{g}_{\mathbf{s}}$ corresponding to $\mathbf{t}$. Similarly, we set $I^{\flat}_{\mathbf{t}}\subset I_{\mathbf{s}}$. We use the notation $V_{\mathbf{t}}$ to denote the restriction of the highest weight  $\mathrm{U}_q\big(\widehat{\mathfrak{g}}_{\mathbf{s}}\big)$-module $V$ with trivial action of $t_{ii,\mathbf{s}}(u),\bar{t}_{ii,\mathbf{s}}(u)$ for $i\in I_{\mathbf{s}}\setminus I^{\flat}_{\mathbf{t}}$ to $\mathrm{U}_q\big(\widehat{\mathfrak{g}}^{\flat}_{\mathbf{t}}\big)$. 
Then the irreducibility of $V_{\mathbf{t}}$ implies that $V$ is also irreducible. 
For each $\mathbf{t}$, we initiate our discussion from the trivial representation of $\mathrm{U}_q\big(\widehat{\mathfrak{g}}^{\,\flat}_{\mathbf{t}}\big)$,
thereby ensuring its irreducibility and finite-dimensionality. 

Assume that the polynomials $P_{ij}(u),Q_{bc}(u),\widetilde{Q}_{bc}(u)$ satisfy the conditions \eqref{rep:mn-af:1} and \eqref{rep:mn-af:2}, and the associated irreducible representation $W_{\mathbf{t}}=V_{\mathbf{t}}(\lambda(u);\bar{\lambda}(u))$ is finite-dimensional. The comutiplication $\widehat{\triangle}_{\mathbf{t}}$ defined in Section \ref{se:RTTqafsuperalgebra} ensures that $\mathrm{U}_q\big(\widehat{\mathfrak{g}}^{\,\flat}_{\mathbf{t}}\big)$ acts on the tensor produce $W_{\mathbf{t}}^{\circ}=U^0_{\mathbf{t}}\otimes W_{\mathbf{t}}$ as a representation. Let $\xi_0$ and $\zeta$ be the maximal vectors of $U^0_{\mathbf{t}}$ and $W_{\mathbf{t}}$, respectively. Observe that the cyclic span $\mathrm{U}_q\big(\widehat{\mathfrak{g}}^{\,\flat}_{\mathbf{t}}\big)(\xi_0\otimes \zeta)$ is a finite-dimensional highest weight representation with highest weights
\begin{gather*}
(\lambda_1^{\circ}(u),\ldots,\lambda_{N}^{\circ}(u);\bar{\lambda}_1^{\circ}(u),\ldots,\bar{\lambda}_{N}^{\circ}(u))
\end{gather*}
such that
\begin{gather*}
\lambda_i^{\circ}(u)=\left(\mu_i^{-1}-\mu_i au^{-1}\right)\lambda_i(z),\quad \bar{\lambda}_i^{\circ}(z)=\left(\mu_i-\mu_i^{-1}au\right)\bar{\lambda}_i(z).
\end{gather*}
Then we have
\begin{align*}
\frac{\lambda_i^{\circ}(z)}{\lambda_j^{\circ}(z)}
&=q_i^{l_{ij}+\#_{(i,j)}+\deg P_{ij}(u)}\frac{\left(\mu_j-q_i^{-2}\mu_j^{-1}au\right)\cdots \left(\mu_j-q_i^{-2(l_{ij}+\#_{(i,j)})}\mu_j^{-1}au\right)}{\left(\mu_j-\mu_j^{-1}au\right)\cdots \left(\mu_j-q_i^{-2(l_{ij}+\#_{(i,j)}-1)}\mu_j^{-1}au\right)}\cdot\frac{P_i(q^{-2}u)}{P_{i}(u)} \\
&=\frac{\bar{\lambda}_i^{\circ}(u)}{\bar{\lambda}_{i+1}^{\circ}(u)}
\end{align*}
for $|i|+|j|=\bar{0}$, $i<j$; and
\begin{gather*}
\frac{\lambda_b^{\circ}(u)}{\lambda_c^{\circ}(u)}=\frac{\mu_b-\mu_b^{-1}au}{\mu_c-\mu_c^{-1}au}\cdot \frac{Q_{bc}(u)}{\widetilde{Q}_{bc}(u)}=\frac{\bar{\lambda}_b^{\circ}(u)}{\bar{\lambda}_c^{\circ}(u)}
\end{gather*}
for $|b|+|c|=\bar{1}$, $b<c$. 
This representation admit a unique irreducible quotient related to the new polynomials
\begin{align*}
P^{\circ}_{ij}(u)&=\left(1-\mu_j^{-2}au\right)\cdots \left(1-q_i^{-2(l_{ij}+\#_{(i,j)}-1)}\mu_j^{-2}au\right)P_{ij}(u), \\
Q^{\circ}_{bc}(u)&=\left(\mu_b a^{-\frac{1}{2}}-\mu_b^{-1}a^{\frac{1}{2}}u\right)Q_{bc}(u),\qquad  
\widetilde{Q}^{\circ}_{bc}(u)=\left(\mu_ca^{-\frac{1}{2}}-\mu_c^{-1}a^{\frac{1}{2}}u\right)\widetilde{Q}_{bc}(u),
\end{align*}
which also obviously satisfies the conditions \eqref{rep:mn-af:1} and \eqref{rep:mn-af:2}. 

Our construction above arounds all possible representation $W_{\mathbf{t}}^{\circ}$ subject to the polynomials 
$P_{ij}^{\circ}(u),Q_{ij}^{\circ}(u),\widetilde{Q}_{ij}^{\circ}(u)$ for the subsequence $\mathbf{t}$, when we adjust the choose of $\mathcal{M}$ in $U_{\mathbf{t}}^0$. 
By induction on the length of $\mathbf{t}$, we conclude that all $W_{\mathbf{s}}^{\circ}$ is finite-dimensional.

\end{proof}

\begin{remark}
    For a given $N$-tuple $(\epsilon_1,\ldots,\epsilon_{N})$, there exists a unique set of polynomials $P_{ij}(u)$ 
that satisfy the condition \eqref{rep:mn-af:1}. In contrast, this particular set of polynomials corresponds uniquely to a $N$-tuple $(\epsilon_1,\ldots,\epsilon_N)$. In particular, this specified condition remains valid if we simultaneously alter the signs of all $\epsilon_i$.
\end{remark}
\begin{remark}
    If we only consider pairs of coprime polynomials $(Q_{bc}(u),\widetilde{Q}_{bc}(u))$ for each $b,c$ in \eqref{rep:mn-af:2}, they are unique up to a factor $\pm 1$. 
\end{remark}

Within the framework of the proof of Theorem \ref{rep:T3}, we immediately have
\begin{corollary}\label{tensor-eva-rep}
    Every finite-dimensional irreducible representation of $\mathrm{U}_q\big(\widehat{\mathfrak{g}}_{\mathbf{s}}\big)$ is isomorphic to a subquotient of a tensor product of evaluation representations. 
\end{corollary}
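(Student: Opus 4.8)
The plan is to extract this corollary directly from the structure of the proof of Theorem~\ref{rep:T3}, so no genuinely new argument is needed. First I would recall that by Proposition~\ref{fd_irreducible representations:af}, any finite-dimensional irreducible representation $V$ of $\mathrm{U}_q\big(\widehat{\mathfrak{g}}_{\mathbf{s}}\big)$ is a highest weight representation, hence isomorphic to $V_{\mathbf{s}}(\lambda(u);\bar{\lambda}(u))$ for some highest weights subject to \eqref{formal_lambda}--\eqref{formal_lambda2}. By Theorem~\ref{rep:T3}, the finite-dimensionality of $V$ forces the existence of the polynomials $P_{ij}(u)$ and the pairs $(Q_{bc}(u),\widetilde{Q}_{bc}(u))$ satisfying \eqref{rep:mn-af:1}--\eqref{rep:mn-af:2}.

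Next I would read off, from the ``sufficiency'' half of that proof, how the module realizing those polynomial data is built: it is obtained, via the iterated coproduct $\widehat{\triangle}_{\mathbf{s}}$, as a cyclic highest-weight submodule of a tensor product $U^0_{\mathbf{s}}\otimes W_{\mathbf{s}}$, where $U^0_{\mathbf{s}}=V_{\mathbf{s}}(\mathcal{M})$ is an evaluation representation (pulled back along $\mathsf{ev}_{a,\mathbf{s}}$ as in Section~\ref{se:evairreducible representations}) and $W_{\mathbf{s}}$ is, by induction on the number of ``tensor factors'' accumulated in the construction, itself a cyclic highest-weight submodule of a tensor product of evaluation representations. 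Unwinding this induction, the module $W_{\mathbf{s}}^{\circ}$ constructed there sits as a highest-weight subquotient of a tensor product $V_{a_1,\mathbf{s}}(\mathcal{M}_1)\otimes\cdots\otimes V_{a_r,\mathbf{s}}(\mathcal{M}_r)$ of evaluation representations. Since for fixed highest weights the irreducible highest weight module $V_{\mathbf{s}}(\lambda(u);\bar{\lambda}(u))$ is unique up to isomorphism (Section~\ref{se:afVermamodule}), and since the constructed module has precisely the highest weights dictated by the polynomial data of $V$, the unique irreducible quotient of that constructed submodule is isomorphic to $V$. Therefore $V$ is a subquotient of a tensor product of evaluation representations.

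The one point requiring care — and the main obstacle — is bookkeeping: in the proof of Theorem~\ref{rep:T3} the tensor factors $U^0_{\mathbf{t}}$ are introduced one subsequence $\mathbf{t}$ at a time and the polynomials are updated to $P^{\circ}_{ij}(u)$, $Q^{\circ}_{bc}(u)$, $\widetilde{Q}^{\circ}_{bc}(u)$, so one must verify that after finitely many such steps one can reach exactly the prescribed polynomials $P_{ij}(u)$, $Q_{bc}(u)$, $\widetilde{Q}_{bc}(u)$ of the given $V$, i.e.\ that the evaluation parameters and the tuples $\mathcal{M}_k$ can be chosen to produce the desired factorizations (each $P_{ij}(u)$ is a product of linear factors of the form $1-c\,u$, and the $Q_{bc}(u),\widetilde{Q}_{bc}(u)$ split into linear factors $\eta+\eta^{-1}u$ as recorded after Theorem~\ref{rep:T1}). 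This is exactly what the inductive construction in the sufficiency part of Theorem~\ref{rep:T3} already does, so the corollary follows by assembling those steps; I would phrase the argument as ``within the framework of the proof of Theorem~\ref{rep:T3}'' and simply point to the chain of tensor products produced there, noting that the given $V$, being an irreducible highest weight module with the same highest weights, must be the irreducible quotient of the cyclic highest-weight submodule of that tensor product.

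\begin{proof}
By Proposition~\ref{fd_irreducible representations:af}, a finite-dimensional irreducible representation $V$ of $\mathrm{U}_q\big(\widehat{\mathfrak{g}}_{\mathbf{s}}\big)$ is a highest weight representation, say $V\simeq V_{\mathbf{s}}(\lambda(u);\bar{\lambda}(u))$. By Theorem~\ref{rep:T3}, there are polynomials $P_{ij}(u)\in 1+u\mathbb{C}[u]$ and pairs $(Q_{bc}(u),\widetilde{Q}_{bc}(u))$ satisfying \eqref{rep:mn-af:1}--\eqref{rep:mn-af:2}. In the sufficiency part of the proof of Theorem~\ref{rep:T3}, a finite-dimensional highest weight module realizing exactly these polynomial data is constructed, by iterating the coproduct $\widehat{\triangle}_{\mathbf{s}}$, as a cyclic highest-weight submodule of a tensor product $V_{a_1,\mathbf{s}}(\mathcal{M}_1)\otimes\cdots\otimes V_{a_r,\mathbf{s}}(\mathcal{M}_r)$ of evaluation representations: each step adjoins one factor $U^0_{\mathbf{t}}=V_{\mathbf{s}}(\mathcal{M})$ and updates $P_{ij}(u),Q_{bc}(u),\widetilde{Q}_{bc}(u)$ to $P^{\circ}_{ij}(u),Q^{\circ}_{bc}(u),\widetilde{Q}^{\circ}_{bc}(u)$ by the displayed linear factors, and since every $P_{ij}(u)$ (resp.\ $Q_{bc}(u),\widetilde{Q}_{bc}(u)$) factors into such linear factors, finitely many steps suffice. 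The cyclic submodule $\mathrm{U}_q\big(\widehat{\mathfrak{g}}_{\mathbf{s}}\big)(\xi_1\otimes\cdots\otimes\xi_r)$ generated by the product of maximal vectors is a highest weight module with highest weights $(\lambda(u);\bar{\lambda}(u))$; by the uniqueness of the irreducible highest weight representation with given highest weights (Section~\ref{se:afVermamodule}), its unique irreducible quotient is isomorphic to $V$. Hence $V$ is a subquotient of a tensor product of evaluation representations.
\end{proof}
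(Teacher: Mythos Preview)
Your approach is exactly what the paper does: it simply declares the corollary an immediate consequence of the proof of Theorem~\ref{rep:T3}, and you have spelled out the intended chain of implications. However, there is a genuine gap in the step where you assert that the cyclic submodule $\mathrm{U}_q\big(\widehat{\mathfrak{g}}_{\mathbf{s}}\big)(\xi_1\otimes\cdots\otimes\xi_r)$ has highest weights \emph{equal to} $(\lambda(u);\bar{\lambda}(u))$. The inductive construction in the sufficiency half of Theorem~\ref{rep:T3} only matches the \emph{ratios} $\lambda_i(u)/\lambda_j(u)$ and $\bar{\lambda}_i(u)/\bar{\lambda}_j(u)$ (these are what the polynomial data \eqref{rep:mn-af:1}--\eqref{rep:mn-af:2} encode); the individual components $\lambda_i(u)$ of a finite-dimensional irreducible are not otherwise constrained, and in particular need not be polynomials in $u^{-1}$.

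This matters because on any tensor product of evaluation modules each $t_{ii,\mathbf{s}}(u)$ acts by a matrix whose entries are polynomials in $u^{-1}$ (degree equal to the number of tensor factors, by the form of $\mathsf{ev}_{a,\mathbf{s}}$ and the coproduct), so every highest weight occurring in a subquotient is polynomial in $u^{-1}$. Yet for, say, $m=n=1$, the one-dimensional module with $\lambda_1(u)=\lambda_2(u)=f(u)$ for an arbitrary invertible series $f$ is finite-dimensional and irreducible but cannot arise this way unless $f$ is polynomial. The fix is to invoke the automorphism \eqref{auto:af:fu}: your tensor product realizes $V_{\mathbf{s}}(\mu(u);\bar{\mu}(u))$ with $\mu_i/\mu_j=\lambda_i/\lambda_j$, and twisting by \eqref{auto:af:fu} identifies this with $V_{\mathbf{s}}(\lambda(u);\bar{\lambda}(u))$. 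You should either state the corollary up to this twist, or observe that the twist can be absorbed into a single tensor factor (so that the conclusion holds after allowing one additional one-dimensional factor), and make that explicit. The paper's one-line ``proof'' glosses over the same point.
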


\subsection{Tensor product of evaluation representations for $\mathrm{U}_q\big(\widehat{\mathfrak{gl}}_{1|1,\mathbf{s}}\big)$}

The final subsection of this paper  provides a more precise result of Corollary \ref{tensor-eva-rep} for the special case $m=n=1$. Our analysis relies essentially on the $q$-super Yangian $\mathrm{Y}_q\big(\mathfrak{gl}_{1|1,\mathbf{s}}\big)$ introduced in Section \ref{se:qsuperYangian}. 

Define the irreducible highest weight module $\check{V}_{\mathbf{s}}(\bar{\lambda}(u))$ of the $q$-super Yangian $\mathrm{Y}_q\big(\mathfrak{gl}_{1|1,\mathbf{s}}\big)$ obtained by  restricting  $V_{\mathbf{s}}(\lambda(u);\bar{\lambda}(u))$. According to the "if" part of the proof of Theorem \ref{rep:T1} with the essential modification of replacing $t_{21,\mathbf{s}}(u)$ by $\bar{t}_{21,\mathbf{s}}(u)$ in equation \eqref{fd:af-eq:3}, we deduce that if the second equation in \eqref{fd:cd-af:11} is satisfied, then
\begin{gather*}
    \bar{t}_{21}^{(p)}\zeta=0,~~p>K,\quad \text{and}\quad \dim \check{V}_{\mathbf{s}}(\bar{\lambda}(u))\leqslant 2^K.
\end{gather*}

Comparing with the embedding \eqref{qSY-embed} and Proposition \ref{eva-map}, $\mathrm{Y}_q\big(\mathfrak{gl}_{1|1,\mathbf{s}}\big)$ inherits the evaluation homomorphism $\mathsf{ev}_{a,\mathbf{s}}$. Thus, the restriction of $V_{a,\mathbf{s}}(\mathcal{M})$ to $\mathrm{Y}_q\big(\mathfrak{gl}_{1|1,\mathbf{s}}\big)$ is an evaluation representation of $\mathrm{Y}_q\big(\mathfrak{gl}_{1|1,\mathbf{s}}\big)$, which is still irreducible. Denote it by $\check{V}_{a,\mathbf{s}}(\mathcal{M})$.  

The comultiplication $\widehat{\triangle}_{\mathbf{s}}$ enables us to regard the tensor products of evaluation representations 
\begin{gather}\label{tensorreps}
    \check{V}_{a_{1},\mathbf{s}}(\mathcal{M}_1)\otimes \check{V}_{a_{2},\mathbf{s}}(\mathcal{M}_2) \cdots \otimes \check{V}_{a_l,\mathbf{s}}(\mathcal{M}_l)
\end{gather}
as a highest weight representation of $\mathrm{Y}_q\big(\mathfrak{gl}_{1|1,\mathbf{s}}\big)$. 
Here, each $a_i\in \mathbb{C}\setminus\{0\}$ and $\mathcal{M}_i=(\mu_{i,1},\ldots,\mu_{i,N})\in \left(\mathbb{C}\setminus\{0\}\right)^N$.  
Clearly, it coincides with the restriction of the tensor product of the evaluation representations over $\mathrm{U}_q\big(\widehat{\mathfrak{gl}}_{1|1,\mathbf{s}}\big)$ given by 
\begin{gather}\label{tensorreps2}
    V_{a_{1},\mathbf{s}}(\mathcal{M}_1)\otimes V_{a_{2},\mathbf{s}}(\mathcal{M}_2) \cdots \otimes V_{a_l,\mathbf{s}}(\mathcal{M}_l). 
\end{gather}

As shown in Section \ref{se:typicalirreducible representationsUq}, every typical evaluation representation $\check{V}_{a,\mathbf{s}}(\mathcal{M})$ of $\mathrm{Y}_q\big(\mathfrak{gl}_{1|1,\mathbf{s}}\big)$ satisfies
\begin{gather*}
    \dim  \check{V}_{a,\mathbf{s}}(\mathcal{M})= 2. 
\end{gather*}
More precisely, 
\begin{gather*}
    \check{V}_{a,\mathbf{s}}(\mathcal{M})=\operatorname{span}_{\mathbb{C}}\big\{\,\zeta,\ \bar{t}_{21}^{(1)}\zeta\,\big\},
\end{gather*}
where $\zeta$ is the maximal vector of $\check{V}_{a,\mathbf{s}}(\mathcal{M})$. Additionally, $\check{V}_{a,\mathbf{s}}(\mathcal{M})$ is one-dimensional if it is atypical. Then we have
\begin{lemma}
    The $\mathrm{Y}_q\big(\mathfrak{gl}_{1|1,\mathbf{s}}\big)$-module \eqref{tensorreps} is irreducible if the last factor $\check{V}_{a_{l},\mathbf{s}}(\mathcal{M}_l)$ is atypical and the tensor product of the remaining factors
    \begin{gather}\label{tensorreps3}
    \check{V}_{a_{1},\mathbf{s}}(\mathcal{M}_1)\otimes \check{V}_{a_{2},\mathbf{s}}(\mathcal{M}_2) \cdots \otimes \check{V}_{a_{l-1},\mathbf{s}}(\mathcal{M}_{l-1})
    \end{gather}
    is irreducible. 
\end{lemma}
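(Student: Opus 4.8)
The plan is to prove irreducibility by induction on the number of tensor factors, reducing everything to the $q$-super Yangian $\mathrm{Y}_q\big(\mathfrak{gl}_{1|1,\mathbf{s}}\big)$ and exploiting the fact that an atypical module is one-dimensional. Write $M=\check{V}_{a_1,\mathbf{s}}(\mathcal{M}_1)\otimes\cdots\otimes\check{V}_{a_{l-1},\mathbf{s}}(\mathcal{M}_{l-1})$, which is irreducible by hypothesis, and let $D=\check{V}_{a_l,\mathbf{s}}(\mathcal{M}_l)$, which by the discussion preceding the lemma is one-dimensional, say $D=\mathbb{C}\eta$ with $\bar t_{11,\mathbf{s}}(u)\eta=\bar\mu_1(u)\eta$, $\bar t_{22,\mathbf{s}}(u)\eta=\bar\mu_2(u)\eta$, and $\bar t_{12,\mathbf{s}}(u)\eta=\bar t_{21,\mathbf{s}}(u)\eta=0$ since $l-k$ only takes the value $\pm1$ when $N=2$ and the off-diagonal series kill the one-dimensional space.

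The first step is to observe that, because $D$ is one-dimensional, the coproduct $\widehat{\triangle}_{\mathbf{s}}$ makes the assignment $v\mapsto v\otimes\eta$ ($v\in M$) an isomorphism of $\mathbb{Z}_2$-graded vector spaces $M\xrightarrow{\sim}M\otimes D$, and under this identification each generator series acts on $M\otimes D$ by $\bar t_{ij,\mathbf{s}}(u)(v\otimes\eta)=\sum_{k}\varsigma_{ik;kj}\,\bar t_{ik,\mathbf{s}}(u)v\otimes \bar t_{kj,\mathbf{s}}(u)\eta$; since $\bar t_{kj,\mathbf{s}}(u)\eta=0$ unless $k=j$ (and then equals $\bar\mu_j(u)\eta$), this collapses to $\bar t_{ij,\mathbf{s}}(u)(v\otimes\eta)=\bar\mu_j(u)\,\bar t_{ij,\mathbf{s}}(u)v\otimes\eta$. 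In other words, on $M\otimes D$ the operator $\bar t_{ij,\mathbf{s}}(u)$ is conjugate (up to the invertible scalar series $\bar\mu_j(u)$, whose constant term $\bar\mu_j^{(0)}\neq 0$) to the operator $\bar t_{ij,\mathbf{s}}(u)$ on $M$. Since the scalars $\bar\mu_j(u)$ are invertible formal series, the lattice of $\mathrm{Y}_q\big(\mathfrak{gl}_{1|1,\mathbf{s}}\big)$-submodules of $M\otimes D$ is identical to the lattice of submodules of $M$: a subspace $U\subseteq M$ is stable under all $\bar t_{ij,\mathbf{s}}^{(r)}$ if and only if $U\otimes\eta$ is. Hence $M\otimes D$ inherits irreducibility from $M$, which is exactly $M\otimes D\cong\eqref{tensorreps}$ being irreducible.

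A slightly more careful point is that one must double-check that the comultiplication indeed produces \emph{this} module structure, i.e. that the last factor really contributes only diagonal scalars: this uses that $D$ is atypical and one-dimensional, so $\bar t_{12,\mathbf{s}}(u)$ and $\bar t_{21,\mathbf{s}}(u)$ act as $0$ on $D$, and $\bar t_{ii,\mathbf{s}}(u)$ acts by the highest weight series of $\check V_{a_l,\mathbf{s}}(\mathcal{M}_l)$; both facts follow from Section~\ref{se:typicalirreducible representationsUq} together with the explicit formulas in Section~\ref{se:evairreducible representations}. One also records for completeness that the highest weight of \eqref{tensorreps} is the coordinatewise product of the highest weights of $M$ and $D$, so it is of the highest-weight type expected from Theorem~\ref{rep:T1}, and $\dim(M\otimes D)=\dim M$.

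The main obstacle is essentially bookkeeping rather than a genuine difficulty: one has to make sure the identification $M\cong M\otimes D$ respects the \emph{super} structure — the Koszul sign $\varsigma_{ik;kj}$ in $\widehat{\triangle}_{\mathbf{s}}$ must be tracked, and since $\eta$ may be odd or even one should verify that the parity of $v\otimes\eta$ is handled consistently so that no stray sign spoils the module isomorphism. Because the surviving term in $\widehat{\triangle}_{\mathbf{s}}(\bar t_{ij,\mathbf{s}}(u))$ has $k=j$, the sign is $\varsigma_{ij;jj}=(-1)^{(|i|+|j|)(|j|+|j|)}=1$, so in fact no sign appears; confirming this cleanly is the one place that needs attention, but it is routine.
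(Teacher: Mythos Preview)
Your argument is correct and is precisely the reasoning the paper intends: the paper does not give a separate proof of this lemma, merely stating it immediately after observing that an atypical evaluation module of $\mathrm{Y}_q\big(\mathfrak{gl}_{1|1,\mathbf{s}}\big)$ is one-dimensional. You have simply made explicit what the paper leaves implicit, namely that tensoring with a one-dimensional module only rescales each $\bar t_{ij,\mathbf{s}}(u)$ by the invertible series $\bar\mu_j(u)$ (with both the Koszul sign and the comultiplication sign $\varsigma_{ij;jj}$ equal to $1$), so the submodule lattices of $M$ and $M\otimes D$ coincide.
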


Next, we will consider the irreducibility of the tensor product \eqref{tensorreps} when each factor is typical. Let $\check{W}(\nu(u))$ be a irreducible highest weight representation of $\mathrm{Y}_q\big(\mathfrak{gl}_{1|1,\mathbf{s}}\big)$ endowed with highest weight $\nu(u)$
    of order $K$, and let $\xi$ be its maximal vector. We need the following lemma. 

\begin{lemma}\label{lemm:linearcombin}
    If any linear combination of the set of vectors
    \begin{gather*}
        \big\{\,\xi,\ \bar{t}_{21,\mathbf{s}}^{(r_1)}\bar{t}_{21,\mathbf{s}}^{(r_2)}\cdots \bar{t}_{21,\mathbf{s}}^{(r_{p})}\xi\,\big|\,1\leqslant r_1<r_2<\cdots<r_{p}\leqslant K~~~\text{for}~~~p=1,2,\ldots,K\,\big\}
    \end{gather*}
    is trivial, then we have
    \begin{gather*}
        \bar{t}_{21,\mathbf{s}}^{(1)}\bar{t}_{21,\mathbf{s}}^{(2)}\cdots \bar{t}_{21,\mathbf{s}}^{(K)}\xi=0.
    \end{gather*}
\end{lemma}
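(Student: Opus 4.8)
The plan is to exploit the fibered/filtered structure of how the single generator series $\bar t_{21,\mathbf{s}}(u)$ acts on a highest weight $\mathrm{Y}_q\big(\mathfrak{gl}_{1|1,\mathbf{s}}\big)$-module of order $K$. Since $\bar t_{21,\mathbf{s}}^{(r)}$ is odd, the relation $[\bar t_{21,\mathbf{s}}(u),\bar t_{21,\mathbf{s}}(v)]_{?}=0$ coming from \eqref{RTT:af4-ex} with $i=k=2$, $j=l=1$ shows that the $\bar t_{21,\mathbf{s}}^{(r)}$ anticommute with one another (up to the usual sign), so every monomial in them can be rewritten as an ordered product $\bar t_{21,\mathbf{s}}^{(r_1)}\cdots\bar t_{21,\mathbf{s}}^{(r_p)}$ with strictly increasing indices, and such a product vanishes if two indices coincide. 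Moreover, the relation in \eqref{fd:af-eq:2}-\eqref{fd:af-eq:3} (with $t$ replaced by $\bar t$) shows $\bar t_{21,\mathbf{s}}^{(p)}\xi = 0$ for $p > K$, so only the finitely many generators $\bar t_{21,\mathbf{s}}^{(1)},\dots,\bar t_{21,\mathbf{s}}^{(K)}$ act nontrivially on $\check W(\nu(u))$. Hence the vectors listed in the lemma span $\check W(\nu(u))$ (by irreducibility, once one checks the diagonal series $\bar t_{ii,\mathbf{s}}(u)$ and $t_{ii,\mathbf{s}}(u)$ preserve this span, which is immediate since they commute with $\bar t_{21,\mathbf{s}}(u)$ up to scalars and fix $\xi$).

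The key step is the following dichotomy: the listed set of $2^K$ vectors is either linearly independent, in which case it is a basis and $\bar t_{21,\mathbf{s}}^{(1)}\cdots\bar t_{21,\mathbf{s}}^{(K)}\xi$ is in particular a nonzero basis element — which is NOT the hypothesis — or it is linearly dependent, which is exactly the hypothesis of the lemma. So I would argue by contradiction: assume a nontrivial linear dependence exists but $\bar t_{21,\mathbf{s}}^{(1)}\cdots\bar t_{21,\mathbf{s}}^{(K)}\xi\neq 0$, and derive a contradiction. The route is to grade the span by "number of $\bar t_{21}$'s applied": let $F^{(p)}$ be the span of $\xi$ together with all $\bar t_{21,\mathbf{s}}^{(r_1)}\cdots\bar t_{21,\mathbf{s}}^{(r_{p'})}\xi$ with $p'\le p$. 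Each $\bar t_{21,\mathbf{s}}^{(r)}$ raises $p$ by one (or kills the vector), and on the top piece $F^{(K)}/F^{(K-1)}$ the operator $\bar t_{21,\mathbf{s}}^{(r)}$ acts as zero since any further application would land in degree $K+1$, which is zero. So the top graded piece is at most one-dimensional, spanned by the image of $\bar t_{21,\mathbf{s}}^{(1)}\cdots\bar t_{21,\mathbf{s}}^{(K)}\xi$.

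The heart of the argument is then: a nontrivial linear relation among the listed vectors, after projecting to the lowest degree $p$ in which it has a nonzero component, gives a nontrivial relation among the degree-exactly-$p$ vectors $\bar t_{21,\mathbf{s}}^{(r_1)}\cdots\bar t_{21,\mathbf{s}}^{(r_p)}\xi$ modulo $F^{(p-1)}$. Applying a suitable product of the complementary generators $\bar t_{21,\mathbf{s}}^{(s)}$ (for $s$ ranging over the $K-p$ indices not used) to this relation pushes everything into top degree $K$; by the anticommutativity, the only surviving term is a nonzero scalar multiple of $\bar t_{21,\mathbf{s}}^{(1)}\cdots\bar t_{21,\mathbf{s}}^{(K)}\xi$, so we conclude $\bar t_{21,\mathbf{s}}^{(1)}\cdots\bar t_{21,\mathbf{s}}^{(K)}\xi = 0$ as desired. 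I expect the main obstacle to be bookkeeping the signs and the exact scalar that appears when the complementary product is applied — one must verify that after the anticommutation reshuffle the coefficient of the top vector is genuinely nonzero (not accidentally cancelled), and that no intermediate terms of the form "two equal indices" secretly contribute; this is where one uses that the $\bar t_{21}^{(r)}$ square to zero and pairwise anticommute cleanly. A secondary subtlety is making precise the claim that the listed vectors span the whole module, which rests on the vanishing $\bar t_{21,\mathbf{s}}^{(p)}\xi=0$ for $p>K$ together with irreducibility; this is routine given Theorem~\ref{base:rep-af} and the $\mathfrak{gl}_{1|1}$ RTT relations, so I would state it briefly and move on.
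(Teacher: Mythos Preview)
Your approach is correct and is precisely the exterior-algebra argument of \cite[Lemma~2]{Zrb95}, which is all the paper invokes as its proof. One point worth tightening: relation \eqref{RTT:af4-ex} with $i=k=2$, $j=l=1$ does \emph{not} literally give $\{\bar t_{21,\mathbf{s}}(u),\bar t_{21,\mathbf{s}}(v)\}=0$; using $q_2=q_1^{-1}$ it reads
\[
(q_1 v - q_1^{-1}u)\,\bar t_{21,\mathbf{s}}(u)\bar t_{21,\mathbf{s}}(v)=(q_1 u - q_1^{-1}v)\,\bar t_{21,\mathbf{s}}(v)\bar t_{21,\mathbf{s}}(u),
\]
and extracting the coefficient of $u^a v^b$ only yields the recurrence $q_1\{\bar t_{21,\mathbf{s}}^{(a)},\bar t_{21,\mathbf{s}}^{(b-1)}\}=q_1^{-1}\{\bar t_{21,\mathbf{s}}^{(a-1)},\bar t_{21,\mathbf{s}}^{(b)}\}$. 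Combined with $\bar t_{21,\mathbf{s}}^{(0)}=0$ from \eqref{RTT:af1}, an immediate induction then gives $\{\bar t_{21,\mathbf{s}}^{(r)},\bar t_{21,\mathbf{s}}^{(s)}\}=0$ for all $r,s\geqslant 1$, after which your filtration-and-complementary-product argument goes through verbatim.
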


\begin{proof}
    This lemma can be proved as a similar argument as in \cite[Lemma 2]{Zrb95}. 
    
\end{proof}

Suppose that $\dim \check{W}(\nu(u))=2^K$ and $\nu(u)=(\nu_1(u),\nu_2(u))$ is the highest weights of $\check{W}(\nu(u))$ with 
\begin{gather*}
    \nu_1(u)=\nu_1^{(0)}+\nu_1^{(1)}u+\cdots+\nu_1^{(K)}u^K,\quad \nu_2(u)=\nu_2^{(0)}+\nu_2^{(1)}u+\cdots+\nu_2^{(K)}u^K,
\end{gather*}
for $\nu_1^{(0)}\nu_1^{(K)}=\nu_2^{(0)}\nu_2^{(K)}.$ 
$\check{V}_{a,\mathbf{s}}(\mathcal{M})$ is typical with maximal vector $\zeta$ and highest weights
\begin{gather*}
    (\mu_1-\mu_1^{-1}au,\mu_2-\mu_2^{-1}au).
\end{gather*}
That is, $\mu_1/\mu_2\neq \pm 1$. 

Set $\xi^-=\bar{t}_{21,\mathbf{s}}^{(1)}\bar{t}_{21,\mathbf{s}}^{(2)}\cdots \bar{t}_{21,\mathbf{s}}^{(K)} \xi$, which is the unique vector in $\check{W}(\nu(u))$ (up to a constant factor) that satisfies $\bar{t}_{21,\mathbf{s}}^{(r)}\xi^-=0$ for all $r$. We call $\xi^-$ the \textit{minimal} vector of $\check{W}(\nu(u))$. Define $\zeta^-=\bar{t}_{21,\mathbf{s}}^{(1)}\zeta$ as the minimal vector of $\check{V}_{a,\mathbf{s}}(\mathcal{M})$. 

Through comultiplication $\widehat{\triangle}_{\mathbf{s}}$, we regard the tensor product $\check{W}(\nu(u))\otimes \check{V}_{a,\mathbf{s}}(\mathcal{M})$ as a representation of $\mathrm{Y}_q\big(\mathfrak{gl}_{1|1,\mathbf{s}}\big)$. Then we have 
\begin{align*}
    \widehat{\triangle}_{\mathbf{s}}\left(\bar{t}_{21,\mathbf{s}}(u)\right)\left(\xi\otimes \zeta\right)
    =\bar{t}_{21,\mathbf{s}}(u)\xi\otimes \left(\mu_{1}-\mu_{1}^{-1}au\right)\zeta +\nu_2(u)\xi\otimes u\zeta^-. 
\end{align*}
When taking $u_0=\mu_{1}^2a^{-1}$, one has
\begin{gather*}
    \xi\otimes \zeta^- =\nu_2(u_0)^{-1}u_0 ^{-1}\widehat{\triangle}_{\mathbf{s}}\left(\bar{t}_{21,\mathbf{s}}(u)\right)\left(\xi\otimes \zeta\right)\in \mathrm{N}^-.\left(\xi\otimes \zeta\right),
\end{gather*}
if $\nu_2(u_0)\neq 0$. 
 It follows that 
$$\mathrm{N}^-.(\xi\otimes \zeta^-)=(\mathrm{N}^-.\xi)\otimes \zeta^-,$$
which forces $$\check{W}(\nu(u))\otimes \zeta^-\in \mathrm{N}^-.(\xi\otimes \zeta).$$
Therefore, 
$$\check{W}(\nu(u))\otimes \check{V}_{a,\mathbf{s}}(\mathcal{M})= \mathrm{N}^-.(\xi\otimes \zeta)$$
for $\nu_2(u_0)\neq 0$.

Moreover, 
\begin{align*}
    \widehat{\triangle}_{\mathbf{s}}\left(\bar{t}_{12,\mathbf{s}}(u)\right)\left(\xi^-\otimes \zeta^-\right)&=\bar{t}_{12,\mathbf{s}}(u)\xi^-\otimes \left(\mu_{2}-\mu_{2}^{-1}au\right)\zeta^- +\nu_1(u)\xi^- \otimes (\bar{t}_{12,\mathbf{s}}^{(0)}+u\bar{t}_{12,\mathbf{s}}^{(1)})\zeta^-,
\end{align*}
where 
$$(\bar{t}_{12,\mathbf{s}}^{(0)}+u\bar{t}_{12,\mathbf{s}}^{(1)})\zeta^-=a(\mu_1\mu_2^{-1}-\mu_1^{-1}\mu_2)\zeta\neq 0,$$
since $\check{V}_{a,\mathbf{s}}(\mathcal{M})$ is typical. 
When taking $u_0=\mu_{2}^2a^{-1}$, one has 
$$\xi^-\otimes\zeta=\nu_1(u_0)^{-1}a^{-1}\left(\frac{\mu_1}{\mu_2}-\frac{\mu_2}{\mu_1}\right)^{-1}\widehat{\triangle}_{\mathbf{s}}\left(\bar{t}_{12,\mathbf{s}}(u)\right)\left(\xi^-\otimes \zeta^-\right)\in \mathrm{N}^+.\left(\xi^-\otimes \zeta^-\right),$$
if $\nu_1(u_0)\neq 0$. 
Similar to the argument in the previous paragraph, we have
$$\check{W}(\nu(u))\otimes \check{V}_{a,\mathbf{s}}(\mathcal{M})= \mathrm{N}^+.(\xi^-\otimes \zeta^-)$$
for $\nu_1(u_0)\neq 0$.

Now we argue that \eqref{tensorreps} is irreducible by induction on $K$. Let $\zeta_i$ be the maximal vector of $\check{V}_{a_{i},\mathbf{s}}(\mathcal{M}_i)$. Define $\zeta_i^-=\bar{t}_{21,\mathbf{s}}^{(1)}\zeta_i$ for each $i$.  For $K=2$, if
$$\frac{a_1}{a_2}\neq \frac{\mu_{1,2}^2}{\mu_{2,1}^2}~~~\text{and}~~~\frac{\mu_{1,1}^2}{\mu_{2,2}^2},$$
we obtain
\begin{align*}
    \check{V}_{a_1,\mathbf{s}}(\mathcal{M}_1)\otimes \check{V}_{a_2,\mathbf{s}}(\mathcal{M}_2)= \mathrm{N}^-.(\zeta_1\otimes \zeta_2)=\mathrm{N}^+.(\zeta_1^-\otimes \zeta_2^-)
\end{align*}
as before. If $\check{V}_{a_1,\mathbf{s}}(\mathcal{M}_1)\otimes \check{V}_{a_2,\mathbf{s}}(\mathcal{M}_2)$ is not irreducible, it has a proper submodule generated by the maximal vector $\zeta_1\otimes \zeta_2$ or the minimal vector $\zeta_1^-\otimes \zeta_2^-$ in terms of Lemma \ref{lemm:linearcombin}; this is impossible.  

Assume that $\check{W}(\nu(u))$ is isomorphic to \eqref{tensorreps3} such that every factor is typical. Thus, $\check{W}(\nu(u))\otimes \check{V}_{a_{l},\mathbf{s}}(\mathcal{M}_l)$ is irreducible if 
\begin{gather}\label{aij-conditions}
    \frac{a_i}{a_j}\neq \frac{\mu_{i,2}^2}{\mu_{j,1}^2}~~~\text{and}~~~\frac{\mu_{i,1}^2}{\mu_{j,2}^2}\qquad\text{for each pair}~~(i,j).
\end{gather}

We note that the $\mathrm{U}_q\big(\widehat{\mathfrak{gl}}_{1|1,\mathbf{s}}\big)$-module \eqref{tensorreps2} is irreducible if its restriction to the $q$-super Yangian $\mathrm{Y}_q\big(\mathfrak{gl}_{1|1,\mathbf{s}}\big)$ is irreducible. To summarize the above arguments, we conclude that
\begin{theorem}
    The $\mathrm{U}_q\big(\widehat{\mathfrak{gl}}_{1|1,\mathbf{s}}\big)$-module \eqref{tensorreps2} is irreducible if condition \eqref{aij-conditions} holds. Moreover, every finite-dimensional irreducible representation of $\mathrm{U}_q\big(\widehat{\mathfrak{gl}}_{1|1,\mathbf{s}}\big)$ is isomorphic to a tensor product of typical evaluation representations with the form \eqref{tensorreps2} satisfying the condition \eqref{aij-conditions}. 
\end{theorem}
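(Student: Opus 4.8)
The plan is to split the statement into its two assertions. For the first assertion --- irreducibility of the tensor product \eqref{tensorreps2} under \eqref{aij-conditions} --- I would reduce everything to the level of the $q$-super Yangian $\mathrm{Y}_q\big(\mathfrak{gl}_{1|1,\mathbf{s}}\big)$, using the observation already recorded in the text that a $\mathrm{U}_q\big(\widehat{\mathfrak{gl}}_{1|1,\mathbf{s}}\big)$-module is irreducible as soon as its restriction to $\mathrm{Y}_q\big(\mathfrak{gl}_{1|1,\mathbf{s}}\big)$ is, and that the restriction of \eqref{tensorreps2} to the $q$-super Yangian is exactly \eqref{tensorreps}. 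So it suffices to show that \eqref{tensorreps} is irreducible under \eqref{aij-conditions}. Here I would carry the induction on the length $l$ (equivalently on the order $K$ of the highest weight, after noting each typical factor contributes order $1$) that is already laid out in the excerpt: the base case $l=2$ is the explicit computation given with the two vectors $\zeta_1\otimes\zeta_2$ and $\zeta_1^-\otimes\zeta_2^-$ and the conclusion via Lemma~\ref{lemm:linearcombin}, and the inductive step writes \eqref{tensorreps} as $\check W(\nu(u))\otimes\check V_{a_l,\mathbf{s}}(\mathcal M_l)$ where $\check W(\nu(u))$ is the (by induction irreducible, finite-dimensional, hence $2^{K-1}$-dimensional by Theorem~\ref{rep:T1}) tensor product of the first $l-1$ factors, and then invokes the two identities
\[
\check W(\nu(u))\otimes\check V_{a_l,\mathbf{s}}(\mathcal M_l)=\mathrm{N}^-.(\xi\otimes\zeta_l)=\mathrm{N}^+.(\xi^-\otimes\zeta_l^-)
\]
established above whenever $\nu_2(u_0)\neq0$ for $u_0=\mu_{l,1}^2a_l^{-1}$ and $\nu_1(u_0')\neq 0$ for $u_0'=\mu_{l,2}^2a_l^{-1}$.

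The point requiring care in the inductive step --- and what I expect to be the main obstacle --- is verifying that the non-vanishing hypotheses $\nu_2(u_0)\neq0$ and $\nu_1(u_0')\neq0$ are \emph{implied} by condition \eqref{aij-conditions}, not merely assumed. Since $\check W(\nu(u))$ is the tensor product of $\check V_{a_i,\mathbf{s}}(\mathcal M_i)$ for $i<l$, the comultiplication $\widehat\triangle_{\mathbf{s}}$ gives $\nu_1(u)=\prod_{i<l}(\mu_{i,1}-\mu_{i,1}^{-1}a_iu)$ and $\nu_2(u)=\prod_{i<l}(\mu_{i,2}-\mu_{i,2}^{-1}a_iu)$ up to overall scalars; hence $\nu_2(u)$ vanishes exactly at $u=\mu_{i,2}^2a_i^{-1}$ and $\nu_1(u)$ at $u=\mu_{i,1}^2a_i^{-1}$, for $i<l$. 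Then $\nu_2(\mu_{l,1}^2a_l^{-1})=0$ would force $\mu_{l,1}^2a_l^{-1}=\mu_{i,2}^2a_i^{-1}$ for some $i<l$, i.e. $a_i/a_l=\mu_{i,2}^2/\mu_{l,1}^2$, which is precisely one of the forbidden equalities in \eqref{aij-conditions}; similarly $\nu_1(\mu_{l,2}^2a_l^{-1})=0$ contradicts $a_i/a_l=\mu_{i,1}^2/\mu_{l,2}^2$. So \eqref{aij-conditions} for all pairs $(i,l)$ with $i<l$ is exactly what makes both non-vanishing conditions hold, and the induction goes through. One should also spell out why $\mathrm{N}^-.(\xi\otimes\zeta_l)=\mathrm{N}^+.(\xi^-\otimes\zeta_l^-)$ together with Lemma~\ref{lemm:linearcombin} forces irreducibility: any nonzero submodule contains a highest-weight vector (by the argument of Proposition~\ref{fd_irreducible representations:af}, a joint eigenvector of the $t_{ii,\mathbf{s}}(u),\bar t_{ii,\mathbf{s}}(u)$ killed by $\mathrm{N}^+$) and dually a minimal vector; but Lemma~\ref{lemm:linearcombin} pins down the highest-weight line and the minimal line of the $2^K$-dimensional space uniquely, so any nonzero submodule contains $\xi\otimes\zeta_l$ or $\xi^-\otimes\zeta_l^-$, hence is the whole module.

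For the second assertion --- that \emph{every} finite-dimensional irreducible $\mathrm{U}_q\big(\widehat{\mathfrak{gl}}_{1|1,\mathbf{s}}\big)$-module is of the form \eqref{tensorreps2} with \eqref{aij-conditions} --- I would start from an arbitrary finite-dimensional irreducible $V=V_{\mathbf{s}}(\lambda(u);\bar\lambda(u))$, which is of highest-weight type by Proposition~\ref{fd_irreducible representations:af}, and apply Theorem~\ref{rep:T1}: there are polynomials $Q(u),\widetilde Q(u)$ of common degree $K$, with the product of leading coefficient and constant term equal to $1$, satisfying $\lambda_1(u)/\lambda_2(u)=Q(u)/\widetilde Q(u)=\bar\lambda_1(u)/\bar\lambda_2(u)$. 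Using the factorisations displayed right after Theorem~\ref{rep:T1},
\[
Q(u)=\epsilon_1\prod_{i=1}^K(\eta_i+\eta_i^{-1}u),\qquad \widetilde Q(u)=\epsilon_2\prod_{i=1}^K(\widetilde\eta_i+\widetilde\eta_i^{-1}u),
\]
I would read off the data of $K$ typical evaluation representations: for each $i$, choose $\mathcal M_i=(\mu_{i,1},\mu_{i,2})$ and $a_i$ so that $\mu_{i,1}-\mu_{i,1}^{-1}a_iu$ and $\mu_{i,2}-\mu_{i,2}^{-1}a_iu$ reproduce the $i$-th linear factors of $Q$ and $\widetilde Q$ (after the harmless normalisation by \eqref{auto:af:fu} used throughout, and noting $\mu_{i,1}/\mu_{i,2}\neq\pm1$ forces typicality). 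After possibly reordering the factors and perturbing the $a_i$ within their allowed values --- here one uses that distinct roots of $Q$ (resp. $\widetilde Q$) can be ordered so that no forbidden ratio in \eqref{aij-conditions} occurs, which is the combinatorial heart of the matter and may require grouping equal factors and separating the "strings" --- the tensor product \eqref{tensorreps2} is irreducible by the first part and has the same highest weight as $V$, hence is isomorphic to $V$ by uniqueness of the irreducible highest-weight module (Section~\ref{se:afVermamodule}). The delicate bookkeeping is exactly this reordering so that \eqref{aij-conditions} is achievable simultaneously with the prescribed polynomials; once that is handled, uniqueness of the highest-weight module closes the argument.
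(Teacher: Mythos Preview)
Your approach matches the paper's: both reduce to the $q$-super Yangian, run the same induction on the number of typical factors using the cyclic generation identities $\mathrm{N}^-.(\xi\otimes\zeta)=\mathrm{N}^+.(\xi^-\otimes\zeta^-)$, and invoke Lemma~\ref{lemm:linearcombin} to conclude irreducibility. Your verification that \eqref{aij-conditions} is precisely what guarantees $\nu_1(u_0'),\nu_2(u_0)\neq 0$ is correct and in fact more explicit than the paper, which leaves this implicit.

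The one place you flag as delicate --- arranging the factors so that \eqref{aij-conditions} can be achieved for the second assertion --- resolves more simply than you suggest. Since only the ratio $Q(u)/\widetilde Q(u)$ matters, you may take $Q$ and $\widetilde Q$ coprime of the same degree (cancelling common linear factors preserves both the degree equality and the product-of-constant-and-leading-term condition). Then the zero sets of $Q$ and $\widetilde Q$ are disjoint. But the forbidden equalities in \eqref{aij-conditions} read $\mu_{i,1}^2 a_i^{-1}=\mu_{j,2}^2 a_j^{-1}$ or $\mu_{i,2}^2 a_i^{-1}=\mu_{j,1}^2 a_j^{-1}$, i.e.\ a root of $Q$ coincides with a root of $\widetilde Q$; coprimality rules this out for \emph{any} pairing of the linear factors, so no reordering or perturbation is needed. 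Typicality of each factor ($\mu_{i,1}^2\neq\mu_{i,2}^2$) is likewise automatic from disjointness of the zero sets.
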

We also have
\begin{corollary} 
   If the irreducible highest weight representation $V_{\mathbf{s}}(\lambda(u);\bar{\lambda}(u))$ satisfyies \eqref{fd:cd-af:11} for $\deg Q(u)=\deg \widetilde{Q}(u)=K$ and $Q_0Q_K=\widetilde{Q}_0\widetilde{Q}_K=1$, then
    the set of vectors
    $$\Big\{\,\zeta,\,\bar{t}_{21,\mathbf{s}}^{(k_1)}\cdots \bar{t}_{21,\mathbf{s}}^{(k_l)}\zeta  \,\Big|\,1\leqslant k_1<\cdots<k_l\leqslant K\,\Big\}$$
    forms a basis for $V_{\mathbf{s}}(\lambda(u);\bar{\lambda}(u))$. Moreover, $\dim V_{\mathbf{s}}(\lambda(u);\bar{\lambda}(u))=2^K$. 
\end{corollary}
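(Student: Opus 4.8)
The plan is to derive the corollary from the classification theorem proved immediately above together with Lemma~\ref{lemm:linearcombin}, so that the whole argument reduces to a single non-vanishing statement. Throughout I assume, as in the proof of Theorem~\ref{rep:T1}, that $Q(u)$ and $\widetilde{Q}(u)$ are coprime (they represent the same rational function $\lambda_1(u)/\lambda_2(u)=\bar{\lambda}_1(u)/\bar{\lambda}_2(u)$, and it is their reduced degree that is representation-theoretically meaningful). First I would count the proposed vectors: since $\mathbf{s}\in\{01,10\}$ the elements $\bar{t}_{21,\mathbf{s}}^{(r)}$ are odd, and the ``if'' part of Theorem~\ref{rep:T1} gives $\bar{t}_{21,\mathbf{s}}^{(r)}\zeta=0$ for $r>K$; hence the vectors are indexed by subsets $\{k_1<\cdots<k_l\}\subseteq\{1,\dots,K\}$, and there are exactly $\sum_{l=0}^{K}\binom{K}{l}=2^{K}$ of them, the empty set giving $\zeta$.

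Next I would fix the dimension. By Theorem~\ref{rep:T1} the hypothesis makes $V_{\mathbf{s}}(\lambda(u);\bar{\lambda}(u))$ finite-dimensional and irreducible, so the classification theorem preceding this corollary identifies it with a tensor product $V_{a_1,\mathbf{s}}(\mathcal{M}_1)\otimes\cdots\otimes V_{a_l,\mathbf{s}}(\mathcal{M}_l)$ of typical evaluation representations satisfying \eqref{aij-conditions}. Equating the highest-weight ratio $\bar{\lambda}_1(u)/\bar{\lambda}_2(u)=Q(u)/\widetilde{Q}(u)$, a quotient of coprime degree-$K$ polynomials, with $\prod_{i=1}^{l}(\mu_{i,1}-\mu_{i,1}^{-1}a_iu)/(\mu_{i,2}-\mu_{i,2}^{-1}a_iu)$ forces $l=K$ (coprimality of the product being exactly \eqref{aij-conditions}); since each typical evaluation representation is $2$-dimensional, this gives $\dim V_{\mathbf{s}}(\lambda(u);\bar{\lambda}(u))=2^{K}$.

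With a set of $2^{K}$ vectors sitting inside a space of dimension $2^{K}$, it suffices to prove they are linearly independent, for then they are automatically a basis. Here I would apply Lemma~\ref{lemm:linearcombin}, according to which a nontrivial linear relation among the listed vectors forces the minimal vector $\xi^{-}:=\bar{t}_{21,\mathbf{s}}^{(1)}\bar{t}_{21,\mathbf{s}}^{(2)}\cdots\bar{t}_{21,\mathbf{s}}^{(K)}\zeta$ to vanish; thus everything comes down to showing $\xi^{-}\neq 0$. To do this I would pass to the tensor-product model $V_{\mathbf{s}}(\lambda(u);\bar{\lambda}(u))\cong V_{a_1,\mathbf{s}}(\mathcal{M}_1)\otimes\cdots\otimes V_{a_K,\mathbf{s}}(\mathcal{M}_K)$ with maximal vector $\zeta=\zeta_1\otimes\cdots\otimes\zeta_K$, and reuse the coproduct computation of the previous subsection: applying $\widehat{\triangle}_{\mathbf{s}}(\bar{t}_{21,\mathbf{s}}(u))$ and specialising $u$ at the points $\mu_{i,1}^{2}a_i^{-1}$ one slot at a time lowers each tensor factor to its minimal vector $\zeta_i^{-}=\bar{t}_{21,\mathbf{s}}^{(1)}\zeta_i$. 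Thus a length-$K$ product of the $\bar{t}_{21,\mathbf{s}}^{(r)}$ carries $\zeta$ to a nonzero multiple of $\zeta_1^{-}\otimes\cdots\otimes\zeta_K^{-}\neq 0$ (nonzero since every factor is typical); since the subspace annihilated by all $\bar{t}_{21,\mathbf{s}}^{(r)}$ is one-dimensional (as recorded in the previous subsection), the vector $\xi^{-}$ coincides with this minimal vector up to a nonzero scalar, so $\xi^{-}\neq 0$. Independence follows, and with it both the basis statement and $\dim V_{\mathbf{s}}(\lambda(u);\bar{\lambda}(u))=2^{K}$.

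I expect the genuine obstacle to be exactly $\xi^{-}\neq 0$. The oddness of the $\bar{t}_{21,\mathbf{s}}^{(r)}$ and the vanishing $\bar{t}_{21,\mathbf{s}}^{(r)}\zeta=0$ for $r>K$ only bound the family above by $2^{K}$ vectors; they do not prevent the top-length product from collapsing, and the defining relations among the $\bar{t}_{21,\mathbf{s}}^{(r)}$ are not a plain Clifford/anticommutation algebra. Controlling the non-collapse genuinely needs the tensor-product realization and the explicit coproduct evaluations, which is why this corollary is placed after both the classification theorem and Lemma~\ref{lemm:linearcombin}.
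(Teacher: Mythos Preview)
Your proof is correct and uses the same decisive input as the paper—the classification theorem identifying $V_{\mathbf{s}}(\lambda(u);\bar\lambda(u))$ with a tensor product of $K$ typical two-dimensional evaluation representations, hence $\dim V_{\mathbf{s}}=2^{K}$—but you close the argument along the ``independence'' side of the basis dichotomy, whereas the paper's intended route is the ``spanning'' side. Concretely, the paper has already recorded (just before Lemma~\ref{lemm:linearcombin}) that $\bar t_{21,\mathbf{s}}^{(p)}\zeta=0$ for $p>K$ and $\dim\check V_{\mathbf{s}}(\bar\lambda(u))\leq 2^{K}$; this upper bound is exactly the statement that the listed $2^{K}$ vectors span, so once $\dim=2^{K}$ is known from the classification, the basis claim is immediate. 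Your detour through Lemma~\ref{lemm:linearcombin} and the non-vanishing of $\xi^{-}$ reaches the same conclusion, and your tensor-product argument for $\xi^{-}\neq 0$ is sound—but it tacitly uses one step you do not spell out: that \emph{every} length-$K$ product of $\bar t_{21,\mathbf{s}}^{(r)}$'s with $r\leq K$, applied to $\zeta$, is a scalar multiple of $\xi^{-}$. This follows because the commutation relation among the $\bar t_{21,\mathbf{s}}^{(r)}$'s is closed and length-preserving, and any ordered length-$K$ monomial with largest index exceeding $K$ annihilates $\zeta$; once noted, your argument is complete. The spanning route is shorter, but your independence route has the merit of exhibiting the minimal vector explicitly in the tensor model.
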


\vspace{1em}

\section*{Acknowledgments}
H. Lin is supported by the Postdoctoral Fellowship Program of CPSF (GZC20252014). H. Zhang is  supported by the support of the National Natural Science Foundation of China (No. 12271332).


\begin{thebibliography}{99}



\bibitem{BGZ90}
A. J. Bracken, M. D. Gould, R. B. Zhang, \textit{Quantum supergroups and solutions of the Yang-Baxter equation}, Mod. Phys. Lett. A \textbf{5}(11) (1990) 831--840.

\bibitem{CH23}
H. Chang, H. Hu, \textit{A note on the center of the super Yangian $Y_{M|N}(\mathfrak{s})$}, J. Algebra \textbf{633} (2023) 648--665 

\bibitem{CP91}
V. Chari, A. Pressley, \textit{Quantum affine algebras}. Comm. Math. Phys. \textbf{142}(2) (1991) 261--283.

\bibitem{CP94-1}
V. Chari, A. Pressley, \textit{Small representations of quantum affine algebras}. Lett. Math. Phys. \textbf{30}(2) (1994) 131--145.

\bibitem{CP94-2}
V. Chari, A. Pressley, \textit{Quantum affine algebras and their representations}. Representations of groups (Banff, AB, 1994), 59--78. AMS, Providence, RI, 1995.





\bibitem{Dr85}
V.G. Drinfeld, \textit{Hopf algebras and the quantum Yang-Baxter equation}, Dokl. Akad. Nauk SSSR \textbf{283}(5) (1985) 1060--1064. 
 
\bibitem{Dr87}
V.G. Drinfeld, \textit{A new realization of Yangians and of quantum affine algebras}, Dokl. Akad. Nauk SSSR \textbf{296}(1) (1987) 13--17. 

\bibitem{DF93}
J. Ding, I. B. Frenkel, \textit{Isomorphism of two realizations of quantum affine algebra}, Comm. Math. Phys.  \textbf{156}(2) (1993) 277--300.

\bibitem{FHS97}
H. Fan, B. Hou, K. Shi, \textit{Drinfeld constructions of the quantum affine superalgebra $U_q(\widehat{gl(m/n)})$}, J. Math. Phys. \textbf{38}(1) (1997) 411--433.


\bibitem{GM10}
L. Gow, A. Molev, \textit{Representations of twisted $q$-Yangians}. Selecta Math. (N.S.)  \textbf{16}(3) (2010) 439--499.

\bibitem{Ji85}
M. Jimbo,  \textit{A $q$-difference analogue of $U(\mathfrak{g})$ and the Yang-Baxter equation}, Lett. Math. Phys. \textbf{10} (1985) 63--69 


\bibitem{JLZ25}
N. Jing, Z. Li, J. Zhang, \textit{Quantum Berezinian for quantum affine superalgebra $U_q(\widehat{gl}_{M|N})$}, Lett. Math. Phys. \textbf{115}(4) (2025) Paper No. 83, 23 pp. 

\bibitem{JLM20-1}
N. Jing, M. Liu, A. Molev, \textit{Isomorphism between the R-matrix and Drinfeld presentations of quantum affine algebra: type C}, J. Math. Phys. \textbf{61}(3) (2020).

\bibitem{JLM20-2}
N. Jing, M. Liu, A. Molev, \textit{Isomorphism between the R-matrix and Drinfeld presentations of quantum affine algebra: types B and D}, SIGMA. Symmetry, Integrability and Geometry: Methods and Applications, \textbf{16} (2020) 043.

\bibitem{JLM20-3}
N. Jing, M. Liu, A. Molev, \textit{Representations of quantum affine algebras in their R-matrix realization},
SIGMA Symmetry Integrability Geom. Methods Appl. \textbf{16}(145) (2020), Paper No. 145, 25 pp.

\bibitem{HZ20}
 R.R.Holmes, C. Zhang, \textit{Simplicity of Kac modules for the quantum general linear superalgebra}, Alg. Rep. Theory \textbf{23}(2020) 1737--1760. 

\bibitem{HK02}
J. Hong, S. J. Kang, \textit{Introduction to quantum groups and crystal bases}, Grad. Stud. Math., vol. 42 Amer. Math. Soc., Providence, (2002).

\bibitem{Ka77-1}
V.G. Kac, \textit{Characters of typical representations of classical Lie superalgebras}, Comm. Algebra \textbf{5} (1977) 889--897.
\bibitem{Ka77-2}
V.G. Kac, \textit{Lie superalgebras}, Adv. Math. \textbf{26} (1977) 8--96




\bibitem{LWZ24}
H. Lin, Y. Wang, H. Zhang, \textit{From quantum loop superalgebras to super Yangians}. J. Algebra \textbf{650} (2024) 299--334.


\bibitem{Lu21}
K. Lu, \textit{Gelfand-Tsetlin bases of representations for super Yangian and quantum affine superalgebra},
Lett. Math. Phys. \textbf{111}(6) (2021), Paper No. 145, 30 pp. 

\bibitem{Lu22}
 K.Lu, \textit{A note on odd reflections of super Yangian and Bethe ansatz}, Lett. Math. Phys. \textbf{112}(2022) 29. 

\bibitem{Mo07}
A. Molev, \textit{Yangians and classical Lie algebras}, Mathematical Surveys and Monographs 143, American
Mathematical Society, Providence, RI (2007)

\bibitem{Mo22}
A.Molev, \textit{Odd reflections in the Yangian associated with $\mathfrak{gl}(m|n)$}, Lett. Math. Phys. \textbf{112} 15 (2022)

\bibitem{MR08}
A. Molev, E. Ragoucy, \textit{Symmetries and invariants of twisted quantum algebras and associated Poisson algebras}, Rev. Math. Phys. \textbf{20}(02) (2008) 173--198. 

\bibitem{MRS03}
A. Molev, E. Ragoucy, P. Sorba, \textit{Coideal subalgebras in quantum affine algebras}, Rev. Math. Phys. \textbf{15}(8) (2003) 789--822.

\bibitem{MTZ04}
A. Molev, V. N. Tolstoy, R. B. Zhang, \textit{On irreducibility of tensor products of evaluation modules for the quantum affine algebra}, Journal of Physics A: Mathematical and General, \textbf{37}(6) (2004) 2385.

\bibitem{Mu12}
I.M. Musson, \textit{Lie superalgebras and enveloping algebras}, Grad. Stud. Math., 131 American Mathematical Society, Providence, RI, (2012). xx+488 pp.

\bibitem{Pe16}
Y.N. Peng, \textit{Parabolic presentations of the super Yangian $\mathrm{Y}(\mathfrak{gl}_{M|N})$ associated with arbitrary 01-sequences}, Comm. Math. Phys. \textbf{346} (2016) 313--347 



\bibitem{RTF90}
 N. Reshetikhin, L. Takhtadzhyan, L. Faddeev,  \textit{Quantization of Lie groups and Lie algebras}, Leningrad Math. J. \textbf{1} (1990) 193--226 



\bibitem{Ts21}
A. Tsymbaliuk, \textit{PBWD bases and shuffle algebra realizations for $U_v(L\mathfrak{sl_n})$, $U_{v_1,v_2}(L\mathfrak{sl_n})$, $U_v(L\mathfrak{sl(m|n)})$ and their integral forms}, Sel. Math. (N.S.) \textbf{27}(3) (2021) Paper No. 35, 48 pages.


\bibitem{Ya94}
H. Yamane, \textit{Quantized enveloping algebras associated with simple Lie superalgebras and their universal R-matrices}, Publ. RIMS, Kyoto Univ. \textbf{30}(1) (1994) 15--87.

\bibitem{Ya99}
H. Yamane, \textit{On defining relations of affine Lie superalgebras and affine quantized universal enveloping
superalgebras}, Publ. RIMS Kyoto Univ. \textbf{35}(3) (1999) 321--390.

\bibitem{Zhf14}
H. Zhang, \textit{Representations of quantum affine superalgebras}. Math. Z. \textbf{278} (2014) 663--703.

\bibitem{Zhf16}
H. Zhang, \textit{RTT realization of quantum affine superalgebras and tensor products}. Int. Math. Res. Not.  \textbf{2016}(4) (2016) 1126--1157.

\bibitem{Zhf17}
H. Zhang, \textit{Fundamental representations of quantum affine superalgebras and R-matrices}. Transform. Groups. \textbf{22}(2) (2017) 559--590.

\bibitem{Zrb93} 
R. B. Zhang, \textit{Finite-dimensional irreducible representations of the quantum supergroup $\mathrm{U}_q(gl(m/n))$}, J. Math. Phys. \textbf{34}(3) (1993) 1236--1254.

\bibitem{Zrb95}
R. B. Zhang, \textit{Representations of super Yangian}, J. Math. Phys. \textbf{36}(7) (1995) 3854--3865.

\bibitem{Zrb96}
R. B. Zhang, \textit{The $gl(M|N)$ Super Yangian and Its Finite-Dimensional Representations}, Lett. Math. Phys. \textbf{37} (1996) 419--434.

\bibitem{ZGB91}
R. B. Zhang, M. D. Gould, A. J. Bracken, \textit{Solutions of the graded classical Yang-Baxter equation and integrable models}, J. Phys. A. \textbf{24}(6) (1991), 1185.

\end{thebibliography}
\end{document}